\newtheorem{theorem}{Theorem}[section]
\newtheorem{lemma}[theorem]{Lemma}
\newtheorem{proposition}[theorem]{Proposition}
\newtheorem{corollary}[theorem]{Corollary}
\theoremstyle{definition}
\newtheorem{definition}[theorem]{Definition}
\newtheorem{claim}[theorem]{Claim}
\newtheorem{example}[theorem]{Example}
\newtheorem{notation}[theorem]{Notation}
\newtheorem *{Theorem A}{Theorem A}
\newtheorem *{Corollary B}{Corollary B}
\newtheorem *{Corollary D}{Corollary D}
\newtheorem *{Theorem B}{Theorem B}
\newtheorem *{Theorem B1}{Theorem}
\newtheorem *{Projective Schur's Lemma}{Projective Schur's Lemma}
\newtheorem *{Graded Artin Wedderburn Theorem}{Graded Artin Wedderburn Theorem}
\newtheorem *{Higgs' Conjecture}{Higgs' Conjecture}
\newtheorem *{Theorem C}{Theorem C}
\newtheorem *{Theorem D}{Theorem D}
\newtheorem *{BSZ}{Generalized BSZ Theorem}
\newtheorem *{Theorem E}{Theorem E}
\newtheorem *{Theorem F}{Theorem F}
\newtheorem *{Theorem G}{Theorem G}
\newtheorem *{Theorem H}{Theorem H}
\newtheorem *{Corollary I}{Corollary I}
\newtheorem *{Theorem J}{Theorem J}
\newtheorem *{Corollary K}{Corollary K}
\newtheorem *{Theorem L}{Theorem L}
\newtheorem *{Question1}{Question 1}
\newtheorem *{Question2}{Question 2}
\newtheorem *{Problem 1}{Problem 1}
\newtheorem *{Problem 2}{Problem 2}
\newtheorem *{Problem 3}{Problem 3}
\newcommand{\bigslant}[2]{{\raisebox{.1em}{$#1$}\left/\raisebox{-.1em}{$#2$}\right.}}
\newcommand{\Id}{\text {Id}}
\newcommand{\IM}{\text {Im}}
\newcommand{\md}{\text {mod }}
\newcommand{\Hom}{\text {Hom}}
\newcommand{\Aut}{\text {Aut}}
\newcommand{\End}{\text {End}}
\newcommand{\C}{{\mathbb C}}
\newcommand{\F}{{\mathbb F}}
\newcommand{\res}{\text{res}}
\newcommand{\e}{\text{eq}}
\newcommand{\Gr}{\text{Gr}}
\newcommand{\Supp}{\text{Supp}}
\newcommand{\dimF}{\dim_{\F}}
\newcommand{\SpF}{\text{Span}_{\F}}
\numberwithin{equation}{section}
\begin{document}
\title{Mackey's obstruction map for discrete graded algebras}
\author{Yuval Ginosar}

\address{Department of Mathematics, University of Haifa, Haifa 3498838, Israel}
\email{ginosar@math.haifa.ac.il}
\begin{abstract}
G.W. Mackey's celebrated obstruction theory for projective representations of locally compact groups was remarkably generalized by J. M. G. Fell and R. S. Doran to
the wide area of saturated Banach *-algebraic bundles.

Analogous obstruction is suggested here for discrete group graded algebras which are not necessarily saturated, i.e. strongly graded in the discrete context.
The discrete obstruction is a map assigning a certain second cohomology class to every equivariance class of absolutely simple graded modules.

The set of equivariance classes of such modules is equipped with an appropriate multiplication, namely a graded product, such that the obstruction map is a homomorphism of abelian monoids.
Graded products, essentially arising as pull-backs of bundles, admit many nice properties, including a way to twist graded algebras and their graded modules.

The obstruction class turns out to determine the fine part that appears in the Bahturin-Zaicev-Sehgal decomposition, i.e. the
graded Artin-Wedderburn theorem for graded simple algebras which are graded Artinian,
in case where the base algebras (i.e. the unit fiber algebras) are finite-dimensional over algebraically closed fields.
\end{abstract}

\date{\today}

\maketitle

\tableofcontents
\bibliographystyle{abbrv}
\section{Introduction}
A graded version of the Artin-Wedderburn Theorem \cite[Theorem I.5.8]{NVO82},\cite[Corollary 4.6.7]{NVO04},\cite[Theorem 2.6]{EK13} says that
a group-graded algebra, which satisfies the descending chain condition on graded (left) ideals, is graded-simple if and only if it is
graded-isomorphic to a graded endomorphism algebra of a graded (left) module which is finite-dimensional over some graded division algebra.
This result has a more explicit description, the Bahturin-Zaicev-Sehgal decomposition \cite[Theorem 3]{MR2488221},
when the group-graded algebra is assumed to be finite-dimensional over an algebraically closed field $\F$, and as can easily be verified
(see Lemma \ref{grdivtwisted} herein) also when the finite-dimension assumption is imposed only on the base algebra while keeping the algebraic closure demand on $\F$.
In this case, the corresponding graded division algebra is a twisted group $\F$-algebra of a subgroup of the grading group, which is determined up to conjugation.
How is this conjugacy class determined, and where does the twist pop-up from? How is the grading on the module over this twisted group algebra determined?

These questions are apparently related to a problem of deciding whether there exist modules of a $G$-graded algebra
\begin{equation}\label{eq:algebragrading}
\mathcal{G}:A_G=\bigoplus _{g\in G} A_g
\end{equation}
that restrict to a given module $M$ of the base algebra $A_e$, or more generally, of finding the $A_G$-modules which {\it lie above} (see the beginning of \S\ref{sb}) this $A_e$-module $M$.
A solution to the restriction problem was given by E.C. Dade \cite[Theorem 2.8]{Dade81} when \eqref{eq:algebragrading} is strongly graded.
A more concrete criterion is well-known in the following special setting. Let $c\in Z^2(\Gamma,\F^*)$ be a 2-cocycle of a group $\Gamma$ acting trivially on a field $\F$.
Then the corresponding twisted group algebra admits a natural $\Gamma$-graded structure $\F^c\Gamma=\oplus_{\gamma\in\Gamma}(\F^c\Gamma)_{\gamma}.$
Furthermore, for every normal subgroup $N\lhd \Gamma$ this twisted group algebra admits a {\it quotient grading} (see \S\ref{grmorph}) by $G:=\Gamma/N$ as follows
\begin{equation}\label{quotgrdef}
\F^c\Gamma=\bigoplus_{\gamma N\in G}(\F^c\Gamma)_{\gamma N}, \text{ where } (\F^c\Gamma)_{\gamma N}=\bigoplus_{\gamma'\in\gamma N}(\F^c\Gamma)_{\gamma'}.
\end{equation}
The base algebra of this $G$-grading is the twisted subgroup algebra $\F^{c_N}N$, where the 2-cocycle $c_N\in Z^2(N,\F^*)$ is the restriction of $c$ from $\Gamma$ to $N$.

This setup leads us to the seminal work of G.W. Mackey \cite{M58}.
As part of the so-called {\it Mackey's machine} for locally compact groups,
his theory in the discrete case (translated from the language of projective representations to that of modules over twisted group algebras)
classifies the simple $\F^{c}\Gamma$-modules lying above a given $\F^{c_N}N$-module, which is $\F$-absolutely simple and $\Gamma$-invariant.
In particular, it determines when this module is restricted from an $\F^c\Gamma$-module by means of the {\it Mackey obstruction}, that lies in the second cohomology of $G:=\Gamma/N$.
Mackey's normal subgroup analysis was pushed further thirty years later by J. M. G. Fell and R. S. Doran to much broader structures,
namely to \textit{Banach *-algebraic bundles} (also known as \textit{Fell bundles}),
over locally compact groups $G$ \cite{FellDoran1,FellDoran2}. The Mackey obstruction in this generality is given
in terms of a certain central extension of $G$ and is shown to exist for saturated bundles in many cases, in particular whenever $G$ is discrete \cite[Proposition XII.4.18]{FellDoran2}.
For non-saturated bundles there is no satisfactory definition of the obstruction, or even of the stability subgroup. Sometimes, the {\it saturated part} of a $G$-bundle may remedy
the situation \cite[\S XII.5.12]{FellDoran2}, however even for discrete groups this part may provide poor data about the entire bundle.

We would like to suggest a natural definition of Mackey's obstruction for any discrete group graded algebra.
This obstruction for discrete group graded complex algebras essentially coincides with the Mackey obstruction for Fell bundles which are saturated,
i.e. whose $L^1$-spaces boil down in the discrete case to ``strongly graded algebras".

In order to formulate an obstruction theorem for extension of absolutely simple modules to general discrete group graded algebras we need two notions,
the precise definitions can be found in the sequel.
Let \eqref{eq:algebragrading} be a $G$-graded $\F$-algebra.
Such a $G$-graded $\F$-algebra can be twisted by any 2-cocycle $\alpha\in Z^2(G,\F^*)$ giving rise to another $G$-graded $\F$-algebra
$$\alpha(\mathcal{G}):\alpha(A_G)=\F^{\alpha}G\otimes^G_{\F}A_G$$
that admits the same base algebra $A_e$. This is a special case
of a more general concept of {\it graded products} of $G$-graded $\F$-algebras (see \eqref{Gprod}).
Let $\text{Irr}_{\F}(A_e)$ denote the set of isomorphism types of the ${\F}$-absolutely simple left $A_e$-modules (or the $\F$-absolutely simple representations, see \S\ref{sb}),
and let $\text{Irr}_{\F}(A_e)^{\mathcal{G}}\subseteq\text{Irr}_{\F}(A_e)$ be its subset of $\mathcal{G}$-invariant classes (see \eqref{inirr}).
As customary, let $H^2(G,\F^*)$ be the second cohomology group of $G$, acting trivially on the group $\F^*$ of $\F$-units.
We have
\begin{Theorem A}\label{ThA}
Let \eqref{eq:algebragrading} be a $G$-graded algebra over any field $\F$. Then there is a well-defined map
\begin{equation}\label{omegaintro}
\omega_{\mathcal{G}}:\text{Irr}_{\F}(A_e)^{\mathcal{G}}\to H^2(G,\F^*),
\end{equation}
such that a $\mathcal{G}$-invariant  ${\F}$-absolutely simple $A_e$-module $M$ is restricted from the twisted graded algebra $\alpha(A_G)$
if (and only if for strongly graded algebras) the 2-cocycle $\alpha^{-1}\in Z^2(G,\F^*)$ belongs to the cohomology class
$\omega_{\mathcal{G}}([M])\in H^2(G,\F^*)$.
In particular, $M$ is restricted from the original graded algebra \eqref{eq:algebragrading} if
(and only if for strongly graded algebras) $\omega_{\mathcal{G}}([M])=1$.
\end{Theorem A}
Theorem A, proven in \S\ref{pfA1},
follows from a criterion for extending any $A_e$-module, not necessarily simple (see Theorem \ref{gendade}). This criterion generalizes Dade's Theorem,
originally formulated for strongly graded algebras.

A nice property, given in Theorem \ref{obsinvas}, says that if $[M]\in\text{Irr}_{\F}(A_e)^{\mathcal{G}}$, then $M$
is restricted from at least one of the twisted graded algebras $\alpha(A_G)$, where $\alpha\in Z^2(G,\F^*)$.
Corollary \ref{cyclic}, another consequence of Theorem A, is an extension criterion in case the second cohomology of the grading group is trivial,
it was known in the strongly graded case using Dade's Theorem.

Note the particular setup, where the $G$-graded algebra in Theorem A is a twisted group algebra $\F^c\Gamma$, graded by the quotient group $G=\Gamma/N$ as in \eqref{quotgrdef}.
In this case the twisting $\alpha(\F^c\Gamma)$ by $\alpha\in Z^2(G,\F^*)$ is isomorphic to the twisted group algebra $\F^{c\cdot\inf^G_{\Gamma}(\alpha)}\Gamma$ as $G$-graded algebras
(Corollary \ref{formackey}), where $\inf^G_{\Gamma}(\alpha)$ is the inflation of $\alpha$ from $G$ to $\Gamma$.
Consequently, Theorem A is Mackey's Obstruction Theorem for projective representations of discrete groups \cite[Theorem 8.2]{M58}.
The successive result in Mackey's paper \cite[Theorem 8.3]{M58}, that is ``Mackey's bijection", was generalized by Fell and Doran to saturated bundles.
The Mackey-Fell-Doran correspondence is formulated
in the end of \S\ref{pfA1} for discrete groups.

Does the map \eqref{omegaintro} enjoy a multiplicative property?
Well, this question does not make too much sense, for $\text{Irr}_{\F}(A_e)^{\mathcal{G}}$ has no
multiplicative structure.
However,
$\text{Irr}_{\F}(A_e)^{\mathcal{G}}$ is mapped into a certain abelian monoid, such that
the obstruction map $\omega_{\mathcal{G}}$ is essentially a restriction from this monoid to $\text{Irr}_{\F}(A_e)^{\mathcal{G}}$ of a homomorphism $\omega_{G,\F}$ whose values are in $H^2(G,\F^*)$.
More precisely, to any $A_e$-module $M$ one associates a well-defined graded $A_G$-module ${A_G}\overline{\otimes}_{A_e}M$
(see \eqref{assogrd}), which is a certain quotient of the graded $A_G$-module $A_G\otimes_{A_e}M$. We have
\begin{Theorem B}
With the above notation there exist
\begin{enumerate}
\item an abelian monoid $\text{Irr}^{\text{inv}}_{G,\F},$ consisting of graded equivariance classes of
invariant and absolutely graded-simple left graded modules over $G$-graded $\F$-algebras of bounded infinite cardinality
(Notation \ref{MIrr}),
\item a map $\iota_{\mathcal{G}}:\text{Irr}_{\F}(A_e)^{\mathcal{G}}\rightarrow\text{Irr}^{\text{inv}}_{G,\F}$,
sending $[M]\in\text{Irr}_{\F}(A_e)^{\mathcal{G}}$ to the graded equivariance class of
${A_G}\overline{\otimes}_{A_e}M$,
as well as
\item a homomorphism of abelian monoids (see \eqref{omega})
\begin{equation}\label{WGF}\omega_{G,\F}:\text{Irr}^{\text{inv}}_{G,\F}\to H^2(G,\F^*),\end{equation}
\end{enumerate}
such that $$\omega_{\mathcal{G}}=\omega_{G,\F}\circ\iota_{\mathcal{G}}.$$
\end{Theorem B}
Theorem B is proven in \S\ref{sb11}. The multiplicative property it manifests makes the obstruction more tractable to work with.
For example, it implies that the map $\omega_{\mathcal{G}}$ respects twistings in the following sense.
Noting that a class $[M]\in \text{Irr}_{\F}(A_e)$ is invariant with respect to a graded algebra \eqref{eq:algebragrading}
iff it is invariant with respect to every twisting $\alpha(A_G)$ of this algebra, we have (see \eqref{aseq1})
\begin{equation}\label{tract}
\omega_{\alpha(\mathcal{G})}([M])=[\alpha]\cdot\omega_{\mathcal{G}}([M]), \ \ \forall \alpha\in Z^2(G,\F^*), [M]\in\text{Irr}_{\F}(A_e)^{\mathcal{G}}.
\end{equation}

The map \eqref{omegaintro} is key in order to answer the questions posed above about the graded Artin-Wedderburn Theorem over algebraically closed fields.
An $A_e$-module $M$ is in general not necessarily $\mathcal{G}$-invariant.
Still, there exists a unique maximal subgroup of the grading group, namely the {\it inertia} $\mathcal{I}=\mathcal{I}_{\mathcal{G}}(M)<G$
(see \eqref{inertiasuspension}), such that $M$ is invariant under the $\mathcal{I}$-graded subalgebra $A_{\mathcal{I}}\subseteq A_G$.
If, additionally, $[M]\in\text{Irr}_{\F}(A_e)$ then
the obstruction cohomology class $\omega_{\mathcal{I}}([M])\in H^2(\mathcal{I},\F^*)$ is well-defined.
Next, the endomorphism algebra End$_{\F}({A_G}\overline{\otimes}_{A_e}M)$ of the associated graded module ${A_G}\overline{\otimes}_{A_e}M$ admits a natural $G$-grading (see \S \ref{Ghgm}).
These graded endomorphisms, on one hand, and the obstruction class, on the other hand, yield the following formulation of the graded Artin-Wedderburn Theorem,
giving an answer to the guiding questions above.
\begin{Theorem C}
Let \eqref{eq:algebragrading} be a $G$-graded algebra over an algebraically closed field $\F$.
Suppose that $A_G$ satisfies the descending chain condition on graded left ideals, and that  $A_e$ is finite-dimensional over $\F$.
Then for any simple $A_e$-module $M$, the simply $G$-graded endomorphism algebra $\text{End}^{\text{l}(G)}_{D_G}({A_G}\overline{\otimes}_{A_e}M)$ is a graded image of $A_G$,
where \begin{enumerate}
\item $D_G\stackrel{G}{\cong}\F^{\omega}\mathcal{I}$ (i.e. these $G$-graded division algebras are graded isomorphic) for any 2-cocycle $\omega$ in the obstruction cohomology class
$\omega_{\mathcal{I}}([M])\in H^2(\mathcal{I},\F^*)$ of the inertia $\mathcal{I}=\mathcal{I}_{\mathcal{G}}(M)<G$, and
\item ${A_G}\overline{\otimes}_{A_e}M$ is finite-dimensional over the graded division algebra $D_G$.
\end{enumerate}
\end{Theorem C}
The base algebra of a left graded Artinian algebra is itself left Artinian (see Lemma \ref{redext}).
Then a simple left $A_e$-module $M$ in Theorem C can be chosen to be any minimal left $A_e$-ideal.
Theorem C is proven in \S\ref{pfB}.

The kernel of a graded homomorphism is a graded ideal. Hence, if in addition to the assumptions in Theorem C, the algebra \eqref{eq:algebragrading} is also simply graded,
then a graded image of $A_G$ is graded isomorphic to $A_G$.
Theorem C immediately yields
\begin{Corollary D}
With the notation of Theorem C, if $A_G$ is further assumed to be simply $G$-graded, then
\begin{equation}\label{stackrel}
A_G\stackrel{G}{\cong}\text{End}^{\text{l}(G)}_{\F^{\omega}\mathcal{I}}({A_G}\overline{\otimes}_{A_e}M)\stackrel{G}{\cong}
\text{M}_{n}(\F)\otimes_{\F}\F^{\omega}\mathcal{I},
\end{equation}
where $M$ is any minimal left $A_e$-ideal, and $n=\dim_{\F^{\omega}\mathcal{I}}({A_G}\overline{\otimes}_{A_e}M)$.
\end{Corollary D}
The matrix algebra $\text{M}_{n}(\F)$ in \eqref{stackrel} is the \textit{elementary} part \cite{das2008} of the grading \eqref{eq:algebragrading}, whereas $F^{\omega}\mathcal{I}$
is its \textit{fine} part \cite{MR2488221}.

Different choices of simple $A_e$-modules $M$ in Corollary D yield graded-isomorphic (up to suspension) associated graded $A_G$-modules ${A_G}\overline{\otimes}_{A_e}M$,
conjugate inertia subgroups $\mathcal{I}$,
as well as conjugate obstruction cohomology classes $\omega_{{\mathcal{I}}}([M])$ (see Theorem \ref{uptoconj}).
Nevertheless, all such choices yield \eqref{stackrel} up to graded isomorphism.

As already mentioned, a substantial theme throughout is the graded product $A_G\otimes_{\F}^GA'_G$ of $G$-graded $\F$-algebras $A_G$ and $A_G'$.
This graded product, which can be regarded as the $L^1$-space of a certain pull-back of the corresponding $G$-bundles, gives rise to a monoidal structure Gr$(G,\F)$
on the graded isomorphism classes of $G$-graded $\F$-algebras (Theorem \ref{thmgrprod}).
The abelian monoid Gr$(G,\F)$ is endowed with an $H^2(G,\F^*)$-action via graded products with classes of twisted group algebras (see Equation \eqref{Z2actgr}),
as well as with an action of Aut$(G)$, and in particular of $G$ itself as inner automorphisms (\S\ref{autsec}).

An interesting family of examples is presented in \S\ref{secpb}.
Theorem \ref{pullback} therein says that the graded product of twisted group $\F$-algebras, whose grading is given via $G$-quotients as in \eqref{quotgrdef}, is graded
isomorphic to a twisted group $\F$-algebra
$$\F^{\alpha}\Gamma\otimes_{\F}^G\F^{\alpha'}\Gamma'\stackrel{G}\cong\F^{\alpha\times_G\alpha'}(\Gamma\times_G\Gamma')$$
of the pull-back $\Gamma\times_G\Gamma'$ of the given groups, twisted by a certain ``pull-back" $\alpha\times_G\alpha'$ of their twistings.

The $G$-graded modules over $G$-graded $\F$-algebras are endowed with a compatible graded product $W_G\otimes_{\F}^GW'_G$ (Definition \ref{gmodprod})
and, particularly, with a twisting option $W_G\mapsto \alpha(W_G)$ (Definition \ref{tgm}).
We show that graded products admit the following important features, which are used in the proofs of the main theorems.
\begin{enumerate}
\item The algebra of graded endomorphisms of $\alpha(W_G)$ over $\alpha(A_G)$ is graded isomorphic to the algebra of
graded endomorphisms of $W_G$ over $A_G$, twisted by $\alpha$ (Corollary \ref{endtwist}).
\item If $W_G,W'_G$ are graded modules over strongly $G$-graded algebras $A_G$ and $A'_G$ respectively,
then the full tensor product module $W_G\otimes_{\F}W'_G$ over the full tensor product algebra $A_G\otimes_{\F}A'_G$ is induced from
the graded product module $W_G\otimes^G_{\F}W'_G$ over the graded product algebra $A_G\otimes^G_{\F}A'_G$ (Corollary \ref{indstgrprod}).
\item Graded End-Tensor relations: Suppose that $W_G,W'_G$ are graded left modules over $G$-graded $\F$-algebras $A_G,A'_G$ respectively.
Then the graded endomorphism algebra $\End^{\text{r}(G)}_{A_G\otimes^G_{\F}A'_G}(W_G\otimes^G_{\F}W'_G)$ of the graded product $W_G\otimes^G_{\F}W'_G$
is graded isomorphic to the graded product of the graded endomorphism algebras $\End^{\text{r}(G)}_{A_G}(W_G)\otimes^G_{\F}\End^{\text{r}(G)}_{A'_G}(W'_G)$
either if both $W_G,W'_G$ are finitely-presented and $A_G,A'_G$ are both strongly graded ({Corollary} \ref{grHTrcor}),
or if these graded modules are both invariant and absolutely $\F$-simply graded
(Theorem \ref{preobst}).
\item Suspensions commute with graded products (Lemma \ref{susresp}), 
and in particular with twistings (Corollary \ref{galphcomm}).
\item The inertia of a module over the base algebra $A_e$ with respect to the graded algebras $\alpha(A_G)$ is the same for all $\alpha\in Z^2(G,\F^*)$
(Proposition \ref{invwrtgract}).
\item An $A_e$-module is restricted from $A_G$ if and only if it is restricted from $\alpha(A_G)$ for every 2-coboundary $\alpha\in B^2(G,\F^*)$ (Proposition \ref{resalpha}).
\end{enumerate}
The homomorphism \eqref{WGF} is a restriction of the more general {\it endomorphism map} (see \S\ref{MGFsec})
\begin{equation}\label{preend}\End:\mathcal{M}_{G,\F} \to\text{Gr}_{}(G,\F),\end{equation} where $\mathcal{M}_{G,\F}$
denotes the set of graded equivariance classes of left $G$-graded modules (not necessarily simply $G$-graded) over $G$-graded $\F$-algebras, both of bounded cardinality.
The graded product endow $\mathcal{M}_{G,\F}$ with a monoidal structure and, in particular, with an $H^2(G,\F^*)$-set structure.
Similarly to Gr$(G,\F)$, the monoid $\mathcal{M}_{G,\F}$ is acted also by $G$ via the suspension.
Loosely speaking, the map \eqref{preend} takes the equivariance class of a left $G$-graded module $W_G$ over a graded algebra \eqref{eq:algebragrading}
to the grading class $[\End^{\text{r}(G)}_{A_G}(W_{G})]$ of its graded $A_G$-endomorphisms. It possesses the following properties:
\begin{enumerate}
\item End is a map of pointed $H^2(G,\F^*)$-sets (Theorem \ref{mapH2sets}), as well as of pointed $G$-sets (Corollary \ref{mapGsets}).
\item When restricted to certain submonoids of $\mathcal{M}_{G,\F}$, it is a homomorphism of monoids (Lemma \ref{endhom}, Theorem \ref{preobst}).
\item In particular, the restriction of the endomorphism map to $\text{Irr}^{\text{inv}}_{G,\F}<\mathcal{M}_{G,\F}$ is essentially the homomorphism \eqref{WGF},
thinking of the cohomology group $H^2(G,\F^*)$ as a submonoid of Gr$(G,\F)$ (see Notation \ref{notsbmon}). This homomorphism $\omega_{G,\F}$ is surjective (Theorem \ref{surjomega}).
\end{enumerate}

Known results, mostly from \cite{NVO04}, are often rewritten here in a way that serves the goals of this article. The reader should however be aware that
the term ``simply graded" in that source tacitly assumes the graded Artinian property.

\section{Preliminaries}\label{sb}
Throughout this paper, all the algebras are assumed to be associative and unital. Unless stated explicitly, when we deal with modules and ideals of an algebra we refer to them as being acted
by the algebra from the left.
When a module $W$ of an $\F$-algebra $A$ is given by a homomorphism
$$\eta:A\to\text{End}_{\F}(W),$$
we write for short $x(w)$ instead of $\eta(x)(w)$ for $x\in A$ and $w\in W$, and either refer to $[W]$ or to $[\eta]$ as the same member in Mod$(A)$.

As customary, the linear endomorphisms of $W$ which respect its $A$-module structure are denoted by $\text{End}_{A}(W).$ Then $W$ is an $A\otimes_{\F}\text{End}_{A}(W)$-module
(or, alternatively, $_AW_{\text{End}_{A}(W)}$ is a bimodule, thinking of $\text{End}_{A}(W)$ as acting on $W$ from the right).

The restriction of an $A$-module $W$ to a subalgebra $B\subseteq A$ is denoted by $W|_B$ or just by $W$.
A (left) $B$-module $M$ is {\it restricted} from $A$ if there exists an $A$-module $W$ such that
$M=W|_B$. In this case we also say that $W$ {\it extends} $M$.
More generally, an $A$-module $W$ {\it lies above} $M$ if $M$ is a $B$-submodule of $W|_B$.
The {\it induction} of the $B$-module $M$ to $A$ is the $A$-module $M|^A:=A\otimes_BM$.
We can regard $M$ as a $B$-submodule of $M|^A$ by identifying it with $B\otimes_BM$ and so $M|^A$ lies above $M$.

Lemma \ref{fptens} below makes use of the following standard result in linear algebra.
\begin{lemma}\label{linalg}
Let $\varphi:W_1\to W_2$ and $\varphi':W_1'\to W'_2$ be linear maps of vector spaces over the same field $\F$. Then the linear map
$$ \varphi'':\begin{array}{ccc}
W_1\otimes_{\F}W'_1&\to &W_2\otimes_{\F}W_2'\\
x\otimes_{\F}x'&\mapsto &\varphi(x)\otimes_{\F}\varphi'(x')
\end{array}$$
satisfies $\text{Im}(\varphi'')=\text{Im}(\varphi)\otimes_{\F}\text{Im}(\varphi')\subseteq W_2\otimes_{\F}W_2'$, and
\begin{equation}\label{kertens}
\ker(\varphi'')\stackrel{}{=}\ker(\varphi)\otimes_{\F}W'_1+W_1\otimes_{\F}\ker(\varphi').
\end{equation}
\end{lemma}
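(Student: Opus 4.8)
The plan is to reduce the statement to the two elementary facts that $\IM$ and $\ker$ commute with tensor products in the stated way, and then verify them by a basis argument. First I would handle the image. Since $W_1 = \ker(\varphi) \oplus U$ for some complement $U$, and likewise $W_1' = \ker(\varphi') \oplus U'$, the map $\varphi$ restricts to an isomorphism $U \xrightarrow{\sim} \IM(\varphi)$ and $\varphi'$ restricts to an isomorphism $U' \xrightarrow{\sim} \IM(\varphi')$. Distributing the tensor product over these direct sum decompositions, one sees that $\varphi'' = \varphi \otimes_\F \varphi'$ kills every summand except $U \otimes_\F U'$ and maps that summand isomorphically onto $\IM(\varphi) \otimes_\F \IM(\varphi')$, viewed inside $W_2 \otimes_\F W_2'$ via the canonical (injective, since we are over a field) inclusion $\IM(\varphi) \otimes_\F \IM(\varphi') \hookrightarrow W_2 \otimes_\F W_2'$. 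Hence $\IM(\varphi'') = \IM(\varphi) \otimes_\F \IM(\varphi')$.

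Next I would prove \eqref{kertens}. One inclusion is immediate: if $x \in \ker(\varphi)$ then $\varphi''(x \otimes_\F x') = \varphi(x) \otimes_\F \varphi'(x') = 0$, and symmetrically for $x' \in \ker(\varphi')$, so $\ker(\varphi) \otimes_\F W_1' + W_1 \otimes_\F \ker(\varphi') \subseteq \ker(\varphi'')$. For the reverse inclusion I would use the decompositions above. Write $W_1 \otimes_\F W_1'$ as the direct sum of the four pieces $\ker(\varphi) \otimes_\F \ker(\varphi')$, $\ker(\varphi) \otimes_\F U'$, $U \otimes_\F \ker(\varphi')$, and $U \otimes_\F U'$; the first three lie in $\ker(\varphi) \otimes_\F W_1' + W_1 \otimes_\F \ker(\varphi')$, while $\varphi''$ is injective on the fourth by the previous paragraph. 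So any element of $\ker(\varphi'')$ has zero component in $U \otimes_\F U'$, hence lies in the sum of the first three pieces, which proves the reverse inclusion.

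The only mildly delicate point — and the one I would be most careful about — is the identification of $\IM(\varphi) \otimes_\F \IM(\varphi')$ as a genuine subspace of $W_2 \otimes_\F W_2'$ rather than merely as an abstract quotient; this is where working over a field (so that all modules are flat and all the inclusions $V \hookrightarrow W$ induce inclusions $V \otimes_\F (-) \hookrightarrow W \otimes_\F (-)$) is used, and it is what makes the equality $\IM(\varphi'') = \IM(\varphi) \otimes_\F \IM(\varphi')$ literally true rather than true up to isomorphism. Everything else is a routine computation with the chosen complements $U, U'$, and no genuine obstacle arises; the lemma is a packaging of standard linear algebra in exactly the form needed for Lemma \ref{fptens}.
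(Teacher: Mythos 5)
Your proof is correct, and it takes a genuinely different route from the paper's. The paper first establishes \eqref{kertens} in the finite-dimensional case by counting dimensions (rank--nullity applied to $\varphi$, $\varphi'$, and $\varphi''$, combined with the identity $(\ker\varphi\otimes W_1')\cap(W_1\otimes\ker\varphi')=\ker\varphi\otimes\ker\varphi'$), and then reduces the general case to the finite-dimensional one by restricting $\varphi,\varphi'$ to the finite-dimensional subspaces spanned by the tensor factors of any given element of $\ker(\varphi'')$. You instead choose complements $W_1=\ker\varphi\oplus U$, $W_1'=\ker\varphi'\oplus U'$, distribute the tensor product over the resulting four-fold direct sum, and read off both the image and the kernel at once from the fact that $\varphi''$ kills three summands and is injective on $U\otimes_\F U'$. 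Your argument handles arbitrary dimensions in one pass with no dimension bookkeeping and no separate reduction step, which is cleaner; the paper's dimension-count proof, on the other hand, is more mechanical and doesn't require invoking the existence of complements (though of course those exist over a field, which both proofs rely on anyway). You are also right to flag the point that $\IM(\varphi)\otimes_\F\IM(\varphi')$ really sits inside $W_2\otimes_\F W_2'$ as a subspace rather than just abstractly---this is where flatness over a field is used, and the paper takes the same thing for granted when it writes $\dimF(\IM(\varphi'')) = \dimF(\IM(\varphi)\otimes_\F\IM(\varphi'))$.
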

\begin{proof}
The description of the image, as well as the inclusion $``\supseteq"$ in \eqref{kertens} are clear. Let us prove the converse inclusion $``\subseteq"$ in \eqref{kertens}.
Assume first that $W_1$ and $W_1'$ are finite-dimensional over $\F$. In this case, the finite dimensions of the domains of the linear maps $\varphi,\varphi'$ and $\varphi''$
are the sums of the dimensions of the corresponding kernels and images. Thus,
$$\dim_{\F}(\ker(\varphi''))=\dimF(W_1\otimes_{\F}W_1')-\dimF(\IM(\varphi''))=\cdots$$
By the above description of the image of $\varphi''$ we put
$$\begin{array}{cl}
\cdots &=\dimF(W_1\otimes_{\F}W_1')-\dimF(\IM(\varphi)\otimes_{\F}\IM(\varphi'))\\
 &=\dimF(W_1)\cdot\dimF(W_1')-\dimF(\IM(\varphi))\cdot\dimF(\IM(\varphi'))\\
  &=\dimF(\ker(\varphi))\cdot\dimF(W_1')+\dimF(W_1)\cdot\dimF(\ker(\varphi'))-\dimF(\ker(\varphi))\cdot\dimF(\ker(\varphi'))\\
  &=\dimF(\ker(\varphi)\otimes_{\F}W_1')+\dimF(W_1\otimes_{\F}\ker(\varphi'))-\dimF(\ker(\varphi)\otimes_{\F}\ker(\varphi'))=\cdots
 \end{array}$$
Since $$(\ker(\varphi)\otimes_{\F}W_1')\cap (W_1\otimes_{\F}\ker(\varphi'))=\ker(\varphi)\otimes_{\F}\ker(\varphi')$$ we obtain
  $$\cdots=\dimF(\ker(\varphi)\otimes_{\F}W_1'+W_1\otimes_{\F}\ker(\varphi')).$$
An equality of the finite dimensions of the two sides in \eqref{kertens}, together with the inclusion $``\supseteq"$ proves \eqref{kertens} under the finite-dimensionality assumption on
the vector spaces $W_1$ and $W_1'$.
We relax this assumption and move to the general, perhaps infinite-dimensional, case. For any $\sum_{j=1}^mx_j\otimes_{\F}y_j\in\ker(\varphi''),$
let $\hat{\varphi}$ and $\hat{\varphi}'$ be the restrictions of the linear maps $\varphi$ and $\varphi'$ to the finite-dimensional subspaces
$$V:=\SpF\{x_1\cdots,x_m\}\subseteq W_1\text{ and }V':=\SpF\{y_1\cdots,y_m\}\subseteq W_1'$$ respectively.
Clearly, $\sum_{j=1}^mx_j\otimes_{\F}y_j\in\ker(\hat{\varphi}''),$ where $\hat{\varphi}''$ is the restriction of $\varphi''$ to the finite-dimensional subspace
$V\otimes_{\F}V'\subseteq W_1\otimes_{\F}W'_!$.
This reduction to the finite-dimensional setup yields
$$\begin{array}{cl}\sum_{j=1}^mx_j\otimes_{\F}y_j\in\ker(\hat{\varphi}'')&=\ker(\hat{\varphi})\otimes_{\F}V'+V\otimes_{\F}\ker(\hat{\varphi}')\\
 & \subseteq \ker(\varphi)\otimes_{\F}W'_1+W_1\otimes_{\F}\ker(\varphi')\end{array}$$
and we are done.
\end{proof}
Let $M$ and $M'$ be modules over $\F$-algebras $A$ and $A'$ respectively. Then $M\otimes_{\F}M'$ has an $A\otimes_{\F}A'$-module structure by
$$(a\otimes_{\F}a')(m\otimes_{\F}m'):=am\otimes_{\F}a'm',\ \ a\in A,a'\in A', m\in M, m'\in M'.$$
Clearly, if $M$ and $M'$ are free, then so is $M\otimes_{\F}M'$.
This fact, as well as the next property of the tensor product will be useful in the sequel.
Recall that an $A$-module $M$ is {\it finitely-presented} if it is an image $\varphi:A^n\twoheadrightarrow M$ of a finitely-generated free $A$-module $A^n$ such that
the $A$-submodule of relations $\ker(\varphi)\subseteq A^n$
is finitely-generated. Alternatively, $M$ admits a projective resolution $$\cdots \to P_1\to P_0\to M\to 0,$$ where $P_0,P_1$ are finitely-generated $A$-modules. We have
\begin{lemma}\label{fptens}
Let $M$ be an $A$-module and let $M'$ be an $A'$-module, for some $\F$-algebras $A$ and $A'$.
Suppose that both $M$ and $M'$ are finitely-generated, respectively finitely-presented.
Then their tensor product $M\otimes_{\F}M'$ is finitely-generated, respectively finitely-presented, over $A\otimes_{\F}A'$.
\end{lemma}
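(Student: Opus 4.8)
The plan is to reduce the statement to the finitely-generated case and then separately handle the additional relation condition needed for finite presentation. First I would treat finite generation: if $\varphi:A^n\twoheadrightarrow M$ and $\varphi':(A')^{n'}\twoheadrightarrow M'$ are surjections of $A$- (resp.\ $A'$-) modules, then by the description of the image in Lemma \ref{linalg} (applied to these two surjections, or simply because tensor product over $\F$ is right exact in each variable) the induced map
$$\varphi\otimes_{\F}\varphi':A^n\otimes_{\F}(A')^{n'}\to M\otimes_{\F}M'$$
is surjective. Since $A^n\otimes_{\F}(A')^{n'}\cong (A\otimes_{\F}A')^{nn'}$ as $A\otimes_{\F}A'$-modules (the free module of rank $nn'$), this exhibits $M\otimes_{\F}M'$ as a quotient of a finitely-generated free $A\otimes_{\F}A'$-module, proving the finitely-generated case.

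For the finitely-presented case I would use the projective-resolution characterization recalled just before the lemma. Pick presentations
$$A^{m}\xrightarrow{\psi} A^{n}\xrightarrow{\varphi} M\to 0,\qquad (A')^{m'}\xrightarrow{\psi'} (A')^{n'}\xrightarrow{\varphi'} M'\to 0$$
with all free modules finitely generated. Tensoring the first sequence over $\F$ with $A^{n'}$ and the second with... — more cleanly, I would tensor the two complexes and use that $-\otimes_{\F}-$ is exact in each variable (vector spaces are flat), so that tensoring preserves the exactness. Concretely, form the finitely-generated free $A\otimes_{\F}A'$-module $A^{n}\otimes_{\F}(A')^{n'}$ surjecting onto $M\otimes_{\F}M'$ via $\varphi\otimes_{\F}\varphi'$, and identify its kernel: by Lemma \ref{linalg}, formula \eqref{kertens},
$$\ker(\varphi\otimes_{\F}\varphi')=\ker(\varphi)\otimes_{\F}(A')^{n'}+A^{n}\otimes_{\F}\ker(\varphi').$$
Now $\ker(\varphi)$ is the image of $\psi$, hence finitely generated as an $A$-module, say by $r_1,\dots,r_m\in A^n$; likewise $\ker(\varphi')$ is generated by $s_1,\dots,s_{m'}\in (A')^{n'}$. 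Then the elements $r_i\otimes_{\F}f'_k$ (with $f'_k$ the standard basis of $(A')^{n'}$) together with the elements $f_j\otimes_{\F}s_l$ (with $f_j$ the standard basis of $A^n$) generate $\ker(\varphi\otimes_{\F}\varphi')$ as an $A\otimes_{\F}A'$-module, a finite set of generators. Hence $M\otimes_{\F}M'$ is finitely presented over $A\otimes_{\F}A'$.

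The only real subtlety — and the step I expect to be the main obstacle to write carefully — is the bookkeeping in applying \eqref{kertens}: one must make sure that $\ker(\varphi)\otimes_{\F}(A')^{n'}$ really is generated as an $A\otimes_{\F}A'$-module by $\{r_i\otimes_{\F}f'_k\}$, i.e.\ that $\F$-spanning of $\ker(\varphi)$ by the $A$-translates of the $r_i$ upgrades to $A\otimes_{\F}A'$-spanning of the tensor after adjoining the basis $f'_k$ of the second factor, and symmetrically for the other summand. This is straightforward since $A$-linear combinations $\sum a_i r_i$ tensored with $a' f'_k$ are exactly $(a_i\otimes_{\F}a')(r_i\otimes_{\F}f'_k)$, but it deserves one line of justification. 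Everything else — right-exactness of $\otimes_{\F}$, freeness of $A^n\otimes_{\F}(A')^{n'}$ over $A\otimes_{\F}A'$, and the image computation — is routine, with the kernel formula from Lemma \ref{linalg} doing the decisive work.
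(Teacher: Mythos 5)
Your proof is correct and follows essentially the same route as the paper: both establish finite generation by surjecting a finitely-generated free tensor product onto $M\otimes_{\F}M'$, and both handle finite presentation by invoking the kernel formula \eqref{kertens} from Lemma \ref{linalg} and then exhibiting a finite generating set for $\ker(\varphi\otimes_{\F}\varphi')$ built from generators of $\ker(\varphi)$, $\ker(\varphi')$, and the standard free-module bases. Your version is merely a bit more explicit than the paper about which tensors generate the kernel.
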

\begin{proof}
A set of generators for $M\otimes_{\F}M'$ can be taken as the tensor products of generators of $M$ and generators of $M'$.
Hence, if both $M$ and $M'$ are finitely-generated, then so is $M\otimes_{\F}M'$.
Suppose now that $M$ and $M'$ are finitely-presented. Let $\varphi:A^n\twoheadrightarrow M$ and $\varphi':(A')^m\twoheadrightarrow M'$ such that $\ker(\varphi)$ and $\ker(\varphi')$
are finitely-generated. Then $M\otimes_{\F}M'$ is an image of the finitely-generated free $A\otimes_{\F}A'$-module $A^n\otimes_{\F}(A')^m$ under
$$ \varphi'':\begin{array}{ccc}
A^n\otimes_{\F}(A')^m&\to &M\otimes_{\F}M'\\
x\otimes_{\F}x'&\mapsto &\varphi(x)\otimes_{\F}\varphi'(x')
\end{array}.$$
Concentrating on $ \varphi''$ as a linear map, we infer by \eqref{kertens}
$$\ker(\varphi'')\stackrel{}{=}\ker(\varphi)\otimes_{\F}(A')^m+A^n\otimes_{\F}\ker(\varphi').$$
Since $\ker(\varphi)$ and $A^n$ are finitely-generated over $A$, while $\ker(\varphi')$ and $(A')^m$ are finitely-generated over $A'$, then by appropriate tensor products
of their generators we can construct finitely many generators of
$\ker(\varphi'')$ over $A\otimes_{\F}A'$, and deduce that $M\otimes_{\F}M'$ is finitely-presented over $A\otimes_{\F}A'$.
\end{proof}
More properties of the tensor products are given in the following subsections.

\subsection{Hom-Tensor relations} \label{HTrsec}
Let $A_1$ and $A_2$ be two $\F$-algebras. For $i=1,2$ let $M_i$ and $M'_i$ be left $A_i$-modules.
Then as can easily be checked, the Hom-Tensor mapping
$$\Psi:\Hom_{A_1}(M_1,M'_1)\otimes_{\F}\Hom_{A_2}(M_2,M'_2)\to\Hom_{A_1\otimes_{\F}A_2}(M_1\otimes_{\F}M_2,M'_1\otimes_{\F}M'_2)$$
determined by
\begin{equation}\label{end-ten}
\Psi(f_1\otimes_{\F}f_2):m_1\otimes_{\F}m_2\mapsto f_1(m_1)\otimes_{\F}f_2(m_2),\ \ m_1\in M_1, m_2\in M_2
\end{equation}
is a well-defined $\F$-linear transformation, which preserves multiplication when $M_i=M'_i$ for $i=1,2$.
By \cite[2.4 (P. 14)]{DMI71} $\Psi$ is bijective when $M_1$ and $M_2$ are finitely-generated and projective.
The projectivity assumption on $M_1$ and $M_2$ can be relaxed when the algebras $A_1$ and $A_2$ are Noetherian. This observation follows from
\begin{theorem}\label{ungrHTr}
With the above notation, suppose that the left modules $M_1$ and $M_2$ are finitely-presented.
Then $\Psi$ is an $\F$-linear space isomorphism.
\end{theorem}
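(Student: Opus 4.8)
The plan is to reduce the statement to the classical finitely-generated-projective case already cited, by exploiting finite presentations of $M_1$ and $M_2$ and the exactness properties of $\Hom$ and $\otimes_{\F}$. First I would choose finite presentations
\[
A_1^{p_1}\xrightarrow{\ \delta_1\ }A_1^{q_1}\xrightarrow{\ \pi_1\ }M_1\to 0,\qquad
A_2^{p_2}\xrightarrow{\ \delta_2\ }A_2^{q_2}\xrightarrow{\ \pi_2\ }M_2\to 0,
\]
which exist by definition of finitely-presented. Tensoring the first over $\F$ with $M_2'$ and applying right-exactness of $-\otimes_{\F}M_2'$, and similarly tensoring $A_2^{\bullet}$ with $M_1'$, one gets exact sequences whose terms are of the form $(A_i^{n})\otimes_{\F}M'_j$, i.e. finite direct sums of copies of $M'_1$ or $M'_2$; a short diagram chase combining the two then produces a presentation of the $A_1\otimes_{\F}A_2$-module $M_1\otimes_{\F}M_2$ by finite free modules over $A_1\otimes_{\F}A_2$ (this is essentially the computation already carried out in the proof of Lemma~\ref{fptens}). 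The point of writing these down is that $A_i^{n}$ is finitely-generated free, hence finitely-generated projective, so the Hom–Tensor map $\Psi$ is an isomorphism for these modules by \cite[2.4 (P. 14)]{DMI71}.

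Next I would apply the contravariant functors $\Hom_{A_1}(-,M_1')$ and $\Hom_{A_2}(-,M_2')$ to the chosen presentations, obtaining left-exact sequences
\[
0\to\Hom_{A_1}(M_1,M_1')\to\Hom_{A_1}(A_1^{q_1},M_1')\to\Hom_{A_1}(A_1^{p_1},M_1'),
\]
and likewise for the index $2$; and $\Hom_{A_1\otimes_{\F}A_2}(-,M_1'\otimes_{\F}M_2')$ to the presentation of $M_1\otimes_{\F}M_2$. The naturality of $\Psi$ in all four module arguments gives a commutative ladder between the ``$1\otimes 2$'' Hom-complex and the tensor product over $\F$ of the two one-variable Hom-complexes. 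On the free pieces $A_i^{n}$ the vertical maps are the isomorphisms just invoked (using also that $\Psi$ is additive, so it is an isomorphism on finite direct sums of modules on which it is an isomorphism, and that $\Hom_{A_1}(A_1^{q_1},M_1')\otimes_{\F}\Hom_{A_2}(A_2^{q_2},M_2')\cong\Hom_{A_1\otimes_{\F}A_2}(A_1^{q_1}\otimes_{\F}A_2^{q_2},M_1'\otimes_{\F}M_2')$ is exactly the free case). The one remaining vertical arrow, over $M_1\otimes_{\F}M_2$ in the kernel position, is $\Psi$ itself; the five-lemma (in its ``left'' form for the kernels of two left-exact sequences) then forces it to be an isomorphism.

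The one genuine subtlety — and the step I expect to be the main obstacle — is that $\Hom$ does \emph{not} commute with the tensor product $\otimes_{\F}$ for infinite-dimensional modules, so the bottom row of the ladder is not literally ``$(\text{complex }1)\otimes_{\F}(\text{complex }2)$'' term by term; one must instead work with the genuine $\Hom_{A_1\otimes_{\F}A_2}$-complex of the chosen free presentation of $M_1\otimes_{\F}M_2$ and compare it, via $\Psi$, with the $\F$-tensor product of the two one-variable complexes, checking that the comparison map is an isomorphism precisely on the free terms $A_1^{n}\otimes_{\F}A_2^{m}$ (where it is the classical statement) and then propagating along the exact rows. Equivalently, and perhaps more cleanly, one can avoid the explicit presentation of $M_1\otimes_{\F}M_2$ altogether: first prove the case ``$M_1$ finitely-presented, $M_2$ finitely-generated projective'' by resolving only $M_1$ and using that $\Psi$ is an isomorphism when $M_1$ is free and $M_2$ is f.g.\ projective, applying the five-lemma once; then run the same argument in the second variable, now with $M_1$ arbitrary finitely-presented fixed, to pass from $M_2$ f.g.\ projective to $M_2$ finitely-presented. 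Either route reduces everything to the cited bijectivity on finitely-generated projectives plus one or two applications of the five-lemma, with no further computation needed.
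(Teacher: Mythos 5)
Your second, ``cleaner'' route is essentially the paper's own proof: resolve $M_1$ by finitely-generated projectives, apply the five-lemma once to handle the case where $M_2$ is finitely-generated projective, then observe that the intermediate vertical maps $\Psi_2,\Psi_3$ in the same ladder are covered by that partial result even for $M_2$ merely finitely-presented, and apply the five-lemma a second time. Your first route (building an explicit finite free $A_1\otimes_{\F}A_2$-presentation of $M_1\otimes_{\F}M_2$) also works, but the bookkeeping you correctly flag — matching the $\Hom_{A_3}$-complex of that presentation against the $\F$-tensor of the two one-variable complexes — is precisely what the paper avoids by tensoring the resolution $P_\bullet$ of $M_1$ with $M_2$ instead of resolving the tensor product itself.
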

\begin{proof}
The following construction is needed for the relaxation of the projectivity assumption. Let
\begin{equation}\label{resolve}\cdots \to P_1\to P_0\to M_1\to 0\end{equation}
be a projective resolution of $A_1$-modules. Then the sequence
$$0\to\Hom_{A_1}(M_1,M'_1)\to\Hom_{A_1}(P_0,M'_1)\to\Hom_{A_1}(P_1,M'_1)$$
is exact. Tensoring this exact sequence over $\F$ with $M_2'':=\Hom_{A_2}(M_2,M'_2)$, i.e.,
$$0\to\Hom_{A_1}(M_1,M'_1)\otimes_{\F}M_2''\to\Hom_{A_1}(P_0,M'_1)\otimes_{\F}M_2''\to\Hom_{A_1}(P_1,M'_1)\otimes_{\F}M_2''$$
preserves exactness.
On the other hand, if we first tensor \eqref{resolve} with $M_2$, and then apply $\Hom_{A_3}(-,M'_3)$,
where $A_3:=A_1\otimes_{\F}A_2$ and $M'_3:=M'_1\otimes_{\F}M'_2$, we obtain
the exact sequence
$$0\to\Hom_{A_3}(M_1\otimes_{\F}M_2,M'_3)\to\Hom_{A_3}(P_0\otimes_{\F}M_2,M'_3)\to\Hom_{A_3}(P_1\otimes_{\F}M_2,M'_3) .$$
Moreover, the diagram
$$\begin{array}{llll}
0\to &\Hom_{A_1}(M_1,M'_1)\otimes_{\F}M_2''&\to \Hom_{A_1}(P_0,M'_1)\otimes_{\F}M_2''&\to \Hom_{A_1}(P_1,M'_1)\otimes_{\F}M_2'' \\
 (*) &\ \ \ \ \ \ \ \ \ \Psi_1\downarrow  & \ \ \ \ \ \ \ \ \ \Psi_2\downarrow   & \ \ \ \ \ \ \ \ \ \Psi_3\downarrow\\
0\to &\Hom_{A_3}(M_1\otimes_{\F}M_2,M'_3)&\to\Hom_{A_3}(P_0\otimes_{\F}M_2,M'_3)&\to \Hom_{A_3}(P_1\otimes_{\F}M_2,M'_3)
  \end{array}$$
is commutative. Here, $\Psi_1,\Psi_2,$ and $\Psi_3$ are the corresponding Hom-Tensor mappings.

With the diagram $(\ast)$ in our disposal we first show that Theorem \ref{ungrHTr} holds supposing that at least one of the modules
$M_1$ or $M_2$ is projective. W.l.o.g. let $M_2$ be projective over $A_2$.
The finite presentation assumption on $M_1$ enables us to choose
the projective $A_1$-modules $P_0$ and $P_1$ in \eqref{resolve} to be finitely-generated. 
Since are $A_2$-module $M_2$ is also projective and finitely-generated, then again by \cite[2.4 (P. 14)]{DMI71}, $\Psi_2$ and $\Psi_3$ are bijective.
The Five Lemma (see \cite[\S VIII.4, Lemma 4]{ML71}) ascertains that so is $\Psi_1$, proving the theorem under the additional projectivity assumption on
either $M_1$ or $M_2$.
Consequently, $\Psi_2$ and $\Psi_3$ in the diagram ($\ast$) are bijective even without assuming that $M_2$ is projective (that is only by the
finite-generation of $P_0,P_1$ and $M_2$, as well as by the projectivity of $P_0$ and $P_1$).
Again, the Five Lemma takes care of the bijectivity of $\Psi_1$ under the weaker conditions of Theorem \ref{ungrHTr}.
\end{proof}
We deduce the following End-Tensor relation.
\begin{corollary}\label{ungrHTrcor}
Let $M_1$ and $M_2$ be finitely-presented left modules over $\F$-algebras $A_1$ and $A_2$ respectively.
Then the $\F$-algebras $\End_{A_1\otimes_{\F}A_2}(M_1\otimes_{\F}M_2)$ and
$\End_{A_1}(M_1)\otimes_{\F}\End_{A_2}(M_2)$
are isomorphic.
\end{corollary}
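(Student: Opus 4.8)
The plan is to obtain this statement as an immediate specialization of Theorem \ref{ungrHTr}. First I would set $M'_1 := M_1$ and $M'_2 := M_2$ in that theorem, so that the Hom-Tensor mapping $\Psi$ of \eqref{end-ten} specializes to
$$\Psi:\End_{A_1}(M_1)\otimes_{\F}\End_{A_2}(M_2)\to\End_{A_1\otimes_{\F}A_2}(M_1\otimes_{\F}M_2),$$
determined by sending $f_1\otimes_{\F}f_2$ to the endomorphism $m_1\otimes_{\F}m_2\mapsto f_1(m_1)\otimes_{\F}f_2(m_2)$. Since $M_1$ and $M_2$ are assumed finitely-presented over $A_1$ and $A_2$ respectively, Theorem \ref{ungrHTr} applies verbatim and asserts that $\Psi$ is an isomorphism of $\F$-vector spaces.

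It then remains to observe that $\Psi$ is compatible with the algebra structures, i.e. that it is a unital $\F$-algebra homomorphism and not merely $\F$-linear. This is exactly the remark recorded just after \eqref{end-ten}: evaluating on an arbitrary elementary tensor $m_1\otimes_{\F}m_2$ one checks that $\Psi(f_1\otimes_{\F}f_2)\circ\Psi(g_1\otimes_{\F}g_2)$ and $\Psi\big((f_1\circ g_1)\otimes_{\F}(f_2\circ g_2)\big)$ agree, and hence agree on all of $M_1\otimes_{\F}M_2$ by $\F$-bilinearity; moreover $\Psi(\Id_{M_1}\otimes_{\F}\Id_{M_2})$ is the identity endomorphism of $M_1\otimes_{\F}M_2$. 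Combining this with the bijectivity from the previous paragraph shows that $\Psi$ is an isomorphism of $\F$-algebras, which is precisely the claim.

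There is essentially no obstacle here: the only substantive ingredient — the bijectivity of $\Psi$ without any projectivity hypothesis on $M_1,M_2$ — has already been carried out in Theorem \ref{ungrHTr}, via the Five Lemma applied to the commutative diagram $(\ast)$. The only thing left to do for the corollary is the routine verification that $\Psi$ transports composition to composition and the identity to the identity, which is a one-line computation on elementary tensors.
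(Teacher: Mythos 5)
Your proof is correct and matches the paper's intent exactly: the paper gives no explicit proof, regarding the corollary as the specialization $M'_i = M_i$ of Theorem \ref{ungrHTr} together with the remark recorded right after \eqref{end-ten} that $\Psi$ preserves multiplication when $M_i = M'_i$.
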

In the next section we establish an End-Tensor isomorphism under another assumption.
\subsection{Absolutely irreducible modules}\label{absec}
By Schur's Lemma, if $M$ is an irreducible module over an $\F$-algebra $A$, then End$_{A}(M)$ is a division algebra containing the multiplications by scalars in $\F$.
The irreducible module $M$ is called \textit{absolutely simple} (or \textit{totally simple}) over $\F$ if these scalar multiplications are its only $A$-endomorphisms,
that is End$_{A}(M)=\F\cdot$Id$_M$.
We write
\begin{equation}\label{Irr}
\text{Irr}_{\F}(A):=\{[M]\in\text{Mod}(A)| M \text{ is absolutely irreducible over }\F\}.
\end{equation}
In case $\F$ is algebraically closed, then by the second part of Schur's Lemma if $M$ is simple and dim$_{\F}M<\infty$ then $[M]\in$Irr$_{\F}(A)$. This classical result can be generalized as follows.
\begin{theorem}\label{dixmier}(``Amitsur's trick", see \cite[Theorem 2.1.1, page 61]{CG97}, \cite[Theorem 2, page 20]{Jacobson64})
Let $D$ be a division algebra over a field $\F$.
Then for every $d\in D$, which is transcendental over $\F$, the set $$\Lambda_d:=\{(d-\lambda)^{-1}\}_{\lambda\in\F}\subset D$$ is linearly independent over $\F$ (in particular, if dim$_{\F}(D)<|\F|$
then $D$ is algebraic over $\F$). Consequently, if
$A$ is an algebra over an algebraically closed field $\F$ and $M$ is a simple $A$-module with dim$_{\F}(M)<|\F|$. Then $[M]\in\text{Irr}_{\F}(A)$.

\end{theorem}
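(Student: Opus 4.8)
\emph{Proof plan.} The plan is to prove the linear-independence assertion about $\Lambda_d$ first, obtain the parenthetical statement by a cardinality count, and then deduce the consequence for absolutely simple modules.

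First I would exploit the transcendence of $d$: the subalgebra $\F[d]\subseteq D$ is then isomorphic to a polynomial ring $\F[t]$, and, being a commutative domain inside the division ring $D$, it lies in a commutative subfield $\F(d)\subseteq D$ with $\F(d)\cong\F(t)$; in particular every $(d-\lambda)^{-1}$ belongs to $\F(d)$. Given a putative nontrivial relation $\sum_{i=1}^{n}c_i(d-\lambda_i)^{-1}=0$ with $c_i\in\F$ and the $\lambda_i$ pairwise distinct, I would multiply through by $\prod_{j}(d-\lambda_j)$ — legitimate since these factors commute — to land, via $\F[d]\cong\F[t]$, at the polynomial identity $\sum_i c_i\prod_{j\ne i}(t-\lambda_j)=0$. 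Evaluating at $t=\lambda_k$ annihilates all summands but the $k$-th and forces $c_k\prod_{j\ne k}(\lambda_k-\lambda_j)=0$; since the $\lambda_j$ are distinct this product is a nonzero scalar, so $c_k=0$ for every $k$, and hence $\Lambda_d$ is $\F$-linearly independent.

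For the parenthetical remark I would note that $\lambda\mapsto(d-\lambda)^{-1}$ is injective, so $|\Lambda_d|=|\F|$; an $\F$-independent subset of that size cannot sit inside $D$ once $\dim_{\F}(D)<|\F|$, so under that hypothesis $D$ has no transcendental elements and is therefore algebraic over $\F$.

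Finally, for the consequence: by Schur's Lemma $D:=\End_A(M)$ is a division $\F$-algebra, and I would bound its dimension using the fact that a simple module is cyclic, $M=Am_0$ for any nonzero $m_0$: the evaluation map $\varphi\mapsto\varphi(m_0)$ is $\F$-linear and injective (since $\varphi(m_0)=0$ forces $\varphi(am_0)=a\varphi(m_0)=0$ for all $a\in A$, i.e. $\varphi=0$ on $Am_0=M$), so $\dim_{\F}(D)\le\dim_{\F}(M)<|\F|$. By the first part $D$ is then algebraic over $\F$, hence for each $d\in D$ the field $\F[d]$ is a finite extension of the algebraically closed field $\F$ and so equals $\F$; thus $D=\F\cdot\Id_M$, i.e. $[M]\in\text{Irr}_{\F}(A)$. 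There is no serious obstacle here — the argument is short and elementary — the only two spots deserving a careful line are checking that all the $(d-\lambda)^{-1}$ genuinely lie in a single commutative subfield (so that clearing denominators is valid) and the dimension bound $\dim_{\F}\End_A(M)\le\dim_{\F}M$, which rests only on cyclicity of $M$ and uses no finite-dimensionality hypothesis.
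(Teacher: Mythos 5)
Your proof is correct, but it follows a genuinely different route from the paper's in both halves, and both differences are worth noting. For the linear-independence assertion, you clear denominators and then evaluate the resulting polynomial identity at each $\lambda_k$, which kills every term except the $k$-th and shows $c_k=0$ directly; the paper instead takes a minimal vanishing combination, shows by a separate manipulation that $\sum_i\alpha_i\neq 0$, and then clears denominators to produce a degree-$(n-1)$ polynomial with nonzero leading coefficient annihilated by $d$. Your partial-fractions evaluation is cleaner since it avoids the minimality bookkeeping and the need to identify the leading coefficient. (Your one point of care — that the $(d-\lambda)^{-1}$ all sit inside the commutative subfield $\F(d)\subseteq D$ so that clearing denominators is legal — is indeed what makes this work, and you address it explicitly.) For the consequence, you first establish the clean bound $\dim_{\F}\End_A(M)\le\dim_{\F}M$ via the injective evaluation map $\varphi\mapsto\varphi(m_0)$ on the cyclic module $M$, and then invoke the parenthetical statement to conclude $D$ is algebraic, hence equals $\F$. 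The paper never isolates this dimension bound; it instead applies the same ``Amitsur trick'' directly to the vectors $(\varphi-\lambda\cdot\Id_M)^{-1}(m)\in M$ and uses the bound $\dim_{\F}M<|\F|$ to force a relation in $D$. Your route is a bit more modular — the parenthetical claim becomes a genuine lemma that is then applied — while the paper's is more self-contained in that it never needs to compare $\dim D$ with $\dim M$. Both are valid and essentially elementary; the dimension bound $\dim_\F\End_A(M)\le\dim_\F M$ you prove is a standard and reusable observation that the paper's presentation leaves implicit.
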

\begin{proof}
Suppose by negation that $\Lambda_d$ is not linearly independent, and let
\begin{equation}\label{minimal}
\sum_{i=1}^n\alpha_i(d-\lambda_i)^{-1}=0, \ \ \ n\geq 1,\ \alpha_i\in \F\setminus\{0\}
\end{equation}
be a minimal non-trivial vanishing linear combination over $\F$. Then in particular
$$\begin{aligned}
\sum_{i=1}^{n-1}\alpha_i(\lambda_i-\lambda_n)(d-\lambda_i)^{-1}+\sum_{i=1}^n\alpha_i&=\sum_{i=1}^n\alpha_i(\lambda_i-\lambda_n+d-\lambda_i)(d-\lambda_i)^{-1}\\
 &=(d-\lambda_n)\sum_{i=1}^n\alpha_i(d-\lambda_i)^{-1}=0.
\end{aligned}$$
By the minimality of $n$ we deduce that $\sum_{i=1}^n\alpha_i\neq 0.$
Next, multiplying the vanishing combination \eqref{minimal} by the product $\prod_{j=1}^n(d-\lambda_j)$ yields
$$0=\prod_{j=1}^n(d-\lambda_j)\sum_{i=1}^n\alpha_i(d-\lambda_i)^{-1}=\sum_{i=1}^n\alpha_i\prod_{j\neq i}(d-\lambda_j)=\sum_{j=0}^{n-1}b_jd^{j}$$
for some $b_0,\cdots,b_{n-1}\in \F$, such that $b_{n-1}=\sum_{i=1}^n\alpha_i$ is non-zero.
Thus, $d$ annihilates a polynomial of degree $n-1$ over $\F$ (with leading coefficient $\sum_{i=1}^n\alpha_i\neq 0)$ contradicting the transcendency of $d$ over $\F$.
This proves that $\Lambda_d$ is linearly independent over $\F$.

To prove the second claim put $D:=$End$_A(M)$, which is indeed a division algebra by the simplicity of $M$. Denote its identity by $\Id_M$.
We show that any $\varphi\in D$ is algebraic over $\F$. Indeed, let $m\in M\setminus \{0\}$.
The demand dim$_{\F}(M)<|\F|$ yields that the subset $$\{(\varphi-\lambda\cdot\Id_M)^{-1}(m)\}_{\lambda\in \F}\subset M$$ cannot be linearly independent over $\F$.
Hence, there is a non-trivial vanishing $\F$-linear combination
\begin{equation}\label{ntvs}
0=\sum_{i=1}^{n}\alpha_i(\varphi-\lambda_i\cdot\Id_M)^{-1}(m),\ \ \ n\geq 1,\ \alpha_i\in \F\setminus\{0\},\ \lambda_i\in \F.
\end{equation}
By \eqref{ntvs} we deduce that
\begin{equation}\label{elae}
\sum_{i=1}^{n}\alpha_i(\varphi-\lambda_i\cdot\Id_M)^{-1}=0,
\end{equation} or else this element is invertible in $D$, a contradiction to $m$ being non-zero.
Equation \eqref{elae} is a non-trivial linear combination in $D$.
By the first part of the theorem $\varphi\in D=$End$_A(M)$ is algebraic over $\F$. Since $\F$ is algebraically closed then any  $\varphi\in D$
is in fact in $\F=\F\cdot$Id$_M,$ proving that $M$ is absolutely simple
over $\F$.
\end{proof}
Here is a nice End-Tensor property (see \S\ref{HTrsec}) of absolutely simple modules.
\begin{theorem}\label{tensor}
Let $[M]\in\text{Irr}_{\F}(A)$ and $[M']\in\text{Mod}(A')$ then \begin{enumerate}
\item End$_{A\otimes_{\F}A'}(M\otimes_{\F}M')\cong\End_{A}(M)\otimes_{\F}\End_{A'}(M')\cong$End$_{A'}(M')$.
\item For every $A\otimes_{\F}A'$-submodule $N''\subseteq M\otimes_{\F}M'$ there exists an $A'$-submodule $N'\subseteq M'$ such that $N''=M\otimes_{\F}N'$.
In particular, if $M'$ is simple then so is $M\otimes_{\F}M'$.
\end{enumerate}
Consequently, if both $M$ and $M'$ are $\F$-absolutely simple then so is the ${A\otimes_{\F}A'}$-module $M\otimes_{\F}M'$.\end{theorem}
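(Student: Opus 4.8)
The plan is to prove the two numbered assertions, from which the final ``Consequently'' clause is immediate (if $M$ is also $\F$-absolutely simple then $\End_{A'}(M') = \F\cdot\Id_{M'}$, so by part (1) $\End_{A\otimes_\F A'}(M\otimes_\F M') \cong \F$, and by part (2) $M\otimes_\F M'$ is simple, hence $[M\otimes_\F M']\in\mathrm{Irr}_\F(A\otimes_\F A')$). So the real content is parts (1) and (2), and I would in fact derive (1) from (2).

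First I would prove (2). Let $N''\subseteq M\otimes_\F M'$ be an $A\otimes_\F A'$-submodule. Fix an $\F$-basis $\{e_i\}_{i\in I}$ of $M$, so every element of $M\otimes_\F M'$ is uniquely $\sum_i e_i\otimes m'_i$ with almost all $m'_i=0$. Set $N' := \{m'\in M' : e_{i_0}\otimes m' \in N''\}$ for a fixed $i_0$; more robustly, define $N'$ as the $\F$-span of all ``coordinates'' appearing in elements of $N''$, i.e. the set of $m'\in M'$ such that $m\otimes m'\in N''$ for some nonzero $m\in M$, and check this is an $A'$-submodule (it is $A'$-stable since $(1\otimes a')(m\otimes m') = m\otimes a'm'$, and it is an additive subgroup by a basis-coordinate argument using that $N''$ is a subgroup). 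The key point where absolute simplicity of $M$ enters: for $m\in M$ nonzero, the map $\End_\F(M)\ni \varphi \mapsto$ the induced endomorphism shows that $A\cdot m = M$ (simplicity), so acting by elements $a\otimes 1$ with $a\in A$ on $m\otimes m'\in N''$ one sweeps out all of $M\otimes \F m' \subseteq N''$; combined with additivity this forces $N'' \supseteq M\otimes_\F N'$, and the reverse inclusion is built into the definition of $N'$. To pin down that one does not get ``more'' — i.e. that the coordinate module is genuinely independent of which slot $e_i$ one reads off — is exactly where I expect to lean on $\End_A(M)=\F\cdot\Id_M$ rather than mere simplicity: absolute simplicity guarantees $M\otimes_\F M'$ has no ``diagonal'' submodules twisting slots against each other. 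The ``in particular'' statement is then immediate: if $M'$ is simple, $N'\in\{0,M'\}$, so $N''\in\{0,M\otimes_\F M'\}$.

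Next, (1). The middle isomorphism $\End_A(M)\otimes_\F\End_{A'}(M') \cong \End_{A'}(M')$ is just $\F\otimes_\F\End_{A'}(M')\cong\End_{A'}(M')$ since $\End_A(M)=\F\cdot\Id_M$. For the outer isomorphism I would use the Hom--Tensor map $\Psi$ of \S\ref{HTrsec}, specialized to $M_1=M_1'=M$, $M_2=M_2'=M'$: it gives an algebra homomorphism $\End_A(M)\otimes_\F\End_{A'}(M')\to\End_{A\otimes_\F A'}(M\otimes_\F M')$. Injectivity is easy (a pure-tensor argument, or: $\Psi$ is always injective when $M_1$ is nonzero and free over $\F$ — which $M$ is as an $\F$-vector space — by evaluating on basis vectors). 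Surjectivity is the crux and is where part (2) is used: given $\Phi\in\End_{A\otimes_\F A'}(M\otimes_\F M')$, I would show its graph, or more precisely a suitable submodule of $(M\otimes_\F M')\oplus(M\otimes_\F M') \cong M\otimes_\F(M'\oplus M')$, has the form $M\otimes_\F N'$ by part (2) applied with $M'\rightsquigarrow M'\oplus M'$; unwinding, $N'\subseteq M'\oplus M'$ is the graph of an $A'$-endomorphism $\psi$ of $M'$, and then $\Phi = \Psi(\Id_M\otimes\psi)$. This is the standard ``an endomorphism of $M\otimes M'$ that commutes with $A\otimes 1$ and $1\otimes A'$ must be $\Id\otimes(\text{something})$'' argument, made rigorous through (2). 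Alternatively, one could invoke Theorem~\ref{ungrHTr} when $M,M'$ happen to be finitely presented, but the virtue here is that absolute simplicity lets us avoid any finiteness hypothesis.

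The main obstacle I anticipate is the bookkeeping in (2) in the infinite-dimensional case: making the ``read off coordinates in a basis of $M$'' argument watertight when $M$ is not finite-dimensional, and in particular verifying that the candidate $N'$ is well-defined (basis-independent) and $A'$-stable. This is precisely the step that genuinely requires $\End_A(M)=\F\cdot\Id_M$ and not just simplicity of $M$ — without it, $M$ could be simple over a noncommutative division ring $D=\End_A(M)$, and then $M\otimes_\F M'$ acquires submodules parametrized by $D$-subspace data that are not of the product form. Everything else — the two easy isomorphisms in (1), the injectivity of $\Psi$, and the final ``Consequently'' — is routine once (2) is in hand.
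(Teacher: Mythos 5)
Your overall plan is sound, and the $(2)\Rightarrow(1)$ reduction via the graph in $M\otimes_\F(M'\oplus M')$ does work (the key point being that if $(0,v)\in N'$ then $\Phi(m\otimes 0)=m\otimes v=0$ forces $v=0$, so $N'$ really is the graph of an $A'$-endomorphism). However, your route differs from the paper's in two ways. First, the paper proves (1) \emph{directly} and does not route it through (2) at all: it fixes a nonzero $m\in M$, expands $\phi''(m\otimes m')=\sum m_i\otimes m_i'$ in a basis containing $m$, and then invokes the Jacobson Density Theorem to produce $x\in A$ with $xm_1=m_1$, $xm_i=0$ for $i\geq2$, which yields $\phi''(m\otimes m')=m\otimes m_1'$ and hence $\phi''=\Psi(\Id_M\otimes\phi_m)$. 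Second, the paper explicitly postpones (2) and proves it later as a special case of the graded statement, Theorem~\ref{grtensor}, again by a density argument.

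The genuine gap in your sketch is precisely the step you flag as the ``main obstacle'': you never supply the tool that makes the ``read off coordinates in a basis'' argument watertight. What you need (and what the paper uses throughout) is Jacobson's Density Theorem: if $n''=\sum_{i=1}^{k}e_i\otimes m_i'\in N''$ with $e_1,\dots,e_k$ linearly independent over $\F=\End_A(M)$, then density yields $x_j\in A$ with $x_je_i=\delta_{ij}e_1$, so $(x_j\otimes1)\,n''=e_1\otimes m_j'\in N''$ and hence $m_j'\in N'$. This is exactly where $\End_A(M)=\F\cdot\Id_M$, and not mere simplicity, is consumed: $\F$-linear independence is independence over the commutant, which is what density requires. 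Without stating this, neither the $A'$-stability and basis-independence of $N'$ nor the containment $N''\subseteq M\otimes_\F N'$ is established, and your ``basis-coordinate argument'' remains a gesture. Once you invoke density at this spot, your proof closes; but note that at that point you are doing essentially the same work the paper does directly in its proof of (1), so the detour through $M'\oplus M'$ gains you no economy.
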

\begin{proof}
To prove (1) it is enough to show that the End-Tensor mapping \eqref{end-ten}
is surjective when $M$ is $\F$-absolutely simple (injectivity under this condition is clear).
Let $\phi''\in\text{End}_{A\otimes_{\F}A'}(M\otimes_{\F}M')$.
We claim that for every $0\neq m\in M$ there exists an $A'$-endomorphism $\phi_m:M'\to M'$ such that
\begin{equation}\label{suchthat}
\phi''(m\otimes_{\F}m')=m\otimes_{\F}\phi_m(m'),\ \ \forall m'\in M'.
\end{equation}
Indeed, choose an $\F$-basis $\mathcal{B}$ of $M$ which contains $m$ and write
$$\phi''(m\otimes_{\F}m')=\sum_{i=1}^nm_i\otimes_{\F}m'_i,\ \ \ m_1=m,m_2,\cdots,m_n\in\mathcal{B}.$$
By Jacobson's Density Theorem \cite{Jac}, since $m_1,\cdots,
m_n\in M$ are linearly independent over $\F=$End$_A(M)$, there
exists $x\in A$ such that $xm_1=m_1$ and $xm_i=0$ for every
$i=2,\cdots, n$. We obtain
\begin{eqnarray}\label{rule}\begin{array}{ll}
\phi''(m\otimes_{\F}m')&=\phi''((x\otimes_{\F}1)(m\otimes_{\F}m'))=(x\otimes_{\F}1)\phi''(m\otimes_{\F}m')\\
&=(x\otimes_{\F}1)(\sum_{i=1}^nm_i\otimes_{\F}m'_i)=m\otimes_{\F}m'_1,
\end{array}\end{eqnarray}
and the rule $m'\mapsto m'_1$ in \eqref{rule}
determines a well-defined map $\phi_m:M'\to M'$ satisfying \eqref{suchthat}. Furthermore, it is not hard to verify that $\phi_m\in$End$_{A'}(M')$.
We now claim that $\Psi(\text{Id}_{M}\otimes_{\F}\phi_m)=\phi''$,
providing a pre-image under the End-Tensor mapping $\Psi$ \eqref{end-ten} for every
$\phi''\in\text{End}_{A\otimes_{\F}A'}(M\otimes_{\F}M')$ (in particular, $\phi_m$ is independent of the choice of $0\neq m\in M$).
Again by the density theorem, for any $\hat{m}\in M$ there
exists $\hat{x}\in A$ such that $\hat{x}m=\hat{m}$. Thus, for every $\hat{m}\otimes_{\F}m'\in M\otimes_{\F}M'$
$$\begin{array}{ll}
\Psi(\text{Id}_{M}\otimes_{\F}\phi_m)(\hat{m}\otimes_{\F}m')&=\hat{m}\otimes_{\F}\phi_{m}(m')=\hat{x}m\otimes_{\F}\phi_{m}(m')\\
&=(\hat{x}\otimes_{\F}1)(m\otimes_{\F}\phi_m(m'))=\cdots
\end{array}$$
By \eqref{suchthat}
$$\cdots=(\hat{x}\otimes_{\F}1)\phi''(m\otimes_{\F}m')=
\phi''(\hat{x}m\otimes_{\F}m')=\phi''(\hat{m}\otimes_{\F}m'),$$
establishing (1).
The proof of (2) is postponed at the moment and will be given in a
more general setup of graded products, see Theorem \ref{grtensor}.
\end{proof}
The following straightforward result is recorded for a later use.
\begin{lemma}\label{phiMstable}
Let $B\subseteq A$ be two $\F$-algebras, let $W$ be an irreducible $A$-module lying above a $B$-module $M$ such that $[M]\in$Irr$_{\F}(B)$, and let $\phi\in$End$_A(W)$
such that $\phi(M)\subseteq M$.
Then $\phi=\lambda\cdot$Id$_{W}$ for some $\lambda\in\F$.
In particular, $A$-modules which extend an $\F$-absolutely simple $B$-module are themselves absolutely simple over $\F$.
\end{lemma}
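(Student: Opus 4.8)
The plan is to extract the scalar $\lambda$ by restricting $\phi$ to $M$ and invoking absolute simplicity there, then to propagate this scalar to all of $W$ using irreducibility of $W$. First I would note that the hypothesis $\phi(M)\subseteq M$ means the restriction $\phi|_M$ is a $B$-module endomorphism of $M$ (it is certainly $\F$-linear, and it commutes with the $B$-action since $\phi$ commutes with the $A$-action and $B\subseteq A$). Since $[M]\in\mathrm{Irr}_{\F}(B)$, by definition $\mathrm{End}_B(M)=\F\cdot\mathrm{Id}_M$, so there exists $\lambda\in\F$ with $\phi|_M=\lambda\cdot\mathrm{Id}_M$, i.e. $\phi(m)=\lambda m$ for all $m\in M$.

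Next I would push this to all of $W$. Consider $\psi:=\phi-\lambda\cdot\mathrm{Id}_W\in\mathrm{End}_A(W)$; this is an $A$-endomorphism of $W$ that vanishes on $M$, so $M\subseteq\ker(\psi)$. Now $\ker(\psi)$ is an $A$-submodule of $W$. Since $W$ lies above $M$, the submodule $M$ is nonzero (absolutely simple modules are in particular nonzero), hence $\ker(\psi)\neq 0$. But $W$ is irreducible, so its only submodules are $0$ and $W$; therefore $\ker(\psi)=W$, i.e. $\psi=0$ and $\phi=\lambda\cdot\mathrm{Id}_W$. This proves the first assertion.

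For the final sentence, I would specialize to the case where $W$ actually \emph{extends} $M$, i.e. $W|_B=M$ as $B$-modules. Then automatically $\phi(M)=\phi(W|_B)\subseteq W|_B=M$ for \emph{every} $\phi\in\mathrm{End}_A(W)$, so the first part applies and gives $\mathrm{End}_A(W)=\F\cdot\mathrm{Id}_W$; moreover $W\neq 0$ since $M\neq 0$. By definition $W$ is then $\F$-absolutely simple over $A$ — provided we also know $W$ is irreducible, which is part of the hypothesis here. (If one wanted to drop the irreducibility hypothesis on $W$ in this corollary, one would first observe that any $A$-submodule $W'\subseteq W$ restricts to a $B$-submodule of $M$, which is $0$ or $M$ by simplicity of $M$; if $W'|_B=M$ then $W'=W$ by a dimension/generation count, forcing $W$ irreducible — but as stated the irreducibility of $W$ is assumed, so this is not needed.)

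There is no serious obstacle: the only point requiring a moment's care is the observation that $\phi|_M$ genuinely lands in $\mathrm{End}_B(M)$ rather than merely in $\mathrm{End}_{\F}(M)$, which is immediate once one records that $B$-linearity of $\phi|_M$ follows from $A$-linearity of $\phi$ together with the inclusion $B\subseteq A$. The passage from $M$ to $W$ is a one-line application of irreducibility of $W$ to the kernel of $\phi-\lambda\cdot\mathrm{Id}_W$.
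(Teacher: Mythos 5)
Your proof is correct and follows essentially the same approach as the paper's: restrict $\phi$ to $M$ and use absolute simplicity of $M$ to extract the scalar $\lambda$, then propagate it to all of $W$ using irreducibility of $W$. The only cosmetic difference is that the paper propagates $\lambda$ by noting $W$ is generated over $A$ by $M$, whereas you apply irreducibility to $\ker(\phi-\lambda\cdot\mathrm{Id}_W)$ — two phrasings of the same step.
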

\begin{proof}
Since $M$ is $\F$-absolutely simple, then the restriction $\phi|_{M}$, which is a well-defined endomorphism of $M$ as a $B$-module, is a multiplication by some $\lambda\in\F$.
The $A$-module $W$ is irreducible, hence it is generated by the elements of $M$ over $A$.
Therefore, $\phi$ acts on $W$ as a multiplication by $\lambda$.
\end{proof}
\section{Graded algebras, graded modules and graded morphisms}
Before beginning the chapter, a few words about the connection, mentioned in the introduction, between group graded algebras and group bundles.
It should be pointed out that since we work over arbitrary fields $\F$, there are no convergence demands on group graded $\F$-algebras, even if $\F$ happens to be normed.
The readers are referred to \cite[\S 2.7]{GS16} for the different terminologies used by various authors dealing with discrete group graded algebras, and
are invited to compare these with the corresponding terminologies for Fell Bundles in \cite{FellDoran1,FellDoran2}. 
\subsection{Families of group graded algebras}\label{sgcpsect}

The notion of group-graded rings goes back to the work of E. Dade \cite{Dade70,D80}, and earlier, for abelian groups, to that of M. Knus \cite{Knus69}.
A {\it grading} of an $\F$-linear space $A_G$ by a group $G$ is a homogeneous vector space decomposition \eqref{eq:algebragrading}.
A grading of an $\F$-algebra $A_G$ by a group $G$ is an $\F$-space grading \eqref{eq:algebragrading} which satisfies
\begin{equation}\label{gradsubset}
A_gA_h\subseteq A_{gh}
\end{equation}
for every $g,h\in G$.
The summand $A_e$, which corresponds to the trivial element $e\in G$, contains the identity element 1 of $A_G$, and is itself an $\F$-subalgebra of $A_G$, called the {\it base algebra}
(or the {\it unit fiber algebra}).
Equation \eqref{eq:algebragrading} describes an $A_e$-bimodule decomposition of $A_G$.
The {\it support} of a graded algebra \eqref{eq:algebragrading} is
\begin{equation}\label{supp}
\text{Supp}_G(A_G)=\{g\in G|\text{dim}_{\F}(A_g)\geq 1\}.\end{equation}

For $g\in G$, $A_g$ is the {\it $g$-homogeneous component} (or the {\it $g$-fiber}) of the $G$-graded algebra \eqref{eq:algebragrading}.
It determines a two-sided ideal
\begin{equation}\label{Ig}
\mathcal{I}_g:=A_{g^{-1}}A_{g}\subseteq A_e
\end{equation} of the base algebra $A_e$.
We regard this component as \textit{strong} if \eqref{Ig} is the entire base algebra $A_e$.
In this case, there exist homogeneous elements
$\{a_j^{(g)}\}_j\subset A_{g^{-1}},\{b_j^{(g)}\}_j\subset A_{g}$ (non-uniquely determined) admitting a \textit{homogeneous unit decomposition}
\begin{equation}\label{unitdecomp}
1=\sum_j a_j^{(g)}b_j^{(g)}.
\end{equation}
It is not hard to verify that
\begin{equation}\label{strcomp}
\mathcal{G}_{\text{str}}:=\{g\in G |\ \  A_g\text{ is strong}\}
\end{equation} is a submonoid of $G$ (but not necessarily a subgroup).
A graded algebra \eqref{eq:algebragrading} is said to be {\it
strongly graded} (or {\it saturated}) if for every $g,h\in G$
there is an equality in \eqref{gradsubset}. The algebra
\eqref{eq:algebragrading} is strongly graded if and only if
$\mathcal{G}_{\text{str}}=G$ \cite[Proposition 1.1.1]{NVO04}.

If the ideal \eqref{Ig} is unital, and for all $h\in G$ the equalities
$$A_hA_g=A_{hg}A_{g^{-1}}A_g=A_hA_{h^{-1}}A_{hg}$$ hold, the homogeneous component $A_g$ can be regarded as \textit{epsilon-strong} in the spirit of \cite{NOP18}.

When a homogeneous component $A_g$ admits an invertible element, say $u_g$,
then clearly $$A_g=u_gA_e=A_eu_g.$$
We call such a special instance of a strong component an {\it invertible} component. It is easily verified that
$$\mathcal{G}_{\text{inv}}:=\{g\in G |\ \  A_g\text{ is invertible}\}$$ is a subgroup of $G$.
If all the homogeneous components of \eqref{eq:algebragrading} are invertible, then
$A_G$ is called a (discrete) {\it crossed product}. It is, in particular, strongly graded.
The broad notion in the theory of Fell bundles is {\it homogeneity}, which might cause ambiguity and will not be used here.
The $\mathcal{G}_{\text{inv}}$-graded subalgebra of any $G$-graded algebra is then a crossed product.
A crossed product supported by $G$ is usually written as
\begin{equation}\label{crosp}
A_e*G=\oplus_{g\in G}A_eu_g=\oplus_{g\in G}u_gA_e,
\end{equation}
where $u_g$ is an invertible element in $A_g$.

Let \eqref{crosp} be a crossed product. Then any $g\in G$ gives rise to the conjugation automorphism $r\mapsto\iota_{u_g}(r):=u_g ru_g^{-1}$ of the base algebra $A_e$ by $u_g$.
The map
\begin{eqnarray}\label{preouter}
\begin{array}{rcl}
G&\to & \text{Aut}(A_e)\\
g& \mapsto &\iota_{u_g}
\end{array},\ \ g\in G
\end{eqnarray}
is not necessarily a group homomorphism. However, modulo the inner automorphisms of the base algebra $A_e$, we do get a group homomorphism
\begin{eqnarray}\label{outer}
\begin{array}{ccc}
G&\to & \text{Out}(A_e)=\bigslant{\text{Aut}(A_e)}{\text{Inn}(A_e)}\\
g& \mapsto &\iota_{u_g}\cdot\text{Inn}(A_e)
\end{array},\ \ g\in G,
\end{eqnarray}
which is regarded as an \textit{outer action} of $G$ on $A_e$ (see \cite[\S 2]{AGdR}).
A choice of invertible elements $\{u_g\}_{g\in G}$ give rise to a \textit{twisting}
\begin{eqnarray}\label{fdef}
\begin{array}{rcl}
f:G\times G & \to & A_e^*\\
(g,h) & \mapsto & u_gu_hu_{gh}^{-1}.
\end{array}
\end{eqnarray}
Developing the associativity condition $(u_gu_h)u_k=u_g(u_hu_k)$ we obtain
\begin{equation}\label{assoc}
f(g,h)f(gh,k)=u_gf(h,k)u_g^{-1}f(g,hk)=\iota_{u_g}(f(h,k))f(g,hk),
\end{equation}
for every $g,h,k\in G$.
The invertible element $u_e$ in the base algebra is usually chosen to be $1\in A_e*G$, thus one may assume the \textit{normalization} condition $$f(e,-)=f(-,e)=1.$$
Note that choosing any other invertible homogeneous element $u'_g\in A_{g}$ is the same as multiplying $u_g$ by an element in $A_e^*$. This implies that the corresponding $A_e$-automorphisms
$\iota_{u'_g}$ and $\iota_{u_g}$ differ by an inner $A_e$-automorphism.
Consequently, a choice $\{u'_g\}_{g\in G}$ of invertible homogeneous element gives rise to the same outer action \eqref{outer} of $G$ on $A_e$.
\begin{definition}\label{skewdef}(compare with {\it semidirect product bundles} \cite[VIII.4.2]{FellDoran2})
A crossed product \eqref{crosp} is said to be a {\it skew group algebra} if it admits a set $\{u_g\}_{g\in G}$
of homogeneous invertible elements such that the corresponding twisting \eqref{fdef} is identically 1, that is if $u_gu_h=u_{gh}$ for every $g,h\in G$.
\end{definition}

Another special family of crossed products $\F$-algebras \eqref{crosp}, which is often used here, arises when the base algebra $A_e$ is exactly $\F$
(and so the corresponding outer action \eqref{outer} is trivial).
This is the same as to say that all the homogeneous components of this crossed product $\F$-algebra are 1-dimensional over $\F$.
\begin{definition}\label{deftga}(compare with the definition of {\it cocycle bundles} \cite[VIII.4.8]{FellDoran2})
A crossed product $\F$-algebra is called a {\it twisted group $\F$-algebra} if its base algebra is 1-dimensional over $\F$.
\end{definition}
In this special case, equation \eqref{assoc} is the
\textit{2-cocycle condition} of the group $G$ with coefficients in the trivial $G$-module $\F^*$.
More explicitly, for every 2-cocycle $\alpha\in Z^2(G,\F^*)$, where $G$ acts trivially on $\F^*$, let
\begin{equation}\label{tga}
\F^{\alpha} G:=\oplus_{g\in G}\text{Span}_{\F}\{v_g\}
\end{equation}
be the corresponding twisted group algebra, with multiplication given by
$$v_g\cdot v_h=\alpha(g,h)v_{gh}$$ and endowed with the natural $G$-grading, which we denote by {\it twisted grading}.
As above, any non-zero homogeneous element in $\text{Span}_{\F}\{v_g\}$ is of the form $\lambda_gv_g$ with $\lambda_g\in \F^*$.
Thus, another choice $\{v'_g\}_{g\in G}$ of homogeneous $\F$-basis of $\F^{\alpha} G$ yields a corresponding 2-cocycle
$$\alpha'(g,h)=v'_gv'_h{v'}_{gh}^{-1}=\lambda_gv_g\lambda_hv_hv_{gh}^{-1}\lambda^{-1}_{gh}=\lambda_g\lambda_h\lambda^{-1}_{gh}\alpha{(g,h)}.$$
Then the cocycles $\alpha'{(g,h)}$ and $\alpha(g,h)$ differ by a {\it 2-coboundary} (or a {\it cohomologically trivial} cocycle)
\begin{equation}\label{coboundary}
(g,h)\mapsto \lambda_g\lambda_h\lambda^{-1}_{gh}
\end{equation}
and are called {\it cohomologous}.
Recall that the 2-coboundaries, which are regarded as the trivial twistings, form a subgroup $B^2(G,\F^*)\lhd Z^2(G,\F^*)$. The quotient
$$H^2(G,\F^*)=\bigslant{Z^2(G,\F^*)}{B^2(G,\F^*)}$$ is the second
cohomology group of $G$ over its trivial module $\F^*$.
So, without the choice of homogeneous $\F$-basis, a twisted group algebra determines a cohomology class $[\alpha]\in H^2(G,\F^*)$.
As in the general case of crossed products, the homogeneous element $v_e\in (\F^{\alpha} G)_e$ can be chosen to be 1. The corresponding normalized
2-cocycle $\alpha\in Z^2(G,\F^*)$ satisfies $\alpha(e,-)=\alpha(-,e)=1$.

A graded algebra, all of whose non-zero homogeneous elements are invertible is called a {\it graded division algebra} (or a {\it graded skew field}).
The following claim is straightforward.
\begin{lemma}\label{grdiv}
A graded algebra \eqref{eq:algebragrading} is a graded division algebra if and only if it is a crossed product (over its support), whose base algebra $A_e$ is an ungraded division algebra.
\end{lemma}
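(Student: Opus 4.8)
The plan is to verify both implications straight from the definitions, the whole point being that an invertible homogeneous element $u_g\in A_g$ identifies the component $A_g$ with a translate $A_eu_g=u_gA_e$ of the base algebra.

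For the forward implication I would assume $A_G$ is a graded division algebra and first check that $H:=\Supp_G(A_G)$ is a subgroup of $G$, so that the phrase ``crossed product over its support'' is meaningful: $e\in H$ because $1\in A_e$; $H$ is closed under inversion because the inverse of a nonzero element of $A_g$ is a nonzero element of $A_{g^{-1}}$; and $H$ is closed under products because for nonzero $u_g\in A_g$, $u_h\in A_h$ the element $u_gu_h\in A_{gh}$ is a product of invertibles, hence invertible and in particular nonzero. (Equivalently, in a graded division algebra $\mathcal{G}_{\text{inv}}=\Supp_G(A_G)$, and the former is always a subgroup.) Specializing to $g=e$ shows every nonzero element of $A_e$ is invertible, i.e.\ $A_e$ is an ungraded division algebra. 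To see $A_G$ is a crossed product over $H$ in the sense of \eqref{crosp}, for each $g\in H$ I would pick any nonzero $u_g\in A_g$; it is invertible with $u_g^{-1}\in A_{g^{-1}}$, and then for $a\in A_g$ the product $au_g^{-1}$ lies in $A_gA_{g^{-1}}\subseteq A_e$, so $a=(au_g^{-1})u_g\in A_eu_g$, and symmetrically $a\in u_gA_e$; combined with the trivial reverse inclusions this yields $A_g=A_eu_g=u_gA_e$.

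For the converse I would assume $A_G$ is a crossed product over its support $H$ with $A_e$ an ungraded division algebra and take an arbitrary nonzero homogeneous element $a\in A_g$. Then $g\in H$ (else $A_g=0$), so $A_g=A_eu_g$ for an invertible $u_g$, and writing $a=ru_g$ with $r\in A_e$ we get $r\neq 0$; since $A_e$ is a division algebra $r$ is invertible, hence so is $a=ru_g$. Thus every nonzero homogeneous element of $A_G$ is invertible. The argument is completely elementary; the only step needing a moment's attention — and the closest thing to an obstacle — is the preliminary verification that the support is a subgroup, which is what makes the statement well-posed.
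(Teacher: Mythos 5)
Your proof is correct. The paper itself declines to give a proof (the lemma is labeled ``straightforward''), and your verification is the natural one: both directions follow directly from the definitions once one notes that the inverse of a nonzero homogeneous element of degree $g$ in a graded division algebra is homogeneous of degree $g^{-1}$ (a one-line degree computation on $1 = aa^{-1}$), which is also exactly what makes the support a subgroup and the phrase ``crossed product over its support'' well-posed.
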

Twisted group algebras are thus graded division algebras. Here is a setting where twisted group algebras are the only graded division algebras.
The following lemma was proven in \cite{MR2488221,EK13} for graded division algebras which are finite-dimensional over an algebraically-closed field $\F$.
The finiteness of dimension may however be entailed only on the corresponding base algebra.
\begin{lemma}\label{grdivtwisted}(see \cite[Theorem 2]{MR2488221}, \cite[Theorem 2.13]{EK13})
A graded division algebra over an algebraically-closed field $\F$, whose base algebra is finite-dimensional over $\F$ is a twisted group algebra over $\F$.
\end{lemma}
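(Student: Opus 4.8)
\textbf{Proof plan for Lemma \ref{grdivtwisted}.}
The plan is to reduce a graded division algebra $D_G=\bigoplus_{g\in\Supp_G(D_G)}D_g$ to its base algebra $D_e$ via the structure provided by Lemma \ref{grdiv}, which tells us $D_G$ is a crossed product over its support $S:=\Supp_G(D_G)$, a subgroup of $G$, and $D_e$ is an ungraded division $\F$-algebra. Since $D_e$ is finite-dimensional over the algebraically closed field $\F$, a classical Wedderburn-type argument forces $D_e=\F$: indeed $D_e$ is a finite-dimensional division algebra over $\F$, so every element generates a finite field extension of $\F$, and algebraic closure of $\F$ gives $D_e=\F\cdot 1$. (Alternatively one can invoke Theorem \ref{dixmier} applied to the simple module $D_e$ over itself, noting $\dim_\F D_e<\infty\le|\F|$ since $\F$ algebraically closed is infinite, to conclude $\End_{D_e}(D_e)=D_e=\F$.)

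Once $D_e=\F$, the crossed product description \eqref{crosp} says $D_G=\bigoplus_{g\in S}D_eu_g=\bigoplus_{g\in S}\F u_g$ for invertible homogeneous elements $u_g\in D_g$, so every homogeneous component is one-dimensional over $\F$. By Definition \ref{deftga} this is precisely the assertion that $D_G$ is a twisted group $\F$-algebra: the associativity constraint \eqref{assoc} on the twisting $f(g,h)=u_gu_hu_{gh}^{-1}$ collapses, because the conjugation $\iota_{u_g}$ acts trivially on $D_e=\F$, to the 2-cocycle identity, so $f\in Z^2(S,\F^*)$ and $D_G\cong\F^fS$ as $S$-graded (hence $G$-graded, with support $S$) algebras.

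The only point deserving care is that $S=\Supp_G(D_G)$ is genuinely a subgroup of $G$: this is part of the content of Lemma \ref{grdiv} (a graded division algebra is a crossed product \emph{over its support}), since if $D_g,D_h\neq 0$ then picking nonzero $x\in D_g$, $y\in D_h$ we get $0\neq xy\in D_{gh}$ by invertibility, and $x^{-1}\in D_{g^{-1}}$, so $S$ is closed under multiplication and inversion. I expect the main obstacle to be purely expository rather than mathematical — namely making explicit the (elementary) step that a finite-dimensional division algebra over an algebraically closed field is the field itself, and checking that all the identifications above are compatible with the gradings so that the final isomorphism $D_G\stackrel{G}{\cong}\F^fS$ is an isomorphism of \emph{graded} algebras. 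Everything else is a direct unwinding of Lemma \ref{grdiv} and Definition \ref{deftga}.
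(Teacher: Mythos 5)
Your argument is correct and follows the paper's own proof essentially verbatim: apply Lemma \ref{grdiv} to get a crossed product with division base algebra $D_e$, observe $D_e=\F$ since a finite-dimensional division algebra over an algebraically closed field is the field itself, and invoke Definition \ref{deftga}. The extra detail you supply (that the support is a subgroup, that the twisting lands in $Z^2(S,\F^*)$ once conjugation on $\F$ is trivial) is a correct but unnecessary elaboration of what Lemma \ref{grdiv} and Definition \ref{deftga} already package up.
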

\begin{proof}
By Lemma \ref{grdiv}, a graded division algebra is a crossed product, whose base algebra $A_e$ is a division algebra.
Since $A_e$ is finite-dimensional over the algebraically-closed field $\F$,
then $A_e=\F$. By Definition \ref{deftga} the graded division algebra is a twisted group $\F$-algebra.
\end{proof}

\subsection{Graded homomorphisms of graded algebras}\label{grmorph}
A {\it graded-homomorphism} between the graded algebra
\eqref{eq:algebragrading} and another graded $\F$-algebra
\begin{equation}\label{eq:equivgr}
\mathcal{H}:B_H=\bigoplus _{h\in H} B_h
\end{equation}
is a pair $(\psi,\phi)$, where $\psi:A_G\rightarrow B_H$ is an
algebra homomorphism and $\phi:G\rightarrow H$ is a group
homomorphism such that $\psi(A_g)\subseteq B_{\phi(g)}$ for any
$g\in G$.
In case $G=H$ we write $A_G\stackrel{G}{\to} B_G$ for short. A
graded-homomorphism $(\psi,\phi):\mathcal{G}\to\mathcal{H}$
between these two graded algebras is a {\it graded-equivalence} if
$\psi$ and $\phi$ are isomorphisms of algebras and groups
respectively. A graded-equivalence $(\psi,\phi)$ is a {\it
graded-isomorphism} if $H=G$ and $\phi$ is the identity map. We
then write $A_{G}\stackrel{G}{\cong}B_G$.
In the terminology of \S\ref{Ghgm}, a graded isomorphism is an isomorphism of algebras, which is homogeneous of trivial degree $e\in G$ as a linear map of $G$-graded $\F$-spaces.
The grading isomorphism class of $\mathcal{G}$ is denoted by $[\mathcal{G}]$. The following lemma is recorded for a later use.
\begin{lemma}\label{cphomim}
Graded homomorphisms take strong and invertible components to strong and invertible components respectively.
In particular, graded homomorphic images of strongly graded algebras and, alternatively, of crossed products are also strongly graded, respectively crossed products. Moreover,
suppose that $\psi:A_e*G\stackrel{G}{\to} B_G$ is a graded homomorphism from a crossed product \eqref{crosp} to a $G$-graded $\F$-algebra $B_G$,
such that $\psi$ takes $A_e$ isomorphically onto the base algebra $B_e$ of $B_G$. Then $\psi$ is a graded isomorphism.
\end{lemma}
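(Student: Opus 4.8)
The plan is to push homogeneous components through the underlying algebra map $\psi$ one at a time, using only that $\psi$ is unital and that $\psi(A_g)\subseteq B_{\phi(g)}$, then to read off the image statements, and finally to bootstrap the last clause from the invertible case. First I would handle a single strong component. If $A_g$ is strong, choose a homogeneous unit decomposition $1=\sum_j a_j^{(g)}b_j^{(g)}$ as in \eqref{unitdecomp}, with $a_j^{(g)}\in A_{g^{-1}}$ and $b_j^{(g)}\in A_g$. Applying $\psi$ and using $\psi(1)=1$ gives $1=\sum_j\psi(a_j^{(g)})\psi(b_j^{(g)})\in B_{\phi(g)^{-1}}B_{\phi(g)}$; this right-hand side is the analogue of the two-sided ideal \eqref{Ig} for $B$, and it contains the identity, hence equals $B_e$, so $B_{\phi(g)}$ is strong. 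Note in passing that $\psi(A_g)\neq 0$, since otherwise that identity would read $1=0$. For an invertible component the argument is even shorter: if $u_g\in A_g$ is invertible in $A_G$, then $\psi(u_g)\in B_{\phi(g)}$ is invertible in $B_H$ with inverse $\psi(u_g^{-1})$, so $B_{\phi(g)}$ is invertible (and nonzero).

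Next I would deduce the image statements. Suppose $\psi$ is surjective and $A_G$ is strongly graded. By the previous paragraph, for every $g\in G$ the component $B_{\phi(g)}$ is strong and $\psi(A_g)\neq 0$; hence the support of $B_H=\psi(A_G)$ is contained in $\phi(G)$ while every component $B_{\phi(g)}$ is strong (in particular nonzero). Since $\mathcal{G}_{\text{str}}$ is a submonoid (see after \eqref{strcomp}), it follows that $B_H$, graded by $\phi(G)$, is strongly graded. Running the identical argument with ``invertible'' in place of ``strong'' shows that a graded homomorphic image of a crossed product is again a crossed product.

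Finally, for the ``Moreover'' clause the group map is the identity of $G$ (this is the meaning of $\stackrel{G}{\to}$), so $\psi(A_g)\subseteq B_g$; thus $\psi$ is homogeneous of degree $e$ and it remains only to show it is bijective. Write $A_e*G=\bigoplus_{g\in G}A_eu_g$ with each $u_g$ invertible, so by the first paragraph each $\psi(u_g)\in B_g$ is invertible. For injectivity, if $\psi\bigl(\sum_g a_gu_g\bigr)=0$ with $a_g\in A_e$, then comparing $g$-homogeneous components gives $\psi(a_g)\psi(u_g)=0$ for every $g$; cancelling the invertible factor $\psi(u_g)$ gives $\psi(a_g)=0$, whence $a_g=0$ because $\psi|_{A_e}$ is injective. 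For surjectivity, given $b_g\in B_g$ we have $b_g\psi(u_g)^{-1}\in B_gB_{g^{-1}}\subseteq B_e=\psi(A_e)$, say $b_g\psi(u_g)^{-1}=\psi(c)$ with $c\in A_e$, and then $b_g=\psi(cu_g)\in\psi(A_G)$; since every element of $B_G$ is a finite sum of homogeneous pieces, $\psi$ is onto. Being a bijective algebra map that is homogeneous of degree $e$, $\psi$ is a graded isomorphism.

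I do not expect a genuine obstacle; all the computations above are one line long. The only place to be slightly careful is the bookkeeping in the image statements — namely that the image should be regarded as graded by $\phi(G)$, and that one must check the image of a strong (resp. invertible) component is nonzero so that $\phi(G)$ really contains the support — together with the observation, used in the surjectivity step, that $B_g=B_e\psi(u_g)$ as soon as $\psi(u_g)$ is invertible.
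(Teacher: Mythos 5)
Your proof is correct and follows essentially the same strategy as the paper: push a homogeneous unit decomposition (resp.\ an invertible element) through $\psi$ to handle the first claim, and use invertibility of the $\psi(u_g)$ to get the decomposition $B_G=\bigoplus_g B_e\psi(u_g)$ for the final clause. You are a bit more explicit than the paper in two respects — checking that the image is nonzero on strong components and flagging that a graded image should be regarded as graded by $\phi(G)$ — but these are details the paper leaves tacit rather than a genuinely different route.
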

\begin{proof}
The first claim follows from the fact that graded homomorphisms take homogeneous unit decompositions \eqref{unitdecomp} to homogeneous unit decompositions, and in particular,
invertible homogeneous elements to invertible homogeneous elements.
The second claim follows from the decomposition $B_G=\oplus_{g\in G}B_e\psi(u_g)$, where all the homogeneous images $\{\psi(u_g)\}_{g\in G}$ are invertible.
\end{proof}
\begin{corollary}\label{preserved}
The properties of being either strongly graded or a crossed product are preserved under graded isomorphism and graded equivalence.
\end{corollary}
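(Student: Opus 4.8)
The plan is to deduce everything from Lemma \ref{cphomim} by observing that a graded isomorphism or graded equivalence is in particular a surjective graded homomorphism whose inverse is again a graded homomorphism of the same type, so that the relevant property passes in both directions.

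First I would note that if $(\psi,\phi):\mathcal{G}\to\mathcal{H}$ is a graded equivalence then its inverse $(\psi^{-1},\phi^{-1}):\mathcal{H}\to\mathcal{G}$ is also a graded equivalence. Indeed, $\psi^{-1}$ and $\phi^{-1}$ are isomorphisms of algebras and of groups respectively, so the only thing to check is compatibility with the gradings, i.e. $\psi^{-1}(B_h)\subseteq A_{\phi^{-1}(h)}$ for every $h\in H$. Since $\psi$ is bijective and $\psi(A_g)\subseteq B_{\phi(g)}$ for all $g\in G$, applying $\psi$ to the decomposition $A_G=\bigoplus_{g\in G}A_g$ and comparing it with $B_H=\bigoplus_{h\in H}B_h$ (using that $\phi$ is a bijection of the index sets) forces $\psi(A_g)=B_{\phi(g)}$ for every $g\in G$; equivalently $\psi^{-1}(B_{\phi(g)})=A_g$, which is the required compatibility after reindexing. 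The same argument applies verbatim to a graded isomorphism, where moreover $H=G$ and $\phi=\Id_G$.

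Now suppose $\mathcal{G}$ is strongly graded (respectively a crossed product). Since $(\psi,\phi)$ is surjective, $\mathcal{H}$ is a graded homomorphic image of $\mathcal{G}$, so Lemma \ref{cphomim} gives that $\mathcal{H}$ is strongly graded (respectively a crossed product). Conversely, if $\mathcal{H}$ has one of these properties, then applying the same reasoning to the graded equivalence $(\psi^{-1},\phi^{-1})$ (which exists by the previous paragraph) shows that $\mathcal{G}$, being a graded homomorphic image of $\mathcal{H}$, inherits it. Hence each of the two properties holds for $\mathcal{G}$ if and only if it holds for $\mathcal{H}$.

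I do not anticipate a genuine obstacle here; the only point requiring a moment's care is the verification that the inverse of a graded equivalence respects the gradings (the passage from ``$\subseteq$'' to ``$=$'' in $\psi(A_g)\subseteq B_{\phi(g)}$), and this is the direct-sum comparison carried out above. Everything else is an immediate invocation of Lemma \ref{cphomim}.
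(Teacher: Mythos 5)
Your proof is correct and matches the approach the paper implicitly intends: the corollary is stated without a written proof, the understanding being that since a graded equivalence is an invertible graded homomorphism, Lemma \ref{cphomim} applies in both directions. Your small check that the inverse of a graded equivalence is itself a graded equivalence (the passage from $\psi(A_g)\subseteq B_{\phi(g)}$ to equality, via the direct sum decomposition and bijectivity of $\phi$) is exactly the detail the paper leaves to the reader.
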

A graded-homomorphism
$(\psi,\phi):\mathcal{G}\to\mathcal{H}$ is a {\it quotient
homomorphism} if $\psi$ is an isomorphisms of algebras, and the
group homomorphism $\phi$ is onto. In this case we say that
\eqref{eq:equivgr} is a {\it quotient-grading} of
\eqref{eq:algebragrading}. Quotients of graded algebras are often
given just by a quotient of the grading group. Let $N\lhd G$ be a
normal subgroup of $G$. Then by a $G/N$-grading of
\eqref{eq:algebragrading} we mean the quotient-grading of the
algebra $A_G$ given by the pair $({1}_A,\md N)$, where
\begin{equation}\label{quotgrad}
({1}_A,\md N)(\mathcal{G}):A_{G/N}=\bigoplus _{gN\in G/N} A_{gN}.
\end{equation}
Here $1_A:A\to A$ is the identity map, and $
A_{gN}:=\bigoplus_{g\in gN}A_g.$ When a quotient homomorphism is given
in terms of the normal subgroup $N\lhd G$, we denote the
corresponding quotient grading by $\mathcal{G}/ N$ for short. The
following assertion is straightforward.
\begin{claim}\label{cpquotcp}
Any $G/N$-quotient grading of a strongly graded algebra
\eqref{eq:algebragrading} is itself strongly graded over the base
algebra $ A_{N}=\bigoplus_{n\in N}A_n.$ A $G/N$-quotient grading
of a crossed product $A_e*G$ is a crossed product whose base
algebra is $A_e*N$.
\end{claim}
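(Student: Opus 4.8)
The plan is to verify the two assertions directly from the definitions, invoking the criterion that a $G$-grading is strongly graded precisely when every homogeneous component is strong (equivalently $\mathcal{G}_{\text{str}}=G$, cf.\ \cite[Proposition 1.1.1]{NVO04}), and the parallel observation that a grading is a crossed product precisely when every homogeneous component is invertible.

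For the first assertion, I would fix a coset $gN\in G/N$ and examine the ideal $\mathcal{I}_{gN}=A_{(gN)^{-1}}A_{gN}=A_{g^{-1}N}A_{gN}$ of the base algebra $A_N$ of the quotient grading \eqref{quotgrad}, obtained by applying \eqref{Ig} to the $G/N$-grading. Since $A_{g^{-1}N}\supseteq A_{g^{-1}}$ and $A_{gN}\supseteq A_g$, one gets $\mathcal{I}_{gN}\supseteq A_{g^{-1}}A_g=\mathcal{I}_g=A_e$, the last equality because $A_G$ is strongly graded, so every component $A_g$ is strong. In particular $1\in\mathcal{I}_{gN}$, and as $\mathcal{I}_{gN}$ is a two-sided ideal of $A_N$ this forces $\mathcal{I}_{gN}=A_N$. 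Hence every homogeneous component of the quotient grading is strong, so the quotient grading is strongly graded over $A_N=\bigoplus_{n\in N}A_n$. (Equivalently, one may simply transport a homogeneous unit decomposition \eqref{unitdecomp} for $A_g$, which already lies in $A_{g^{-1}}\times A_g\subseteq A_{g^{-1}N}\times A_{gN}$, to witness that $A_{gN}$ is strong.)

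For the second assertion, let $A_e*G=\bigoplus_{g\in G}A_eu_g$ be a crossed product with each $u_g\in A_g$ invertible in $A_G$. Passing to the $G/N$-grading does not change the underlying algebra, so each $u_g$ remains invertible and is now homogeneous of degree $gN$; thus every component $A_{gN}$ contains an invertible homogeneous element, i.e.\ is an invertible component, and the quotient grading is a crossed product. To identify its base algebra, note that it is the $N$-component $\bigoplus_{n\in N}A_eu_n$, which is exactly the $N$-graded subalgebra $A_e*N$ (itself a crossed product supported on $N$). Finally, using the twisting \eqref{fdef} one has $A_eu_nu_g=A_ef(n,g)u_{ng}=A_eu_{ng}$, and as $n$ ranges over $N$ the element $ng$ ranges over $Ng=gN$ by normality of $N$; therefore $A_{gN}=\bigoplus_{g'\in gN}A_eu_{g'}=(A_e*N)u_g$, which exhibits the crossed product structure with base algebra $A_e*N$ explicitly.

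Since both parts reduce to an elementary containment together with the observation that a two-sided ideal containing $1$ is the whole ring, I do not expect a genuine obstacle. The only point demanding (trivial) care is the use of the normality of $N$: it is precisely what guarantees that products of cosets behave as required and that $Ng=gN$, so that the supports of the relevant sums match up.
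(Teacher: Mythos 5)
Your proof is correct, and since the paper states Claim \ref{cpquotcp} without proof (prefaced only by ``the following assertion is straightforward''), you are supplying the details the author omits. Both of your arguments are the natural ones and use exactly the machinery the paper sets up: for the strongly graded case, the ideal $\mathcal{I}_{gN}=A_{g^{-1}N}A_{gN}$ of the base algebra $A_N$ contains $\mathcal{I}_g=A_{g^{-1}}A_g=A_e\ni 1$, so it must be all of $A_N$ (the parenthetical alternative of transporting a homogeneous unit decomposition \eqref{unitdecomp} is equally valid and perhaps cleaner); for the crossed-product case, the units $u_g$ remain invertible homogeneous elements of degree $gN$, the base component is $\bigoplus_{n\in N}A_eu_n=A_e*N$, and your explicit identification $A_{gN}=(A_e*N)u_g$, which uses normality via $Ng=gN$, makes the crossed-product structure over $A_e*N$ transparent. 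No gaps.
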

Evidently, any algebra $A$ can be viewed as graded by the trivial
group. This trivial grading is a quotient of all possible gradings
of $A$. The quotient homomorphism, which is responsible for that, just
``forgets" the graded data of a grading \eqref{eq:algebragrading}
and leaves only its algebra structure (see \cite[\S 1.2]{NVO04}).
With the above notation, this forgetful homomorphism is given by the
pair $(1_A,$ mod $G)$.

\subsection{Graded modules and graded-simple algebras}\label{gmgsa}

A left (right) module $W$ of a graded algebra \eqref{eq:algebragrading} is {\it $G$-graded} if it also affords a linear decomposition
\begin{equation}\label{modec}
W_G=\bigoplus_{g\in G} W_g
\end{equation}
which respects the grading $\mathcal{G}$, that is
for every $g,h\in G$
\begin{equation}\label{stgrmod}
A_gW_h\subseteq W_{gh}\ \ \ (\text{respectively }W_hA_g\subseteq W_{hg}).
\end{equation}
In particular, every homogeneous component $W_g\subset W_G$ is an $A_e$-module. Note that
\begin{enumerate}
\item the quotient $\bigslant{W_G}{W'_G}$ of \eqref{modec} by a graded submodule $W'_G\subseteq W_G$ is a graded $A_G$-module with the natural $G$-grading, and that
\item a graded algebra \eqref{eq:algebragrading} is a graded bimodule over itself, moreover, any free $A_G$-module is naturally $G$-graded.
\end{enumerate}

The {\it support} of a graded module \eqref{modec} is
$$\text{Supp}_G(W_G)=\{g\in G|\ \ \text{dim}_{\F}(W_g)\geq 1\}.$$
In particular, the support \eqref{supp} of a graded algebra \eqref{eq:algebragrading} is its support as a bimodule over itself.
For graded modules over strongly graded algebras we have
\begin{lemma}\label{genst}
Let \eqref{modec} be a graded left module over a $G$-graded algebra \eqref{eq:algebragrading} and let $g^{-1}\in\mathcal{G}_{\text{str}}$ (see \eqref{strcomp}).
Then for every $h\in G$, \eqref{stgrmod} is an equality. Hence if $W_h$ vanishes, then so is $W_{gh}$.
In particular, if \eqref{eq:algebragrading} is strongly graded and $W_G\neq 0$ then $\text{Supp}_G(W_G)=G$, and $W_G$ is generated by $W_e$.
\end{lemma}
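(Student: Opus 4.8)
The statement to prove is Lemma~\ref{genst}: for a graded left module $W_G$ over a $G$-graded algebra $A_G$, if $g^{-1}\in\mathcal{G}_{\text{str}}$ then $A_gW_h=W_{gh}$ for all $h$; consequently if $A_G$ is strongly graded and $W_G\neq 0$ then $\Supp_G(W_G)=G$ and $W_G$ is generated by $W_e$.

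The plan is to exploit the homogeneous unit decomposition \eqref{unitdecomp} attached to the strong component $A_{g^{-1}}$. First I would record the trivial inclusion $A_gW_h\subseteq W_{gh}$ from \eqref{stgrmod}. For the reverse inclusion, fix $w\in W_{gh}$ and use that $g^{-1}\in\mathcal{G}_{\text{str}}$ means $\mathcal{I}_{g^{-1}}=A_{g}A_{g^{-1}}=A_e$; hence there are homogeneous elements $\{a_j\}\subset A_{g}$ and $\{b_j\}\subset A_{g^{-1}}$ with $1=\sum_j a_jb_j$. Then $w=1\cdot w=\sum_j a_j(b_jw)$, and since $b_j\in A_{g^{-1}}$ and $w\in W_{gh}$, each $b_jw\in A_{g^{-1}}W_{gh}\subseteq W_{g^{-1}gh}=W_h$. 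Therefore $w=\sum_j a_j(b_jw)\in A_gW_h$, giving $W_{gh}\subseteq A_gW_h$ and hence equality. The immediate corollary $W_h=0\Rightarrow W_{gh}=0$ follows since $A_gW_h=A_g\cdot 0=0$.

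For the final sentence, assume $A_G$ is strongly graded, so every component is strong and in particular $g^{-1}\in\mathcal{G}_{\text{str}}$ for every $g\in G$. Suppose $W_G\neq 0$; then some $W_{h_0}\neq 0$. For any $g\in G$, writing $g=(gh_0^{-1})h_0$ and applying the equality just proved with the group element $gh_0^{-1}$ in place of $g$ (legitimate since its inverse lies in $\mathcal{G}_{\text{str}}=G$), we get $W_g=A_{gh_0^{-1}}W_{h_0}$. If this were zero, then multiplying by $A_{h_0g^{-1}}$ and using the equality again in the other direction would force $W_{h_0}=A_{h_0g^{-1}}W_g=0$, a contradiction; more directly, the equality $W_{h_0}=A_{h_0 g^{-1}}W_g$ shows $W_g\neq 0$. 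Hence $\Supp_G(W_G)=G$. Finally, to see $W_G$ is generated by $W_e$: for each $g$, $W_g=A_gW_e$ by the displayed equality (taking $h=e$), so the $A_G$-submodule generated by $W_e$ contains every $W_g$, hence equals $W_G$.

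The argument is essentially a one-step unwinding of the homogeneous unit decomposition, so there is no serious obstacle; the only point requiring a little care is bookkeeping with the grading indices — making sure that $A_{g^{-1}}W_{gh}\subseteq W_h$ uses the correct instance of \eqref{stgrmod}, and that the hypothesis "$g^{-1}$ strong" (rather than "$g$ strong") is the one that produces a unit decomposition inside $A_gA_{g^{-1}}=A_e$. I would also remark that the non-vanishing propagation statement is just the contrapositive reading of the equality $A_gW_h=W_{gh}$, so it needs no separate proof.
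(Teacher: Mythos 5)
Your proof is correct and follows essentially the same route as the paper: the paper's one-line argument is $W_{gh}=A_eW_{gh}=A_gA_{g^{-1}}W_{gh}\subseteq A_gW_h$, which is exactly your computation, just written as a chain of set inclusions rather than element-by-element through a unit decomposition $1=\sum_j a_jb_j$. You also correctly fill in the "In particular" consequences, which the paper leaves implicit, and your index bookkeeping (in particular that $g^{-1}\in\mathcal{G}_{\text{str}}$ is what gives $A_gA_{g^{-1}}=A_e$, and that $A_{g^{-1}}W_{gh}\subseteq W_h$) matches the paper's conventions.
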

\begin{proof}
The strong component condition yields $$W_{gh}=A_eW_{gh}=A_gA_{g^{-1}}W_{gh}\subseteq A_gW_h,$$
proving the converse inclusion of \eqref{stgrmod}.
\end{proof}
\begin{lemma}\label{fgenst}
Let \eqref{modec} be a finitely-generated graded left module over a $G$-graded algebra \eqref{eq:algebragrading}.
Then there exists a finite set of homogeneous elements that generate $W_G$.
Moreover, if \eqref{eq:algebragrading} is further assumed to be strongly graded,
then there exists a finite generating set of $W_G$ consisting solely of homogeneous elements of $W_e$.
\end{lemma}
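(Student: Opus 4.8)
The plan is to reduce everything to a single well-chosen finite generating set, make it homogeneous, and then---in the strongly graded case---push it down into the unit component $W_e$ using Lemma \ref{genst}.

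For the first assertion I would begin with any finite generating set $w_1,\dots,w_k$ of $W_G$ over $A_G$, not assumed homogeneous. By the defining decomposition \eqref{modec}, each $w_i$ has a \emph{finite} expansion $w_i=\sum_{g\in G}(w_i)_g$ with $(w_i)_g\in W_g$ and all but finitely many terms zero. The set $S:=\{(w_i)_g:1\le i\le k,\ (w_i)_g\neq 0\}$ is thus finite and purely homogeneous. Since each $w_i$ is a sum of elements of $S$, it lies in the additive subgroup---a fortiori in the $A_G$-submodule---generated by $S$; hence that submodule contains all the $w_i$ and therefore equals $W_G$. The only thing to watch here is finiteness, which is preserved because homogeneous expansions in \eqref{modec} are finite sums.

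For the second assertion, assume \eqref{eq:algebragrading} is strongly graded, so that every element of $G$ (in particular every $g^{-1}$) lies in $\mathcal{G}_{\text{str}}$ (see \eqref{strcomp}). Apply the first part to get a finite homogeneous generating set $u_1,\dots,u_m$ of $W_G$, say $u_j\in W_{g_j}$. By Lemma \ref{genst}, applied with $g=g_j$ and $h=e$, the inclusion \eqref{stgrmod} becomes the equality $W_{g_j}=A_{g_j}W_e$. Consequently each $u_j$ admits a \emph{finite} expression $u_j=\sum_{l=1}^{n_j}a_{jl}v_{jl}$ with $a_{jl}\in A_{g_j}\subseteq A_G$ and $v_{jl}\in W_e$; concretely one may take the $a_{jl}$ among the $A_{g_j}$-factors of a homogeneous unit decomposition \eqref{unitdecomp} for $A_{g_j}$, but the bare equality $W_{g_j}=A_{g_j}W_e$ already suffices. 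Then $T:=\{v_{jl}:1\le j\le m,\ 1\le l\le n_j\}$ is a finite subset of $W_e$ consisting of homogeneous elements of degree $e$, and the $A_G$-submodule it generates contains every $u_j$, hence is all of $W_G$.

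I do not anticipate a genuine obstacle: the argument is essentially bookkeeping. The one point demanding a little care is finiteness---both the passage to homogeneous components via \eqref{modec} and the rewriting of $u_j$ through $W_{g_j}=A_{g_j}W_e$ yield finite sums, so the generating set never becomes infinite---and the strong grading hypothesis is used exactly once, through Lemma \ref{genst}, to guarantee $W_{g_j}=A_{g_j}W_e$.
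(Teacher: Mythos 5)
Your proof is correct and follows essentially the same strategy as the paper: first replace a finite generating set by the (finitely many) homogeneous components of its members, then use strong grading to push each degree-$g_j$ generator down to finitely many generators in $W_e$. The only cosmetic difference is that the paper performs the second reduction directly with a homogeneous unit decomposition \eqref{unitdecomp} (replacing $w_g$ by the explicit elements $d_j^{(g)}w_g\in W_e$), whereas you reach the same conclusion by citing the equality $W_{g_j}=A_{g_j}W_e$ from Lemma \ref{genst}, which is itself proved by the unit-decomposition argument.
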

\begin{proof}
The first part of the claim is obtained by replacing any element in a finite generating set of $W_G$ by the homogeneous elements in its graded decomposition.
Suppose now that \eqref{eq:algebragrading} is strongly graded and let $X_G$ be a finite generating set of $W_G$ consisting of homogeneous elements.
The extra strongly graded assumption enables us to substitute every $w_g\in X_G$ by finitely many elements in $W_e$ as follows.
Let $$1=\sum_{j=1}^m c_j^{(g)}d_j^{(g)},\ \ \{c_j^{(g)}\}_{j=1}^m\subset A_{g},\ \ \{d_j^{(g)}\}_{j=1}^m\subset A_{g^{-1}}$$
be a homogeneous unit decomposition. Then replace $w_g$ by the elements $$d_1^{(g)}w_g,\cdots, d_m^{(g)}w_g\in A_{e}.$$
Clearly, $w_g$ is generated by these $e$-homogeneous elements. Repeating this procedure for all elements in $X_G$ yields a finite generating set of $e$-homogeneous elements for $W_G$.
\end{proof}
\begin{definition}
Let \eqref{modec} be a graded left module over a $G$-graded algebra \eqref{eq:algebragrading}.
\end{definition}

A non-zero graded $A_G$-module \eqref{modec} is {\it graded simple} if it does not admit proper non-zero graded submodules. The following claim is straightforward.
\begin{lemma}\label{grsimpsimp}
Let \eqref{modec} be a simply graded left module over a $G$-graded algebra \eqref{eq:algebragrading}. Then $W_G=A_GW_g$ for every $g\in\text{Supp}_G(W_g)$.
In particular, $W_g$ is simple as an ungraded $A_e$-module.
\end{lemma}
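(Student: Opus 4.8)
The plan is to exploit the observation that, for any $A_e$-submodule $N\subseteq W_g$, the $A_G$-submodule $A_GN$ that it generates inside $W_G$ is \emph{automatically graded}, with a computable $g$-homogeneous component; graded simplicity of $W_G$ then pins $N$ down. The displayed equality is the case $N=W_g$, and the simplicity of $W_g$ over $A_e$ is the general case.

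First I would prove $W_G=A_GW_g$. Fix $g\in\Supp_G(W_G)$, so $W_g\neq 0$; since $1\in A_e$ we have $W_g=A_eW_g\subseteq A_GW_g$, hence the $A_G$-submodule $A_GW_g$ of $W_G$ is non-zero. It is graded: writing $A_GW_g=\sum_{h\in G}A_hW_g$ and invoking \eqref{stgrmod}, each summand $A_hW_g$ lies in $W_{hg}$, and because the homogeneous components $\{W_{hg}\}_{h\in G}$ of $W_G$ are in direct sum, $A_GW_g=\bigoplus_{h\in G}A_hW_g$ is a homogeneous subspace of \eqref{modec}. Being a non-zero graded submodule of the graded-simple module $W_G$, it must equal $W_G$.

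Next I would run essentially the same argument for an arbitrary non-zero $A_e$-submodule $N\subseteq W_g$. As above, $A_GN=\bigoplus_{h\in G}A_hN$ is a graded $A_G$-submodule of $W_G$ with $A_hN\subseteq W_{hg}$. Intersecting with $W_g$ and using $W_{hg}\cap W_g=0$ for $h\neq e$, its $g$-homogeneous component is exactly $A_eN=N$ (note $A_eN\subseteq W_g$ already, since $N\subseteq W_g$). In particular $A_GN\neq 0$, so graded simplicity forces $A_GN=W_G$, and comparing $g$-homogeneous components gives $N=W_g$. Thus $W_g$ admits no proper non-zero $A_e$-submodule, i.e. it is simple over $A_e$.

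The hard part is essentially bookkeeping rather than substance: the one step that deserves a careful sentence is the assertion that $A_GN$ (and $A_GW_g$) is a homogeneous subspace with the stated homogeneous pieces — this rests precisely on the directness of $W_G=\bigoplus_h W_h$ together with the grading compatibility \eqref{stgrmod}, and on the fact that $hg=g$ only for $h=e$. Everything else follows directly from the definition of a graded-simple module.
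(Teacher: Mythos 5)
Your argument is correct. The paper gives no proof (it labels the lemma ``straightforward''), and what you wrote is precisely the standard argument the author intends: for a non-zero $A_e$-submodule $N\subseteq W_g$, the sum $A_GN=\sum_h A_hN$ is direct because $A_hN\subseteq W_{hg}$ and the map $h\mapsto hg$ is injective, so $A_GN$ is a non-zero graded $A_G$-submodule; graded simplicity forces $A_GN=W_G$, and comparing $g$-components (noting $(A_GN)\cap W_g=A_eN=N$) gives $N=W_g$.

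One tiny remark: the statement in the paper reads ``$g\in\text{Supp}_G(W_g)$,'' which is presumably a typo for $g\in\text{Supp}_G(W_G)$; you read it correctly.
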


\begin{definition}(see \cite[\S I.7]{NVO82})
A left ideal $I_G$ of a group-graded algebra \eqref{eq:algebragrading} is a {\it graded ideal} if it is graded as a left $A_G$-module, that is if $I_G=\oplus _{g\in G} (I_G\cap A_g)$.
A graded algebra is {\it graded-simple} if it admits no non-trivial two-sided graded ideals.
\end{definition}
Let \eqref{eq:algebragrading} be a group-graded algebra and let $I$ be a left ideal of $A_e$. Then the extended left ideal $A_G\cdot I$ of $A_G$ is $G$-graded by
$A_G\cdot I=\bigoplus _{g\in G} A_g\cdot I$.
Since
$(A_G\cdot I)\cap A_e=I$ we obtain
\begin{lemma}\label{inj}
The extension $I\mapsto A_G\cdot I$ determines an injective correspondence from the set of left $A_e$-ideals to the set of left graded $A_G$-ideals.
\end{lemma}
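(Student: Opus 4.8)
The plan is to exhibit an explicit left inverse (a retraction) for the extension map $I\mapsto A_G\cdot I$; injectivity is then immediate. First I would check that the assignment is well-defined as a map into graded left $A_G$-ideals. Given a left ideal $I$ of $A_e$, the subset $A_G\cdot I$ is a left ideal of $A_G$ because $A_h(A_g\cdot I)\subseteq A_{hg}\cdot I\subseteq A_G\cdot I$ for all $g,h\in G$. It is a \emph{graded} ideal because each summand sits inside a single homogeneous component: $A_g\cdot I\subseteq A_gA_e\subseteq A_g$. Consequently the decomposition $A_G\cdot I=\bigoplus_{g\in G}A_g\cdot I$, already noted before the statement, exhibits $A_G\cdot I$ as the direct sum of its homogeneous components, with $(A_G\cdot I)\cap A_g=A_g\cdot I$ for every $g\in G$.

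Next I would identify the $e$-homogeneous component. Since $1\in A_e$ and $I$ is stable under left multiplication by $A_e$, we have $A_e\cdot I=I$, whence $(A_G\cdot I)\cap A_e=I$; this is exactly the equality $(A_G\cdot I)\cap A_e=I$ quoted in the excerpt. In other words, the map $J\mapsto J\cap A_e$, sending a graded left $A_G$-ideal to a left $A_e$-ideal, satisfies $(A_G\cdot I)\cap A_e=I$ for every left $A_e$-ideal $I$, so it is a left inverse of the extension map.

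Finally, injectivity follows formally: if $I$ and $I'$ are left $A_e$-ideals with $A_G\cdot I=A_G\cdot I'$, intersecting both sides with $A_e$ gives $I=(A_G\cdot I)\cap A_e=(A_G\cdot I')\cap A_e=I'$. I do not expect any genuine obstacle here; the only point that requires a moment's care is the observation $A_g\cdot I\subseteq A_g$, which simultaneously guarantees that $A_G\cdot I$ is graded and that no cancellation between homogeneous components occurs, so that intersecting with $A_e$ recovers precisely $I$ rather than some larger left ideal.
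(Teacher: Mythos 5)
Your proof is correct and takes essentially the same approach as the paper: both rest on the graded decomposition $A_G\cdot I=\bigoplus_{g\in G}A_g\cdot I$ (which holds because $A_g\cdot I\subseteq A_g$) and the resulting identity $(A_G\cdot I)\cap A_e=A_e\cdot I=I$, which is exactly the retraction you exhibit.
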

\begin{lemma}\label{AeAss}
Let \eqref{eq:algebragrading} be graded-simple. Then $A_e$ is semiprimitive, that is $A_e$ admits a trivial Jacobson radical.
\end{lemma}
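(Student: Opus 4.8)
The plan is to argue by contradiction: assume the Jacobson radical $J:=J(A_e)$ is non-zero, and produce a non-trivial proper two-sided graded ideal of $A_G$, contradicting graded-simplicity. The natural candidate is the extended ideal $A_G\cdot J\cdot A_G=\bigoplus_{g\in G}(A_G\cdot J\cdot A_G)_g$, which is manifestly a two-sided graded ideal of $A_G$. It is non-zero since $J\neq 0$ and $1\in A_e$, so $J\subseteq A_G\cdot J\cdot A_G$. By graded-simplicity it must then be all of $A_G$; in particular $1\in A_G\cdot J\cdot A_G$, and intersecting with the base component $A_e$ I would like to conclude that $1\in (A_G\cdot J\cdot A_G)\cap A_e$ forces a contradiction with $J$ being contained in a proper ideal.

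First I would set up the grading of $\mathfrak{J}:=A_G\cdot J\cdot A_G$ carefully: its $e$-component is $\mathfrak{J}_e=\sum_{g\in G}A_{g^{-1}}JA_g$ (using $A_{g^{-1}}A_g\subseteq A_e$ and that each $A_g J A_{g^{-1}}$-type product lands in the right place). The key step is to show that $\mathfrak{J}_e$ is still contained in the Jacobson radical of $A_e$, or at least in a proper two-sided ideal of $A_e$; then $1\notin\mathfrak{J}_e$, so $\mathfrak{J}\neq A_G$, giving the required non-trivial proper graded ideal. To see $\mathfrak{J}_e\subseteq J$: each summand $A_{g^{-1}}JA_g$ is a two-sided ideal of $A_e$ (since $A_{g^{-1}}A_e A_g\subseteq A_{g^{-1}}A_g\subseteq A_e$... actually $A_e A_{g^{-1}}=A_{g^{-1}}$ and $A_gA_e=A_g$, so it is an $A_e$-bimodule), and I claim it is contained in $J$. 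For this I would use that $J$ is the intersection of annihilators of simple left $A_e$-modules, equivalently that $J$ consists of the elements $r$ such that $1+A_e r A_e$ consists of units, or cleanest: $J$ is the largest left ideal $L$ with $1+L\subseteq A_e^\times$. An element $x\in A_{g^{-1}}JA_g$ acts on any simple $A_e$-module $S$: one checks $A_g S$ is an $A_e$-submodule (possibly zero) of an induced-type module, and $J$ kills the relevant composition factors. The robust way is: for any simple left $A_e$-module $S$, the space $A_g\otimes_{A_e}S$ is a (possibly zero) $A_e$-module all of whose simple subquotients are $A_e$-simple, hence annihilated by $J$; therefore $J A_g S'=0$-type reasoning gives $A_{g^{-1}}JA_g\cdot S=0$ after composing, so $\mathfrak{J}_e\subseteq\operatorname{Ann}_{A_e}(S)$ for every simple $S$, whence $\mathfrak{J}_e\subseteq J\subsetneq A_e$.

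The main obstacle I anticipate is this last containment $A_{g^{-1}}JA_g\subseteq J$, i.e.\ that conjugation-type operations by homogeneous components cannot push a radical element out of the radical, \emph{without} any strong-grading hypothesis (so $A_g$ need not be invertible or even nonzero-behaved). The cleanest route is the characterization $J(A_e)=\bigcap_S\operatorname{Ann}_{A_e}(S)$ over simple left modules $S$, together with the observation that $A_g$ induces a functor $S\mapsto A_g\otimes_{A_e}S$ on $A_e$-modules which sends simples to modules with $A_e$-simple subquotients; since $J$ annihilates all $A_e$-simples it annihilates each $A_g\otimes_{A_e}S$, and unwinding $A_{g^{-1}}(J(A_g\otimes_{A_e}S))$ inside $S$ (via the evaluation map $A_{g^{-1}}\otimes A_g\otimes S\to S$) yields $A_{g^{-1}}JA_g\cdot S=0$. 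Summing over $g$ gives $\mathfrak{J}_e\subseteq J$, finishing the contradiction. If that functorial argument proves delicate, the fallback is the quasi-regularity description of $J$ combined with a direct computation that $1-a x b$ is left-invertible in $A_e$ for $a\in A_{g^{-1}}$, $x\in J$, $b\in A_g$, $x\in J$, using that $bx'\in$ (left ideal of $A_e$ contained in $J$) for a suitable rewriting — but the module-theoretic argument is conceptually safer and grading-hypothesis-free.
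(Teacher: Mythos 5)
Your strategy --- showing that $\mathfrak{J}:=A_G\cdot J(A_e)\cdot A_G$ is a proper, nonzero, graded two-sided ideal --- is a genuinely different and self-contained route from the paper's. The paper simply quotes that the graded Jacobson radical of a graded-simple algebra vanishes (\cite[I.7.3]{NVO82}) and then cites \cite[Corollary 2.9.3]{NVO04} to pass from the graded radical of $A_G$ to the ordinary radical of $A_e$; your argument makes the mechanism explicit. However, the step you describe as ``conceptually safer'' is the one with a real gap: from the fact that $J:=J(A_e)$ annihilates every simple $A_e$-module you cannot conclude that $J$ annihilates $A_g\otimes_{A_e}S$ --- only its simple subquotients are killed. (Compare: every simple subquotient of the regular module $A_e$ is killed by $J$, yet $J\cdot A_e=J\neq 0$.) What you call the fallback is in fact the correct proof of the key containment $A_{g^{-1}}JA_g\subseteq J$, and it closes cleanly: for $a\in A_{g^{-1}}$, $x\in J$, $b\in A_g$ one has $ba\in A_e$, hence $xba\in J$; so $1-xba$ is a unit, say with inverse $u$, and a direct check gives $(1-axb)(1+auxb)=1=(1+auxb)(1-axb)$. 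Since $ra\in A_{g^{-1}}$ and $bs\in A_g$ for any $r,s\in A_e$, the same computation shows $1-r(axb)s$ is a unit for all $r,s\in A_e$, i.e.\ $axb\in J(A_e)$. Summing over $g$ yields $\mathfrak{J}_e=\sum_{g\in G}A_{g^{-1}}JA_g\subseteq J\subsetneq A_e$, so $1\notin\mathfrak{J}_e$, hence $\mathfrak{J}\neq A_G$, contradicting graded-simplicity. I would therefore drop the module-theoretic justification and present the quasi-regularity computation as the main argument rather than a fallback.
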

\begin{proof}
The \textit{graded Jacobson radical} of $A_G$ is trivial, as a property
of simply graded algebras (see \cite[I.7.3]{NVO82}).
By \cite[Corollary 2.9.3]{NVO04}, the ordinary Jacobson radical of the corresponding base algebra $A_e$ also vanishes.
\end{proof}
\begin{definition}
A graded left module \eqref{modec} is called \textit{ graded Artinian} if it satisfies the descending chain condition on graded submodules.
In particular, a group-graded algebra \eqref{eq:algebragrading} is \textit{left graded Artinian} if it satisfies the descending chain condition as a left graded module over itself.
\end{definition}
\begin{lemma}\label{redext}
If the group-graded algebra \eqref{eq:algebragrading} is
left graded Artinian
then $A_e$ is left Artinian.
\end{lemma}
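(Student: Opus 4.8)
The plan is to transport the problem along the injective extension correspondence $I\mapsto A_G\cdot I$ from left ideals of $A_e$ to graded left ideals of $A_G$, which was set up in Lemma \ref{inj}, together with the identity $(A_G\cdot I)\cap A_e = A_e\cdot I = I$ recorded just before it. The key observation is that this correspondence is not merely injective but order-preserving: if $I\subseteq J$ are left $A_e$-ideals then $A_G\cdot I=\bigoplus_{g\in G}A_g I\subseteq \bigoplus_{g\in G}A_g J=A_G\cdot J$, and moreover equality upstairs forces equality downstairs because intersecting with $A_e$ recovers $I$ and $J$.

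Concretely, I would start from an arbitrary descending chain $I_1\supseteq I_2\supseteq\cdots$ of left ideals of the base algebra $A_e$ and apply the extension to produce the descending chain $A_G\cdot I_1\supseteq A_G\cdot I_2\supseteq\cdots$ of graded left ideals of $A_G$. Since $A_G$ is left graded Artinian by hypothesis, this chain of graded left ideals stabilizes: there is an index $N$ with $A_G\cdot I_n = A_G\cdot I_N$ for all $n\geq N$. Intersecting with $A_e$ and using $(A_G\cdot I)\cap A_e=I$ then gives $I_n=(A_G\cdot I_n)\cap A_e=(A_G\cdot I_N)\cap A_e=I_N$ for all $n\geq N$, so the original chain stabilizes and $A_e$ is left Artinian.

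There is essentially no obstacle here: all the content sits in Lemma \ref{inj} and the preceding identity, both already available. The only points that need to be stated carefully are that the extension map sends descending chains to descending chains (order-preservation) and that it reflects equality (so stabilization of the extended chain pulls back to stabilization of the original), and both are immediate from $(A_G\cdot I)\cap A_e=I$.
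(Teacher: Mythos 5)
Your proof is correct and is exactly the paper's argument, which itself says only that Lemma \ref{inj}'s extension map respects inclusions and therefore transports descending chain conditions back along $(A_G\cdot I)\cap A_e = I$. You have simply spelled out the order-preservation and equality-reflection steps that the paper leaves implicit.
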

\begin{proof}
This is a consequence of the fact that the extension $I\mapsto A_g\cdot I$ is an injective map from the left $A_e$-modules to the left graded $A_G$-modules (see Lemma \ref{inj}),
which respects inclusions.
\end{proof}
\begin{corollary}\label{AeAsscor}
A graded Artinian graded-simple algebra admits a semisimple Artinian base algebra.
\end{corollary}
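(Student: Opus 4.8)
The plan is to simply assemble the two lemmas just proven together with the classical structure theory of left Artinian rings. Let $A_G=\bigoplus_{g\in G}A_g$ be a graded-simple algebra that is left graded Artinian.

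First I would invoke Lemma \ref{redext}: since $A_G$ satisfies the descending chain condition on graded left ideals, its base algebra $A_e$ satisfies the descending chain condition on (ordinary) left ideals, that is, $A_e$ is left Artinian. Next I would invoke Lemma \ref{AeAss}: since $A_G$ is graded-simple, the ordinary Jacobson radical $J(A_e)$ is trivial, i.e. $A_e$ is semiprimitive.

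Finally I would appeal to the classical fact (a form of the Artin--Wedderburn theorem) that a left Artinian ring with vanishing Jacobson radical is semisimple Artinian. Concretely, the Jacobson radical of a left Artinian ring is nilpotent, and such a ring is semisimple precisely when its radical is zero; combining this with the two conclusions above shows that $A_e$ is semisimple Artinian, as claimed.

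I do not expect a genuine obstacle here: the entire mathematical content already resides in Lemmas \ref{redext} and \ref{AeAss}, and the corollary merely records their conjunction via a standard ring-theoretic characterization of semisimplicity. The only point demanding a moment's care is to phrase the last step so that it clearly refers to the ordinary (ungraded) notions of Artinian and semiprimitive for $A_e$, which is exactly how those two lemmas deliver their conclusions.
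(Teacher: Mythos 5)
Your proof is correct and is essentially identical to the paper's, which simply cites Lemma \ref{redext} and Lemma \ref{AeAss} and leaves implicit the standard fact that a left Artinian ring with vanishing Jacobson radical is semisimple Artinian. You have just made that last classical step explicit.
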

\begin{proof}
Apply Lemma \ref{AeAss} and Lemma \ref{redext}.
\end{proof}
Evidently, graded division algebras do not afford non-trivial one-sided, let alone two-sided, graded ideals.
They hence serve as seminal examples for graded Artinian and graded simple algebras as follows.
\begin{proposition}\label{examplegrartsimp}
Any graded division algebra is simply graded Artinian.
\end{proposition}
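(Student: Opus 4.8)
The plan is to exploit the fact, already flagged in the paragraph preceding the statement, that a graded division algebra has essentially no proper non-zero graded one-sided ideals; once this is made precise, both halves of ``simply graded Artinian'' fall out immediately. So the first step is to record the relevant content of the hypothesis: if $A_G=\bigoplus_{g\in G}A_g$ is a graded division algebra, then by definition every non-zero homogeneous element is invertible (equivalently, invoke Lemma \ref{grdiv}).

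The second and main step is to show that $0$ and $A_G$ are the only graded left ideals of $A_G$. Suppose $I_G$ is a non-zero graded left ideal. By definition of a graded ideal, $I_G=\bigoplus_{g\in G}(I_G\cap A_g)$, so $I_G$ contains a non-zero homogeneous element $a\in A_g\cap I_G$ for some $g\in G$. Since $a$ is invertible with inverse in $A_{g^{-1}}$, the identity $1=a^{-1}a$ lies in $I_G$, whence $I_G=A_G$. The identical argument applies to graded right ideals, and a fortiori to two-sided graded ideals, so $A_G$ has no non-trivial two-sided graded ideal, i.e.\ it is graded simple.

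The third step is the conclusion. The lattice of graded left submodules of $A_G$ (viewed as a left module over itself) consists of exactly the two members $0\subsetneq A_G$, so every descending chain of graded left ideals stabilizes trivially; hence $A_G$ is left graded Artinian in the sense of the definition preceding Lemma \ref{redext}. Together with graded simplicity from the previous step, this gives that $A_G$ is simply graded Artinian. I do not expect any genuine obstacle here: the only point deserving a moment's attention is the passage ``non-zero graded ideal $\Rightarrow$ meets some homogeneous component non-trivially,'' which is precisely the defining property of a graded ideal and not an extra input.
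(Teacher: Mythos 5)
Your proof is correct and matches the paper's (implicit) argument: the paper simply flags in the sentence preceding the proposition that a graded division algebra has no non-trivial one- or two-sided graded ideals and treats the statement as evident. Your write-up merely makes explicit the invertibility-of-a-homogeneous-element step, which is exactly the content the paper is alluding to.
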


Let \eqref{modec} be a graded module over a graded division $\F$-algebra.
In general, homogeneous elements of $W_G$, which are linearly independent over $\F$ may be dependent over the graded division algebra. When the graded division algebra is a twisted group algebra
we have the following straightforward
\begin{lemma}\label{linind}
Let \eqref{modec} be a graded module over a twisted group algebra \eqref{tga}.
Suppose that the homogeneous elements $w_1^g,\cdots,w^g_n\in W_g$, all of degree $g\in G$, are linearly independent over $\F$, then they are linearly independent
also over $\F^{\alpha}G$.
\end{lemma}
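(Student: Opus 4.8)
The plan is to prove Lemma~\ref{linind} by a standard ``lowest-degree'' argument exploiting the fact that in a twisted group algebra $\F^{\alpha}G$ every nonzero homogeneous element is invertible. Suppose for contradiction that $w_1^g,\dots,w_n^g\in W_g$ are linearly independent over $\F$ but satisfy a nontrivial relation $\sum_{i=1}^n x_i w_i^g=0$ with $x_i\in\F^{\alpha}G$, not all zero. Writing each $x_i=\sum_{h\in G}\lambda_{i,h}v_h$ in the homogeneous $\F$-basis $\{v_h\}_{h\in G}$ of $\F^{\alpha}G$ from \eqref{tga}, and using that $v_h W_g\subseteq W_{hg}$, I would decompose the relation according to the $G$-grading of $W_G$: for each $k\in G$ the $W_k$-component gives $\sum_{i=1}^n \lambda_{i,kg^{-1}}\,v_{kg^{-1}}\,w_i^g=0$. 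Thus without loss of generality we may assume all the $x_i$ are homogeneous of a single degree $h\in G$, say $x_i=\lambda_i v_h$ with $\lambda_i\in\F$ not all zero.

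Now the key step: since $v_h$ is invertible in $\F^{\alpha}G$ (its inverse being a scalar multiple of $v_{h^{-1}}$, as follows from the crossed-product structure in Definition~\ref{deftga} and Lemma~\ref{grdiv}), I can left-multiply the relation $\sum_{i=1}^n \lambda_i v_h w_i^g=0$, i.e.\ $v_h\bigl(\sum_{i=1}^n\lambda_i w_i^g\bigr)=0$, by $v_h^{-1}$ to obtain $\sum_{i=1}^n\lambda_i w_i^g=0$. This is a nontrivial $\F$-linear relation among $w_1^g,\dots,w_n^g$, contradicting their $\F$-linear independence. Hence no such relation exists, proving the lemma.

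The argument is essentially routine; the only point requiring a moment's care is the reduction from a general element $x_i\in\F^{\alpha}G$ to a homogeneous one, which is where the hypothesis that all the $w_i^g$ have the \emph{same} degree $g$ is used — it guarantees that the $v_h$-homogeneous part of each $x_i$ contributes to a single fixed homogeneous component $W_{hg}$ of $W_G$, so that the graded decomposition of the relation cleanly separates the coefficients by the degree of $v_h$. I expect the main (very mild) obstacle to be bookkeeping the indices in this graded decomposition; once a single homogeneous degree is isolated, invertibility of $v_h$ finishes the proof immediately.
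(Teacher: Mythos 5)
Your proof is correct. The paper labels Lemma~\ref{linind} as ``straightforward'' and supplies no proof, so there is no argument to compare against; the route you take --- decompose the hypothetical relation $\sum_i x_i w_i^g=0$ along the $G$-grading of $W_G$ to reduce to coefficients $\lambda_i v_h$ supported on a single homogeneous component, then cancel the invertible homogeneous element $v_h$ using unitality of the module --- is the natural one and fills the gap cleanly. The only point worth making fully explicit (you do gesture at it) is the choice of $k$: since not all $x_i$ vanish, some coefficient $\lambda_{i,h}$ is nonzero, and taking $k=hg$ in the graded decomposition yields a component relation $\sum_i \lambda_{i,h} v_h w_i^g=0$ whose scalar coefficients $\lambda_{i,h}$ are not all zero, which is exactly what the invertibility step needs.
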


Let \eqref{modec} be a graded left module over a $G$-graded algebra \eqref{eq:algebragrading}. Then for every $g\in G$ there exists a unique graded submodule
$t_{\mathcal{C}_g}(W_G)\subseteq W_G$ (see \cite[p. 38]{NVO04}), which is maximal among those graded submodules of $W_G$ having trivial $g$-th homogeneous component.
This submodule is the \textit{localizing radical} of $W_G$.
Following \cite{NVO04}, our interest herein is in the quotient $\bigslant{W_G}{t_{\mathcal{C}_e}(W_G)}$.
The next result is a consequence of Lemma \ref{genst}.
\begin{lemma}\label{tcg}
Let \eqref{modec} be a graded left module over a $G$-graded algebra \eqref{eq:algebragrading} and let $g^{-1}\in\mathcal{G}_{\text{str}}$. Then for every $h\in G$,
$t_{\mathcal{C}_h}(W_G)\subseteq t_{\mathcal{C}_{gh}}(W_G)$ as well as $(t_{\mathcal{C}_h}(W_G))_{gh}=0$.
In particular, if \eqref{eq:algebragrading} is strongly graded then $t_{\mathcal{C}_{g}}(W_G)=0$ for every $g\in G$.
\end{lemma}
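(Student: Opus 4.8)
The plan is to derive both inclusions directly from Lemma~\ref{genst}, applied not to $W_G$ itself but to the graded submodule $t_{\mathcal{C}_h}(W_G)$, combined with the maximality property that characterizes the localizing radicals. There is no deep content here; the whole argument is bookkeeping built on Lemma~\ref{genst}.

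First I would fix $h\in G$ and set $V:=t_{\mathcal{C}_h}(W_G)$; by construction $V$ is a graded $A_G$-submodule of $W_G$ with $V_h=0$. Since $V$ is itself a graded left $A_G$-module and $g^{-1}\in\mathcal{G}_{\text{str}}$, Lemma~\ref{genst} applies to $V$ and yields $V_{gh}=A_gV_h=0$, which is exactly the assertion $(t_{\mathcal{C}_h}(W_G))_{gh}=0$. Now $V$ is a graded submodule of $W_G$ whose $(gh)$-th homogeneous component vanishes, so by the uniqueness-and-maximality characterization of $t_{\mathcal{C}_{gh}}(W_G)$ (see \cite[p.~38]{NVO04}) we must have $V\subseteq t_{\mathcal{C}_{gh}}(W_G)$, i.e. $t_{\mathcal{C}_h}(W_G)\subseteq t_{\mathcal{C}_{gh}}(W_G)$.

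For the last statement I would assume $A_G$ is strongly graded, so that $\mathcal{G}_{\text{str}}=G$. Fix $g\in G$ and put $V:=t_{\mathcal{C}_g}(W_G)$, so $V_g=0$. For an arbitrary $h\in G$ one has $(hg^{-1})^{-1}=gh^{-1}\in\mathcal{G}_{\text{str}}=G$, hence Lemma~\ref{genst} gives $V_h=A_{hg^{-1}}V_g=0$. As $h$ was arbitrary, $V=\bigoplus_{h\in G}V_h=0$, that is $t_{\mathcal{C}_g}(W_G)=0$. I do not anticipate a genuine obstacle here: the only points to watch are that $t_{\mathcal{C}_h}(W_G)$ is genuinely a \emph{graded} $A_G$-submodule, so that Lemma~\ref{genst} legitimately applies to it, and the index bookkeeping in choosing which element to feed into that lemma.
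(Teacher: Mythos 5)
Your proof is correct and is precisely the argument the paper intends: the paper gives no written proof but says only that Lemma~\ref{tcg} is a consequence of Lemma~\ref{genst}, and your deduction—applying Lemma~\ref{genst} to the graded submodule $t_{\mathcal{C}_h}(W_G)$ and then invoking the maximality characterization of the localizing radical—is exactly how that consequence is drawn; the index bookkeeping in the strongly-graded case is also handled correctly.
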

A left $A_e$-module $M$ determines an extended graded left $A_G$-module
$$A_G\otimes_{A_e}M=\bigoplus_{g\in G} (A\otimes_{A_e}M)_g,$$ where
$(A_G\otimes_{A_e}M)_g:=A_g\otimes_{A_e}M$ for every $g\in G$.
Factoring out the extension ${A_G}\otimes_{A_e}M$ by the corresponding localizing radical $t_{\mathcal{C}_e}({A_G}\otimes_{A_e}M)$ as above yields the $G$-graded ${A_G}$-module
\begin{equation}\label{assogrd}
{A_G}\overline{\otimes}_{A_e}M:=\bigslant{{A_G}{\otimes}_{A_e}M}{t_{\mathcal{C}_e}({A_G}{\otimes}_{A_e}M)}
\end{equation}
{\it associated} to $M$.
Lemma \ref{tcg} implies the first part of the following theorem.
\begin{theorem}\label{indfunct}\cite[Theorem 2.7.2]{NVO04}
Let \eqref{eq:algebragrading} be a $G$-graded algebra and let $M$ be a left $A_e$-module. Then with the above notation \begin{enumerate}
\item If $g^{-1}\in\mathcal{G}_{\text{str}}$, then there is an $A_e$-module isomorphism $({A_G}\overline{\otimes}_{A_e}M)_g\cong {A_g}\otimes_{A_e}M$.
In particular, $({A_G}\overline{\otimes}_{A_e}M)_e\cong M$ as $A_e$-modules, and
if ${A_G}$ is further assumed to be strongly graded then there is a graded isomorphism of graded left $A_G$-modules (see \S\ref{Ghgm})
$${A_G}\overline{\otimes}_{A_e}M\stackrel{G}\cong {A_G}\otimes_{A_e}M.$$
\item If $M$ is simple over $A_e$ then ${A_G}\overline{\otimes}_{A_e}M$ is graded simple over ${A_G}$ with $({A_G}\overline{\otimes}_{A_e}M)_e\cong M$.
\item $M\cong M'$ if and only if ${A_G}\overline{\otimes}_{A_e}M\stackrel{G}\cong {A_G}\overline{\otimes}_{A_e}M'$.
\end{enumerate}
\end{theorem}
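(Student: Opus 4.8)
The plan is to read off all three claims from the defining construction of ${A_G}\overline{\otimes}_{A_e}M$ as the quotient of the extended module ${A_G}\otimes_{A_e}M$ by its localizing radical $t_{\mathcal{C}_e}({A_G}\otimes_{A_e}M)$, with Lemma \ref{tcg} as the only structural input. For part (1), I would first recall that a quotient of a graded module by a graded submodule is computed componentwise, so $({A_G}\overline{\otimes}_{A_e}M)_g=(A_g\otimes_{A_e}M)\big/\big(t_{\mathcal{C}_e}({A_G}\otimes_{A_e}M)\big)_g$ for every $g\in G$. If $g^{-1}\in\mathcal{G}_{\text{str}}$, then Lemma \ref{tcg} applied with $h=e$ gives $\big(t_{\mathcal{C}_e}({A_G}\otimes_{A_e}M)\big)_g=0$, hence $({A_G}\overline{\otimes}_{A_e}M)_g\cong A_g\otimes_{A_e}M$ as $A_e$-modules. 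Taking $g=e$ (which lies in the submonoid $\mathcal{G}_{\text{str}}$) yields $({A_G}\overline{\otimes}_{A_e}M)_e\cong A_e\otimes_{A_e}M\cong M$. When $A_G$ is strongly graded, $\mathcal{G}_{\text{str}}=G$, so this shows every homogeneous component of $t_{\mathcal{C}_e}({A_G}\otimes_{A_e}M)$ vanishes, i.e. $t_{\mathcal{C}_e}({A_G}\otimes_{A_e}M)=0$, and the quotient map ${A_G}\otimes_{A_e}M\to{A_G}\overline{\otimes}_{A_e}M$ is the asserted graded isomorphism.

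For part (2), the key preliminary observation is that ${A_G}\otimes_{A_e}M$ is generated over $A_G$ by its degree-$e$ component $A_e\otimes_{A_e}M=1\otimes M$, since $a\otimes m=a\cdot(1\otimes m)$; consequently its quotient ${A_G}\overline{\otimes}_{A_e}M$ is generated over $A_G$ by $({A_G}\overline{\otimes}_{A_e}M)_e$, which by part (1) is isomorphic to $M$ and in particular nonzero. Now let $0\neq N_G\subseteq{A_G}\overline{\otimes}_{A_e}M$ be a graded submodule and let $\widetilde N_G\subseteq{A_G}\otimes_{A_e}M$ be its preimage under the quotient map, a graded submodule containing $t_{\mathcal{C}_e}({A_G}\otimes_{A_e}M)$. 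I would argue by the dichotomy on $(N_G)_e$: if $(N_G)_e=0$ then $(\widetilde N_G)_e=\big(t_{\mathcal{C}_e}({A_G}\otimes_{A_e}M)\big)_e=0$, so maximality of the localizing radical forces $\widetilde N_G\subseteq t_{\mathcal{C}_e}({A_G}\otimes_{A_e}M)$ and hence $N_G=0$, a contradiction; therefore $(N_G)_e\neq0$, and being an $A_e$-submodule of the simple module $({A_G}\overline{\otimes}_{A_e}M)_e\cong M$ it equals the whole degree-$e$ component, whence $N_G\supseteq A_G\cdot({A_G}\overline{\otimes}_{A_e}M)_e={A_G}\overline{\otimes}_{A_e}M$.

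For part (3), one direction is functoriality: an $A_e$-isomorphism $\theta:M\to M'$ induces the graded $A_G$-isomorphism $\Id_{A_G}\otimes\theta:{A_G}\otimes_{A_e}M\to{A_G}\otimes_{A_e}M'$, which, being homogeneous of degree $e$, carries the maximal graded submodule with trivial $e$-component onto the maximal graded submodule with trivial $e$-component, i.e. $t_{\mathcal{C}_e}({A_G}\otimes_{A_e}M)$ onto $t_{\mathcal{C}_e}({A_G}\otimes_{A_e}M')$, and so descends to a graded isomorphism ${A_G}\overline{\otimes}_{A_e}M\stackrel{G}{\cong}{A_G}\overline{\otimes}_{A_e}M'$. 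Conversely, any graded isomorphism ${A_G}\overline{\otimes}_{A_e}M\stackrel{G}{\cong}{A_G}\overline{\otimes}_{A_e}M'$ restricts to an $A_e$-isomorphism of the degree-$e$ components, which by part (1) are isomorphic to $M$ and $M'$ respectively.

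The routine parts are the componentwise computation of quotient gradings, the gradedness of $\Id_{A_G}\otimes\theta$, and the explicit $A_G$-action on the extended module; the one genuinely structural ingredient is Lemma \ref{tcg}, localizing the vanishing of the localizing radical's components to the strong part of the grading. The only place I expect to require real care is the dichotomy $(N_G)_e=0$ versus $(N_G)_e\neq0$ in part (2), where both the maximality property defining $t_{\mathcal{C}_e}$ and the fact that the associated module is cyclic on its degree-$e$ component must be combined.
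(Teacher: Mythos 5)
Your proposal is correct. The paper itself does not spell out a proof of this theorem: it cites \cite[Theorem 2.7.2]{NVO04} and simply remarks that Lemma \ref{tcg} implies part (1). Your treatment of part (1) uses Lemma \ref{tcg} in exactly this way (applying it with $h=e$ to kill the $g$-component of $t_{\mathcal{C}_e}({A_G}\otimes_{A_e}M)$ whenever $g^{-1}\in\mathcal{G}_{\text{str}}$, and noting $e\in\mathcal{G}_{\text{str}}$ always holds since $1\in A_e$), so this matches the paper's intended route. For parts (2) and (3), where the paper delegates entirely to the citation, your arguments are the standard ones and are sound: the $e$-generation of ${A_G}\otimes_{A_e}M$ passes to the quotient, the dichotomy on $(N_G)_e$ exploits maximality of the localizing radical exactly as needed, and the functoriality direction of (3) correctly notes that a degree-$e$ graded isomorphism must carry one localizing radical onto the other since each is characterized as the maximal graded submodule with vanishing $e$-component. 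No gaps.
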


\subsection{Graded homomorphisms of graded modules}\label{Ghgm}
A linear map $\varphi$ between $G$-graded $\F$-spaces $W_G$ and $W'_G$ is \textit{left homogeneous of degree $h\in G$} if
\begin{equation}\label{lefthom}
\varphi:W_g\to W'_{hg},\ \ \forall g\in G,
\end{equation}
and, analogously, \textit{right homogeneous of degree $h\in G$} if
\begin{equation}\label{righthom}
\varphi:W_g\to W'_{gh},\ \ \forall g\in G.
\end{equation}
A homogeneous linear $\F$-map of trivial degree $e\in G$ is termed a \textit{$G$-graded} map and may be denoted by $W_G\stackrel{G}{\to}W'_G$.
It is easily verified that the kernel and image of left and right homogeneous homomorphisms are graded subspaces.
The space of left (right) homogeneous homomorphisms of degree $h\in G$ between $W_G$ and $W'_G$ are denoted by Hom$_{\F}^{\textmd{l}(h)}(W_G,W_G')$ (respectively, Hom$^{\textmd{r}(h)}_{\F}(W_G,W'_G)$).
The direct sums of these homogeneous subspaces of Hom$_{\F}(W_G,W_G')$
are $G$-graded spaces denoted by
$$\begin{array}{l}\text{Hom}^{\text{l}(G)}_{\F}(W_G,W'_G):=\bigoplus_{h\in G}\text{Hom}_{\F}^{\textmd{l}(h)}(W_G,W_G'),\text{ and }\\ \text{Hom}^{\text{r}(G)}_{\F}(W_G,W'_G):
=\bigoplus_{h\in G}\text{Hom}_{\F}^{\textmd{r}(h)}(W_G,W'_G).\end{array}$$
When $W_G=W_G'$ then the corresponding graded spaces $\text{End}^{\text{l}(G)}_{\F}(W_G)$ and $\text{End}^{\text{r}(G)}_{\F}(W_G)$ are graded algebras acting on $W_G$ from the left,
or, respectively, from the right.
We adopt these orientations when acting with either of these endomorphism algebras.
When we do not care about orientation we write $\text{Hom}^{\text{gr}}_{\F}(W_G,W'_G)$ and, particularly, $\text{End}^{\text{gr}}_{\F}(W_G)$.

If the graded spaces $W_G$ and $W'_G$ are furnished with additional graded left module structures over a $G$-graded algebra \eqref{eq:algebragrading} then
\begin{equation}\label{homgr}\text{Hom}^{\text{r}(G)}_{A_G}(W_G,W_G'):=\text{Hom}^{\text{r}(G)}_{\F}(W_G,W_G')\cap\text{Hom}_{A_G}(W_G,W'_G)\end{equation}
is a graded subspace of $\text{Hom}^{\text{r}(G)}_{\F}(W_G,W_G')$.
If there exists an invertible graded homomorphism $\varphi\in$Hom$_{A_G}^e(W_G,W_G')$
(i.e., homogeneous of trivial degree $e\in G$), we say that $W_G$ and $W_G'$ are {\it graded isomorphic} and write $W_G\stackrel{G}\cong W_G'$.
Nonetheless, our main distinguishing between graded modules will be coarser, namely up to \textit{graded equivariance} (see Definition \ref{defeq}).

As a particular case of \eqref{homgr}, the graded algebra of right graded endomorphisms of the graded left module $W_G$ is denoted by
\begin{equation}\label{endgr}
A_{W}:=\text{End}^{\text{r}(G)}_{A_G}(W_G)=\bigoplus_{h\in G}\text{End}_{A_G}^{\textmd{r}(h)}(W_G).
\end{equation}
\begin{example}\label{frendex}
Let $W_G:=(A_G)^x$ be the free left $A_G$-module, generated by one element $x$. Since any $\varphi\in \text{End}_{A_G}(W_G)$ is determined by $\varphi(x)$, there is a graded isomorphism of algebras
$$\text{End}^{\text{r}(G)}_{A_G}(W_G)=\text{End}^{}_{A_G}(W_G)\stackrel{G}{\cong} A_G.$$
\end{example}
Under the $G$-grading \eqref{modec}, $W_G$ is a graded bimodule with respect to the actions of $A_G$ from the left and of $A_W$ from the right
(and since it is an $A_G\otimes_{\F} A_W$-module, it is a graded module over the graded product $A_G\otimes_{\F}^G A_W$, see \S\ref{actongral}).
As above, let
$$\text{End}^{\text{l}(G)}_{A_W}(W_G):=\text{End}_{\F}^{\text{l}(G)}(W_G)\cap\text{End}_{A_W}(W_G)$$ be the corresponding graded algebra of intertwining graded endomorphisms of $W_G$.
For any homogeneous element $x_g\in A_g$, let $$\varphi_{x_g}:\begin{array}{ccc}W_G&\to& W_G\\ w&\mapsto&x_g(w)\end{array}$$ denote
the action of $x_g$ on the left module $W_G$.
Then the above setup says that $\varphi_{x_g}$ is a left homogeneous $A_W$-endomorphism of $W_G$ of degree $g$, that is $\varphi_{x_g}\in$End$^{\textmd{l}(g)}_{A_W}(W_G)$.
The following result is easily verified.
\begin{lemma}\label{varphixglemma}
Let \eqref{modec} be a graded left module of a graded algebra \eqref{eq:algebragrading}. Then with the notation \eqref{endgr}
\begin{eqnarray}\label{varphixg}\varphi_W:\begin{array}{ccc}
A_G&\to&\text{End}^{\text{l}(G)}_{A_W}(W_G)\\
\sum_gx_g&\mapsto&\sum_g\varphi_{x_g}
\end{array}\end{eqnarray} is a homomorphism of graded algebras.
\end{lemma}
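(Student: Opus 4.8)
The plan is to check the three requirements of a graded algebra homomorphism: that $\varphi_W$ is well-defined with the asserted codomain, that it preserves products and the identity, and that it is homogeneous of trivial degree. All three are direct consequences of the module axioms together with the grading condition \eqref{stgrmod}.

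\emph{Well-definedness.} First I would fix a homogeneous $x_g\in A_g$ and observe that \eqref{stgrmod} gives $x_g(W_h)=A_gW_h\subseteq W_{gh}$ for every $h\in G$, so $\varphi_{x_g}$ satisfies \eqref{lefthom} with degree $g$; hence $\varphi_{x_g}\in\text{End}_{\F}^{\text{l}(g)}(W_G)$. Next, for every $\psi\in A_W=\text{End}^{\text{r}(G)}_{A_G}(W_G)$ — which in particular is an honest $A_G$-module endomorphism of the left module $W_G$ — and every $w\in W_G$, the left-module axiom yields $\varphi_{x_g}(\psi(w))=x_g\psi(w)=\psi(x_gw)=\psi(\varphi_{x_g}(w))$. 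Since $A_W$ acts on $W_G$ from the right while $\varphi_{x_g}$ acts from the left, this says precisely that $\varphi_{x_g}$ commutes with the right $A_W$-action, i.e. $\varphi_{x_g}\in\text{End}_{A_W}(W_G)$. Combining, $\varphi_{x_g}\in\text{End}^{\text{l}(g)}_{A_W}(W_G)$, so for a general $\sum_gx_g\in A_G$ we get $\varphi_W(\sum_gx_g)=\sum_g\varphi_{x_g}\in\bigoplus_{h\in G}\text{End}^{\text{l}(h)}_{A_W}(W_G)=\text{End}^{\text{l}(G)}_{A_W}(W_G)$, and $\F$-linearity of $\varphi_W$ is immediate from bilinearity of the action.

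\emph{Multiplicativity, unitality, and gradedness.} By associativity of the module action, for homogeneous $x_g\in A_g$, $x_h\in A_h$ and $w\in W_G$ one has $\varphi_{x_g}(\varphi_{x_h}(w))=x_g(x_hw)=(x_gx_h)(w)=\varphi_{x_gx_h}(w)$, so $\varphi_{x_g}\circ\varphi_{x_h}=\varphi_{x_gx_h}$; extending bilinearly gives $\varphi_W(xy)=\varphi_W(x)\circ\varphi_W(y)$ for all $x,y\in A_G$, and $\varphi_W(1)=\text{Id}_{W_G}$ since $1\in A_e$ acts as the identity. Finally, the inclusion $\varphi_W(A_g)\subseteq\text{End}^{\text{l}(g)}_{A_W}(W_G)$ established above says exactly that $\varphi_W$ carries the degree-$g$ component of $A_G$ into the degree-$g$ component of the target graded algebra \eqref{endgr}, i.e. $\varphi_W$ is a graded map (homogeneous of trivial degree $e$). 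As a consistency check, composing a degree-$g$ left-homogeneous map after a degree-$h$ one sends $W_k\to W_{hk}\to W_{ghk}$, giving degree $gh$, matching $x_gx_h\in A_{gh}$.

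I do not expect a genuine obstacle here: the only thing that needs care is keeping the left/right conventions straight — that $A_W$ acts from the right so ``commuting with $A_W$'' is the defining condition of $\text{End}_{A_W}(W_G)$ and is compatible with the left action $\varphi_{x_g}$, and that products in $\text{End}^{\text{l}(G)}_{A_W}(W_G)$ are composition with the left-action orientation fixed in \S\ref{Ghgm} — after which every step is a one-line verification from the axioms.
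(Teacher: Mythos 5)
Your proof is correct and is exactly the verification the paper leaves to the reader (the text preceding the lemma already notes that $\varphi_{x_g}\in\text{End}^{\text{l}(g)}_{A_W}(W_G)$ and then says the lemma "is easily verified"). You check well-definedness, commutation with the right $A_W$-action, multiplicativity, unitality, and gradedness directly from the axioms, keeping the left/right conventions straight — precisely the intended argument.
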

Note that any homogeneous component $W_g$ is stable under the endomorphisms in $\text{End}^e_{A_G}(W_G)$. Hence, for every $g\in G$ the restriction
\begin{equation}\label{gres}
\text{res}_g:\text{End}^e_{A_G}(W_G)\to\text{End}_{A_e}(W_g)
\end{equation}
is a well-defined algebra homomorphism. Moreover, if $W_G$ is simply graded, then it is generated as an ${A_G}$-module by any non-zero homogeneous element.
Consequently, if $g\in$Supp$(W_G)$ then endomorphisms in $\text{End}_{A_e}(W_g)$ determine a unique endomorphism in $\text{End}^e_{A_G}(W_G)$. We have
\begin{lemma}\label{aAWlemma}
Let \eqref{modec} be a simply graded module over a $G$-graded algebra \eqref{eq:algebragrading}. Then for every $g\in$Supp$(W_G)$ the restriction \eqref{gres} yields an algebra isomorphism
\begin{equation}\label{eAW}
\text{End}^e_{A_G}(W_G)\cong\text{End}_{A_e}(W_g).
\end{equation}
\end{lemma}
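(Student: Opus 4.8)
The plan is to prove that the algebra homomorphism $\text{res}_g$ of \eqref{gres}, which has already been checked to be well defined, is bijective; all the content lies in producing, for a given $\psi\in\text{End}_{A_e}(W_g)$, a degree-$e$ endomorphism of the $A_G$-module $W_G$ that restricts to $\psi$ on $W_g$. Injectivity is immediate: by Lemma \ref{grsimpsimp} a simply graded module satisfies $W_G=A_GW_g$, so any $\varphi\in\text{End}^e_{A_G}(W_G)$ vanishing on $W_g$ vanishes on all of $A_GW_g=W_G$.

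For surjectivity I would realise $W_G$ as a canonical quotient of the extended module and transport $\psi$ along it. Set $N:=A_G\otimes_{A_e}W_g$, graded by $N_h:=A_{hg^{-1}}\otimes_{A_e}W_g$, so that $N_g=1\otimes_{A_e}W_g$; up to the harmless relabelling placing $W_g$ in degree $g$ instead of $e$, this is precisely the extension discussed just before Theorem \ref{indfunct}. Multiplication defines a surjective graded $A_G$-homomorphism $\pi\colon N\to W_G$, $a\otimes_{A_e}w\mapsto aw$, of degree $e$ (surjective because $W_G=A_GW_g$). The crux is the identification $\ker\pi=t_{\mathcal{C}_g}(N)=:t$: since $\pi$ restricts on $N_g$ to the canonical isomorphism $A_e\otimes_{A_e}W_g\cong W_g$, the graded submodule $\ker\pi$ has trivial $g$-component, hence $\ker\pi\subseteq t$ by maximality; the induced graded surjection $W_G\cong N/\ker\pi\twoheadrightarrow N/t$ lands in a module whose $g$-component is $N_g/t_g=N_g\cong W_g\neq0$ (here $g\in\text{Supp}(W_G)$ is used), so graded simplicity of $W_G$ forces it to be an isomorphism. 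Thus $\ker\pi=t$ and $W_G\stackrel{G}\cong N/t$, compatibly with the identity on degree-$g$ parts. (One could instead cite that $N/t$ is graded simple, which is Theorem \ref{indfunct}(2) applied to the simple $A_e$-module $W_g$.)

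It then remains to push $\psi$ through this quotient. The map $\Phi:=\text{id}_{A_G}\otimes\psi$ is well defined because $\psi$ is $A_e$-linear, and it is an $A_G$-endomorphism of $N$ of degree $e$. Its image $\Phi(t)$ is again a graded $A_G$-submodule of $N$ with trivial $g$-component, so $\Phi(t)\subseteq t$ by the maximality defining $t_{\mathcal{C}_g}(N)$. Hence $\Phi$ descends to an endomorphism $\bar\Phi\in\text{End}^e_{A_G}(N/t)$, which we identify with $\text{End}^e_{A_G}(W_G)$ via the isomorphism above; on the $g$-component $\bar\Phi$ acts as $\text{id}_{A_e}\otimes\psi$, that is, as $\psi$ under $N_g\cong W_g$. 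Therefore $\text{res}_g(\bar\Phi)=\psi$, and $\text{res}_g$ is onto.

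I expect the only real obstacle to be the middle step, the equality $\ker\pi=t_{\mathcal{C}_g}(N)$: this is where the hypotheses genuinely enter, via Lemma \ref{grsimpsimp} and graded simplicity of $W_G$ (for the generation $W_G=A_GW_g$, and for the surjection onto $N/t$ being forced to be an isomorphism). Everything else — that $\text{id}_{A_G}\otimes\psi$ is legitimate, the degree bookkeeping of the shift by $g$, and the compatibility of all the identifications on degree-$g$ components — is routine and I would not spell it out in detail.
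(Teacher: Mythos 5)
Your proof is correct, and it fills in a step that the paper essentially leaves implicit. The paper presents Lemma~\ref{aAWlemma} with no proof environment: the surrounding text observes that $\operatorname{res}_g$ is well defined, that a simply graded module is generated by any nonzero homogeneous element, and then simply asserts that an $A_e$-endomorphism of $W_g$ ``determines a unique'' degree-$e$ endomorphism of $W_G$. The injectivity half of that claim is exactly your first paragraph. But the existence half — that the prescription $a\,w\mapsto a\,\psi(w)$ is actually a consistent definition on $W_G=A_GW_g$ — is nontrivial, and the paper does not argue it. Your construction resolves this cleanly: realize $W_G$ as $(A_G\otimes_{A_e}W_g)\big/t_{\mathcal C_g}$, lift $\psi$ to $\operatorname{id}_{A_G}\otimes\psi$, and observe that this degree-$e$ map preserves the localizing radical because the image of a graded submodule with trivial $g$-component again has trivial $g$-component. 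This is exactly the mechanism that the paper's own Theorem~\ref{indfunct} is built around, so your argument is in the same spirit as the rest of the paper even though it goes beyond what the author wrote at this spot. The one identification you should make explicit if you write it up — that the shift by $g$ converts $t_{\mathcal C_e}$ of Theorem~\ref{indfunct} into your $t_{\mathcal C_g}$ — is, as you note, routine, and your step establishing $\ker\pi=t_{\mathcal C_g}(N)$ via graded simplicity is correct.
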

The following is a graded version of Schur's Lemma.
\begin{lemma}\label{schurgr}(Graded Schur's Lemma \cite[Proposition 2.7.1(3)]{NVO04})
Let \eqref{modec} be a graded simple $A_G$-module. Then $\text{End}^{\text{r}(G)}_{A_G}(W_G)$ is a graded division algebra.
\end{lemma}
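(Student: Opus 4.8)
The plan is to prove the Graded Schur's Lemma by the standard Schur-type argument, adapted to the graded setting using the fact that kernels and images of homogeneous homomorphisms are graded subspaces. First I would take an arbitrary nonzero homogeneous element $\varphi\in\text{End}^{\text{r}(h)}_{A_G}(W_G)$ of some degree $h\in G$, and show it is invertible in $\text{End}^{\text{r}(G)}_{A_G}(W_G)$; since every element of the graded algebra is a sum of its homogeneous components and the nonzero homogeneous elements form a multiplicatively closed set once we know each is invertible, this suffices to conclude the graded division property. The key observation is that $\varphi$ is in particular an $A_G$-module homomorphism $W_G\to W_G$, so $\ker(\varphi)$ is an $A_G$-submodule of $W_G$; moreover, because $\varphi$ is right homogeneous of degree $h$ (it maps $W_g\to W_{gh}$), its kernel is a \emph{graded} $A_G$-submodule, i.e. $\ker(\varphi)=\bigoplus_{g\in G}(\ker(\varphi)\cap W_g)$. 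Since $W_G$ is graded simple and $\varphi\neq 0$, we must have $\ker(\varphi)=0$, so $\varphi$ is injective.

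Next I would treat the image. Again $\text{Im}(\varphi)$ is an $A_G$-submodule of $W_G$, and since $\varphi$ is right homogeneous it is a graded submodule. As $\varphi\neq 0$ we have $\text{Im}(\varphi)\neq 0$, so by graded simplicity $\text{Im}(\varphi)=W_G$; hence $\varphi$ is surjective. Therefore $\varphi:W_G\to W_G$ is a bijective linear map which intertwines the $A_G$-action, and its set-theoretic inverse $\varphi^{-1}$ is automatically an $A_G$-module homomorphism. It remains to check that $\varphi^{-1}$ is homogeneous of degree $h^{-1}$: if $w\in W_{gh}$ then $w=\varphi(w')$ for a unique $w'\in W_G$, and writing $w'=\sum_k w'_k$ in homogeneous components, $\varphi(w'_k)\in W_{kh}$, so comparing homogeneous components of degree $gh$ forces $w'=w'_g\in W_g$; thus $\varphi^{-1}(W_{gh})\subseteq W_g$, i.e. $\varphi^{-1}\in\text{End}^{\text{r}(h^{-1})}_{A_G}(W_G)$. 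So $\varphi$ is invertible within the graded endomorphism algebra.

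Finally I would assemble the conclusion: $\text{End}^{\text{r}(G)}_{A_G}(W_G)=\bigoplus_{h\in G}\text{End}^{\text{r}(h)}_{A_G}(W_G)$ is a $G$-graded algebra (as recorded in \S\ref{Ghgm}), and we have just shown every nonzero homogeneous element is invertible. By the definition of a graded division algebra recalled before Lemma \ref{grdiv}, this is exactly the assertion. I do not expect a serious obstacle here; the only point requiring a little care is the verification that the inverse of a bijective homogeneous map is again homogeneous (of the inverse degree), which is the small computation sketched above and uses only the directness of the homogeneous decomposition of $W_G$. One should also note at the outset that graded simplicity guarantees $W_G\neq 0$, so that the identity map is a nonzero homogeneous element of degree $e$ and the graded endomorphism algebra is itself nonzero.
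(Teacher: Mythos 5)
Your argument is correct and is essentially the paper's own proof, just written out in full: the paper's one-line justification is precisely ``the kernel and the image of a graded homomorphism are graded themselves,'' which you expand by taking an arbitrary nonzero homogeneous $\varphi$, invoking graded simplicity to kill the kernel and saturate the image, and then checking that the inverse is homogeneous of the inverse degree.
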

\begin{proof}
This is a consequence of the fact that the kernel and the image of a graded homomorphism are graded themselves.
\end{proof}
Graded modules over strongly graded algebras admit graded covers by free modules as follows.
\begin{lemma}\label{freest}
Let \eqref{modec} be a graded left module over a strongly $G$-graded algebra \eqref{eq:algebragrading}.
Then $W_G$ is a graded image $$\varphi_G:(A_G)^X\stackrel{G}{\twoheadrightarrow}W_G$$ of a free $A_G$-module $(A_G)^X$, admitting a graded module of relations ker$(\varphi_G)$.
\end{lemma}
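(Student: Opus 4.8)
The plan is to build the free cover by hand, putting \emph{all} of its free generators in degree $e$, and then to read off gradedness of the kernel from the elementary fact recorded in \S\ref{Ghgm} that kernels of homogeneous homomorphisms are graded subspaces. The strongly graded hypothesis will enter at exactly one point, namely to guarantee that degree-$e$ elements suffice to generate $W_G$.

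First I would dispose of the case $W_G=0$ by taking $X=\emptyset$. Assume now $W_G\neq 0$, and choose a set $X$ of generators of the $A_e$-module $W_e$ (for instance an $\F$-basis of $W_e$, or $W_e$ itself). Let $(A_G)^X=\bigoplus_{x\in X}(A_G)e_x$ be the free left $A_G$-module on $X$, endowed with its canonical $G$-grading $((A_G)^X)_g=\bigoplus_{x\in X}A_ge_x$, in which each free generator $e_x$ is homogeneous of trivial degree (recall that any free $A_G$-module is naturally $G$-graded, cf.\ the remarks following \eqref{stgrmod}). By freeness there is a unique $A_G$-module homomorphism
$$\varphi_G:(A_G)^X\to W_G,\qquad e_x\mapsto x\quad(x\in X),$$
extended $A_G$-linearly. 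A homogeneous element $x_ge_x$ with $x_g\in A_g$ is sent to $x_g(x)\in A_gW_e\subseteq W_{ge}=W_g$, so $\varphi_G\big(((A_G)^X)_g\big)\subseteq W_g$ for every $g\in G$; that is, $\varphi_G$ is homogeneous of trivial degree, a $G$-graded map.

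It remains to check surjectivity and gradedness of the kernel. The image of $\varphi_G$ is the $A_G$-submodule of $W_G$ generated by $X$; being in particular an $A_e$-submodule containing $X$, it contains $A_eX=W_e$, hence it contains $A_GW_e$. Since $A_G$ is strongly graded we have $g^{-1}\in\mathcal{G}_{\text{str}}$ for all $g\in G$, so Lemma \ref{genst} gives $W_G=A_GW_e$; therefore $\varphi_G$ is onto, yielding $\varphi_G:(A_G)^X\stackrel{G}{\twoheadrightarrow}W_G$. Finally, $\ker(\varphi_G)$ is an $A_G$-submodule of $(A_G)^X$ (being the kernel of an $A_G$-module map) and a graded $\F$-subspace (being the kernel of a homogeneous $\F$-linear map between $G$-graded $\F$-spaces, see \S\ref{Ghgm}); hence $\ker(\varphi_G)=\bigoplus_{g\in G}\big(\ker(\varphi_G)\cap(A_g)^X\big)$ is the desired graded module of relations. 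The only non-formal step is the surjectivity, and it rests entirely on Lemma \ref{genst}; everything else is an unwinding of the definitions of the free graded module and of homogeneous maps.
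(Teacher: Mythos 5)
Your proof is correct and follows essentially the same route as the paper: choose degree-$e$ generators, map the free module with free generators in degree $e$ onto $W_G$, use Lemma \ref{genst} (the strongly graded hypothesis) for surjectivity, and invoke gradedness of kernels of homogeneous maps. The only cosmetic difference is that you pick $A_e$-generators of $W_e$ while the paper picks a subset of $W_e$ generating $W_G$ directly; both choices are equivalent given $W_G=A_GW_e$.
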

\begin{proof}
By Lemma \ref{genst}, $W_G$ is generated by $W_e$. Let $X\subset W_e$ be a set of $e$-homogeneous generators for $W_G$ and let $(A_G)^X$ be a free $A_G$-module on the set $\{U_x\}_{x\in X}$.
Then the rule $U_x\mapsto x, \ \ x\in X$ determines a well-defined homomorphism of $A_G$-modules $\varphi_G:(A_G)^X\stackrel{}{\to}W_G$, which is graded and surjective.
Since the kernel of a graded homomorphism is graded, it follows that ker$(\varphi_G)$ is a graded submodule of $(A_G)^X$.
\end{proof}

\subsection{Absolutely graded simple modules}\label{absgsm}
Let \eqref{modec} be a graded module of a $G$-graded $\F$-algebra \eqref{eq:algebragrading}. Clearly, multiplication by any fixed element in $\F$ is a graded endomorphism of $W_G$
of trivial degree $e\in G$.
In the spirit of \S\ref{absec} we define
\begin{definition}\label{agsm}
A graded simple module \eqref{modec} of a $G$-graded $\F$-algebra \eqref{eq:algebragrading} is {\it absolutely $\F$-simply graded}
(or  {\it totally $\F$-simply graded}) if $\text{End}^{e}_{A_G}(W_G)=\F\cdot$Id$_{W_G}$.
In this case we write $[W_G]\in\text{Irr}^{\text{gr}}_{\F}(A_G).$
\end{definition}
\begin{lemma}\label{FWg}
Let $[W_G]\in\text{Irr}^{\text{gr}}_{\F}(A_G).$ Then $[W_g]\in$Irr$_{\F}(A_e)$ for every $g\in$Supp$_G(W_G)$ (and zero otherwise).
\end{lemma}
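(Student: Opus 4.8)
The plan is to split the statement into its two defining components — simplicity of $W_g$ as an $A_e$-module, and the absolute condition $\text{End}_{A_e}(W_g)=\F\cdot\text{Id}_{W_g}$ — and to dispose of each by citing the structure theory for simply graded modules already developed above. The case $g\notin\text{Supp}_G(W_G)$ is immediate: $W_g=0$ is precisely what it means for $g$ to lie outside the support. So fix $g\in\text{Supp}_G(W_G)$ for the rest.

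First I would handle simplicity. Since $[W_G]\in\text{Irr}^{\text{gr}}_{\F}(A_G)$, the module $W_G$ is in particular graded simple, hence simply graded, so Lemma \ref{grsimpsimp} applies verbatim and yields that $W_g$ is simple as an ungraded $A_e$-module (and, as a byproduct, $W_G=A_GW_g$).

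Next I would handle absoluteness. Here the key input is Lemma \ref{aAWlemma}: for a simply graded module and any $g\in\text{Supp}_G(W_G)$, the restriction homomorphism $\text{res}_g:\text{End}^e_{A_G}(W_G)\to\text{End}_{A_e}(W_g)$ of \eqref{gres} is an algebra isomorphism. By the hypothesis $[W_G]\in\text{Irr}^{\text{gr}}_{\F}(A_G)$ we have $\text{End}^e_{A_G}(W_G)=\F\cdot\text{Id}_{W_G}$ by Definition \ref{agsm}, and since $\text{res}_g$ carries $\lambda\cdot\text{Id}_{W_G}$ to $\lambda\cdot\text{Id}_{W_g}$, the isomorphism forces $\text{End}_{A_e}(W_g)=\F\cdot\text{Id}_{W_g}$. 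Combining this with the simplicity established in the previous step gives $[W_g]\in\text{Irr}_{\F}(A_e)$ by the definition \eqref{Irr}, completing the argument.

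I do not anticipate a genuine obstacle here; the proof is essentially an assembly of Lemma \ref{grsimpsimp} and Lemma \ref{aAWlemma}. The only point requiring a word of care is verifying that the hypotheses of those two lemmas are met — namely that an absolutely $\F$-simply graded module is in particular simply graded and that $g$ was chosen in the support — which is exactly what licenses both applications.
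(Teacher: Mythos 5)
Your proof is correct and follows essentially the same path as the paper's: Lemma \ref{grsimpsimp} for simplicity of $W_g$ and Lemma \ref{aAWlemma} (with Definition \ref{agsm}) to transport $\text{End}^e_{A_G}(W_G)=\F\cdot\text{Id}$ to $\text{End}_{A_e}(W_g)=\F\cdot\text{Id}$.
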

\begin{proof}
By Lemma \ref{grsimpsimp}, $W_g$ is simple as an $A_e$-module, whereas by Lemma \ref{aAWlemma} $\text{End}_{A_e}(W_g)\cong\text{End}^e_{A_G}(W_G)=\F\cdot$Id.
\end{proof}
The following result can be regarded as an alternative definition for absolutely graded simplicity.
\begin{lemma}\label{preobs}
Let \eqref{modec} be a graded simple left module of a $G$-graded algebra \eqref{eq:algebragrading}. Then $[W_G]\in\text{Irr}^{\text{gr}}_{\F}({A_G})$ iff
$\text{End}^{\text{r}(G)}_{A_G}(W_G)$ is a twisted group algebra over $\F$.
\end{lemma}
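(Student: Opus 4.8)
The statement to prove is Lemma~\ref{preobs}: for a graded simple left $A_G$-module $W_G$, one has $[W_G]\in\text{Irr}^{\text{gr}}_{\F}(A_G)$ if and only if $D_G:=\text{End}^{\text{r}(G)}_{A_G}(W_G)$ is a twisted group $\F$-algebra. The plan is to identify the two conditions via the structure theory of graded division algebras already assembled in \S\ref{sgcpsect} and \S\ref{Ghgm}. By Lemma~\ref{schurgr} we already know $D_G$ is a graded division algebra, so by Lemma~\ref{grdiv} it is a crossed product over its support whose base algebra is the ungraded division algebra $(D_G)_e=\text{End}^e_{A_G}(W_G)$. By Definition~\ref{deftga}, a crossed product is a twisted group $\F$-algebra precisely when its base algebra is $1$-dimensional over $\F$, i.e. when $(D_G)_e=\F\cdot\text{Id}_{W_G}$. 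But $(D_G)_e=\text{End}^e_{A_G}(W_G)=\F\cdot\text{Id}_{W_G}$ is exactly the defining condition for $[W_G]\in\text{Irr}^{\text{gr}}_{\F}(A_G)$ in Definition~\ref{agsm}. So the argument is essentially a matter of unwinding definitions.

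Concretely, I would argue as follows. First, since $W_G$ is graded simple, Lemma~\ref{schurgr} gives that $D_G$ is a graded division algebra. Then Lemma~\ref{grdiv} says $D_G$ is a crossed product over $\Supp_G(D_G)$ whose base algebra is the (ungraded) division algebra $(D_G)_e$. Note that by definition the degree-$e$ component of $D_G=\text{End}^{\text{r}(G)}_{A_G}(W_G)$ is $\text{End}^{\text{r}(e)}_{A_G}(W_G)=\text{End}^e_{A_G}(W_G)$, which is precisely the algebra appearing in Definition~\ref{agsm}. Now for the forward direction: if $[W_G]\in\text{Irr}^{\text{gr}}_{\F}(A_G)$, then $(D_G)_e=\F\cdot\text{Id}_{W_G}$ is $1$-dimensional over $\F$, so the crossed product $D_G$ has $1$-dimensional base algebra, and by Definition~\ref{deftga} it is a twisted group $\F$-algebra. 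Conversely, if $D_G$ is a twisted group $\F$-algebra, then by Definition~\ref{deftga} its base algebra $(D_G)_e$ is $1$-dimensional over $\F$; since it always contains $\F\cdot\text{Id}_{W_G}$, we get $(D_G)_e=\text{End}^e_{A_G}(W_G)=\F\cdot\text{Id}_{W_G}$, which is the condition for $[W_G]\in\text{Irr}^{\text{gr}}_{\F}(A_G)$.

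The only point requiring a word of care — and the nearest thing to an obstacle — is the bookkeeping of which endomorphism algebra carries which grading: $\text{End}^{\text{r}(G)}_{A_G}(W_G)$ is a $G$-graded $\F$-algebra acting on $W_G$ from the right, and one must confirm that its unit fiber (the degree-$e$ homogeneous component) coincides with $\text{End}^{e}_{A_G}(W_G)$ as used in Definition~\ref{agsm}, and that the identity map $\text{Id}_{W_G}$ lies in it. Both are immediate from \eqref{homgr} and \eqref{endgr}: a right homogeneous $A_G$-endomorphism of trivial degree is exactly an $A_G$-endomorphism preserving each homogeneous component, and $\text{Id}_{W_G}$ is such. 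With this identification in hand, the equivalence is a direct combination of Lemma~\ref{schurgr}, Lemma~\ref{grdiv}, Definition~\ref{deftga} and Definition~\ref{agsm}, with no further computation needed.
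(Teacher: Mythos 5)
Your proof is correct and follows essentially the same route as the paper's: Graded Schur's Lemma to see $D_G$ is a graded division algebra, Lemma~\ref{grdiv} to see it is a crossed product over $\text{End}^e_{A_G}(W_G)$, and then Definition~\ref{deftga} and Definition~\ref{agsm} to translate "$1$-dimensional base algebra" into "absolutely graded simple." The extra care you take identifying the degree-$e$ fiber of $\text{End}^{\text{r}(G)}_{A_G}(W_G)$ with $\text{End}^e_{A_G}(W_G)$ is sound but is treated as immediate in the paper.
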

\begin{proof}
By Graded Schur's Lemma \ref{schurgr}, $\text{End}^{\text{r}(G)}_{A_G}(W_G)$ is a graded division algebra. By Lemma \ref{grdiv} it is a crossed product over its base algebra $\text{End}^{e}_{A_G}(W_G)$.
Suppose that $W_G$ is $\F$-absolutely graded simple. Then the base algebra $\text{End}^{e}_{A_G}(W_G)$ of this crossed product $\F$-algebra is $\F$ itself. By Definition \ref{deftga} we conclude that
$\text{End}^{\text{r}(G)}_{A_G}(W_G)$ is graded isomorphic to a twisted group algebra over $\F$.
The ``if" direction is straightforward from Definition \ref{agsm} of absolute $\F$-simplicity.
\end{proof}
Here is a seminal instance for the absolutely graded simple notion.
\begin{example}\label{absgsmex}
For any 2-cocycle $\alpha\in Z^2(G,\F^*)$, let $W_G:=(\F^{\alpha}G)^x$ be the free $\F^{\alpha}G$-module, generated by one element $x$. Then as a particular case of Example \ref{frendex} we have
\begin{equation}\label{endfree}\text{End}^{\text{r}(G)}_{A_G}((\F^{\alpha}G)^x)\stackrel{G}{\cong} \F^{\alpha}G.\end{equation}
Hence, $W_G$ is absolutely graded simple over $\F$.
\end{example}
The following result claims that the associated $G$-graded ${A_G}$-module (see \eqref{assogrd}) of an absolutely simple $A_e$-module is absolutely graded simple.
\begin{lemma}\label{grirrtga}
Let \eqref{eq:algebragrading} be a group-graded algebra and let $[M]\in$Irr$_{\F}(A_e)$. Then $[{A_G}\overline{\otimes}_{A_e}M]\in\text{Irr}^{\text{gr}}_{\F}({A_G}).$
\end{lemma}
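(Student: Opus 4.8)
The goal is to show that ${A_G}\overline{\otimes}_{A_e}M$ is absolutely $\F$-simply graded, i.e. that $\text{End}^e_{A_G}({A_G}\overline{\otimes}_{A_e}M)=\F\cdot\text{Id}$. By Theorem \ref{indfunct}(2) we already know ${A_G}\overline{\otimes}_{A_e}M$ is graded simple with $({A_G}\overline{\otimes}_{A_e}M)_e\cong M$ as $A_e$-modules, so the module is a legitimate candidate and only the endomorphism computation remains. The key observation is Lemma \ref{aAWlemma}: since ${A_G}\overline{\otimes}_{A_e}M$ is simply graded and $e\in\text{Supp}_G({A_G}\overline{\otimes}_{A_e}M)$ (its $e$-component is the nonzero module $M$), the restriction map $\text{res}_e$ of \eqref{gres} is an algebra \emph{isomorphism}
$$\text{End}^e_{A_G}({A_G}\overline{\otimes}_{A_e}M)\;\xrightarrow{\ \sim\ }\;\text{End}_{A_e}\big(({A_G}\overline{\otimes}_{A_e}M)_e\big)\;\cong\;\text{End}_{A_e}(M).$$

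First I would verify that $e$ lies in the support: by Theorem \ref{indfunct}(1) (applied with $g=e$, noting $e\in\mathcal{G}_{\text{str}}$ always, since $A_e=A_e A_e$), $({A_G}\overline{\otimes}_{A_e}M)_e\cong M\neq 0$. Then I would invoke the hypothesis $[M]\in\text{Irr}_{\F}(A_e)$, which by definition \eqref{Irr} means precisely $\text{End}_{A_e}(M)=\F\cdot\text{Id}_M$. Chaining the isomorphism above, $\text{End}^e_{A_G}({A_G}\overline{\otimes}_{A_e}M)\cong\F$, and one checks that under this identification the scalar multiplications on the left correspond to the scalar multiplications on $M$ on the right — this is immediate since $\text{res}_e$ sends multiplication by $\lambda\in\F$ on ${A_G}\overline{\otimes}_{A_e}M$ to multiplication by $\lambda$ on $M$. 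Hence $\text{End}^e_{A_G}({A_G}\overline{\otimes}_{A_e}M)=\F\cdot\text{Id}$, which is exactly Definition \ref{agsm}, so $[{A_G}\overline{\otimes}_{A_e}M]\in\text{Irr}^{\text{gr}}_{\F}(A_G)$.

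There is essentially no hard part: the result is a formal consequence of Theorem \ref{indfunct}(2) (graded simplicity of the associated module, giving us access to Lemma \ref{aAWlemma}) combined with the absolute simplicity hypothesis on $M$. The only point requiring a moment's care is confirming that the identification of base-degree endomorphisms with $\text{End}_{A_e}(M)$ in Lemma \ref{aAWlemma} is as $\F$-algebras (so that the scalar subalgebras match up), but this is built into that lemma since $\text{res}_e$ is by construction $\F$-linear and multiplicative. One could alternatively phrase the argument through Lemma \ref{preobs}, showing instead that $\text{End}^{\text{r}(G)}_{A_G}({A_G}\overline{\otimes}_{A_e}M)$ is a twisted group $\F$-algebra, but the direct route via the $e$-component is cleaner and shorter.
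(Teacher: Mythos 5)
Your proposal is correct and follows the paper's own argument essentially verbatim: graded simplicity from Theorem \ref{indfunct}(2), $e\in\text{Supp}$ and $({A_G}\overline{\otimes}_{A_e}M)_e\cong M$ from Theorem \ref{indfunct}(1), then Lemma \ref{aAWlemma} to identify $\text{End}^e_{A_G}({A_G}\overline{\otimes}_{A_e}M)$ with $\text{End}_{A_e}(M)=\F\cdot\text{Id}$. The extra remark that the identification is as $\F$-algebras is a reasonable point of care but adds nothing beyond the paper's reasoning.
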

\begin{proof}
Firstly, by Theorem \ref{indfunct}(2), ${A_G}\overline{\otimes}_{A_e}M$ is graded simple over $A_G$. Next, by Theorem \ref{indfunct}(1), $({A_G}\overline{\otimes}_{A_e}M)_e\cong M$ (in particular
$e\in$Supp$({A_G}\overline{\otimes}_{A_e}M)$) as $A_e$-modules.
Hence, by Lemma \ref{aAWlemma} $$\text{End}^e_{A_G}({A_G}\overline{\otimes}_{A_e}M)\cong\text{End}_{A_e}(({A_G}\overline{\otimes}_{A_e}M)_e)\cong\text{End}_{A_e}(M).$$
Since the $A_e$-module $M$ is absolutely simple over $\F$, then the above endomorphisms are just scalar multiplications by $\F$,
implying that the $A_G$-module ${A_G}\overline{\otimes}_{A_e}M$ is absolutely graded simple over $\F$.
\end{proof}

\section{Graded products}\label{actongral}
Two $G$-graded $\F$-spaces
\begin{equation}\label{twoalgebragrading}
\mathcal{G}:A_G=\bigoplus _{g\in G} A_g,\ \  \mathcal{G}':A'_G=\bigoplus _{g\in G} A'_g
\end{equation}
determine a new $G$-graded $\F$-space
\begin{equation}\label{Gprod}
\mathcal{G}\otimes_{\F}^G\mathcal{G}':A_G\otimes_{\F}^GA_G'=\bigoplus _{g\in G} A_g\otimes_{\F}A'_g\subseteq A_G\otimes_{\F} A'_G,
\end{equation}
namely the \textit{graded product} of $A_G$ and $A'_G$. If \eqref{twoalgebragrading} are $G$-graded algebras, then their graded product is a subalgebra of the tensor product
$A_G\otimes_{\F} A'_G$, and \eqref{Gprod} describes its $G$-graded algebra structure.
In particular, the base algebra of the $G$-graded algebra $A_G\otimes_{\F}^GA'_G$ is $A_e\otimes_{\F}A'_e$. The following claim is immediate.
\begin{lemma}\label{wdgp}
Let $\varphi:A_G\stackrel{G}\to B_G$ and $\varphi':A'_G\stackrel{G}\to B'_G$ be graded homomorphisms of $G$-graded $\F$-algebras. Then
$$\begin{array}{ccc}
A_G\otimes_{\F}^{\small{G}}A'_G&\to& B_G\otimes_{\F}^{\small{G}}B'_G\\
a_g\otimes_{\F}a'_g&\mapsto&\varphi(a_g)\otimes_{\F}\varphi'(a'_g)
\end{array}$$
is a graded homomorphism of the corresponding graded products. Consequently, graded products respect graded isomorphisms.
\end{lemma}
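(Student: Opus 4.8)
The plan is to realize the displayed map as the restriction to the graded product subalgebras of the ordinary tensor product homomorphism $\varphi\otimes_{\F}\varphi'$. First I would invoke the standard fact that, for $\F$-algebra homomorphisms $\varphi:A_G\to B_G$ and $\varphi':A'_G\to B'_G$, the $\F$-bilinear rule $(a,a')\mapsto\varphi(a)\otimes_{\F}\varphi'(a')$ induces a well-defined $\F$-linear map $\varphi\otimes_{\F}\varphi':A_G\otimes_{\F}A'_G\to B_G\otimes_{\F}B'_G$. This map is multiplicative, since on simple tensors $(\varphi\otimes_{\F}\varphi')\bigl((a_1\otimes_{\F}a'_1)(a_2\otimes_{\F}a'_2)\bigr)=\varphi(a_1a_2)\otimes_{\F}\varphi'(a'_1a'_2)=\bigl(\varphi(a_1)\varphi(a_2)\bigr)\otimes_{\F}\bigl(\varphi'(a'_1)\varphi'(a'_2)\bigr)$ and such tensors span the source over $\F$; it is unital because $\varphi$ and $\varphi'$ are (the paper's standing convention).

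Next I would check that $\varphi\otimes_{\F}\varphi'$ carries the graded product subalgebra into the graded product subalgebra. Because $\varphi$ and $\varphi'$ are graded homomorphisms over the identity map of $G$, we have $\varphi(A_g)\subseteq B_g$ and $\varphi'(A'_g)\subseteq B'_g$ for every $g\in G$, hence $(\varphi\otimes_{\F}\varphi')(A_g\otimes_{\F}A'_g)\subseteq B_g\otimes_{\F}B'_g$. Summing over $g$ and recalling the definition \eqref{Gprod} of the graded product, the restriction of $\varphi\otimes_{\F}\varphi'$ to $A_G\otimes_{\F}^GA'_G$ takes values in $B_G\otimes_{\F}^GB'_G$ and sends the $g$-homogeneous component into the $g$-homogeneous component. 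The restriction of an algebra homomorphism to a subalgebra whose image lies in a subalgebra of the codomain is again a (unital) algebra homomorphism, and it sends $1_{A_e}\otimes_{\F}1_{A'_e}$ to $1_{B_e}\otimes_{\F}1_{B'_e}$; thus the induced map is a graded homomorphism of the graded products, with underlying group map $\mathrm{id}_G$. It coincides with the displayed map because it is determined by its effect on the homogeneous simple tensors $a_g\otimes_{\F}a'_g$.

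For the final clause, if $\varphi$ and $\varphi'$ are graded isomorphisms I would apply the same construction to $\varphi^{-1}$ and $(\varphi')^{-1}$, obtaining a graded homomorphism $B_G\otimes_{\F}^GB'_G\to A_G\otimes_{\F}^GA'_G$; on homogeneous simple tensors the two composites are clearly the respective identities (as $\varphi^{-1}\circ\varphi=\mathrm{id}$ and $(\varphi')^{-1}\circ\varphi'=\mathrm{id}$), and since such tensors span the graded products over $\F$ the composites are the identity maps. Hence the induced map is a graded isomorphism, so graded products respect graded isomorphisms. I do not anticipate any genuine obstacle: the one point requiring care is verifying that $\varphi\otimes_{\F}\varphi'$ actually restricts to the graded product, i.e. that the image of the $g$-th piece stays in degree $g$, which is exactly where the grading-compatibility of $\varphi$ and $\varphi'$ enters; everything else is bookkeeping.
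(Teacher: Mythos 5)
Your proof is correct and fills in exactly the routine verification the paper leaves implicit (the paper states the claim as ``immediate'' without proof). Restricting $\varphi\otimes_{\F}\varphi'$ to the graded product, checking degree preservation via $\varphi(A_g)\subseteq B_g$ and $\varphi'(A'_g)\subseteq B'_g$, and inverting componentwise for the isomorphism statement is precisely the intended argument.
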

Let Gr$(G,\F)$=Gr$_{\kappa}(G,\F)$ be the set of graded-isomorphism classes of $G$-graded $\F$-algebras, whose cardinality is bounded by some infinite cardinality $\kappa$. By Lemma \ref{wdgp} we have
\begin{theorem}\label{thmgrprod}
The rule
$$\begin{array}{ccc}
\text{Gr}(G,\F)\times\text{Gr}(G,\F)&\to&\text{Gr}(G,\F)\\
([\mathcal{G}],[\mathcal{G}'])&\mapsto &[\mathcal{G}\otimes_{\F}^G\mathcal{G}']
\end{array}$$
determines a well-defined operation which
turns Gr$(G,\F)$ into an abelian monoid, whose identity is the graded class of the ordinary group algebra (or the $L^1$-space of the {\it group bundle}) $[\F G]$.
\end{theorem}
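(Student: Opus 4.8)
The plan is to verify directly the three monoid axioms for the operation $([\mathcal{G}],[\mathcal{G}']) \mapsto [\mathcal{G} \otimes_{\F}^G \mathcal{G}']$ on $\text{Gr}(G,\F)$, after first observing that the operation is well-defined. Well-definedness is exactly the content of Lemma \ref{wdgp}: if $\mathcal{G} \stackrel{G}\cong \mathcal{B}$ and $\mathcal{G}' \stackrel{G}\cong \mathcal{B}'$, then applying the graded isomorphisms componentwise gives a graded isomorphism $A_G \otimes_{\F}^G A'_G \stackrel{G}\cong B_G \otimes_{\F}^G B'_G$, so the class $[\mathcal{G} \otimes_{\F}^G \mathcal{G}']$ depends only on $[\mathcal{G}]$ and $[\mathcal{G}']$. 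I should also remark that the cardinality bound $\kappa$ is respected: the underlying $\F$-space of $A_G \otimes_{\F}^G A'_G$ is a subspace of $A_G \otimes_{\F} A'_G$, whose dimension is at most $\kappa \cdot \kappa = \kappa$ (for infinite $\kappa$), so the graded product of two classes in $\text{Gr}_\kappa(G,\F)$ again lies in $\text{Gr}_\kappa(G,\F)$.

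Next I would check associativity and commutativity. For associativity, note that the canonical associativity isomorphism of the full tensor product $A_G \otimes_{\F} (A'_G \otimes_{\F} A''_G) \cong (A_G \otimes_{\F} A'_G) \otimes_{\F} A''_G$ sends $a_g \otimes_{\F} (a'_g \otimes_{\F} a''_g)$ to $(a_g \otimes_{\F} a'_g) \otimes_{\F} a''_g$; restricting to the degree-$g$ summands $A_g \otimes_{\F} A'_g \otimes_{\F} A''_g$ on each side shows it carries $A_G \otimes_{\F}^G (A'_G \otimes_{\F}^G A''_G)$ isomorphically onto $(A_G \otimes_{\F}^G A'_G) \otimes_{\F}^G A''_G$, and this isomorphism is manifestly degree-preserving and multiplicative, hence a graded isomorphism. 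Commutativity is analogous, using the flip $a_g \otimes_{\F} a'_g \mapsto a'_g \otimes_{\F} a_g$, which restricts from the graded-commutativity isomorphism $A_G \otimes_{\F} A'_G \cong A'_G \otimes_{\F} A_G$ of $\F$-algebras and respects the $G$-grading by construction. Both are routine once one records that every structure map is the restriction of the corresponding map on full tensor products.

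Finally I would identify the identity element. The claim is that $[\F G]$ — the class of the group algebra $\F G$ with its standard grading $(\F G)_g = \SpF\{v_g\}$ — is a two-sided (hence, by commutativity, genuine) identity. Given any $G$-graded $\F$-algebra \eqref{eq:algebragrading}, one has $(\F G \otimes_{\F}^G A_G)_g = (\F G)_g \otimes_{\F} A_g = \SpF\{v_g\} \otimes_{\F} A_g$, which is $\F$-linearly isomorphic to $A_g$ via $v_g \otimes_{\F} a_g \mapsto a_g$; assembling over $g \in G$ gives an $\F$-space isomorphism $\F G \otimes_{\F}^G A_G \stackrel{G}\to A_G$, and one checks it is multiplicative because $v_g v_h = v_{gh}$ in $\F G$ carries no cocycle. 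Thus $[\F G]\cdot[\mathcal{G}] = [\mathcal{G}]$ for all $[\mathcal{G}]$, completing the verification that $\text{Gr}(G,\F)$ is an abelian monoid with identity $[\F G]$.

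I do not anticipate a serious obstacle here; the only point requiring a little care is bookkeeping — making sure that each candidate isomorphism genuinely restricts from a map of full tensor products and genuinely preserves both the grading (degree $e$) and the multiplication, rather than merely the $\F$-space structure. The mild subtlety worth stating explicitly is that the graded product is a \emph{subalgebra}, not a quotient, of $A_G \otimes_{\F} A'_G$, so all the standard tensor-product natural isomorphisms must be checked to carry the graded-product subspace into the graded-product subspace; since each such isomorphism is homogeneous of degree $e$ on the full tensor products (with their total $G \times G$- or $G \times G \times G$-gradings restricted along the diagonal), this is immediate, but it is the step one should not skip.
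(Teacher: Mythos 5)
Your proposal is correct and takes essentially the same approach as the paper: the paper states the theorem immediately after Lemma \ref{wdgp}, leaving the routine verification of associativity, commutativity, and the identity $[\F G]$ to the reader, and you have simply filled in those checks explicitly. The one small imprecision is that $\text{Gr}_\kappa(G,\F)$ is defined by a bound on \emph{cardinality} rather than $\F$-dimension, but since $\F$ embeds in any unital $\F$-algebra the cardinality bookkeeping goes through the same way, so the conclusion is unaffected.
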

\subsection{Sub-monoids of Gr$(G,\F)$}\label{submonsec}
Let \eqref{twoalgebragrading} be two $G$-graded $\F$-algebras. Returning to the terminology of \S \ref{sgcpsect}, for every $g\in G$ there are two-sided ideals
$$\mathcal{I}_g=A_{g^{-1}}A_{g}\subseteq A_e,\ \ \ \ \mathcal{I}'_g=A'_{g^{-1}}A'_{g}\subseteq A'_e$$ of the corresponding base algebras. In turn, there is a two-sided ideal
$$(\mathcal{I}\otimes_{\F}^G\mathcal{I}')_g:=(A_{g^{-1}}\otimes_{\F}A'_{g^{-1}})(A_g\otimes_{\F}A'_g)\subseteq A_e\otimes_{\F}A'_e=(A_G\otimes_{\F}^GA'_G)_e$$
of the base algebra of the corresponding graded product.
It is not hard to show that
\begin{lemma}\label{strcomprod}
With the above notation, for every $g\in G$
$$(\mathcal{I}\otimes_{\F}^G\mathcal{I}')_g=\mathcal{I}_g\otimes_{\F}\mathcal{I}'_g.$$
In particular, with the notation \eqref{strcomp}
\begin{equation}\label{intersectrong}
(\mathcal{G}\otimes_{\F}^G\mathcal{G}')_{\text{str}}=\mathcal{G}_{\text{str}}\cap\mathcal{G}'_{\text{str}}.
\end{equation}
\end{lemma}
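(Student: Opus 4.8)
The plan is to prove the first displayed equality by a direct computation with spanning sets, and then to deduce \eqref{intersectrong} by unwinding the definition of a strong component.

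For the equality $(\mathcal{I}\otimes_{\F}^G\mathcal{I}')_g=\mathcal{I}_g\otimes_{\F}\mathcal{I}'_g$ I would first isolate the following purely linear-algebraic fact: for $\F$-subspaces $X_1,X_2\subseteq A$ and $Y_1,Y_2\subseteq A'$ of $\F$-algebras $A,A'$, where each $X_i\otimes_{\F}Y_j$ is identified with its image in $A\otimes_{\F}A'$ (legitimate since $-\otimes_{\F}-$ is exact), one has
\[
(X_1\otimes_{\F}Y_1)(X_2\otimes_{\F}Y_2)=(X_1X_2)\otimes_{\F}(Y_1Y_2)
\]
as subspaces of $A\otimes_{\F}A'$. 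Indeed, the left-hand side is $\SpF\{st\mid s\in X_1\otimes_{\F}Y_1,\ t\in X_2\otimes_{\F}Y_2\}$, and bilinearity of the multiplication of $A\otimes_{\F}A'$ reduces this to $\SpF\{(x_1\otimes_{\F}y_1)(x_2\otimes_{\F}y_2)\}=\SpF\{x_1x_2\otimes_{\F}y_1y_2\}$ over $x_i\in X_i$, $y_j\in Y_j$; bilinearity of $\otimes_{\F}$ shows that the right-hand side is exactly the same span. Taking $X_1=A_{g^{-1}}$, $X_2=A_g$, $Y_1=A'_{g^{-1}}$, $Y_2=A'_g$ then yields the claim.

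For \eqref{intersectrong}, recall that $g\in\mathcal{G}_{\text{str}}$ exactly when $\mathcal{I}_g=A_e$, and likewise $g\in(\mathcal{G}\otimes_{\F}^G\mathcal{G}')_{\text{str}}$ exactly when $(\mathcal{I}\otimes_{\F}^G\mathcal{I}')_g$ equals the base algebra $A_e\otimes_{\F}A'_e$ of the graded product. By the first part this last condition is $\mathcal{I}_g\otimes_{\F}\mathcal{I}'_g=A_e\otimes_{\F}A'_e$, so it suffices to show this holds if and only if $\mathcal{I}_g=A_e$ and $\mathcal{I}'_g=A'_e$. The ``if'' direction is immediate (equivalently, tensoring homogeneous unit decompositions \eqref{unitdecomp} for $A_g$ and for $A'_g$ produces one for $A_g\otimes_{\F}A'_g$). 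For ``only if'', suppose $\mathcal{I}_g\subsetneq A_e$ and pick a linear complement $A_e=\mathcal{I}_g\oplus C$ with $C\neq 0$; then $A_e\otimes_{\F}A'_e=(\mathcal{I}_g\otimes_{\F}A'_e)\oplus(C\otimes_{\F}A'_e)$ with $C\otimes_{\F}A'_e\neq 0$ (since $C\neq 0$ and $1\in A'_e$), so $\mathcal{I}_g\otimes_{\F}\mathcal{I}'_g\subseteq\mathcal{I}_g\otimes_{\F}A'_e$ is a proper subspace of $A_e\otimes_{\F}A'_e$, a contradiction; symmetrically $\mathcal{I}'_g=A'_e$.

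There is no serious obstacle here; the only point meriting care is that $A_e$ and $A'_e$ may be infinite-dimensional, so the complement $C$ above is furnished by Zorn's lemma and the nonvanishing and properness assertions for tensor products of subspaces should be justified by flatness of $-\otimes_{\F}-$ rather than by dimension counting. I would state the elementary tensor-product lemma once and then simply apply it, keeping the write-up short.
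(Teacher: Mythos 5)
Your proof is correct, and since the paper gives no proof (it only remarks ``It is not hard to show that\dots''), there is nothing for it to diverge from: the direct computation you carry out is evidently the intended argument. The one nontrivial point — that $\mathcal{I}_g\otimes_{\F}\mathcal{I}'_g = A_e\otimes_{\F}A'_e$ forces both $\mathcal{I}_g = A_e$ and $\mathcal{I}'_g = A'_e$ — is handled correctly via flatness of $-\otimes_{\F}-$ over a field and the observation that $A'_e\neq 0$, so the write-up is complete as stated.
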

Equation \eqref{intersectrong} yields
\begin{corollary}\label{strprodcor}
The algebra $A_G\otimes_{\F}^GA_G'$ is strongly graded iff so are both algebras $A_G$ and $A_G'$.
\end{corollary}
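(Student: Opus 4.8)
The plan is to deduce the corollary directly from equation \eqref{intersectrong} in Lemma \ref{strcomprod}, together with the characterization of strongly graded algebras recalled in \S\ref{sgcpsect}: a $G$-graded algebra $\mathcal{G}$ is strongly graded if and only if $\mathcal{G}_{\text{str}}=G$ (see \cite[Proposition 1.1.1]{NVO04}). So there is essentially no new work to do; the corollary is a short chain of equivalences.

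First I would apply this criterion to the graded product: $A_G\otimes_{\F}^GA'_G$ is strongly graded if and only if $(\mathcal{G}\otimes_{\F}^G\mathcal{G}')_{\text{str}}=G$. By \eqref{intersectrong} the left-hand side equals $\mathcal{G}_{\text{str}}\cap\mathcal{G}'_{\text{str}}$, so the condition becomes $\mathcal{G}_{\text{str}}\cap\mathcal{G}'_{\text{str}}=G$. Since both $\mathcal{G}_{\text{str}}$ and $\mathcal{G}'_{\text{str}}$ are subsets of $G$, their intersection equals $G$ precisely when $\mathcal{G}_{\text{str}}=G$ and $\mathcal{G}'_{\text{str}}=G$, that is, precisely when both $A_G$ and $A'_G$ are strongly graded. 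This finishes the argument.

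The only genuine content lies in Lemma \ref{strcomprod}, which in turn rests on the identity $(\mathcal{I}\otimes_{\F}^G\mathcal{I}')_g=\mathcal{I}_g\otimes_{\F}\mathcal{I}'_g$ for each $g\in G$; granting that, there is no real obstacle here. If one wanted to avoid citing \eqref{intersectrong} as a black box, the point to check is that $\mathcal{I}_g\otimes_{\F}\mathcal{I}'_g=A_e\otimes_{\F}A'_e$ forces $\mathcal{I}_g=A_e$ and $\mathcal{I}'_g=A'_e$, which follows by extending $\F$-bases of $\mathcal{I}_g$ and $\mathcal{I}'_g$ to $\F$-bases of $A_e$ and $A'_e$ and observing that a proper inclusion on either factor yields a proper inclusion of the tensor products. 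Since all of this is already packaged in the cited lemma, I would simply invoke it.
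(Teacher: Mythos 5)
Your proof is correct and follows the same route as the paper: invoke \eqref{intersectrong} from Lemma \ref{strcomprod} together with the criterion $\mathcal{G}_{\text{str}}=G$ for strong grading, and observe that $\mathcal{G}_{\text{str}}\cap\mathcal{G}'_{\text{str}}=G$ iff both factors equal $G$. Your aside on why $\mathcal{I}_g\otimes_{\F}\mathcal{I}'_g=A_e\otimes_{\F}A'_e$ forces $\mathcal{I}_g=A_e$ and $\mathcal{I}'_g=A'_e$ is also a correct account of the step hidden inside \eqref{intersectrong}.
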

From Corollary \ref{preserved} and from the ``if" direction of Corollary \ref{strprodcor} we obtain that the graded isomorphism classes of strongly $G$-graded $\F$-algebras form
a sub-monoid $\text{St}(G,\F)<\text{Gr}(G,\F)$.
From the ``only if" direction of Corollary \ref{strprodcor} we obtain that its complement $\text{Gr}(G,\F)\setminus\text{St}(G,\F)$ is a (prime) monoidal ideal of $\text{Gr}(G,\F)$.
We deduce that the sub-monoid $\text{St}(G,\F)$ admits a group of fractions $\overline{\text{St}(G,\F)}$, with
$$\overline{\text{St}(G,\F)}\cap\text{Gr}(G,\F)=\text{St}(G,\F).$$
Other important sub-monoids are introduced in the following claim.
\begin{lemma}\label{grprosgcp}
The grading classes of strongly $G$-graded $\F$-algebras, crossed products $\F$-algebras over $G$, skew group $\F$-algebras over $G$
and twisted group $\F$-algebras over $G$ are sub-monoids of $\text{Gr}(G,\F)$.
\end{lemma}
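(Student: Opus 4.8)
The plan is to check, for each of the four families separately, the three defining properties of a submonoid: that the family is a union of graded-isomorphism classes, so that it determines a well-defined subset of $\text{Gr}(G,\F)$; that it contains the identity class $[\F G]$; and that it is closed under the graded product $\otimes_{\F}^{G}$. Observe at the outset that $\F G$ belongs to all four families simultaneously: it is strongly graded, it is a crossed product with the group elements themselves serving as invertible homogeneous generators $u_{g}=g$, these satisfy $u_{g}u_{h}=u_{gh}$ so $\F G$ is a skew group algebra, and its base algebra is $\F$ so it is the twisted group algebra $\F^{1}G$. Hence the identity-element requirement is settled uniformly.

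For strongly graded algebras, closure under graded isomorphism is Corollary \ref{preserved}, and closure under graded products is the ``if'' direction of Corollary \ref{strprodcor}. For crossed products, closure under graded isomorphism is again Corollary \ref{preserved}, and closure under graded products rests on one elementary remark: if $A_{G}=\oplus_{g}A_{e}u_{g}$ and $A'_{G}=\oplus_{g}A'_{e}u'_{g}$ are crossed products with $u_{g}\in A_{g}$ and $u'_{g}\in A'_{g}$ invertible, then $u_{g}\otimes_{\F}u'_{g}$ lies in the homogeneous component $A_{g}\otimes_{\F}A'_{g}=(A_{G}\otimes_{\F}^{G}A'_{G})_{g}$ of the graded product and is invertible there, with inverse $u_{g}^{-1}\otimes_{\F}(u'_{g})^{-1}$; thus every homogeneous component of $A_{G}\otimes_{\F}^{G}A'_{G}$ contains an invertible element, which is exactly the crossed-product condition. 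Each such component is moreover nonzero, so the graded product is supported on all of $G$, consistently with the phrase ``over $G$''.

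The remaining two families are handled by refining the same computation. If the invertible generators above can in addition be chosen with trivial twistings, $u_{g}u_{h}=u_{gh}$ and $u'_{g}u'_{h}=u'_{gh}$, then $(u_{g}\otimes_{\F}u'_{g})(u_{h}\otimes_{\F}u'_{h})=u_{gh}\otimes_{\F}u'_{gh}$, so $\{u_{g}\otimes_{\F}u'_{g}\}_{g\in G}$ presents $A_{G}\otimes_{\F}^{G}A'_{G}$ as a skew group algebra. If instead $A_{G}=\F^{\alpha}G=\oplus_{g}\SpF\{v_{g}\}$ and $A'_{G}=\F^{\alpha'}G=\oplus_{g}\SpF\{v'_{g}\}$ are twisted group algebras, then each homogeneous component of the graded product is $\SpF\{v_{g}\}\otimes_{\F}\SpF\{v'_{g}\}=\SpF\{v_{g}\otimes_{\F}v'_{g}\}$, one-dimensional over $\F$, and in particular its base algebra $\F\otimes_{\F}\F\cong\F$ is one-dimensional; by Definition \ref{deftga} the graded product is again a twisted group algebra, explicitly $\F^{\alpha\alpha'}G$ with $\alpha\alpha'$ the pointwise product of the two $2$-cocycles. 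For these last two families closure under graded isomorphism is not literally Corollary \ref{preserved}, so the one point to note separately is that a graded isomorphism $\psi:A_{G}\stackrel{G}{\cong}B_{G}$ carries a system $\{u_{g}\}$ of invertible homogeneous elements to the system $\{\psi(u_{g})\}$ of invertible homogeneous elements (Lemma \ref{cphomim}), with the same twistings since $\psi$ is multiplicative, and preserves the dimensions of all homogeneous components; hence being a skew group algebra, respectively a twisted group algebra, over $G$ is a property of the graded-isomorphism class. There is no genuine obstacle here: the whole content is the observation that invertibility, trivial twisting, and one-dimensionality of homogeneous components all pass to tensor factors.
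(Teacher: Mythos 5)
Your proposal is correct and matches the paper's approach: the paper dismisses this lemma with ``By direct computation,'' and your argument is exactly the expected direct computation, spelled out. You correctly handle all three submonoid requirements for each family, including the often-overlooked point that the families of skew and twisted group algebras are not literally covered by Corollary \ref{preserved} and need the separate observation via Lemma \ref{cphomim}.
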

\begin{proof}
By direct computation.
\end{proof}
We focus in this section on two of the sub-monoids in Lemma \ref{grprosgcp}. Let us begin with twisted grading classes (Definition \ref{deftga}).
The grading classes $[\F^{\alpha}G]$ of twisted group algebras are exactly the invertible elements in $\text{Gr}(G,\F)$ as follows.
\begin{lemma}\label{invGr}
Let $\alpha,\alpha'\in Z^2(G,\F^*)$. Then
\begin{enumerate}
\item $[\F^{\alpha}G\otimes_{\F}^G\F^{\alpha'}G]=[\F^{\alpha\cdot\alpha'}G]$
\item $[\F^{\alpha}G]=[\F^{\alpha'}G]$ iff $[\alpha]=[\alpha']\in H^2(G,\F^*)$.
\item If $[\mathcal{G}\otimes_{\F}^G\mathcal{G}']=[\F G]$ then $\mathcal{G}$ and $\mathcal{G}'$ are twisted gradings.
\end{enumerate}
Consequently, the group of units $\text{Gr}(G,\F)^*$ is isomorphic to $H^2(G,\F^*)$.
\end{lemma}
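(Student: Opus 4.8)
The plan is to prove the three numbered assertions in turn and then read off the description of the group of units. For (1), fix homogeneous $\F$-bases $\{v_g\}_{g\in G}$ and $\{v'_g\}_{g\in G}$ of $\F^{\alpha}G$ and $\F^{\alpha'}G$ as in \eqref{tga}. The $g$-component of the graded product is $\SpF\{v_g\}\otimes_{\F}\SpF\{v'_g\}=\SpF\{v_g\otimes_{\F}v'_g\}$, which is one-dimensional, and $(v_g\otimes_{\F}v'_g)(v_h\otimes_{\F}v'_h)=v_gv_h\otimes_{\F}v'_gv'_h=\alpha(g,h)\alpha'(g,h)(v_{gh}\otimes_{\F}v'_{gh})$. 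Setting $w_g:=v_g\otimes_{\F}v'_g$ thus exhibits $\F^{\alpha}G\otimes_{\F}^G\F^{\alpha'}G$ as a twisted group algebra with cocycle $\alpha\cdot\alpha'$, and $w_g\mapsto$ (the degree-$g$ basis vector of $\F^{\alpha\cdot\alpha'}G$) is the desired graded isomorphism.

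For (2), the ``if'' direction is precisely the computation around \eqref{coboundary}: if $\alpha'(g,h)=\lambda_g\lambda_h\lambda_{gh}^{-1}\alpha(g,h)$ with $\{\lambda_g\}\subset\F^*$, then $v_g\mapsto\lambda_gv'_g$ extends to a graded isomorphism $\F^{\alpha}G\stackrel{G}{\cong}\F^{\alpha'}G$. Conversely, any graded isomorphism $\F^{\alpha}G\stackrel{G}{\to}\F^{\alpha'}G$ sends the degree-$g$ basis vector $v_g$ into the one-dimensional space $(\F^{\alpha'}G)_g$, i.e.\ $v_g\mapsto\lambda_gv'_g$ for some $\lambda_g\in\F^*$, and comparing the images of $v_gv_h$ on both sides forces $\alpha(g,h)=\lambda_g\lambda_h\lambda_{gh}^{-1}\alpha'(g,h)$, so $\alpha$ and $\alpha'$ are cohomologous.

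For (3), suppose $A_G\otimes_{\F}^GA'_G\stackrel{G}{\cong}\F G$. Its $e$-component is $A_e\otimes_{\F}A'_e\cong(\F G)_e=\F$; since $A_e$ and $A'_e$ are nonzero (each contains the identity), a dimension count forces $\dimF A_e=\dimF A'_e=1$. More generally, since the support of the graded product equals $\Supp_G(A_G)\cap\Supp_G(A'_G)$ and must be all of $G$, every $A_g$ and $A'_g$ is nonzero, and from $A_g\otimes_{\F}A'_g\cong(\F G)_g=\F$ we again get $\dimF A_g=\dimF A'_g=1$ for all $g$. Pick spanning vectors $v_g\in A_g$, $v'_g\in A'_g$, so $v_gv_h=c(g,h)v_{gh}$ and $v'_gv'_h=c'(g,h)v'_{gh}$ for scalars $c(g,h),c'(g,h)\in\F$, with $c$ and $c'$ satisfying the $2$-cocycle identity by associativity. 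In $\F G$ the product of two nonzero homogeneous elements is nonzero, and this property passes through the graded isomorphism; applying it to $v_g\otimes_{\F}v'_g$ and $v_h\otimes_{\F}v'_h$ and using $(v_g\otimes_{\F}v'_g)(v_h\otimes_{\F}v'_h)=c(g,h)c'(g,h)(v_{gh}\otimes_{\F}v'_{gh})$ gives $c(g,h)c'(g,h)\neq0$, hence $c,c'\in Z^2(G,\F^*)$. Therefore $A_G\stackrel{G}{\cong}\F^{c}G$ and $A'_G\stackrel{G}{\cong}\F^{c'}G$ are twisted gradings (and $[c\cdot c']=1$ by (2)).

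Finally, by (1) the assignment $[\alpha]\mapsto[\F^{\alpha}G]$ is a monoid homomorphism $\Phi\colon H^2(G,\F^*)\to\Gr(G,\F)$, well defined by the ``if'' part of (2) and injective by its ``only if'' part. Its image consists of units, since $[\F^{\alpha}G]\cdot[\F^{\alpha^{-1}}G]=[\F^{\alpha\cdot\alpha^{-1}}G]=[\F G]$, the identity of $\Gr(G,\F)$ by Theorem \ref{thmgrprod}. Conversely, if $[\mathcal{G}]\in\Gr(G,\F)^*$ with inverse $[\mathcal{G}']$, then $[\mathcal{G}\otimes_{\F}^G\mathcal{G}']=[\F G]$ and (3) shows $\mathcal{G}$ is a twisted grading, so $[\mathcal{G}]=[\F^{\alpha}G]$ for some $\alpha$. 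Hence $\Phi$ corestricts to a bijective monoid homomorphism onto $\Gr(G,\F)^*$, which between groups is a group isomorphism $H^2(G,\F^*)\cong\Gr(G,\F)^*$. The one delicate point is step (3): that $A_e\otimes_{\F}A'_e\cong\F$ genuinely forces each factor to be one-dimensional, that this propagates to all homogeneous components via the support identity, and that the nonvanishing of products of homogeneous elements of $\F G$ survives the graded isomorphism so as to make all the structure constants invertible; the rest is routine cocycle bookkeeping.
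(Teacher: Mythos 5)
Your proof is correct and follows essentially the same route as the paper. The paper defers (1) to Theorem \ref{pullback} (of which your direct computation is the special case $\Gamma=\Gamma'=G$, as the paper itself notes after Corollary \ref{formackey}), cites an external reference for (2), and gives only a terse sketch of (3); you supply exactly the details that the paper outsources or compresses, including the clean observation that the nonvanishing of homogeneous products in $\F G$ pulls back through the graded isomorphism to force the structure constants $c,c'$ into $\F^*$.
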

\begin{proof}
The first claim is proven under the more general setting of Theorem \ref{pullback} in the sequel.
For the proof of (2) see, e.g., \cite[Prop. 2.4]{GS16}, whereas in order to prove (3) note that the homogeneous components of $G$-graded $\F$-algebras,
whose grading class is invertible, should be one-dimensional over $\F$ and admit homogeneous units. By Definition \ref{deftga} these are just the twisted group algebras.
\end{proof}
\begin{notation}\label{notsbmon}
We denote the sub-monoids of $\text{Gr}(G,\F)$, consisting of grading classes of skew group $\F$-algebras over $G$, crossed products $\F$-algebras over $G$, and strongly $G$-graded $\F$-algebras, by
\begin{equation}\label{sbmons}
\text{Sk}(G,\F)<\text{CP}(G,\F)<\text{St}(G,\F)
\end{equation}
respectively.
By Lemma \ref{invGr}, we identify the sub-monoid of $\text{Gr}(G,\F)$, consisting of grading classes of twisted group $\F$-algebras over $G$ with the cohomology group $H^2(G,\F^*)(<\text{CP}(G,\F))$.
Note that $$H^2(G,\F^*)\cap\text{Sk}(G,\F)=\{[\F G]\}=\{1\}.$$
\end{notation}
\subsection{Twisted graded algebras}\label{tgasec}
Quite a few notions of algebra twistings were posed in the literature (see, e.g. the symmetric twists for graded algebras over abelian groups \cite[Definition 3.1]{E18}). Here is our suggestion.
\begin{definition}\label{twistedalg}
The {\it twisted graded algebra} of \eqref{eq:algebragrading} by a 2-cocycle $\alpha\in Z^2(G,\F^*)$ is the graded product of \eqref{tga} and $\mathcal{G}$, that is
\begin{equation}\label{twist}
\alpha(\mathcal{G}):=\F^{\alpha} G\otimes_{\F}^G\mathcal{G},\ \ \alpha\in Z^2(G,\F^*),[\mathcal{G}]\in\text{Gr}(G,\F).
\end{equation}
\end{definition}

Owing to Lemma \ref{invGr}(2), we can think of $H^2(G,\F^*)$ as acting on $\text{Gr}(G,\F)$ via
\begin{equation}\label{Z2actgr}
[\alpha]([\mathcal{G}]):=[\alpha(\mathcal{G})]=[\F^{\alpha} G\otimes_{\F}^G\mathcal{G}]
\end{equation}
for every $[\alpha]\in H^2(G,\F^*)$ and $[\mathcal{G}]\in\text{Gr}(G,\F)$. The following claim is straightforward.
\begin{lemma}\label{basetwist}
With the notation \eqref{tga}, the rule
\begin{eqnarray}\label{samebase}
\begin{array}{ccl}
A_e&\to&\text{Span}_{\F}\{v_e\}\otimes_{\F}A_e\\
x&\mapsto& {v_e}\otimes_{\F}x
\end{array}
\end{eqnarray}
determines an isomorphism between the base algebras of $\mathcal{G}$ and its twisting ${\alpha}(\mathcal{G})$.
\end{lemma}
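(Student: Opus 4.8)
The plan is to verify directly that the $\F$-linear rule in \eqref{samebase} is a unital algebra isomorphism onto the base algebra of the twisting. First I would pin down the target. By the general description of graded products recorded after \eqref{Gprod}, the base algebra of $\alpha(\mathcal{G})=\F^{\alpha}G\otimes_{\F}^{G}\mathcal{G}$ is $(\F^{\alpha}G)_{e}\otimes_{\F}A_{e}$, and since $(\F^{\alpha}G)_{e}=\mathrm{Span}_{\F}\{v_{e}\}$ by \eqref{tga}, this is exactly $\mathrm{Span}_{\F}\{v_{e}\}\otimes_{\F}A_{e}$, the codomain displayed in \eqref{samebase}. The base algebra of $\mathcal{G}$ itself is $A_{e}$, so \eqref{samebase} is indeed a map between the two base algebras in question.

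Next I would check bijectivity. Because $\mathrm{Span}_{\F}\{v_{e}\}$ is one-dimensional over $\F$, the canonical $\F$-linear map $\F\otimes_{\F}A_{e}\to\mathrm{Span}_{\F}\{v_{e}\}\otimes_{\F}A_{e}$ carrying $1\otimes_{\F}x$ to $v_{e}\otimes_{\F}x$ is an isomorphism of $\F$-spaces; precomposing with the identification $A_{e}\cong\F\otimes_{\F}A_{e}$ shows that $x\mapsto v_{e}\otimes_{\F}x$ is an $\F$-linear bijection.

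It then remains to confirm compatibility with multiplication and units, and here I would work with a normalized representative $\alpha$, so that $\alpha(e,-)=\alpha(-,e)=1$ and $v_{e}=1_{\F^{\alpha}G}$, as permitted by the normalization discussion following \eqref{coboundary}. The multiplication of the tensor product algebra then yields $(v_{e}\otimes_{\F}x)(v_{e}\otimes_{\F}y)=(v_{e}v_{e})\otimes_{\F}(xy)=\alpha(e,e)\,v_{e}\otimes_{\F}(xy)=v_{e}\otimes_{\F}(xy)$ for all $x,y\in A_{e}$, and $v_{e}\otimes_{\F}1_{A_{e}}=1\otimes 1$ is the identity of $(\F^{\alpha}G)_{e}\otimes_{\F}A_{e}$. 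Hence \eqref{samebase} is an algebra isomorphism. I do not anticipate a genuine obstacle; the only point requiring care is to pass to a normalized cocycle (equivalently, to take $v_{e}$ to be the unit of $\F^{\alpha}G$) before checking multiplicativity, since for a non-normalized $\alpha$ the naive rule would be off by the scalar $\alpha(e,e)\in\F^{*}$.
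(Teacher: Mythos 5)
Your proof is correct, and in fact more detailed than the paper, which simply declares the claim ``straightforward'' and offers no argument. The three steps — identifying the base algebra of the graded product as $(\F^{\alpha}G)_{e}\otimes_{\F}A_{e}=\mathrm{Span}_{\F}\{v_{e}\}\otimes_{\F}A_{e}$, observing bijectivity from one-dimensionality, and checking unitality and multiplicativity via $(v_{e}\otimes_{\F}x)(v_{e}\otimes_{\F}y)=\alpha(e,e)\,v_{e}\otimes_{\F}(xy)$ — are exactly what is needed. Your parenthetical caution about normalization is a genuine observation: as stated with an arbitrary graded basis $\{v_g\}$, the displayed rule is multiplicative only when $\alpha(e,e)=1$, so one should either pass to a normalized representative (as the paper permits in the discussion after \eqref{coboundary}) or, equivalently, choose $v_e$ to be the identity of $\F^{\alpha}G$; the paper tacitly assumes this but does not flag it.
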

We thus regard classes in the same $H^2(G,\F^*)$-orbit in $\text{Gr}(G,\F)$ as having the same base algebra.
It is convenient (e.g., in order to prove Lemma \ref{grprosgcp}) to take $\{u_g\otimes_{\F}u'_g\}_{g\in G}$
as a set of invertible homogeneous elements for the crossed product obtained from the $G$-graded product of the crossed products
$A_e*G=\oplus_{g\in G}A_eu_g$ and $A'_e*G=\oplus_{g\in G}A_eu'_g$.
We record the special case of twisted crossed product in
\begin{lemma}\label{ugvg}
Let $\alpha(A_e*G):=\F^{\alpha} G\otimes_{\F}^GA_e*G$ be the graded product of a twisted group algebra
$\F^{\alpha} G:=\oplus_{g\in G}\text{Span}_{\F}\{v_g\}$ and a crossed product $A_e*G=\oplus_{g\in G}A_eu_g$.
Then \begin{equation}\label{twicp}\alpha(A_e*G)\stackrel{G}{\cong}\oplus_{g\in G}A_e(v_g\otimes_{\F} u_g),\end{equation}
in particular $\alpha(A_e*G)$ is a crossed product of $G$ over $A_e$.
\end{lemma}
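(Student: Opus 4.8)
The plan is to unwind the definition of the graded product and exhibit the explicit homogeneous components. First I would note that by \eqref{Gprod}, for every $g\in G$ the $g$-homogeneous component of $\alpha(A_e*G)=\F^{\alpha}G\otimes_{\F}^G(A_e*G)$ is
$$(\F^{\alpha}G)_g\otimes_{\F}(A_e*G)_g=\SpF\{v_g\}\otimes_{\F}A_eu_g.$$
Taking $g=e$ and using the normalizations $v_e=1\in\F^{\alpha}G$ and $u_e=1\in A_e*G$, the base algebra is $\SpF\{v_e\}\otimes_{\F}A_eu_e=\F\otimes_{\F}A_e$, which Lemma \ref{basetwist} identifies with $A_e$ via $a\mapsto v_e\otimes_{\F}a$. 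Under this identification, using $\F$-bilinearity of $\otimes_{\F}$ together with $v_ev_g=v_g$ and $u_eu_g=u_g$, one checks that $\SpF\{v_g\}\otimes_{\F}A_eu_g=\{(v_e\otimes_{\F}a)(v_g\otimes_{\F}u_g):a\in A_e\}=A_e\cdot(v_g\otimes_{\F}u_g)$ as a left $A_e$-module. Summing over $g$ yields the decomposition $\alpha(A_e*G)\stackrel{G}{\cong}\bigoplus_{g\in G}A_e(v_g\otimes_{\F}u_g)$ asserted in \eqref{twicp}.

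It then remains to see that each homogeneous element $v_g\otimes_{\F}u_g$ is invertible, so that this is a crossed product of $G$ over $A_e$. For this I would compute, using the multiplication inherited from $\F^{\alpha}G\otimes_{\F}(A_e*G)$ and the twisting $f$ of \eqref{fdef} of the crossed product $A_e*G$,
$$(v_g\otimes_{\F}u_g)(v_{g^{-1}}\otimes_{\F}u_{g^{-1}})=v_gv_{g^{-1}}\otimes_{\F}u_gu_{g^{-1}}=\alpha(g,g^{-1})\bigl(v_e\otimes_{\F}f(g,g^{-1})u_e\bigr),$$
which under the identification above is the element $\alpha(g,g^{-1})f(g,g^{-1})\in A_e$; since $\alpha(g,g^{-1})\in\F^*$ and $f(g,g^{-1})\in A_e^*$, this is a unit of $A_e$. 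A symmetric computation with $(v_{g^{-1}}\otimes_{\F}u_{g^{-1}})(v_g\otimes_{\F}u_g)$ produces the unit $\alpha(g^{-1},g)f(g^{-1},g)\in A_e^*$. Hence $v_g\otimes_{\F}u_g$ has both a left and a right inverse in $\alpha(A_e*G)$ — suitable $A_e$-multiples of $v_{g^{-1}}\otimes_{\F}u_{g^{-1}}$ — and is therefore invertible; alternatively this is exactly the content of the remark preceding the lemma, i.e.\ of Lemma \ref{grprosgcp} applied to the two crossed products $\F^{\alpha}G$ and $A_e*G$. Either way, $\alpha(A_e*G)$ is a $G$-graded algebra each of whose homogeneous components contains an invertible element and whose base algebra is $A_e$, so it is a crossed product $A_e*G$.

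The only step requiring a little care — and the one I would most want to double-check — is the bookkeeping with the two normalizations and with $\F$-bilinearity: one must make sure scalars really float freely across the tensor sign, so that $\SpF\{v_g\}\otimes_{\F}A_eu_g$ coincides with $A_e(v_g\otimes_{\F}u_g)$ (and not merely with an $\F$-span thereof), and that the identification of the base algebra with $A_e$ from Lemma \ref{basetwist} is compatible with the left $A_e$-action used in \eqref{twicp}. There is no genuine obstacle beyond this routine verification.
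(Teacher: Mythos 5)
Your argument is correct, and it is essentially the paper's (unstated) proof: the paper declares Lemma~\ref{ugvg} to be ``the special case'' of the observation immediately preceding it, namely that the graded product of two crossed products $A_e*G$ and $A'_e*G$ is a crossed product with invertible homogeneous elements $\{u_g\otimes_{\F}u'_g\}_{g\in G}$ (a computation the paper also leaves implicit in the proof of Lemma~\ref{grprosgcp}). Your write-up is just the fleshed-out ``direct computation'': identifying the homogeneous components via \eqref{Gprod}, letting scalars float across $\otimes_{\F}$ to see that $\SpF\{v_g\}\otimes_{\F}A_eu_g=A_e(v_g\otimes_{\F}u_g)$ under the Lemma~\ref{basetwist} identification of the base algebra with $A_e$, and verifying that $(v_g\otimes_{\F}u_g)(v_{g^{-1}}\otimes_{\F}u_{g^{-1}})=\alpha(g,g^{-1})v_e\otimes_{\F}f(g,g^{-1})u_e$ is a unit in $A_e$ (and symmetrically on the other side), so each $v_g\otimes_{\F}u_g$ is invertible. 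The point you flag as needing care --- compatibility of the base-algebra identification with the normalizations $v_e=1$, $u_e=1$ and the left $A_e$-action --- is indeed the only bookkeeping step, and it goes through as you describe.
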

Furthermore, the outer action \eqref{outer} of $G$ on the base algebra $A_e$ in the twisted crossed product \eqref{twicp} is given by the conjugation
\begin{equation}\label{outerA}
(v_g\otimes_{\F}u_g)(v_e\otimes_{\F}r)(v_g\otimes_{\F}u_g)^{-1}=v_e\otimes_{\F}u_gru_g^{-1}
\end{equation} for every $g\in G, r\in A_e$.
From Lemma \ref{ugvg} and equation \eqref{outerA} we infer
\begin{corollary}\label{sameactions}
With the identification \eqref{samebase}, crossed products in a $Z^2(G,\F^*)$-orbit admit the same $G$-outer action on $A_e$.
\end{corollary}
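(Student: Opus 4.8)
The statement to prove is Corollary \ref{sameactions}: with the identification \eqref{samebase}, crossed products in a $Z^2(G,\F^*)$-orbit admit the same $G$-outer action on $A_e$.

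\medskip

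The plan is to trace through what the $Z^2(G,\F^*)$-orbit of a crossed product looks like at the level of homogeneous components, and then read off the outer action. First I would fix a crossed product $A_e*G = \oplus_{g\in G} A_e u_g$ and an arbitrary $\alpha \in Z^2(G,\F^*)$, and invoke Lemma \ref{ugvg}: the twisted graded algebra $\alpha(A_e*G) = \F^{\alpha}G \otimes_{\F}^G (A_e*G)$ is graded isomorphic to the crossed product $\oplus_{g\in G} A_e (v_g \otimes_{\F} u_g)$, so that a natural system of invertible homogeneous elements for $\alpha(A_e*G)$ is $\{v_g \otimes_{\F} u_g\}_{g\in G}$. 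Under the base-algebra identification \eqref{samebase}, which sends $r \in A_e$ to $v_e \otimes_{\F} r$, the relevant conjugation formula is precisely \eqref{outerA}:
\[
(v_g \otimes_{\F} u_g)(v_e \otimes_{\F} r)(v_g \otimes_{\F} u_g)^{-1} = v_e \otimes_{\F} (u_g r u_g^{-1}),
\]
which is valid because $v_g$ is central up to the scalar $\alpha$-factors and those scalars cancel in the conjugation (conjugation kills the $\F^*$-ambiguity). Hence the conjugation automorphism $\iota_{v_g \otimes u_g}$ of the base algebra, transported via \eqref{samebase}, is exactly $\iota_{u_g}$.

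\medskip

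The remaining point is that the outer action \eqref{outer} is, by construction, independent of the choice of invertible homogeneous representatives: any two choices of invertible elements in a given fiber $A_g$ differ by multiplication by a unit of $A_e$, and this alters the conjugation automorphism only by an inner automorphism of $A_e$, hence not at all in $\text{Out}(A_e)$ (this is exactly the remark made just before Definition \ref{skewdef}). Therefore the outer action of $G$ on $A_e$ attached to $\alpha(A_e*G)$ — computed using the representatives $\{v_g \otimes_{\F} u_g\}$ and then identified with $A_e$ via \eqref{samebase} — is the map $g \mapsto \iota_{u_g}\cdot\text{Inn}(A_e)$, which is the outer action attached to the original crossed product $A_e*G$. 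Since every member of the $Z^2(G,\F^*)$-orbit of $[A_e*G]$ is of the form $[\alpha(A_e*G)]$ for some $\alpha \in Z^2(G,\F^*)$, and all of these yield the same outer action, the corollary follows.

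\medskip

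I do not expect a genuine obstacle here; the corollary is a direct bookkeeping consequence of Lemma \ref{ugvg} and equation \eqref{outerA}. The only mild subtlety worth spelling out is the compatibility of the base-algebra identification \eqref{samebase} with the conjugation action — i.e. that \eqref{outerA} really does say $\iota_{v_g\otimes u_g}$ corresponds to $\iota_{u_g}$ under \eqref{samebase} rather than to some twisted version of it — but this is immediate from the displayed formula, since the left-hand side applied to $v_e\otimes_{\F} r$ returns $v_e\otimes_{\F}(u_g r u_g^{-1})$, i.e. the image under \eqref{samebase} of $\iota_{u_g}(r)$.
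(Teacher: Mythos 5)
Your proof is correct and follows essentially the same route the paper takes: the paper states that the corollary follows directly from Lemma \ref{ugvg} and equation \eqref{outerA}, and your argument simply spells out that inference — reading off the crossed-product structure of $\alpha(A_e*G)$ from Lemma \ref{ugvg}, computing the conjugation on the base algebra via \eqref{outerA}, and noting that the outer action is independent of the choice of invertible homogeneous representatives. Nothing to add.
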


\subsection{Graded products of quotient graded twisted group algebras}\label{secpb}
Here is an appealing family of examples for graded products.
Let
\begin{equation}\label{beta12}
\beta:1\to N\xrightarrow{\epsilon} \Gamma \xrightarrow{\pi} G\to 1\text{   and } \ \beta':1\to N'\xrightarrow{\epsilon'} \Gamma' \xrightarrow{\pi'} G\to 1
\end{equation}
be two group extensions of $G$ by $N$ and $N'$ respectively.
Evidently, there is no sense in a graded product of a $\Gamma$-graded $\F$-algebra $A_{\Gamma}$ and a $\Gamma'$-graded $\F$-algebra $A_{\Gamma'}$.
However, their quotient $G$-gradings under the graded quotient homomorphisms $(1_{A_{\Gamma}},\pi)$ and $(1_{A_{\Gamma'}},\pi')$ respectively (see \eqref{quotgrad}) do admit a $G$-graded product,
whose base algebra is $A_N\otimes_{\F}A_{N'}$. When $A_{\Gamma}$ and  $A_{\Gamma'}$ are twisted group algebras, their $G$-graded product is described using the pull-back
\begin{equation}\label{pb}\Gamma\times_G\Gamma':=\{(\gamma,\gamma')\in\Gamma\times\Gamma'|\ \ \pi(\gamma)=\pi'(\gamma')\}.\end{equation}
\begin{theorem}\label{pullback}
Let \eqref{beta12} be two group extensions of $G$, and let $c\in Z^2(\Gamma,\F^*)$ and $c'\in Z^2(\Gamma',\F^*)$. Then
$$\F^{c}\Gamma\otimes^G_{\F}\F^{c'}\Gamma'\stackrel{G}{\cong}\F^{c\times_G c'}(\Gamma\times_G\Gamma'),$$
where $c\times_G c'\in Z^2(\Gamma\times_G\Gamma',\F^*)$ is given by
$$(c\times_G c')((\gamma_1,\gamma_1'),(\gamma_2,\gamma_2')):=
c(\gamma_1,\gamma_2)\cdot c'(\gamma_1',\gamma_2'),\ \forall (\gamma_1,\gamma_1'),(\gamma_2,\gamma_2')\in\Gamma\times_G\Gamma'.$$
\end{theorem}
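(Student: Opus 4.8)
The plan is to construct an explicit graded isomorphism between the graded product $\F^{c}\Gamma\otimes^G_{\F}\F^{c'}\Gamma'$ and the twisted group algebra $\F^{c\times_G c'}(\Gamma\times_G\Gamma')$, and then check that it is multiplicative and homogeneous of trivial degree. First I would fix the standard $\F$-bases: write $\F^{c}\Gamma=\bigoplus_{\gamma\in\Gamma}\SpF\{v_\gamma\}$ with $v_{\gamma_1}v_{\gamma_2}=c(\gamma_1,\gamma_2)v_{\gamma_1\gamma_2}$, similarly $\F^{c'}\Gamma'=\bigoplus_{\gamma'\in\Gamma'}\SpF\{v'_{\gamma'}\}$, and $\F^{c\times_G c'}(\Gamma\times_G\Gamma')=\bigoplus_{(\gamma,\gamma')\in\Gamma\times_G\Gamma'}\SpF\{w_{(\gamma,\gamma')}\}$. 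Recall from \eqref{quotgrdef} that the $G$-gradings in play are the quotient gradings: $(\F^{c}\Gamma)_{gN}=\bigoplus_{\gamma\in\pi^{-1}(g)}\SpF\{v_\gamma\}$ for $g\in G$ (identifying $G$ with $\Gamma/N$ via $\pi$), and analogously on the primed side and on $\Gamma\times_G\Gamma'$ via the induced surjection onto $G$. The graded product, by \eqref{Gprod}, is therefore $\bigoplus_{g\in G}\bigl(\bigoplus_{\pi(\gamma)=g}\SpF\{v_\gamma\}\bigr)\otimes_{\F}\bigl(\bigoplus_{\pi'(\gamma')=g}\SpF\{v'_{\gamma'}\}\bigr)$, and the crucial observation is that this is spanned exactly by the tensors $v_\gamma\otimes_{\F}v'_{\gamma'}$ with $\pi(\gamma)=\pi'(\gamma')$, i.e.\ with $(\gamma,\gamma')\in\Gamma\times_G\Gamma'$.

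Next I would define the $\F$-linear map $\Phi$ by $\Phi(w_{(\gamma,\gamma')}):=v_\gamma\otimes_{\F}v'_{\gamma'}$ on basis elements and extend linearly; by the previous paragraph this is a bijection between the spanning sets, hence an $\F$-linear isomorphism. Then I would verify multiplicativity: on one side, $w_{(\gamma_1,\gamma_1')}w_{(\gamma_2,\gamma_2')}=(c\times_G c')\bigl((\gamma_1,\gamma_1'),(\gamma_2,\gamma_2')\bigr)\,w_{(\gamma_1\gamma_2,\gamma_1'\gamma_2')}=c(\gamma_1,\gamma_2)c'(\gamma_1',\gamma_2')\,w_{(\gamma_1\gamma_2,\gamma_1'\gamma_2')}$, noting that $(\gamma_1\gamma_2,\gamma_1'\gamma_2')$ indeed lies in $\Gamma\times_G\Gamma'$ since the latter is a subgroup; on the other side, $(v_{\gamma_1}\otimes_{\F}v'_{\gamma_1'})(v_{\gamma_2}\otimes_{\F}v'_{\gamma_2'})=v_{\gamma_1}v_{\gamma_2}\otimes_{\F}v'_{\gamma_1'}v'_{\gamma_2'}=c(\gamma_1,\gamma_2)c'(\gamma_1',\gamma_2')\,(v_{\gamma_1\gamma_2}\otimes_{\F}v'_{\gamma_1'\gamma_2'})$. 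Applying $\Phi$ to the former gives the latter, so $\Phi$ is an algebra homomorphism; being bijective it is an algebra isomorphism. Finally, homogeneity of trivial degree: if $(\gamma,\gamma')\in\Gamma\times_G\Gamma'$ has image $g\in G$ under the induced map, then $w_{(\gamma,\gamma')}$ sits in the $g$-component of $\F^{c\times_G c'}(\Gamma\times_G\Gamma')$ and $v_\gamma\otimes_{\F}v'_{\gamma'}$ sits in the $g$-component of the graded product, so $\Phi$ preserves the $G$-grading on the nose; hence $\Phi$ realizes a graded isomorphism $\stackrel{G}{\cong}$ in the sense of \S\ref{grmorph}.

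I should also dispatch two bookkeeping points. One is that $c\times_G c'$ as defined really is a $2$-cocycle on $\Gamma\times_G\Gamma'$: this is immediate because it is the pointwise product of the restrictions (pullbacks along the coordinate projections $\Gamma\times_G\Gamma'\to\Gamma$ and $\to\Gamma'$) of the cocycles $c$ and $c'$, and restriction and pullback of cocycles, as well as products of cocycles, again yield cocycles — alternatively, this falls out for free from the computation above, since associativity in the graded product algebra forces the cocycle identity on $c\times_G c'$. The other is normalization: if $c,c'$ are chosen normalized then $(\gamma,\gamma')=(e,e)$ gives $(c\times_G c')$ normalized and $\Phi$ maps $w_{(e,e)}\mapsto v_e\otimes_{\F}v'_e$, matching identity elements; this is not strictly needed for the statement but is harmless to record.

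The only genuinely delicate step is the identification of spanning sets in the first paragraph — namely that the $G$-graded product of the two quotient-graded twisted group algebras has, in degree $g$, precisely $\SpF\{\,v_\gamma\otimes_{\F}v'_{\gamma'}\ :\ \pi(\gamma)=\pi'(\gamma')=g\,\}$ as its homogeneous component, so that over all $g\in G$ one recovers exactly a basis indexed by $\Gamma\times_G\Gamma'$. Once the fibered-product combinatorics of the grading is pinned down, everything else is the routine check that tensor-factorwise multiplication of twisted-group-algebra bases reproduces the product cocycle, which is straightforward.
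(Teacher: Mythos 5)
Your proposal is correct and takes essentially the same route as the paper's proof: identify the homogeneous $\F$-basis of the graded product as indexed by the pull-back $\Gamma\times_G\Gamma'$, then verify that multiplication of basis tensors reproduces the cocycle $c\times_G c'$. The paper leaves the isomorphism implicit after this computation; you simply spell out the map $\Phi$ and the degree check explicitly, which is a harmless amplification of the same argument.
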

\begin{proof}
Let $\{u_{\gamma}\}_{\gamma\in \Gamma}$ and $\{u'_{\gamma'}\}_{\gamma'\in \Gamma'}$ be $\F$-bases of invertible homogeneous elements for $\F^{c} \Gamma$ and $\F^{c'} \Gamma'$ respectively.
It is not hard to verify that
$$\left\{u_{\gamma}\otimes_{\F}u'_{\gamma'}|\ \pi(\gamma)=\pi'(\gamma')\right\}_{}=\left\{u_{\gamma}\otimes_{\F}u'_{\gamma'}\right\}_{(\gamma,\gamma')\in\Gamma\times\Gamma'}$$
is an $\F$-basis for $\F^{c}\Gamma\otimes^G_{\F}\F^{c'}\Gamma'$. The product of two such basis elements gives
$$\begin{array}{rl}
(u_{\gamma_1}\otimes_{\F}u'_{\gamma_1'})\cdot (u_{\gamma_2}\otimes_{\F}u'_{\gamma'_2})&=u_{\gamma_1}\cdot u_{\gamma_2}\otimes_{\F}u'_{\gamma_1'}\cdot u'_{\gamma'_2}\\ &=
c(\gamma_1,\gamma_2)\cdot u_{\gamma_1\cdot\gamma_2}\otimes_{\F}c'(\gamma'_1,\gamma'_2)\cdot u'_{\gamma_1'\cdot\gamma'_2}\\
 &=c(\gamma_1,\gamma_2)\cdot c'(\gamma'_1,\gamma'_2)\cdot ( u_{\gamma_1\cdot\gamma_2}\otimes_{\F}u'_{\gamma_1'\cdot\gamma'_2})\\
  &=(c\times_G c')((\gamma_1,\gamma_1'),(\gamma_2,\gamma_2'))\cdot ( u_{\gamma_1\cdot\gamma_2}\otimes_{\F}u'_{\gamma_1'\cdot\gamma'_2})
\end{array}$$
for every $(\gamma_1,\gamma_1'),(\gamma_2,\gamma_2')\in\Gamma\times_G\Gamma'.$ This proves the claim.
\end{proof}

Let $\F^c\Gamma$ be a twisted group algebra, graded by a quotient $G$ of $\Gamma$ (see \eqref{quotgrdef}). How does the twisting of this $G$-graded algebra by
$\alpha\in Z^2(G,\F^*)$ look like?
Recall that with the notation \eqref{beta12} the \textit{inflation} of $\alpha$ is a 2-cocycle of $\Gamma$ defined by
$$\text{inf}^G_{\Gamma}(\alpha)(\gamma_1,\gamma_2):=\alpha(\pi(\gamma_1),\pi(\gamma_2)).$$
It turns out that the twisting $\alpha(\F^c\Gamma)$ is $G$-graded isomorphic to the twisted group algebra $\F^{c''}\Gamma$,
where $c,c''\in Z^2(\Gamma,\F^*)$ differ by the inflation of $\alpha$ from $G$ to $\Gamma$.
More explicitly,
\begin{corollary}\label{formackey}
Let $c\in Z^2(\Gamma,\F^*)$ and $\alpha\in Z^2(G,\F^*)$, where $G=\Gamma/N$. Then
$$\alpha(\F^c\Gamma)\stackrel{G}{\cong}\F^{c\cdot\inf^G_{\Gamma}(\alpha)}\Gamma.$$
\end{corollary}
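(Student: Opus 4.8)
The plan is to recognize Corollary~\ref{formackey} as the special case of Theorem~\ref{pullback} in which the second extension $\beta'$ is the split extension $1\to G\xrightarrow{\mathrm{id}} G\xrightarrow{\mathrm{id}} G\to 1$ (so $N'=1$, $\Gamma'=G$, $\pi'=\mathrm{id}_G$), and $c'=\alpha$. First I would fix the extension $\beta:1\to N\xrightarrow{\epsilon}\Gamma\xrightarrow{\pi}G\to 1$ together with $c\in Z^2(\Gamma,\F^*)$, and observe that the $G$-grading on $\F^c\Gamma$ coming from the quotient $G=\Gamma/N$ as in~\eqref{quotgrdef} is precisely the quotient $G$-grading $(1_{\F^c\Gamma},\pi)(\F^c\Gamma)$ appearing in the setup of \S\ref{secpb}. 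Likewise, the $G$-graded algebra $\F^{\alpha}G$ with its twisted grading is the quotient $G$-grading of $\F^\alpha\Gamma'$ along $\pi'=\mathrm{id}_G$. Then by Definition~\ref{twistedalg}, $\alpha(\F^c\Gamma)=\F^\alpha G\otimes^G_\F\F^c\Gamma$, which (graded products being commutative up to graded isomorphism, Lemma~\ref{wdgp} / Theorem~\ref{thmgrprod}) is graded isomorphic to $\F^c\Gamma\otimes^G_\F\F^\alpha G$, exactly the left-hand side of Theorem~\ref{pullback} with the data above.

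Next I would compute the pull-back group $\Gamma\times_G G=\{(\gamma,g)\in\Gamma\times G\mid \pi(\gamma)=g\}$. The map $(\gamma,g)\mapsto\gamma$ is visibly a group isomorphism $\Gamma\times_G G\xrightarrow{\sim}\Gamma$, with inverse $\gamma\mapsto(\gamma,\pi(\gamma))$. Under this identification I would transport the cocycle $c\times_G\alpha$ supplied by Theorem~\ref{pullback}: by its defining formula,
$$(c\times_G\alpha)\big((\gamma_1,\pi(\gamma_1)),(\gamma_2,\pi(\gamma_2))\big)=c(\gamma_1,\gamma_2)\cdot\alpha\big(\pi(\gamma_1),\pi(\gamma_2)\big)=c(\gamma_1,\gamma_2)\cdot\inf^G_\Gamma(\alpha)(\gamma_1,\gamma_2),$$
using the very definition of inflation recalled just before the corollary. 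Hence the transported cocycle is $c\cdot\inf^G_\Gamma(\alpha)\in Z^2(\Gamma,\F^*)$, and the group isomorphism $\Gamma\times_G G\cong\Gamma$ lifts to a graded isomorphism $\F^{c\times_G\alpha}(\Gamma\times_G G)\stackrel{G}{\cong}\F^{c\cdot\inf^G_\Gamma(\alpha)}\Gamma$ of the associated twisted group algebras with their quotient $G$-gradings (one checks the $\F$-basis $\{u_{(\gamma,\pi(\gamma))}\}\leftrightarrow\{u_\gamma\}$ matches degrees, since $(\gamma,\pi(\gamma))$ lies over $\pi(\gamma)\in G$). Chaining the graded isomorphisms
$$\alpha(\F^c\Gamma)=\F^\alpha G\otimes^G_\F\F^c\Gamma\stackrel{G}{\cong}\F^c\Gamma\otimes^G_\F\F^\alpha G\stackrel{G}{\cong}\F^{c\times_G\alpha}(\Gamma\times_G G)\stackrel{G}{\cong}\F^{c\cdot\inf^G_\Gamma(\alpha)}\Gamma$$
gives the claim.

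The only genuinely delicate point is bookkeeping of the gradings rather than any deep argument: one must make sure that when Theorem~\ref{pullback} is invoked, the $G$-grading on $\F^\alpha G$ built into Definition~\ref{twistedalg} (the twisted grading) really is the quotient $G$-grading along $\mathrm{id}_G$, so that $\F^\alpha G$ legitimately plays the role of "$\F^{c'}\Gamma'$" in that theorem — this is immediate but should be stated — and, similarly, that the identification $\Gamma\times_G G\cong\Gamma$ is compatible with the $\pi$-quotient gradings on both sides (degree of $\gamma$ in $\Gamma/N$ equals the $G$-component of $(\gamma,\pi(\gamma))$). Everything else is a direct unwinding of definitions, so the corollary follows without further computation.
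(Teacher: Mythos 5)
Your proposal is correct and takes essentially the same route as the paper: specialize Theorem \ref{pullback} to $\Gamma':=G$, $\pi':=\mathrm{id}_G$, $c':=\alpha$, identify $\Gamma\times_G G$ with $\Gamma$ via $(\gamma,\pi(\gamma))\mapsto\gamma$, and observe that the pull-back cocycle $c\times_G\alpha$ transports to $c\cdot\inf^G_\Gamma(\alpha)$. You are merely a bit more explicit than the paper about the commutativity of the graded product (needed to match the factor order in Definition \ref{twistedalg} with that in Theorem \ref{pullback}) and about the identification $\Gamma\times_G G\cong\Gamma$ respecting the $G$-quotient gradings; these are harmless bookkeeping points the paper leaves implicit.
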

\begin{proof}
This is a consequence of Theorem \ref{pullback}, putting $\Gamma':=G$ with $\pi':=$Id$(G)$.
In this case $$\Gamma\times_GG:=\{(\gamma,\pi(\gamma))\}_{\gamma\in\Gamma}<\Gamma\times G$$ is identified with $\Gamma$ by sending the pairs $(\gamma,\pi(\gamma))$ to $\gamma$
for every $\gamma\in\Gamma$. Furthermore, with the above notation we have
$$(c\times_G \alpha)((\gamma_1,\pi(\gamma_1)),(\gamma_2,\pi(\gamma_2)))=
c(\gamma_1,\gamma_2)\cdot \alpha(\pi(\gamma_1),\pi(\gamma_2))=c\cdot\text{inf}^G_{\Gamma}(\alpha)(\gamma_1,\gamma_2)$$
for every $\gamma_1,\gamma_2\in\Gamma$, verifying the result.
\end{proof}

Note that Lemma \ref{invGr}(1) can be deduced from Theorem \ref{pullback}, or more particularly from its Corollary \ref{formackey} by putting $\Gamma=\Gamma'=G$.

\subsection{Action of Aut$(G)$ on Gr$(G,\F)$}\label{autsec}
Let \eqref{eq:algebragrading} be a $G$-graded $\F$-algebra.
Then every $\theta\in\Aut(G)$ gives rise to a $G$-graded $\F$-algebra $\theta(A_G)$, which is the same as $A_G$ as an ungraded $\F$-algebra, whereas its
$G$-grading is a relabelling of the homogeneous components of $A_G$ with respect to $\theta$, that is
$$\theta(A_G):=\bigoplus _{g\in G} \theta(A)_g,\ \text{ where }  \theta(A)_g:=A_{\theta^{-1}(g)} \text{ for every $g\in G$}.$$
Clearly,
\begin{equation}\label{suppaut}
\Supp_G(\theta(A_G))=\theta(\Supp_G(A_G)).
\end{equation}
\begin{lemma}\label{autact}
The rule
$$[A_G]\mapsto [\theta(A_{G})],\ \ \theta\in\Aut(G)$$
determines a well-defined left action of Aut$(G)$ on Gr$(G,\F)$ as abelian monoid automorphisms.
This action stabilizes the subgroup of units $H^2(G,\F^*)<\Gr(G,\F^*)$ as well as the submonoids \eqref{sbmons}. In particular, the above rule determines
an action of $G$ on Gr$(G,\F)$ via the conjugation automorphism
$$g([A_G]):=\iota_g([A_G]),\ \ g\in G.$$
\end{lemma}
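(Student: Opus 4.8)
The plan is to check, one at a time, the four assertions in the statement: well-definedness on graded isomorphism classes, the left-action axioms, that each $\theta$ is a monoid automorphism, and that $\theta$ preserves the distinguished submonoids; the $G$-action then follows formally by precomposing with $g\mapsto\iota_g$. Everything is bookkeeping with the relabeling $\theta(A)_g=A_{\theta^{-1}(g)}$, so I will keep the verification short.

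First, well-definedness. If $\psi\colon A_G\to B_G$ is a graded isomorphism, i.e. an algebra isomorphism with $\psi(A_g)=B_g$ for every $g$, then the very same map satisfies $\psi(\theta(A)_g)=\psi(A_{\theta^{-1}(g)})=B_{\theta^{-1}(g)}=\theta(B)_g$, hence $\psi\colon\theta(A_G)\stackrel{G}{\cong}\theta(B_G)$. Since $\theta(A_G)$ is $A_G$ as an ungraded algebra, it has the same cardinality, so $[\theta(A_G)]$ stays in $\Gr_\kappa(G,\F)$. The action axioms are immediate: the identity automorphism of $G$ leaves every grading unchanged, and $\theta_1(\theta_2(A))_g=(\theta_2(A))_{\theta_1^{-1}(g)}=A_{\theta_2^{-1}(\theta_1^{-1}(g))}=A_{(\theta_1\theta_2)^{-1}(g)}=(\theta_1\theta_2)(A)_g$, which is exactly the left-action identity $\theta_1(\theta_2(A_G))=(\theta_1\theta_2)(A_G)$.

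Next I would show each $\theta$ is an automorphism of the monoid $\Gr(G,\F)$. Comparing homogeneous components, $(\theta(A_G\otimes_{\F}^GA'_G))_g=(A_G\otimes_{\F}^GA'_G)_{\theta^{-1}(g)}=A_{\theta^{-1}(g)}\otimes_{\F}A'_{\theta^{-1}(g)}=\theta(A)_g\otimes_{\F}\theta(A')_g$, so $\theta(A_G\otimes_{\F}^GA'_G)$ and $\theta(A_G)\otimes_{\F}^G\theta(A'_G)$ are literally the same $G$-graded algebra, giving $[\theta(\mathcal G\otimes_{\F}^G\mathcal G')]=[\theta(\mathcal G)\otimes_{\F}^G\theta(\mathcal G')]$. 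The unit $[\F G]$ is fixed because the basis-reindexing map $v_h\mapsto v_{\theta^{-1}(h)}$ is an algebra isomorphism $\F G\to\theta(\F G)$ sending the $h$-component of $\F G$ to $(\F G)_{\theta^{-1}(h)}=\theta(\F G)_h$, so $\theta(\F G)\stackrel{G}{\cong}\F G$. Combined with the action property already established (so $\theta$ is bijective as a self-map of $\Gr(G,\F)$, with inverse $\theta^{-1}$), this makes each $\theta$ an abelian monoid automorphism and $\Aut(G)\to\Aut(\Gr(G,\F))$ a group homomorphism, i.e. a left action.

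Finally, stability of the submonoids. Since a monoid automorphism maps the group of units onto itself, Lemma \ref{invGr} gives that $H^2(G,\F^*)=\Gr(G,\F)^*$ is stabilized; explicitly, the same reindexing identifies $\theta(\F^{\alpha}G)$ with $\F^{\alpha^\theta}G$, where $\alpha^\theta(g,h):=\alpha(\theta^{-1}(g),\theta^{-1}(h))$. For \eqref{sbmons} the defining properties survive the relabeling verbatim: if $A_gA_h=A_{gh}$ for all $g,h$ then $\theta(A)_g\theta(A)_h=A_{\theta^{-1}(g)}A_{\theta^{-1}(h)}=A_{\theta^{-1}(gh)}=\theta(A)_{gh}$; and if each $A_g$ contains an invertible element $u_g$ (and, for skew group algebras, one may take $u_gu_h=u_{gh}$), then each $\theta(A)_g=A_{\theta^{-1}(g)}$ contains $u_{\theta^{-1}(g)}$ (with $u_{\theta^{-1}(g)}u_{\theta^{-1}(h)}=u_{\theta^{-1}(gh)}$), so strong gradings, crossed products and skew group algebras are each carried to their own kind, i.e. $\theta$ restricts to an automorphism of $\text{Sk}(G,\F)$, $\text{CP}(G,\F)$ and $\text{St}(G,\F)$. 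For the last sentence, $g\mapsto\iota_g$ is a group homomorphism $G\to\Aut(G)$ (as $\iota_g\iota_h(x)=gh\,x\,(gh)^{-1}=\iota_{gh}(x)$) with image $\text{Inn}(G)$, so composing it with the action just built yields a left $G$-action $g([A_G])=\iota_g([A_G])$ on $\Gr(G,\F)$. The proof presents no real obstacle; the only point requiring care is that graded isomorphisms must have trivial degree $e$, so the fixedness of $[\F G]$ and of $H^2(G,\F^*)$ is witnessed by the reindexing map $v_h\mapsto v_{\theta^{-1}(h)}$ rather than by the identity map, and one must consistently track $\theta$ versus $\theta^{-1}$ in order to land a genuine left (rather than right) action.
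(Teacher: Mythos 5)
Your proposal is correct and follows essentially the same route as the paper: the paper's proof simply lists, as "easily verified," the very four facts you check in detail — that $\theta\circ\theta'$ and $\theta(\theta'(\cdot))$ agree, that $\theta$ commutes with the graded product, that $\theta$ is well-defined on graded-isomorphism classes, and that $\theta$ preserves the structural properties defining the submonoids \eqref{sbmons} and $H^2(G,\F^*)$. Your explicit verifications (including the reindexing map $v_h\mapsto v_{\theta^{-1}(h)}$ witnessing $\theta(\F G)\stackrel{G}{\cong}\F G$ and the identification $\theta(\F^{\alpha}G)\stackrel{G}{\cong}\F^{\alpha^\theta}G$) are all correct.
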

\begin{proof}
As can easily be verified, for every $G$-graded $\F$-algebras
\eqref{twoalgebragrading} and $\theta,\theta'\in\Aut(G)$
\begin{enumerate}
\item $(\theta\circ \theta')(A_G)=\theta(\theta'(A_G)).$
\item $\theta(A_G\otimes_{\F}^{\small{G}}A'_G)=\theta(A_G)\otimes_{\F}^{\small{G}}\theta(A'_G).$
\item if $[A_G]=[A'_G]$ then $[\theta(A_G)]=[\theta(A'_G)]$.
\item the graded algebra $\theta(A_G)$ inherits the properties of $A_G$ of being strongly graded, a crossed product,
as well as being a twisted or a skew group algebra.
\end{enumerate}
Lemma \ref{autact} is a direct consequence of the above facts.
\end{proof}
Note that the action of Aut$(G)$ on the subgroup of units $H^2(G,\F^*)<\Gr(G,\F^*)$ is given by
\begin{equation}\label{eq:action}
\begin{aligned}
\theta([f])&=[{\theta}(f)],\\ 
{\theta}(f)(g_1,g_2):&=f({\theta}^{-1}(g_1),{\theta}^{-1}(g_2))
\end{aligned}
, [f]\in H^2(G,\C^*),\ \ \theta\in\Aut(G).
\end{equation}
It therefore coincides with case $n=2$ of the well-known action of $\Aut(G)$ on the $n$-th cohomology group $H^n(G,\F^*)$ for any non-negative integer $n$.
More generally, fixing any subgroup $H<G$, then any $\theta\in\Aut(G)$ and $[f]\in H^2(H,\F^*)$ give rise to a class $[{\theta}(f)]\in H^2({\theta}(H),\F^*)$ where
\begin{equation}\label{H<G}
{\theta}(f)(k_1,k_2):=f({\theta}^{-1}(k_1),{\theta}^{-1}(k_2)),\ \ k_1,k_2\in \theta(H).
\end{equation}
Equation \eqref{H<G} is used in \S\ref{pfB}.
\subsection{Graded products of graded modules}\label{WWsec}
Let \begin{equation}\label{WW'}
W_G=\bigoplus_{g\in G}W_g, \ \ W'_G=\bigoplus_{g\in G}W'_g\end{equation}
be $G$-graded left modules over $G$-graded $\F$-algebras \eqref{twoalgebragrading} respectively.
\begin{definition}\label{gmodprod} The \textit{graded product} of the graded modules \eqref{WW'} is their $\F$-space graded product (see \eqref{Gprod})
$$W_G\otimes_{\F}^GW'_G=\bigoplus_{g\in G}(W_G\otimes_{\F}^GW_G')_g\subseteq W_G\otimes_{\F}W'_G,\ \ (W_G\otimes_{\F}^GW'_G)_g:=W_g\otimes_{\F}W'_g,$$
furnished with a left $A_G\otimes_{\F}^GA'_G$-module structure
\begin{equation}\label{modstr}
(a_g\otimes_{\F}a'_g)(w_h\otimes_{\F}w'_h):=a_gw_h\otimes_{\F}a'_gw'_h\in (W_G\otimes_{\F}^GW'_G)_{gh},
\end{equation}
for every $a_g\otimes_{\F}a'_g\in (A_G\otimes_{\F}^GA'_G)_g$ and $w_h\otimes_{\F}w'_h\in (W_G\otimes_{\F}^GW'_G)_h.$
\end{definition}
The following is a straightforward analogue of Lemma \ref{wdgp} for graded modules.
\begin{lemma}\label{wdmgp}
Let $\varphi:W_G\stackrel{G}\to \widetilde{W_G}$ and $\varphi':W'_G\stackrel{G}\to \widetilde{W'_G}$ be graded homomorphisms of $G$-graded left modules over $G$-graded $\F$-algebras
\eqref{eq:algebragrading} respectively. Then
$$\begin{array}{ccc}
W_G\otimes_{\F}^{\small{G}}W'_G&\to& \widetilde{W_G}\otimes_{\F}^{\small{G}}\widetilde{W'_G}\\
w_g\otimes_{\F}w'_g&\mapsto&\varphi(w_g)\otimes_{\F}\varphi'(w'_g)
\end{array}$$
is a graded homomorphism of the corresponding graded module products over the graded product $A_G\otimes_{\F}^{\small{G}}A'_G$.
Consequently, graded products of graded modules respect graded isomorphisms.
\end{lemma}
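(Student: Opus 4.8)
The plan is to mimic the proof of Lemma \ref{wdgp} almost verbatim, the only new ingredient being compatibility with the module actions in place of compatibility with a second multiplication. First I would observe that $\varphi\otimes_{\F}\varphi'$ is a well-defined $\F$-linear map $W_G\otimes_{\F}W'_G\to\widetilde{W_G}\otimes_{\F}\widetilde{W'_G}$ by the universal property of the tensor product, applied to the bilinear rule $(w,w')\mapsto\varphi(w)\otimes_{\F}\varphi'(w')$. Restricting it to the graded-product subspace is legitimate: since $\varphi$ and $\varphi'$ are homogeneous of trivial degree $e$, one has $\varphi(W_g)\subseteq\widetilde{W_g}$ and $\varphi'(W'_g)\subseteq\widetilde{W'_g}$, hence $(\varphi\otimes_{\F}\varphi')(W_g\otimes_{\F}W'_g)\subseteq\widetilde{W_g}\otimes_{\F}\widetilde{W'_g}=(\widetilde{W_G}\otimes_{\F}^{G}\widetilde{W'_G})_g$. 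Thus the restricted map sends $W_G\otimes_{\F}^{G}W'_G$ into $\widetilde{W_G}\otimes_{\F}^{G}\widetilde{W'_G}$ and is itself homogeneous of degree $e$, i.e. a $G$-graded $\F$-linear map.

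Next I would check the module-homomorphism property; by linearity it suffices to verify it on homogeneous generators $a_g\otimes_{\F}a'_g\in(A_G\otimes_{\F}^{G}A'_G)_g$ and $w_h\otimes_{\F}w'_h\in(W_G\otimes_{\F}^{G}W'_G)_h$. Using the action \eqref{modstr}, together with $\varphi\in\Hom_{A_G}(W_G,\widetilde{W_G})$ and $\varphi'\in\Hom_{A'_G}(W'_G,\widetilde{W'_G})$, one gets
$$(\varphi\otimes_{\F}\varphi')\big((a_g\otimes_{\F}a'_g)(w_h\otimes_{\F}w'_h)\big)=\varphi(a_gw_h)\otimes_{\F}\varphi'(a'_gw'_h)=a_g\varphi(w_h)\otimes_{\F}a'_g\varphi'(w'_h),$$
which by \eqref{modstr} again equals $(a_g\otimes_{\F}a'_g)\big((\varphi\otimes_{\F}\varphi')(w_h\otimes_{\F}w'_h)\big)$. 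Extending by $\F$-linearity over $A_G\otimes_{\F}^{G}A'_G$ and $W_G\otimes_{\F}^{G}W'_G$ shows the induced map is a graded homomorphism of graded left $A_G\otimes_{\F}^{G}A'_G$-modules.

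For the stated consequence, if $\varphi$ and $\varphi'$ are graded isomorphisms, then $\varphi^{-1}$ and $(\varphi')^{-1}$ are again graded homomorphisms (the inverse of a bijective homogeneous map of degree $e$ is homogeneous of degree $e$), so applying the first part to the pair $(\varphi^{-1},(\varphi')^{-1})$ yields a graded module homomorphism which is visibly a two-sided inverse of $\varphi\otimes_{\F}\varphi'$ on the graded products; hence $W_G\otimes_{\F}^{G}W'_G\stackrel{G}{\cong}\widetilde{W_G}\otimes_{\F}^{G}\widetilde{W'_G}$ as graded $A_G\otimes_{\F}^{G}A'_G$-modules.

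There is no serious obstacle here — the argument is a routine diagram chase. The one point deserving a moment's care is exactly the one used already in Lemma \ref{wdgp}: confirming that a map originally defined on the full tensor product $W_G\otimes_{\F}W'_G$ genuinely restricts to the smaller space $\bigoplus_g W_g\otimes_{\F}W'_g$ with image inside $\bigoplus_g\widetilde{W_g}\otimes_{\F}\widetilde{W'_g}$, which is where the homogeneity hypothesis on $\varphi$ and $\varphi'$ enters.
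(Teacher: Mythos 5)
Your proof is correct and fills in exactly the routine argument the paper has in mind: the lemma is stated without proof, billed only as a ``straightforward analogue'' of Lemma~\ref{wdgp}. You identify the right two points — that homogeneity of trivial degree forces $\varphi\otimes_{\F}\varphi'$ to carry $W_g\otimes_{\F}W'_g$ into $\widetilde{W_g}\otimes_{\F}\widetilde{W'_g}$, and that the $A_G\otimes_{\F}^{G}A'_G$-linearity checks on homogeneous generators via \eqref{modstr} — and the passage to graded isomorphisms by applying the construction to $\varphi^{-1}$, $(\varphi')^{-1}$ is the standard step.
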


Graded products of modules respect direct sums. In particular, if $(A_G)^X$ is a free $A_G$-module on a set $X$ of generators and $(A'_G)^{X'}$ is a free $A'_G$-module on a set $X'$ of generators, then
\begin{equation}\label{grfree}
(A_G)^X\otimes_{\F}^G(A'_G)^{X'}\stackrel{G}{\cong}(A_G\otimes_{\F}^GA'_G)^{X\times X'}
\end{equation}
as free $A_G\otimes_{\F}^GA'_G$-modules.

\begin{lemma}\label{fgfp}
Let \eqref{WW'} be finitely-generated $G$-graded left modules over strongly $G$-graded $\F$-algebras \eqref{twoalgebragrading} respectively.
Then the graded product $W_G\otimes_{\F}^GW'_G$ is finitely-generated over $A_G\otimes_{\F}^GA'_G$. If \eqref{WW'} are further assumed to be finitely-presented, then so is $W_G\otimes_{\F}^GW'_G$.
\end{lemma}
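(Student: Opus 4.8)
The plan is to reduce the statement about graded products over strongly graded algebras to the ungraded Lemma \ref{fptens}, by exploiting the fact that over a strongly graded algebra a finitely-generated graded module can be generated by finitely many $e$-homogeneous elements (Lemma \ref{fgenst}), and that its module of relations is again graded (Lemma \ref{freest}). First I would invoke Lemma \ref{fgenst} to choose finite generating sets $X\subseteq W_e$ and $X'\subseteq W'_e$ of $e$-homogeneous elements for $W_G$ and $W'_G$ respectively. By Lemma \ref{freest} these give graded surjections $\varphi_G:(A_G)^X\stackrel{G}{\twoheadrightarrow}W_G$ and $\varphi'_G:(A'_G)^{X'}\stackrel{G}{\twoheadrightarrow}W'_G$ of free graded modules. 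Applying Lemma \ref{wdmgp} (or directly the construction of Definition \ref{gmodprod}) and the identification \eqref{grfree}, I obtain a graded surjection of $A_G\otimes_{\F}^GA'_G$-modules
$$\varphi_G\otimes_{\F}^G\varphi'_G:(A_G\otimes_{\F}^GA'_G)^{X\times X'}\stackrel{G}{\twoheadrightarrow}W_G\otimes_{\F}^GW'_G.$$
Since $X\times X'$ is finite, this already establishes the first assertion: $W_G\otimes_{\F}^GW'_G$ is finitely-generated over $A_G\otimes_{\F}^GA'_G$.

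For the finitely-presented case, I would now identify the module of relations. The key observation is that, restricted to a single homogeneous degree $g\in G$, the map $(\varphi_G\otimes_{\F}^G\varphi'_G)_g$ is exactly the $\F$-linear map $(\varphi_G)_g\otimes_{\F}(\varphi'_G)_g$ on the degree-$g$ components $(A_G)^X_g\otimes_{\F}(A'_G)^{X'}_g$, so Lemma \ref{linalg} (specifically \eqref{kertens}) applies degreewise to give
$$\ker(\varphi_G\otimes_{\F}^G\varphi'_G)=\ker(\varphi_G)\otimes_{\F}^G(A'_G)^{X'}+(A_G)^X\otimes_{\F}^G\ker(\varphi'_G)$$
as graded subspaces, the sum being taken inside $(A_G)^X\otimes_{\F}^G(A'_G)^{X'}$. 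This is the graded analogue of the kernel computation in the proof of Lemma \ref{fptens}. Now $\ker(\varphi_G)$ is a graded submodule of $(A_G)^X$ over the strongly graded algebra $A_G$; the finite-presentation hypothesis on $W_G$ says it is finitely-generated, so by Lemma \ref{fgenst} again it has a finite generating set of $e$-homogeneous elements, and likewise for $\ker(\varphi'_G)$ over $A'_G$. Taking the tensor products of these $e$-homogeneous generators of $\ker(\varphi_G)$ with the $e$-homogeneous free generators $\{U_{x'}\}_{x'\in X'}$ of $(A'_G)^{X'}$, together with the symmetric family, yields a finite set of homogeneous elements generating $\ker(\varphi_G\otimes_{\F}^G\varphi'_G)$ over $A_G\otimes_{\F}^GA'_G$; this uses that $A_G\otimes_{\F}^GA'_G$ is again strongly graded (Corollary \ref{strprodcor}), so that multiplying an $e$-homogeneous generator across the whole algebra does recover all homogeneous components. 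Hence $W_G\otimes_{\F}^GW'_G$ is finitely-presented.

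The main obstacle I anticipate is making the kernel formula rigorous as an identity of graded $A_G\otimes_{\F}^GA'_G$-submodules rather than merely as $\F$-spaces: Lemma \ref{linalg} is purely linear-algebraic and applies to each degree separately, so one must check that the two sides agree component by component and that the right-hand side, defined a priori as an $\F$-span, is in fact a submodule closed under the $A_G\otimes_{\F}^GA'_G$-action. The latter follows because each summand $\ker(\varphi_G)\otimes_{\F}^G(A'_G)^{X'}$ is visibly an $A_G\otimes_{\F}^GA'_G$-submodule (being a graded product of submodules), and a sum of submodules is a submodule. Once that bookkeeping is in place, the finiteness count is routine, exactly parallel to the last paragraph of the proof of Lemma \ref{fptens}, with the single extra ingredient that strong gradedness lets us keep all chosen generators in degree $e$.
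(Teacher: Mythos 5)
Your proof is correct and follows essentially the same route as the paper: both build the graded surjection $(A_G)^X\otimes_{\F}^G(A'_G)^{X'}\twoheadrightarrow W_G\otimes_{\F}^GW'_G$ from Lemma \ref{freest}, both invoke \eqref{kertens} to identify the kernel, and both use strong gradedness (via Lemma \ref{fgenst} and Corollary \ref{strprodcor}) to reduce generation to the $e$-component. The only stylistic difference is that you state the kernel formula degreewise as a full graded identity, whereas the paper computes only $\ker(\varphi''_e)$ and then appeals to Lemma \ref{genst} to propagate; these amount to the same thing. One small gap to patch: you assert that finite presentation of $W_G$ directly gives $\ker(\varphi_G)$ finitely generated, but finite presentation only guarantees that \emph{some} finite free cover has finitely generated kernel; that the particular cover from Lemma \ref{freest} also does requires the standard lemma that the paper cites from Bourbaki (all finitely generated free covers of a finitely presented module have finitely generated kernels).
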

\begin{proof}
%

Since the graded modules \eqref{WW'} are finitely-generated over strongly graded algebras, then by Lemma \ref{freest} both are graded images
\begin{equation}\label{grimages}
\varphi_G:(A_G)^X\stackrel{G}{\twoheadrightarrow}W_G,\ \ \varphi'_G:(A'_G)^{X'}\stackrel{G}{\twoheadrightarrow}W'_G
\end{equation}
of the finitely-generated free $A_G$-module $(A_G)^X$ and of the finitely-generated free $A_G$-module $(A'_G)^{X'}$ respectively.
Then there is a surjective $A_G\otimes_{\F}^GA'_G$-module homomorphism
\begin{eqnarray}\label{grimprod}
\varphi_G'':\begin{array}{ccc}
(A_G)^X\otimes_{\F}^G(A'_G)^{X'}&\stackrel{G}{\twoheadrightarrow}&W_G\otimes_{\F}^GW'_G\\
y_g\otimes_{\F}y'_g&\mapsto & \varphi_G (y_g)\otimes_{\F}\varphi'_G(y'_g)
\end{array},
\end{eqnarray}
where $y_g\in (A_G)^X_g$ and $y'_g\in (A'_G)^{X'}_g$.
We infer that the graded product $W_G\otimes_{\F}^GW'_G$ is finitely-generated being a graded image under \eqref{grimprod}
of the finitely-generated free $A_G\otimes_{\F}^GA'_G$-module (see \eqref{grfree})
$$(A_G)^X\otimes_{\F}^G(A'_G)^{X'}\stackrel{G}{\cong}(A_G\otimes_{\F}^GA'_G)^{X\times X'},$$
proving the first part of the lemma.

What is the kernel of the graded homomorphism \eqref{grimprod}?
Since $A_G$ and $A'_G$ are both strongly graded, then by Corollary \ref{strprodcor} so is their graded product $A_G\otimes_{\F}^GA'_G$.
Lemma \ref{genst} tells us then that the $A_G\otimes_{\F}^GA'_G$-module ker$(\varphi_G'')$ is generated by its base algebra.
This $e$-component is just ker$(\varphi_e'')$, where
$$\varphi_e'':((A_G)^X\otimes_{\F}^G(A'_G)^{X'})_e\stackrel{}{\twoheadrightarrow}W_e\otimes_{\F}W'_e$$
is the restriction of $\varphi_G''$ to the $e$-component
$$((A_G)^X\otimes_{\F}^G(A'_G)^{X'})_e=(A_e)^X\otimes_{\F}(A'_e)^{X'}.$$
Since $\varphi_e''$ is an (ungraded) homomorphism of $A_e\otimes_{\F}A'_e$-modules, then by \eqref{kertens} we deduce that
\begin{equation}\label{kervarphi''}
\ker(\varphi_e'')\stackrel{}{=}\ker(\varphi_e)\otimes_{\F}(A'_e)^{X'}+(A_e)^X\otimes_{\F}\ker(\varphi'_e),
\end{equation}
where
$$\varphi_e:(A_e)^X\stackrel{G}{\twoheadrightarrow}W_e,\text{ and } \ \varphi'_e:(A'_e)^{X'}\stackrel{G}{\twoheadrightarrow}W'_e$$
are the corresponding (ungraded) restrictions of the graded homomorphisms \eqref{grimages}.
Suppose now that the modules \eqref{WW'} are also finitely-presented. Then all their finitely-generated covers, in particular those given in \eqref{grimages}, admit finitely generated kernels
(see \cite[Lemma 9, p. 21]{Bourbaki72}). Since both homomorphisms \eqref{grimages} are graded, then $\ker(\varphi_G)$ and $\ker(\varphi'_G)$, which were just proven to be finitely-generated,
are graded modules.
Once again by Lemma \ref{genst} and Lemma \ref{fgenst}, these kernels are generated over $A_G$ and $A'_G$ by finite subsets of $\ker(\varphi_e)$ and $\ker(\varphi'_e)$ respectively.
In particular, $\ker(\varphi_e)$ and $\ker(\varphi'_e)$ are themselves finitely-generated over $A_e$ and $A'_e$ respectively, and by
\eqref{kervarphi''}, we deduce that ker$(\varphi_e'')$ is finitely-generated over $A_e\otimes_{\F}A'_e$.
Finally, since ker$(\varphi_e'')$ generates ker$(\varphi_G'')$ over $A_G\otimes_{\F}^GA'_G$, we conclude that \eqref{grimprod} is a finite presentation of $W_G\otimes_{\F}^GW'_G$.
This settles the second claim.\end{proof}

A special case of graded product of graded modules occurs when one of the graded modules is $\F^{\alpha}G$ as a free module over itself.
\begin{definition}\label{tgm}
Let \eqref{modec} be a graded module over a $G$-graded $\F$-algebra \eqref{eq:algebragrading}.
A \textit{twisting} $\alpha(W_G)$ of $W_G$ by a 2-cocycle $\alpha\in Z^2(G,\F^*)$ is
the graded module product $\F^{\alpha}G\otimes^G_{\F}W_G$
over the twisted graded algebra $\F^{\alpha}G\otimes^G_{\F}A_G=\alpha(A_G)$.
\end{definition}
Let
$$\F^{\alpha}G=\oplus_{g\in G}\text{Span}_{\F}\{v_g\},\ \ \F^{\alpha^{-1}}G=\oplus_{g\in G}\text{Span}_{\F}\{\hat{v}_g\}.$$
We record for a later use that the rule
\begin{equation}\label{therule}
\chi_g:v_g\otimes_{\F}\hat{v}_g \otimes_{\F}w_g\mapsto w_g,\ \ w_g\in W_g,\ \  g\in G
\end{equation}
determines a graded isomorphism
$$\chi:\F^{\alpha}G\otimes^G_{\F}(\F^{\alpha^{-1}}G\otimes^G_{\F}A_G)=\alpha(\alpha^{-1}(W_G))\xrightarrow{G}W_G.$$
As can easily be checked, if $W'_G$ is a graded $\alpha^{-1}(A_G)$-module, then for every $w'_g\in W'_G$
\begin{equation}\label{rule1}
\hat{v}_g\otimes_{\F}\chi_g(v_g \otimes_{\F}w'_g)=w'_g.
\end{equation}

The next three claims are straightforward.
\begin{lemma}\label{inductwist}
Let \eqref{eq:algebragrading} be a $G$-graded $\F$-algebra, let $M$ be an $A_e$-module, and let $\alpha\in Z^2(G,\F^*)$ be a 2-cocycle.
Then there is a graded isomorphism
\begin{equation}\label{inductwisteq}
\alpha(A_G){\otimes}_{A_e}M^{}\stackrel{G}\cong\alpha({A_G}{\otimes}_{A_e}M)\end{equation}
of $\alpha(A_G)$-modules.
\end{lemma}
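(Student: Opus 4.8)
The plan is to write down the obvious degreewise map and check it is a graded isomorphism of $\alpha(A_G)$-modules. Normalize the cocycle $\alpha\in Z^2(G,\F^*)$ so that $v_e=1$ in $\F^{\alpha}G=\oplus_{g\in G}\SpF\{v_g\}$, and recall from Lemma \ref{basetwist} that the base algebra of $\alpha(A_G)=\F^{\alpha}G\otimes_{\F}^GA_G$ is identified with $A_e$ via $v_e\otimes_{\F}r\mapsto r$; under this identification the right $A_e$-action on the homogeneous component $\alpha(A_G)_g=\SpF\{v_g\}\otimes_{\F}A_g$ is $(v_g\otimes_{\F}a_g)\cdot r=v_gv_e\otimes_{\F}a_gr=v_g\otimes_{\F}a_gr$, i.e.\ it only affects the $A_g$-tensorand. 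Likewise the $g$-component of the right-hand side is $\alpha(A_G\otimes_{A_e}M)_g=\SpF\{v_g\}\otimes_{\F}(A_g\otimes_{A_e}M)$.

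First I would produce, for each $g\in G$, the $\F$-linear map
$$\Phi_g:(\SpF\{v_g\}\otimes_{\F}A_g)\otimes_{A_e}M\ \longrightarrow\ \SpF\{v_g\}\otimes_{\F}(A_g\otimes_{A_e}M),\qquad (v_g\otimes_{\F}a_g)\otimes_{A_e}m\ \mapsto\ v_g\otimes_{\F}(a_g\otimes_{A_e}m).$$
This is well defined and bijective because $\SpF\{v_g\}$ is one-dimensional, hence free, over $\F$: the functor $\SpF\{v_g\}\otimes_{\F}(-)$ is exact and commutes with $(-)\otimes_{A_e}M$, and since the $A_e$-balancing only involves the $A_g$-factor the assignment respects the defining relations of the tensor product over $A_e$; the reverse assignment $v_g\otimes_{\F}(a_g\otimes_{A_e}m)\mapsto(v_g\otimes_{\F}a_g)\otimes_{A_e}m$ is a two-sided inverse.

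Then I would set $\Phi:=\bigoplus_{g\in G}\Phi_g$. By construction $\Phi$ carries the $g$-component of $\alpha(A_G)\otimes_{A_e}M$ onto the $g$-component of $\alpha(A_G\otimes_{A_e}M)$, so it is homogeneous of trivial degree $e\in G$. It then remains only to verify $\alpha(A_G)$-linearity: for a homogeneous $v_h\otimes_{\F}b_h\in\alpha(A_G)_h$, the multiplication rule in $\F^{\alpha}G\otimes_{\F}^GA_G$ gives $(v_h\otimes_{\F}b_h)(v_g\otimes_{\F}a_g)=\alpha(h,g)(v_{hg}\otimes_{\F}b_ha_g)$, so $(v_h\otimes_{\F}b_h)\cdot\big((v_g\otimes_{\F}a_g)\otimes_{A_e}m\big)$ is sent by $\Phi$ to $\alpha(h,g)\,v_{hg}\otimes_{\F}(b_ha_g\otimes_{A_e}m)$, which is exactly the action of $v_h\otimes_{\F}b_h$ on $v_g\otimes_{\F}(a_g\otimes_{A_e}m)$ inside $\alpha(A_G\otimes_{A_e}M)=\F^{\alpha}G\otimes_{\F}^G(A_G\otimes_{A_e}M)$. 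Hence $\Phi$ is an invertible graded $\alpha(A_G)$-homomorphism, which is the graded isomorphism \eqref{inductwisteq}.

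I do not expect a genuine obstacle here: the twisting by $\alpha$ merely rescales each homogeneous component by an $\F^{*}$-scalar without permuting the components, so it commutes with the purely $A_e$-linear extension functor $A_G\otimes_{A_e}(-)$. The only point deserving a line of justification is the well-definedness of $\Phi_g$ as a map on a tensor product over $A_e$, and this rests on nothing more than the freeness of the one-dimensional space $\SpF\{v_g\}$ over $\F$.
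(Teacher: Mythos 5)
Your proof is correct and supplies the argument the paper omits: Lemma \ref{inductwist} is listed among ``the next three claims [that] are straightforward'' and is left unproved in the source. The degreewise isomorphism $\Phi_g$, its well-definedness via the fact that $\SpF\{v_g\}$ is a one-dimensional $\F$-space on which the base-algebra action (under the normalized identification of Lemma \ref{basetwist}) is trivial, and the verification of $\alpha(A_G)$-linearity by tracking the $\alpha(h,g)$ scalar through both sides, are precisely the intended ``straightforward'' argument.
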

\begin{lemma}\label{quotwist}
Let $W'_G$ be a graded submodule of \eqref{modec}. Then for any 2-cocycle $\alpha\in Z^2(G,\F^*)$, the twisted graded module $\alpha(W'_G)$ is a graded $\alpha(A_G)$-submodule of $\alpha(W_G)$.
Furthermore, with the notation \eqref{tga} the rule
$$(v_g\otimes_{\F}w_g)+\alpha(W'_G)\mapsto v_g\otimes_{\F}(w_g+W'_G),\ \ w_g\in W_g$$ determines a well-defined graded isomorphism
\begin{equation}\label{equotwist}
\bigslant{\alpha(W_G)}{\alpha(W'_G)}\stackrel{G}{\cong}\alpha(\bigslant{W_G}{W'_G}).\end{equation}
\end{lemma}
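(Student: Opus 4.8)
The plan is to deduce both assertions from the functoriality of graded products (Lemma~\ref{wdmgp}), applied to the inclusion $W'_G\hookrightarrow W_G$ and to the quotient map $W_G\twoheadrightarrow W_G/W'_G$, and then to identify the relevant kernels on each homogeneous component by means of the linear-algebra Lemma~\ref{linalg}.

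First I would settle the submodule claim. The inclusion $\iota\colon W'_G\hookrightarrow W_G$ is a graded homomorphism of $G$-graded left $A_G$-modules, so by Lemma~\ref{wdmgp} the graded product $\mathrm{Id}_{\F^\alpha G}\otimes^G_\F\iota\colon \alpha(W'_G)=\F^\alpha G\otimes^G_\F W'_G\ \xrightarrow{G}\ \F^\alpha G\otimes^G_\F W_G=\alpha(W_G)$ is a graded homomorphism of $\alpha(A_G)$-modules. On the $g$-homogeneous component it is the map $\mathrm{Span}_\F\{v_g\}\otimes_\F W'_g\to\mathrm{Span}_\F\{v_g\}\otimes_\F W_g$ induced by $W'_g\subseteq W_g$; by Lemma~\ref{linalg}, applied with $\varphi=\mathrm{Id}_{\mathrm{Span}_\F\{v_g\}}$ (so $\ker\varphi=0$), its kernel is $\ker(\mathrm{Id})\otimes_\F W'_g+\mathrm{Span}_\F\{v_g\}\otimes_\F\ker(\iota_g)=0$, hence it is injective. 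Thus $\alpha(W'_G)$ is identified with the graded subspace $\bigoplus_{g\in G}\mathrm{Span}_\F\{v_g\}\otimes_\F W'_g$ of $\alpha(W_G)$, which is an $\alpha(A_G)$-submodule because $(v_g\otimes_\F a_g)(v_h\otimes_\F w'_h)=\alpha(g,h)\,v_{gh}\otimes_\F a_g w'_h$ and $a_g w'_h\in W'_{gh}$ by the graded submodule property of $W'_G$.

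Next, let $\rho\colon W_G\twoheadrightarrow W_G/W'_G$ be the canonical graded $A_G$-epimorphism onto the quotient endowed with its natural $G$-grading. Again by Lemma~\ref{wdmgp}, $\Phi:=\mathrm{Id}_{\F^\alpha G}\otimes^G_\F\rho\colon\alpha(W_G)\xrightarrow{G}\alpha(W_G/W'_G)$ is a graded $\alpha(A_G)$-epimorphism, explicitly $\Phi(v_g\otimes_\F w_g)=v_g\otimes_\F(w_g+W'_G)$. I would then compute $\ker\Phi$ componentwise: $\Phi_g=\mathrm{Id}_{\mathrm{Span}_\F\{v_g\}}\otimes_\F\rho_g$, so by Lemma~\ref{linalg} $\ker\Phi_g=\ker(\mathrm{Id})\otimes_\F W_g+\mathrm{Span}_\F\{v_g\}\otimes_\F\ker(\rho_g)=\mathrm{Span}_\F\{v_g\}\otimes_\F W'_g$, which is exactly the $g$-component of the submodule $\alpha(W'_G)$ described above. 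Hence $\ker\Phi=\alpha(W'_G)$, and $\Phi$ factors through a graded isomorphism $\alpha(W_G)/\alpha(W'_G)\xrightarrow{G}\alpha(W_G/W'_G)$ of $\alpha(A_G)$-modules; unwinding the factorization, this induced map sends $(v_g\otimes_\F w_g)+\alpha(W'_G)$ to $v_g\otimes_\F(w_g+W'_G)$, so the rule in the statement is well-defined and bijective, establishing \eqref{equotwist}.

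There is no serious obstacle here: everything rests on the exactness of tensoring over $\F$ with the one-dimensional space $\mathrm{Span}_\F\{v_g\}$, conveniently packaged in Lemma~\ref{linalg}, together with Lemma~\ref{wdmgp}. The only point requiring a little care is bookkeeping, namely verifying that the submodule $\alpha(W'_G)\subseteq\alpha(W_G)$ of the second paragraph coincides \emph{on the nose} with $\ker\Phi$, not merely up to isomorphism, so that the quotient in \eqref{equotwist} is literally $\alpha(W_G)/\alpha(W'_G)$; this is immediate once both are written as $\bigoplus_{g\in G}\mathrm{Span}_\F\{v_g\}\otimes_\F W'_g$.
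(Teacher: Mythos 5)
The paper does not actually give a proof of this lemma; it simply groups it among ``the next three claims,'' which it declares straightforward. Your proposal is a correct verification of that assertion, using Lemma~\ref{wdmgp} for functoriality and Lemma~\ref{linalg} for the kernel identification, and the care you take to check that the abstract $\alpha(W'_G)$ coincides on the nose with $\ker\Phi$ inside $\alpha(W_G)$ is exactly the bookkeeping that makes the quotient in \eqref{equotwist} literal rather than merely up to isomorphism. One small remark: invoking Lemma~\ref{linalg} is heavier machinery than is needed here, since tensoring over $\F$ with the one-dimensional space $\text{Span}_\F\{v_g\}$ is exact for the trivial reason that $v_g\otimes_\F(-)$ is a linear isomorphism onto its image; but this costs nothing in correctness, and your argument stands.
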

\begin{lemma}\label{cortwist}
Let \eqref{modec} be a graded module over a $G$-graded $\F$-algebra \eqref{eq:algebragrading} and let $\alpha\in Z^2(G,\F^*)$ be a 2-cocycle.
Then $W_G\mapsto\alpha(W_G)$
determines a bijective correspondence between graded $A_G$-modules and graded $\alpha(A_G)$-modules, which preserves inclusion and hence graded simplicity.
Consequently, the localizing radicals (see \S\ref{sgcpsect}) satisfy
\begin{equation}\label{eqlocrad}
t_{\mathcal{C}_e}(\alpha(W_G))=\alpha(t_{\mathcal{C}_e}(W_G)).
\end{equation}
\end{lemma}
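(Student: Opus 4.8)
The plan is to realize twisting by the inverse cocycle $\alpha^{-1}$ as a two-sided inverse to twisting by $\alpha$, and then to transport the lattice of graded submodules and the defining property of the localizing radical through these mutually inverse operations. Everything reduces to the behaviour of the canonical graded isomorphism $\chi$ of \eqref{therule}.

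First I would note that twisting by $\alpha$ is well-defined on graded isomorphism classes: since graded products of graded modules respect graded isomorphisms (Lemma~\ref{wdmgp}), $W_G\mapsto\alpha(W_G)=\F^{\alpha}G\otimes_{\F}^GW_G$ sends graded $A_G$-modules to graded $\alpha(A_G)$-modules, compatibly with graded isomorphism. By Lemma~\ref{invGr}(1) the graded product $\F^{\alpha}G\otimes_{\F}^G\F^{\alpha^{-1}}G$ is graded isomorphic to $\F G$, the identity of $\Gr(G,\F)$; on the level of modules this is exactly the graded isomorphism $\chi:\alpha(\alpha^{-1}(W_G))\xrightarrow{G}W_G$ of \eqref{therule}, with the companion identity \eqref{rule1}. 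Hence $W'_G\mapsto\alpha^{-1}(W'_G)$ is a two-sided inverse (up to the natural identifications given by $\chi$), which yields the asserted bijective correspondence between graded $A_G$-modules and graded $\alpha(A_G)$-modules.

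Next, inclusion-preservation. If $W'_G\subseteq W_G$ is a graded $A_G$-submodule, then by the first assertion of Lemma~\ref{quotwist}, $\alpha(W'_G)$ is a graded $\alpha(A_G)$-submodule of $\alpha(W_G)$. Applying the same assertion to the cocycle $\alpha^{-1}$ and to a graded $\alpha(A_G)$-submodule $U'_G\subseteq\alpha(W_G)$, and then composing with $\chi$, shows that $\chi(\alpha^{-1}(U'_G))$ is a graded $A_G$-submodule of $W_G$ whose twisting is $U'_G$; so the correspondence restricts to an order isomorphism between the lattice of graded submodules of $W_G$ and that of $\alpha(W_G)$. In particular a graded module is graded simple iff its twisting is, and maximal graded submodules are carried to maximal graded submodules. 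For the localizing radicals I would combine this order isomorphism with the elementary computation $(\alpha(W'_G))_e=(\F^{\alpha}G)_e\otimes_{\F}(W'_G)_e=\text{Span}_{\F}\{v_e\}\otimes_{\F}(W'_G)_e$, which is zero precisely when $(W'_G)_e$ is. Thus the twisting correspondence restricts further to an order isomorphism between the graded submodules of $W_G$ with trivial $e$-component and those of $\alpha(W_G)$ with trivial $e$-component, and therefore carries the unique maximal such submodule $t_{\mathcal{C}_e}(W_G)$ to the unique maximal such submodule $t_{\mathcal{C}_e}(\alpha(W_G))$, giving \eqref{eqlocrad}.

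The only mildly delicate point, and the step I would be most careful about, is the bookkeeping with identifications: the correspondence is a genuine bijection only after identifying $\alpha^{-1}(\alpha(W_G))$ with $W_G$ via $\chi$, so I would verify once and for all, using \eqref{therule} and \eqref{rule1}, that $\chi$ is compatible both with passing to graded submodules and with taking $e$-homogeneous components. Once that is settled, the bijectivity, the order isomorphism of submodule lattices, and the identity for the localizing radicals are all formal consequences, which is why the statement is labelled straightforward.
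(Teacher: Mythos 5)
The paper gives no proof of this lemma --- it is the third of three claims the author labels ``straightforward'' --- so there is no argument to compare against. Your fill-in is correct and hits exactly the points one would expect the paper to be taking for granted: $\alpha^{-1}$ furnishes a two-sided inverse via $\chi$ of \eqref{therule}, the first assertion of Lemma~\ref{quotwist} gives inclusion-preservation in both directions, and the identification $(\alpha(W'_G))_e=\text{Span}_{\F}\{v_e\}\otimes_{\F}(W'_G)_e$ (which vanishes iff $(W'_G)_e$ does) turns the localizing-radical equality \eqref{eqlocrad} into a formal consequence of the order isomorphism on the submodule lattices. Your cautionary remark about the identifications is well placed: strictly, $\chi$ is a graded-equivariant map relative to the algebra isomorphism $\alpha(\alpha^{-1}(A_G))\xrightarrow{G}A_G$ rather than a graded $A_G$-module isomorphism, but since it is graded and intertwines the two actions it still carries graded submodules to graded submodules and respects homogeneous components, which is all the argument needs.
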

Notice that under the identification \eqref{samebase} of the base algebras of $A_G$ and its twisting $\alpha(A_G)$, the homogeneous components of $W_G$ and $\alpha(W_G)$ are isomorphic as $A_e$-modules.
With the notation \eqref{assogrd} we have
\begin{corollary}\label{assoctwwist}
Let \eqref{eq:algebragrading} be a $G$-graded $\F$-algebra, let $M$ be an $A_e$-module, and let $\alpha\in Z^2(G,\F^*)$ be a 2-cocycle.
Then there is an isomorphism
$$\alpha(A_G)\overline{\otimes}_{A_e}M^{}\stackrel{G}\cong\alpha({A_G}\overline{\otimes}_{A_e}M)$$
of graded $\alpha(A_G)$-modules.
\end{corollary}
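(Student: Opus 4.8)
The plan is to chain the three preceding lemmas on twistings --- Lemmas \ref{inductwist}, \ref{cortwist} and \ref{quotwist} --- with no extra ideas required. Write $W_G := A_G \otimes_{A_e} M$ for the extended (not yet associated) graded $A_G$-module, so that by \eqref{assogrd} the associated graded module is $A_G \overline{\otimes}_{A_e} M = \bigslant{W_G}{t_{\mathcal{C}_e}(W_G)}$. Throughout, the base algebra of $\alpha(A_G) = \F^{\alpha} G \otimes_{\F}^G A_G$ is identified with $A_e$ via \eqref{samebase} (Lemma \ref{basetwist}), so that $M$ is meaningful as a module over it and the left-hand side $\alpha(A_G) \overline{\otimes}_{A_e} M$ makes sense; by \eqref{assogrd} applied to the algebra $\alpha(A_G)$ it is the quotient of $\alpha(A_G) \otimes_{A_e} M$ by its localizing radical.

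First I would rewrite $\alpha(A_G) \otimes_{A_e} M$. By Lemma \ref{inductwist}, the graded isomorphism \eqref{inductwisteq} identifies it, as a graded $\alpha(A_G)$-module, with $\alpha(W_G)$. Since the localizing radical $t_{\mathcal{C}_e}$ is characterized intrinsically as the largest graded submodule whose $e$-homogeneous component vanishes (see \cite[p. 38]{NVO04}), it is transported by any trivial-degree graded isomorphism; hence $\alpha(A_G) \overline{\otimes}_{A_e} M$ is graded isomorphic to $\bigslant{\alpha(W_G)}{t_{\mathcal{C}_e}(\alpha(W_G))}$. Next, invoking \eqref{eqlocrad} of Lemma \ref{cortwist} gives $t_{\mathcal{C}_e}(\alpha(W_G)) = \alpha(t_{\mathcal{C}_e}(W_G))$, so that
$$\alpha(A_G) \overline{\otimes}_{A_e} M \stackrel{G}{\cong} \bigslant{\alpha(W_G)}{\alpha(t_{\mathcal{C}_e}(W_G))}.$$
Finally, since $t_{\mathcal{C}_e}(W_G)$ is a graded submodule of $W_G$, the graded isomorphism \eqref{equotwist} of Lemma \ref{quotwist} with $W'_G := t_{\mathcal{C}_e}(W_G)$ yields
$$\bigslant{\alpha(W_G)}{\alpha(t_{\mathcal{C}_e}(W_G))} \stackrel{G}{\cong} \alpha\Bigl(\bigslant{W_G}{t_{\mathcal{C}_e}(W_G)}\Bigr) = \alpha(A_G \overline{\otimes}_{A_e} M),$$
and composing the three graded isomorphisms of $\alpha(A_G)$-modules proves the corollary.

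The only point that is not entirely formal is the observation that $t_{\mathcal{C}_e}$ is a graded-isomorphism invariant, but this is immediate from its universal description together with the fact that a trivial-degree graded isomorphism carries graded submodules to graded submodules while preserving homogeneous components. I therefore do not anticipate any genuine obstacle; the purpose of this corollary is simply to record how the $\overline{\otimes}$-construction interacts with twistings, for later use (e.g. in analysing the obstruction map $\omega_{\mathcal{G}}$ and its behaviour on $\alpha(\mathcal{G})$).
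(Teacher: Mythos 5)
Your proof is correct and takes essentially the same route as the paper's: both chain Lemma~\ref{inductwist} (to identify $\alpha(A_G)\otimes_{A_e}M$ with $\alpha(A_G\otimes_{A_e}M)$), the compatibility of the localizing radical with graded isomorphism, Equation~\eqref{eqlocrad} of Lemma~\ref{cortwist}, and Equation~\eqref{equotwist} of Lemma~\ref{quotwist}, in the same order. The only difference is cosmetic: you spell out explicitly the step that $t_{\mathcal{C}_e}$ is transported along trivial-degree graded isomorphisms, which the paper's first displayed isomorphism uses tacitly when combining Lemma~\ref{inductwist} with the definition~\eqref{assogrd}.
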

\begin{proof}
By the definition of the associated graded module \eqref{assogrd} and by Lemma \ref{inductwist} we have
$$\begin{array}{cl}
\alpha(A_G)\overline{\otimes}_{A_e}M^{}&=\bigslant{\alpha({A_G}){\otimes}_{A_e}M}{t_{\mathcal{C}_e}(\alpha({A_G}){\otimes}_{A_e}M)}\\
&\stackrel{G}\cong\bigslant{\alpha({A_G}{\otimes}_{A_e}M)}{t_{\mathcal{C}_e}(\alpha({A_G}{\otimes}_{A_e}M))}=\cdots
\end{array}$$
By \eqref{equotwist} and \eqref{eqlocrad} we establish
$$\cdots=\bigslant{\alpha({A_G}{\otimes}_{A_e}M)}{\alpha(t_{\mathcal{C}_e}({A_G}{\otimes}_{A_e}M))}\stackrel{G}\cong
\alpha(\bigslant{{A_G}{\otimes}_{A_e}M}{t_{\mathcal{C}_e}({A_G}{\otimes}_{A_e}M)})
=\alpha({A_G}\overline{\otimes}_{A_e}M),$$
proving the claim.
\end{proof}
Under the graded product $W_G\otimes^G_{\F}W'_G$, a pair of graded submodules $S_G\subseteq W_G$ and $S'_G\subseteq W'_G$ is mapped to a graded  $A_G\otimes^G_{\F}A'_G$-submodule
$S_G\otimes^G_{\F}S'_G\subseteq W_G\otimes^G_{\F}W_G'$.
When $W_G$ is absolutely graded simple (see Definition \ref{agsm}) over $\F$ then this map is onto as shown in the following graded generalization of Theorem \ref{tensor}.
\begin{theorem}\label{grtensor}
Let $[W_G]\in\text{Irr}^{gr}_{\F}(A_G)$ and let $W'_G$ be any graded left $A'_G$-module. Then for every graded $A_G\otimes^G_{\F}A'_G$-submodule $S''_G\subseteq W_G\otimes^G_{\F}W_G'$
there exists a graded $A'_G$-submodule $S'_G\subseteq W'_G$ such that $S''_G=W_G\otimes^G_{\F}S'_G$.
In particular, if $W'_G$ is graded simple then $W_G\otimes^G_{\F}W'_G$ is either graded simple or $\{0\}$.
\end{theorem}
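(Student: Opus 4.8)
The plan is to mimic the proof of Theorem \ref{tensor}(1)--(2) but work entirely in the homogeneous setting, using the graded density statement that is already packaged in Lemma \ref{aAWlemma} (via the isomorphism $\text{End}^e_{A_G}(W_G)\cong\text{End}_{A_e}(W_g)$) together with the absolute graded simplicity hypothesis $\text{End}^e_{A_G}(W_G)=\F\cdot\text{Id}$. First I would fix a grading-group element $g$ and restrict attention to the $g$-homogeneous component. By Lemma \ref{grsimpsimp}, $W_g$ is a simple $A_e$-module for each $g\in\Supp_G(W_G)$, and by Lemma \ref{FWg} (applied with the definition of $\text{Irr}^{\text{gr}}_{\F}$) each such $W_g$ is in fact $\F$-absolutely simple over $A_e$. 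This is the hook that lets me invoke the ungraded density argument component-wise: for linearly independent homogeneous elements $m_1,\dots,m_n\in W_g$ there is $x\in A_e$ with $xm_1=m_1$ and $xm_i=0$ for $i\geq 2$.

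Next, given a graded submodule $S''_G\subseteq W_G\otimes^G_{\F}W'_G$, I would set $S'_g:=\{w'\in W'_g \mid m\otimes_{\F}w'\in S''_G \text{ for all } m\in W_g\}$ for a fixed nonzero $m\in W_g$ — more carefully, I would first show this set is independent of the chosen nonzero $m\in W_g$ and that $S''_G\cap(W_g\otimes_{\F}W'_g)=W_g\otimes_{\F}S'_g$. The argument: take $\xi\in S''_G$ homogeneous of degree $g$, write $\xi=\sum_{i=1}^n m_i\otimes_{\F}w'_i$ with the $m_i\in W_g$ linearly independent over $\F$; choosing $x\in A_e$ isolating $m_1$ as above and acting by $(x\otimes_{\F}1)\in (A_e\otimes_{\F}A'_e)\subseteq(A_G\otimes^G_{\F}A'_G)_e$, which preserves $S''_G$, yields $m_1\otimes_{\F}w'_1\in S''_G$; symmetrically each $m_i\otimes_{\F}w'_i\in S''_G$. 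Then for an arbitrary nonzero $\widehat m\in W_g$, density over $A_e$ gives $\widehat x\in A_e$ with $\widehat x m_1=\widehat m$, whence $\widehat m\otimes_{\F}w'_1=(\widehat x\otimes_{\F}1)(m_1\otimes_{\F}w'_1)\in S''_G$; this shows $w'_1\in S'_g$ is well-defined and that $S''_G\cap(W_g\otimes_{\F}W'_g)=W_g\otimes_{\F}S'_g$. Collecting over all $g$, set $S'_G:=\bigoplus_{g\in G}S'_g\subseteq W'_G$, so $S''_G=\bigoplus_g\big(S''_G\cap(W_g\otimes_{\F}W'_g)\big)=W_G\otimes^G_{\F}S'_G$ as graded $\F$-spaces; the homogeneity of $S''_G$ is what justifies the first equality.

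It then remains to check that $S'_G$ is an $A'_G$-submodule. For homogeneous $a'_h\in A'_h$ and $w'\in S'_g$, pick any strong-enough device — or simply use that $W_G$ is graded simple so $A_G W_g$ spans $W_{hg}$ — to produce, for a fixed nonzero $m\in W_g$, some $a_h\in A_h$ with $a_h m\neq 0$; then $(a_h\otimes_{\F}a'_h)(m\otimes_{\F}w')=a_h m\otimes_{\F}a'_h w'\in S''_G$, and since this lies in $W_{hg}\otimes_{\F}W'_{hg}$ with $a_h m\neq 0$, the already-established description forces $a'_h w'\in S'_{hg}$. (One small subtlety: if every $a_h\in A_h$ kills $m$, then the $h$-translate contributes nothing and $S'$ at degree $hg$ is vacuously closed under that action; graded simplicity guarantees there is at least one degree realizing each $w'$.) This proves $S'_G$ is a graded $A'_G$-submodule with $S''_G=W_G\otimes^G_{\F}S'_G$. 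Finally, if $W'_G$ is graded simple, the only graded submodules $S'_G$ are $0$ and $W'_G$, so the only graded submodules of $W_G\otimes^G_{\F}W'_G$ are $\{0\}$ and the whole module; hence $W_G\otimes^G_{\F}W'_G$ is graded simple or zero. The main obstacle I anticipate is the bookkeeping needed to make the density argument genuinely internal to the graded product algebra $A_G\otimes^G_{\F}A'_G$ — one must only ever apply elements of $(A_G\otimes^G_{\F}A'_G)_h$, i.e.\ tensors $a_h\otimes_{\F}a'_h$ with \emph{matching} degrees, so the isolating element $x$ must be taken in $A_e$ (degree $e$) and paired with $1\in A'_e$, which is exactly why the hypothesis $\text{End}^e_{A_G}(W_G)=\F\cdot\text{Id}$ (equivalently, absolute simplicity of the $A_e$-modules $W_g$) rather than mere graded simplicity is indispensable.
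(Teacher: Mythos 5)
Your isolation argument via Jacobson density on the $\F$-absolutely simple $A_e$-modules $W_g$ (routed through Lemma~\ref{FWg}) is a perfectly good substitute for the paper's use of the Graded Density Theorem together with Lemma~\ref{linind}, and it correctly establishes the degree-by-degree equality $S''_G\cap(W_g\otimes_{\F}W'_g)=W_g\otimes_{\F}S'_g$, where $S'_g$ is the span of the $W'_G$-contents. The problem is with the final step.

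The $\F$-space $S'_G:=\bigoplus_g S'_g$ you construct need \emph{not} be an $A'_G$-submodule of $W'_G$, and your parenthetical attempt to patch this is where the gap lies. Concretely: suppose $g\in\Supp_G(W_G)$, $w'\in S'_g$, and $h\in G$ is such that $W_{hg}=0$ (which is entirely possible, since $W_G$ need not have full support and graded simplicity gives $W_{hg}=A_hW_g$, hence $W_{hg}=0$ exactly when $A_hW_g=0$). Then $(W_G\otimes^G_{\F}W'_G)_{hg}=0$, so $S''_{hg}=0$, so your $S'_{hg}$ is $0$; but there is nothing forcing $a'_hw'=0$ for $a'_h\in A'_h$, because $a'_h$ acts on $W'_G$ independently of $W_G$. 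So $A'_hS'_g\not\subseteq S'_{hg}$ in general, and $\bigoplus_g S'_g$ fails to be $A'_G$-stable precisely at degrees outside $\Supp_G(W_G)$. Your remark that ``the $h$-translate contributes nothing and $S'$ at degree $hg$ is vacuously closed'' confuses the vanishing of the \emph{graded product component} with stability of $S'_G$ inside $W'_G$; the theorem explicitly demands a graded $A'_G$-submodule, not merely an $\F$-subspace making the graded-product equality hold.

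The paper resolves this at the outset by taking $S'_G:=A'_G\bigl(\bigoplus_g S'_g\bigr)$, the $A'_G$-submodule \emph{generated} by the contents, so submodule-hood is automatic; the work is then to prove both inclusions $S''_G\subseteq W_G\otimes^G_{\F}S'_G$ (easy) and $W_G\otimes^G_{\F}S'_G\subseteq S''_G$. The latter is exactly the verification $w_{hg}\otimes_{\F}x'_h(w'_g)\in S''_G$ for all $w_{hg}\in W_{hg}$, $x'_h\in A'_h$, $w'_g\in S'_g$, which combines your density-isolation step (to get $w^1_g\otimes_{\F}w'_g\in S''_G$) with graded simplicity of $W_G$ (to realize $w_{hg}=x_h(w^1_g)$) and then acts by $(x_h\otimes_{\F}x'_h)\in(A_G\otimes^G_{\F}A'_G)_h$. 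Notice that when $W_{hg}=0$ this inclusion is vacuous, which is exactly how the paper sidesteps the failure mode your construction runs into. To fix your proof, redefine $S'_G$ as the generated submodule and supply this converse inclusion.
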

\begin{proof}
Given $S''_G\subseteq W_G\otimes^G_{\F}W'_G$, let $S'_G\subseteq W'_G$ be the graded left $A'_G$-module generated by the \textit{$W'_G$-content} of $S''_G$. More precisely,
$S'_G:=A'_G(\oplus_{g\in G}S'_g)$, where
$$S'_g:=\left\{{w'}_{g}^i\in W'_g\ \ | \ \ \exists \sum_{i=1}^nw_g^i\otimes_{\F}{w'}_g^i\in S''_g, \text{written in a shortest way}\right\}.$$
Clearly, every homogeneous element of $S''_G$ lies in $W_G\otimes^G_{\F}S'_G$, hence $S''_G\subseteq W_G\otimes^G_{\F}S'_G$.
For the converse inclusion, it is enough to prove that
\begin{equation}\label{qm}
w_{hg}\otimes_{\F}x'_h(w'_g)\stackrel{?}{\in} S''_G.\ \ \ \forall w_{hg}\in W_{hg},x'_h\in A'_h,w'_g\in S'_g.\end{equation}
Indeed, let $w_{gh}\in W_{gh},x'_h\in A'_h$ and $w_g'\in S'_g$.
By the definition of $S'_g$, there exist an integer $n$ and two sets of homogeneous elements $$\{w_g^1,\cdots,w_g^n\}\subseteq W_g,\{{w'}_g^1=w_g',\cdots,{w'}_g^n\}\subseteq W'_g,$$
both of which linearly independent over $\F$, such that
$\sum_{i=1}^nw_g^i\otimes_{\F}{w'}_g^i\in S''_G.$
Recall that since $W_G$ is absolutely graded simple, then End$^{\text{r}(G)}_{A_G}(W_G)$
is a twisted group algebra over $\F$ (see Lemma \ref{preobs}),
hence by Lemma \ref{linind}, the $\F$-independence of
$w_g^1,\cdots,w_g^n$ implies that these homogeneous elements are linearly
independent also over End$^{\text{r}(G)}_{A_G}(W_G)$. From the Graded Density Theorem
\cite[Theorem 2.5]{EK13} we deduce that there exists $x\in A_e$
such that
$$xw^1_g=w^1_g,\text{   and} \ \ xw_g^i=0,\ \  i\geq 2,$$
and therefore
\begin{equation}\label{shortest}
w^1_g\otimes_{\F}{w'}_g=w^1_g\otimes_{\F}{w'}_g^1=\sum_{i=1}^nxw_g^i\otimes_{\F}{w'}_g^i=(x\otimes_{\F}1)(\sum_{i=1}^nw_g^i\otimes_{\F}{w'}_g^i)\in S''_g.
\end{equation}
Now, being graded-simple, $W_G$ is generated in particular by the homogeneous element $w^1_g\in W_g$. Thus, there exists $x_h\in A_h$ such that
\begin{equation}\label{hghg}
w_{hg}=x_h(w^1_g).\end{equation}
By \eqref{shortest} and \eqref{hghg} we conclude that
$$w_{hg}\otimes_{\F}x'_h(w'_g)=x_h(w^1_g)\otimes_{\F}x'_h(w'_g)=(x_h\otimes_{\F}{x'}_h)(w^1_g\otimes_{\F}{w'}_g){\in} S''_G.$$
verifying \eqref{qm}. 
The second claim of the theorem follows immediately.
\end{proof}
\begin{theorem}\label{irrsgp}
Let $[W_G]\in\text{Irr}^{\text{gr}}_{\F}(A_G)$ and $[W'_G]\in\text{Irr}^{\text{gr}}_{\F}(A'_G),$ such that $W_G\otimes_{\F}^GW'_G$ is non-zero. Then
~$[W_G\otimes_{\F}^GW'_G]\in\text{Irr}^{\text{gr}}_{\F}(A_G\otimes_{\F}^GA'_G)$.
\end{theorem}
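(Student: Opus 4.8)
The plan is to check that $W_G\otimes_{\F}^GW'_G$ satisfies the two conditions of Definition \ref{agsm}: it is graded simple over $A_G\otimes_{\F}^GA'_G$, and $\text{End}^e_{A_G\otimes_{\F}^GA'_G}(W_G\otimes_{\F}^GW'_G)=\F\cdot\text{Id}$. The first condition is essentially already in hand: since $[W_G]\in\text{Irr}^{\text{gr}}_{\F}(A_G)$ the module $W_G$ is in particular graded simple, and $W'_G$ is graded simple by hypothesis, so Theorem \ref{grtensor} says $W_G\otimes_{\F}^GW'_G$ is either graded simple over $A_G\otimes_{\F}^GA'_G$ or zero; the standing assumption that it is non-zero rules out the latter.

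For the endomorphism condition I would first locate a convenient homogeneous component. Choose $g_0\in\text{Supp}_G(W_G\otimes_{\F}^GW'_G)$, which is non-empty precisely because the product is non-zero. Its $g_0$-component is $W_{g_0}\otimes_{\F}W'_{g_0}$, and over a field a tensor product is non-zero only if both factors are, so $g_0\in\text{Supp}_G(W_G)\cap\text{Supp}_G(W'_G)$; by Lemma \ref{FWg} this yields $[W_{g_0}]\in\text{Irr}_{\F}(A_e)$ and $[W'_{g_0}]\in\text{Irr}_{\F}(A'_e)$. Since the base algebra of $A_G\otimes_{\F}^GA'_G$ is $A_e\otimes_{\F}A'_e$ and $(W_G\otimes_{\F}^GW'_G)_{g_0}=W_{g_0}\otimes_{\F}W'_{g_0}$, the concluding assertion of Theorem \ref{tensor} (both factors absolutely $\F$-simple) gives that $W_{g_0}\otimes_{\F}W'_{g_0}$ is an absolutely $\F$-simple $A_e\otimes_{\F}A'_e$-module, i.e. $\text{End}_{A_e\otimes_{\F}A'_e}(W_{g_0}\otimes_{\F}W'_{g_0})=\F\cdot\text{Id}$.

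Now take any $\phi\in\text{End}^e_{A_G\otimes_{\F}^GA'_G}(W_G\otimes_{\F}^GW'_G)$. As a degree-$e$ graded endomorphism it stabilizes each homogeneous component, so applying the restriction homomorphism \eqref{gres} at $g_0$ produces $\text{res}_{g_0}(\phi)\in\text{End}_{A_e\otimes_{\F}A'_e}(W_{g_0}\otimes_{\F}W'_{g_0})$, which by the previous step equals $\lambda\cdot\text{Id}$ for some $\lambda\in\F$. Then $\phi-\lambda\cdot\text{Id}$ is again a degree-$e$ graded endomorphism of $W_G\otimes_{\F}^GW'_G$, and its kernel --- a graded submodule --- contains the entire non-zero $g_0$-component $W_{g_0}\otimes_{\F}W'_{g_0}$. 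By graded simplicity of $W_G\otimes_{\F}^GW'_G$ (established in the first paragraph) this kernel must be the whole module, so $\phi=\lambda\cdot\text{Id}$. Hence $\text{End}^e_{A_G\otimes_{\F}^GA'_G}(W_G\otimes_{\F}^GW'_G)=\F\cdot\text{Id}$ and $[W_G\otimes_{\F}^GW'_G]\in\text{Irr}^{\text{gr}}_{\F}(A_G\otimes_{\F}^GA'_G)$.

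I do not anticipate a genuine obstacle: the statement mostly reassembles Theorem \ref{grtensor} together with the absolutely-simple End-Tensor relation of Theorem \ref{tensor}. The only points that need care are (i) choosing $g_0$ inside the supports of \emph{both} $W_G$ and $W'_G$ so that Lemma \ref{FWg} is applicable to each factor, and (ii) noting that the homogeneous piece on which one computes endomorphisms is $W_{g_0}\otimes_{\F}W'_{g_0}$ rather than the $e$-component, together with the fact that pinning $\phi$ down on this single component suffices only because graded simplicity lets a graded submodule with nonzero $g_0$-part fill the whole module. An alternative, more computational route would imitate the Graded Density Theorem argument in the proof of Theorem \ref{grtensor}: build an $A'_e$-endomorphism $\phi_{w_{g_0}}$ of $W'_{g_0}$ with $\phi(w_{g_0}\otimes_{\F}w'_{g_0})=w_{g_0}\otimes_{\F}\phi_{w_{g_0}}(w'_{g_0})$ and extend it via Lemma \ref{aAWlemma}; but the restriction-to-a-component argument above is shorter.
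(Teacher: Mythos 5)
Your proof is correct and follows essentially the same route as the paper: graded simplicity via Theorem \ref{grtensor}, then identifying a $g_0$ in both supports and applying Lemma \ref{FWg} and Theorem \ref{tensor} to the $g_0$-component. The only difference is cosmetic: where the paper simply invokes Lemma \ref{aAWlemma} to identify $\text{End}^e_{A_G\otimes_{\F}^GA'_G}(W_G\otimes_{\F}^GW'_G)$ with $\text{End}_{A_e\otimes_{\F}A'_e}(W_{g_0}\otimes_{\F}W'_{g_0})$, you re-prove the injectivity half of that lemma inline (by noting the kernel of $\phi-\lambda\cdot\text{Id}$ is a graded submodule meeting the $g_0$-component).
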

\begin{proof}
By the second claim of Theorem \ref{grtensor}, $W_G\otimes^G_{\F}W'_G$ is graded simple over $A_G\otimes^G_{\F}A'_G$.
Next, let
$$g\in\text{Supp}_G(W_G\otimes^G_{\F}W'_G)=\text{Supp}_G(W_G)\cap\text{Supp}_G(W'_G).$$ By Theorem \ref{FWg},
$[W_g]\in\text{Irr}_{\F}(A_e)$ and $[{W'}_g]\in\text{Irr}_{\F}(A'_e).$
Theorem \ref{tensor}(1) says that in this case
\begin{equation}\label{eq2.4}
\text{End}_{A_e\otimes_{\F}A'_e}(W_g\otimes_{\F}{W'}_g)\cong\End_{A_e}(W_g)\otimes_{\F}\End_{A'_e}(W'_g)\cong\F.
\end{equation}
Now, by Lemma \ref{aAWlemma}
$$\text{End}_{A_G\otimes^G_{\F}A'_G}^e(W_G\otimes^G_{\F}{W'_G})\cong\text{End}_{(A_G\otimes^G_{\F}A'_G)_e}((W_G\otimes^G_{\F}{W'_G})_g)=\text{End}_{A_e\otimes_{\F}A'_e}(W_g\otimes_{\F}{W'}_g),$$
which, together with \eqref{eq2.4} and the Definition \ref{agsm},
prove that $W_G\otimes^G_{\F}{W'_G}$ is absolutely $\F$-simply graded over $A_G\otimes^G_{\F}A'_G$.
\end{proof}
Let \eqref{modec} be a graded left module over a $G$-graded $\F$-algebra \eqref{eq:algebragrading} and let $\alpha(W_G)$ be its
twisting by $\alpha\in Z^2(G,\F^*)$ (see Definition \ref{tgm})
over the twisted graded algebra $\alpha(A_G)$.
The right graded endomorphisms End$^{\text{r}(G)}_{\alpha(A_G)}\alpha(W_G)$ can easily be obtained from End$^{\text{r}(G)}_{A_G}(W_G)$ as follows.
For every right graded endomorphism $\phi_g\in$ End$^{\text{r}(g)}_{A_G}(W_G)$ of degree $g\in G$, use the notations \eqref{tga} and \eqref{twist} to set
\begin{eqnarray}\label{twistendo}
\alpha(\phi_g):\begin{array}{ccc}
\alpha(W_G)&\to &\alpha(W_G)\\
\sum_{h\in G}v_h\otimes_{\F}w_h&\mapsto& \sum_{h\in G}\alpha(h,g)\cdot v_{hg}\otimes_{\F}(w_h)\phi_g
\end{array}.\end{eqnarray}
\begin{lemma}\label{twistendolemma}
Let \eqref{modec} be a $G$-graded left module of a $G$-graded algebra \eqref{eq:algebragrading} and let $\alpha\in Z^2(G,\F^*)$.
Then with the notation \eqref{twistendo}\begin{enumerate}
\item For every $g\in G$, $\alpha(\phi_g)\in$End$^{\textmd{r}(g)}_{\alpha(A_G)}\alpha(W_G)$.
\item For every $g,h\in G$, $\alpha(\phi_g)\circ\alpha(\phi_h)=\alpha(g,h)\cdot\alpha(\phi_g\circ\phi_h)$.
\end{enumerate}
\end{lemma}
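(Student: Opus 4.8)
The plan is to prove both parts by a direct computation on homogeneous elements, the only substantive ingredient being the $2$-cocycle identity for $\alpha$, which (for the trivial action on $\F^*$) reads $\alpha(a,b)\,\alpha(ab,c)=\alpha(a,bc)\,\alpha(b,c)$ for all $a,b,c\in G$. First I record that \eqref{twistendo} makes sense: since $(\alpha(W_G))_h=\SpF\{v_h\}\otimes_{\F}W_h$ is free of rank one over $\F$ on $v_h$, every homogeneous element of $\alpha(W_G)$ is uniquely $v_h\otimes_{\F}w_h$ with $w_h\in W_h$, so $\alpha(\phi_g)$ is a well-defined $\F$-linear endomorphism of $\alpha(W_G)$; and because $(w_h)\phi_g\in W_{hg}$, it carries $(\alpha(W_G))_h$ into $(\alpha(W_G))_{hg}$, so it is right homogeneous of degree $g$. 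Hence for (1) the remaining point is that $\alpha(\phi_g)$ commutes with the left $\alpha(A_G)$-action.

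For that, take homogeneous $v_k\otimes_{\F}a_k\in(\alpha(A_G))_k$ and $v_h\otimes_{\F}w_h\in(\alpha(W_G))_h$. On one side, $(v_k\otimes_{\F}a_k)(v_h\otimes_{\F}w_h)=\alpha(k,h)\,v_{kh}\otimes_{\F}a_kw_h$ by the multiplication rules in $\F^{\alpha}G$ and in the graded product, and applying \eqref{twistendo} together with $(a_kw_h)\phi_g=a_k\big((w_h)\phi_g\big)$ — valid because $\phi_g$ is an $A_G$-module map — turns $\alpha(\phi_g)\big((v_k\otimes_{\F}a_k)(v_h\otimes_{\F}w_h)\big)$ into $\alpha(k,h)\,\alpha(kh,g)\,v_{khg}\otimes_{\F}a_k\big((w_h)\phi_g\big)$. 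On the other side, $(v_k\otimes_{\F}a_k)\cdot\alpha(\phi_g)(v_h\otimes_{\F}w_h)$ equals $\alpha(h,g)\,\alpha(k,hg)\,v_{khg}\otimes_{\F}a_k\big((w_h)\phi_g\big)$. The two scalars coincide by the cocycle identity with $(a,b,c)=(k,h,g)$, which proves (1).

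For (2), note first that $\phi_g\circ\phi_h\in\End^{\text{r}(gh)}_{A_G}(W_G)$ (a composition of $A_G$-module endomorphisms, with degrees adding under the right-action convention), so $\alpha(\phi_g\circ\phi_h)$ is again defined by \eqref{twistendo}. Evaluating on a homogeneous $v_k\otimes_{\F}w_k$ and applying $\alpha(\phi_g)$ and then $\alpha(\phi_h)$,
$$\big(\alpha(\phi_g)\circ\alpha(\phi_h)\big)(v_k\otimes_{\F}w_k)=\alpha(k,g)\,\alpha(kg,h)\,v_{kgh}\otimes_{\F}\big((w_k)\phi_g\big)\phi_h=\alpha(k,g)\,\alpha(kg,h)\,v_{kgh}\otimes_{\F}(w_k)(\phi_g\circ\phi_h),$$
whereas $\alpha(\phi_g\circ\phi_h)(v_k\otimes_{\F}w_k)=\alpha(k,gh)\,v_{kgh}\otimes_{\F}(w_k)(\phi_g\circ\phi_h)$. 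Comparing and using the cocycle identity with $(a,b,c)=(k,g,h)$, namely $\alpha(k,g)\,\alpha(kg,h)=\alpha(k,gh)\,\alpha(g,h)$, gives exactly $\alpha(\phi_g)\circ\alpha(\phi_h)=\alpha(g,h)\cdot\alpha(\phi_g\circ\phi_h)$.

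No serious obstacle is expected; the whole argument is bookkeeping. The one point that needs care is the orientation convention: the endomorphisms in $\End^{\text{r}(G)}$ act, and compose, from the right, so $\phi_g\circ\phi_h$ has degree $gh$ rather than $hg$ and it is the ``untransposed'' instance of the cocycle identity that appears in both parts; and in (1) it is essential to use $\phi_g$ as a genuine $A_G$-module endomorphism so that the coefficient $a_k$ can be pulled through the tensor factor.
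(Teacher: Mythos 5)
Your proof is correct and follows essentially the same route as the paper's: both verify the two identities by direct computation on homogeneous elements, pulling the left factor $v_k\otimes_{\F}a_k$ (resp. composing the right actions) and invoking the $2$-cocycle identity with the same triples $(k,h,g)$ for (1) and $(k,g,h)$ for (2). Your added remarks on well-definedness and the right-action composition convention are consistent with the paper's usage and do not change the argument.
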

\begin{proof}
We first prove that elements from $\alpha(A_G)$ can be pulled out of $\alpha(\phi_g)$.
Evidently, it is enough to check for homogeneous elements $v_k\otimes_{\F}a_k\in[\alpha(A_G)]_k,\ \ k\in G$.
Indeed, by the definition \eqref{twistendo} for every $v_h\otimes_{\F}w_h\in [\alpha(W_G)]_h$ we have
$$\begin{array}{rl}
[(v_k\otimes_{\F}a_k)(v_h\otimes_{\F}w_h)]\alpha(\phi_g)&=(\alpha(k,h)\cdot v_{kh}\otimes_{\F}a_kw_h)\alpha(\phi_g)\\
 &=\alpha(k,h)\cdot\alpha(kh,g)\cdot v_{khg}\otimes_{\F}(a_kw_h)\phi_g=\cdots
\end{array}$$
Applying the 2-cocycle condition and using the definitions once again, bearing in mind that $\phi_g$ is an $A_G$-module endomorphism of $W_G$ we obtain
$$\begin{array}{rl}
\cdots&=\alpha(k,hg)\cdot\alpha(h,g)\cdot v_{khg}\otimes_{\F}(a_kw_h)\phi_g=(v_k\otimes_{\F}a_k)(\alpha(h,g)\cdot v_{hg}\otimes_{\F}(w_h)\phi_g)\\
 &=(v_k\otimes_{\F}a_k)[(v_{h}\otimes_{\F}w_h)\alpha(\phi_g)],
\end{array}$$
proving that $\alpha(\phi_g)$ is an $\alpha(A_G)$-module endomorphism of $\alpha(W_G)$. The fact that $\alpha(\phi_g)$ is right homogeneous of degree $g$ is clear. This proves (1).
To prove the second part of the lemma, note that $\phi_g\circ\phi_h\in$ End$^{\text{r}(gh)}_{A_G}(W_G)$,
and so by the first claim of this lemma $$\alpha(\phi_g\circ\phi_h)\in\End^{\text{r}(gh)}_{\alpha(A_G)}(W_G).$$
Now, compare the action of both sides of (2) on a homogeneous element $v_{k}\otimes_{\F}w_k\in [\alpha(W_G)]_k$.
$$\begin{array}{rl}
[(v_{k}\otimes_{\F}w_k)\alpha(\phi_g)]\alpha(\phi_h)&=\alpha(k,g)\cdot(v_{kg}\otimes_{\F}(w_k)\phi_g)\alpha(\phi_h)\\
 &=\alpha(k,g)\cdot\alpha(kg,h)\cdot v_{kgh}\otimes_{\F}((w_k)\phi_g)\phi_h=\cdots
\end{array}$$
Apply the 2-cocycle condition to obtain
$$\cdots=\alpha(g,h)\cdot\alpha(k,gh)\cdot v_{kgh}\otimes_{\F}(w_k)(\phi_g\circ\phi_h)=\alpha(g,h)\cdot (v_{k}\otimes_{\F}w_k)\alpha(\phi_g\circ\phi_h).$$
Both endomorphisms agree on homogeneous elements and so are the same.
\end{proof}
\begin{corollary}\label{endtwist}
With the above notation, the map
$$\alpha_W:\begin{array}{ccc}
\alpha(End^{\textmd{r}(G)}_{A_G}(W_G))&\stackrel{G}\rightarrow& End^{\textmd{r}(G)}_{\alpha(A_G)}\alpha(W_G)\\
v_g\otimes_{\F}\phi_g&\mapsto&\alpha(\phi_g)
\end{array}$$
determines an isomorphism of $G$-graded algebras.
\end{corollary}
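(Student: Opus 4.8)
The plan is to read off almost everything from Lemma \ref{twistendolemma}, which was set up precisely for this purpose, and then to invert $\phi_g\mapsto\alpha(\phi_g)$ explicitly on each homogeneous component. First, the source $\alpha(\End^{\text{r}(G)}_{A_G}(W_G))=\F^{\alpha}G\otimes^G_{\F}\End^{\text{r}(G)}_{A_G}(W_G)$ has $g$-homogeneous component $\text{Span}_{\F}\{v_g\}\otimes_{\F}\End^{\text{r}(g)}_{A_G}(W_G)$, so it suffices to prescribe $\alpha_W$ on elements $v_g\otimes_{\F}\phi_g$ and extend $\F$-linearly. Lemma \ref{twistendolemma}(1) guarantees that $\alpha(\phi_g)$ lies in $\End^{\text{r}(g)}_{\alpha(A_G)}\alpha(W_G)$, and since $\alpha(\phi_g)$ is visibly $\F$-linear in $\phi_g$ and right homogeneous of degree $g$, the map $\alpha_W$ is a well-defined $G$-graded $\F$-linear map.

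Next I would check multiplicativity, which is just the combination of two facts already in hand: the product of $v_g\otimes_{\F}\phi_g$ and $v_h\otimes_{\F}\phi_h$ in the graded product $\F^{\alpha}G\otimes^G_{\F}\End^{\text{r}(G)}_{A_G}(W_G)$ is $\alpha(g,h)\cdot v_{gh}\otimes_{\F}(\phi_g\circ\phi_h)$, whose image under $\alpha_W$ is $\alpha(g,h)\cdot\alpha(\phi_g\circ\phi_h)$ by $\F$-linearity, and this equals $\alpha(\phi_g)\circ\alpha(\phi_h)$ by Lemma \ref{twistendolemma}(2); the identity maps to the identity after the usual normalization of $\alpha$. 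Thus $\alpha_W$ is a homomorphism of $G$-graded algebras, and only bijectivity remains.

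For bijectivity I would work degree by degree. Given $\psi_g\in\End^{\text{r}(g)}_{\alpha(A_G)}\alpha(W_G)$, homogeneity of degree $g$ forces $\psi_g$ to send $v_h\otimes_{\F}w_h$ into the $hg$-component $\text{Span}_{\F}\{v_{hg}\}\otimes_{\F}W_{hg}$, so there is a unique $u(h,w_h)\in W_{hg}$ with $\psi_g(v_h\otimes_{\F}w_h)=v_{hg}\otimes_{\F}u(h,w_h)$; set $(w_h)\phi_g:=\alpha(h,g)^{-1}u(h,w_h)$ and extend additively over $W_G=\bigoplus_h W_h$. Additivity and the right-homogeneous-degree-$g$ property are immediate from this recipe; the substantive point is $A_G$-linearity, $(a_kw_h)\phi_g=a_k((w_h)\phi_g)$, which drops out of comparing the two evaluations of $\psi_g$ on $(v_k\otimes_{\F}a_k)(v_h\otimes_{\F}w_h)$ together with the 2-cocycle identity $\alpha(k,h)\alpha(kh,g)=\alpha(h,g)\alpha(k,hg)$ — exactly the manipulation used to prove Lemma \ref{twistendolemma}. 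By construction $\alpha(\phi_g)=\psi_g$, and $\phi_g$ is the unique such preimage, so $\alpha_W$ is bijective in each degree, hence a graded isomorphism.

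I expect the sole real obstacle to be the bookkeeping in that last verification, i.e. confirming that ``untwisting'' an $\alpha(A_G)$-endomorphism produces a genuine $A_G$-endomorphism, all of which rides on the cocycle relation. An alternative route that avoids redoing this computation is to invoke the construction itself with the cocycle $\alpha^{-1}$: the already-established homomorphism $(\alpha^{-1})_{\alpha(W_G)}$, combined with the graded isomorphism $\chi$ of \eqref{therule} and $\F^{\alpha^{-1}}G\otimes^G_{\F}\F^{\alpha}G\stackrel{G}{\cong}\F G$ from Lemma \ref{invGr}(1), identifies $\alpha^{-1}(\alpha(W_G))$ with $W_G$ and $\alpha^{-1}(\alpha(A_G))$ with $A_G$; tracking $\phi_g$ through the resulting composite $\End^{\text{r}(G)}_{A_G}(W_G)\to\End^{\text{r}(G)}_{A_G}(W_G)$ shows it is the identity, and symmetrically in the other order, so $\alpha_W$ is invertible with inverse $(\alpha^{-1})_{\alpha(W_G)}$ up to these canonical identifications.
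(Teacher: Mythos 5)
Your proof is correct. The multiplicativity check is identical to the paper's: it invokes Lemma \ref{twistendolemma}(2) exactly as they do, and the graded-ness and well-definedness from Lemma \ref{twistendolemma}(1) are likewise the same. The one place you genuinely diverge is invertibility. Your alternative route -- running the construction with $\alpha^{-1}$ and using $\chi$ from \eqref{therule} together with $[\F^{\alpha^{-1}}G\otimes^G_{\F}\F^{\alpha}G]=[\F G]$ to identify $\alpha^{-1}(\alpha(W_G))$ with $W_G$ -- is precisely what the paper does (the paper says only ``Using \eqref{therule} one can verify that $\alpha_W$ is invertible'' and leaves the bookkeeping to the reader). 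Your primary route is different and more explicit: given $\psi_g\in\End^{\textmd{r}(g)}_{\alpha(A_G)}\alpha(W_G)$ you read off a candidate preimage $\phi_g$ on each $W_h$ via $(w_h)\phi_g:=\alpha(h,g)^{-1}u(h,w_h)$, then verify $A_G$-linearity by comparing $\psi_g((v_k\otimes_{\F}a_k)(v_h\otimes_{\F}w_h))$ computed two ways and cancelling with the cocycle identity $\alpha(k,h)\alpha(kh,g)=\alpha(h,g)\alpha(k,hg)$; this is sound, and it has the advantage of producing a closed-form inverse without routing through $\chi$ and $\F^{\alpha^{-1}}G$ at all, at the cost of redoing the cocycle manipulation that Lemma \ref{twistendolemma}(1) already carried out in the forward direction. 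Either route is acceptable; the paper opts for the shorter appeal to the canonical identification, yours makes the inverse visible.
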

\begin{proof}
Let $\phi_g\in$End$^{\textmd{r}(g)}_{A_G}(W_G)$ and $\phi_h\in$End$^{\textmd{r}(h)}_{A_G}(W_G)$. Then
$$\alpha_W[(v_g\otimes_{\F}\phi_g)(v_h\otimes_{\F}\phi_h)]=\alpha_W(v_gv_h\otimes_{\F}\phi_g\circ\phi_h)=\alpha_W(\alpha(g,h)\cdot v_{gh}\otimes_{\F}\phi_g\circ\phi_h)=\cdots$$
By the definition of $\alpha_W$ and by Lemma \ref{twistendolemma}(2) we have
$$\cdots=\alpha(g,h)\cdot\alpha(\phi_g\circ\phi_h)=\alpha(\phi_g)\circ\alpha(\phi_h)=\alpha_W(v_g\otimes_{\F}\phi_g)\circ\alpha_W(v_h\otimes_{\F}\phi_h),$$
proving that $\alpha_W$ is multiplicative.
Clearly, $\alpha_W$ is a graded homomorphism. Using \eqref{therule} one can verify that $\alpha_W$ is invertible, completing the proof of the corollary.
\end{proof}
Here is a nice property of the graded product of graded modules $W_G$ and $W'_G$ over \textit{strongly graded algebras} $A_G$ and $A'_G$.
In this case we show that the $A_G\otimes_{\F}A'_G$-module $W_G\otimes_{\F}W'_G$ is induced (see \S\ref{sb}) from
the $A_G\otimes^G_{\F}A'_G$-module $W_G\otimes^G_{\F}W'_G$. We first need the following
\begin{lemma}\label{strgen}
Let \eqref{WW'} be $G$-graded left modules over $G$-graded $\F$-algebras \eqref{twoalgebragrading} respectively. Suppose that $A_G$ is strongly graded.
Then $$W_G\otimes_{\F}W'_G=(A_G\otimes_{\F}1)(W_G\otimes^G_{\F}W'_G)$$
(Similarly, $W_G\otimes_{\F}W'_G=(1\otimes_{\F}A'_G)(W_G\otimes^G_{\F}W'_G)$ if $A'_G$ is strongly graded). In particular, in both cases
\begin{equation}\label{gengrprod}
W_G\otimes_{\F}W'_G=(A_G\otimes_{\F}A'_G)(W_G\otimes^G_{\F}W'_G).
\end{equation}
\end{lemma}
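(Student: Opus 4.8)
The plan is to reduce the asserted identity to a statement about the homogeneous tensors and then to feed in the strong‑grading hypothesis through Lemma \ref{genst}. Since $W_G\otimes_\F W'_G=\bigoplus_{g,h\in G}W_g\otimes_\F W'_h$ is spanned over $\F$ by the elements $w_g\otimes_\F w'_h$ with $w_g\in W_g$ and $w'_h\in W'_h$, and since $(A_G\otimes_\F 1)(W_G\otimes^G_\F W'_G)$ is an $\F$‑subspace of $W_G\otimes_\F W'_G$, it suffices to prove that every such $w_g\otimes_\F w'_h$ lies in $(A_G\otimes_\F 1)(W_G\otimes^G_\F W'_G)$; the reverse inclusion is automatic.

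First I would fix $g,h\in G$ together with $w_g\in W_g$ and $w'_h\in W'_h$. Since $A_G$ is strongly graded, $\mathcal{G}_{\text{str}}=G$, so every homogeneous component of $A_G$ is strong; in particular $A_{hg^{-1}}$ is strong, and Lemma \ref{genst} applied to the component $A_{gh^{-1}}$ and the degree $h$ gives the equality $A_{gh^{-1}}W_h=W_g$. Hence there are finitely many $x_i\in A_{gh^{-1}}$ and $w_i\in W_h$ with $w_g=\sum_{i=1}^n x_iw_i$, and therefore
$$w_g\otimes_\F w'_h=\Big(\sum_{i=1}^n x_iw_i\Big)\otimes_\F w'_h=\sum_{i=1}^n (x_i\otimes_\F 1)\bigl(w_i\otimes_\F w'_h\bigr).$$
Each summand $w_i\otimes_\F w'_h$ lies in $W_h\otimes_\F W'_h=(W_G\otimes^G_\F W'_G)_h\subseteq W_G\otimes^G_\F W'_G$, so $w_g\otimes_\F w'_h\in(A_G\otimes_\F 1)(W_G\otimes^G_\F W'_G)$. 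This establishes $W_G\otimes_\F W'_G=(A_G\otimes_\F 1)(W_G\otimes^G_\F W'_G)$; interchanging the two tensor factors and applying Lemma \ref{genst} to $A'_G$ yields the parenthetical statement in the case where $A'_G$ is strongly graded.

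Finally, \eqref{gengrprod} drops out: whenever one of $A_G$, $A'_G$ is strongly graded, the identity just proved together with the inclusion $A_G\otimes_\F 1\subseteq A_G\otimes_\F A'_G$ (respectively $1\otimes_\F A'_G\subseteq A_G\otimes_\F A'_G$) gives $W_G\otimes_\F W'_G\subseteq(A_G\otimes_\F A'_G)(W_G\otimes^G_\F W'_G)$, while the opposite inclusion is trivial. I expect the only delicate point to be the bookkeeping of which homogeneous component of $A_G$ must be strong in order to produce $W_g$ from $W_h$; once one notices that strong grading forces every component (hence every inverse component) to be strong, Lemma \ref{genst} supplies precisely what is required, so no genuine obstacle remains.
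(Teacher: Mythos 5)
Your proof is correct and follows essentially the same route as the paper: both arguments reduce to a single monomial $w_g\otimes_\F w'_h$ and use the strong grading to rewrite it so the $W$-factor lands in degree $h$; the paper inserts a homogeneous unit decomposition $1=\sum_j a_j^{(hg^{-1})}b_j^{(hg^{-1})}$ and lets $b_j^{(hg^{-1})}$ push $w_g$ into $W_h$, while you package the same step by invoking Lemma \ref{genst} to write $w_g=\sum_i x_iw_i$ with $x_i\in A_{gh^{-1}}$ and $w_i\in W_h$. The two formulations differ only in presentation, since Lemma \ref{genst} is itself proved via that unit decomposition.
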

\begin{proof}
The inclusion ``$\supseteq$" is obvious.
We prove the inclusion $``\subseteq"$ under the assumption that $A_G$ is strongly graded. The proof for $A'_G$ strongly graded is the same.
Clearly, it is enough to show this inclusion for any element of the form $w_g\otimes_{\F}w'_h\in W_g\otimes_{\F}W'_h$, where $g,h\in G$.
Indeed, given such an element, apply a homogeneous unit decomposition (see \eqref{unitdecomp})
\begin{equation}\label{unitgh-1}
1=\sum_j a_j^{(hg^{-1})}b_j^{(hg^{-1})},
\end{equation}
for appropriate homogeneous elements
$\{a_j^{(hg^{-1})}\}_j\subset A_{gh^{-1}},$ and $\{b_j^{(hg^{-1})}\}_j\subset A_{hg^{-1}},$ provided by the strongly graded condition on $A_G$.
We develop
\begin{eqnarray}\label{eq8.1}
\begin{array}{cl}
w_g\otimes_{\F}w'_h &=(\sum_j a_j^{(hg^{-1})}b_j^{(hg^{-1})}\otimes_{\F}1)(w_g\otimes_{\F}w'_h)\\
 &=\sum_j (a_j^{(hg^{-1})}\otimes_{\F}1)(b_j^{(hg^{-1})}\otimes_{\F}1)(w_g\otimes_{\F}w'_h)\\
 &\in(A_G\otimes_{\F}1)\sum_j (b_j^{(hg^{-1})}\otimes_{\F}1)(w_g\otimes_{\F}w'_h).
\end{array}\end{eqnarray}
Also, for every $j$ we have
\begin{equation}\label{eq8.2}
(b_j^{(hg^{-1})}\otimes_{\F}1)(w_g\otimes_{\F}w'_h)\in W_h\otimes^G_{\F}W'_h=(W_G\otimes^G_{\F}W'_G)_h.
\end{equation}
By equations \eqref{eq8.1} and \eqref{eq8.2} we obtain $w_g\otimes_{\F}w'_h\in(A_G\otimes_{\F}1)(W_G\otimes^G_{\F}W'_G).$
This completes the proof.
\end{proof}
\begin{lemma}\label{extendhom}
Let \eqref{WW'} be $G$-graded left modules over $G$-graded $\F$-algebras \eqref{twoalgebragrading} respectively,
and let $M$ is any left $A_G\otimes_{\F}A'_G$-module.
Suppose that either $A_G$ or $A'_G$ are strongly graded. Then the restriction map
\begin{equation}\label{pres}
\text{res}:\Hom^{}_{A_G\otimes_{\F}A'_G}(W_G\otimes_{\F}W'_G,M)\to\Hom^{}_{A_G\otimes_{\F}^GA'_G}(W_G\otimes^G_{\F}W'_G,M)
\end{equation}
is bijective.
\end{lemma}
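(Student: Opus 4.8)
The plan is to deduce both directions from the generation statement of Lemma~\ref{strgen}. Assume without loss of generality that $A_G$ is strongly graded, the case in which $A'_G$ is strongly graded being entirely symmetric (replace $A_G\otimes_{\F}1$ by $1\otimes_{\F}A'_G$ throughout and use homogeneous unit decompositions inside $A'_G$). Write $R:=A_G\otimes_{\F}^GA'_G$, viewed as a subalgebra of $A_G\otimes_{\F}A'_G$ with $R_e=A_e\otimes_{\F}A'_e$, and recall from Lemma~\ref{strgen} that $W_G\otimes_{\F}W'_G=(A_G\otimes_{\F}1)(W_G\otimes_{\F}^GW'_G)$, so that the graded product module generates the full tensor product module over $A_G\otimes_{\F}A'_G$. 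Injectivity of \eqref{pres} is then immediate: a homomorphism $\varphi\in\Hom_{A_G\otimes_{\F}A'_G}(W_G\otimes_{\F}W'_G,M)$ that vanishes on $W_G\otimes_{\F}^GW'_G$ vanishes on the submodule this set generates, which is everything.

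For surjectivity, given $\psi\in\Hom_{R}(W_G\otimes_{\F}^GW'_G,M)$ I would construct an extension $\varphi$ as follows. By Lemma~\ref{strgen} (and splitting into homogeneous pieces) every element of $W_G\otimes_{\F}W'_G$ is a finite sum $\sum_i(a_i\otimes_{\F}1)s_i$ with each $a_i\in A_G$ and each $s_i\in W_G\otimes_{\F}^GW'_G$ homogeneous, and I set
\begin{equation*}
\varphi\Big(\sum_i(a_i\otimes_{\F}1)s_i\Big):=\sum_i(a_i\otimes_{\F}1)\psi(s_i).
\end{equation*}
The step that requires real work is well-definedness: I must show that $\sum_i(a_i\otimes_{\F}1)s_i=0$ forces $\sum_i(a_i\otimes_{\F}1)\psi(s_i)=0$. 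Grouping the terms by bidegree in $W_G\otimes_{\F}W'_G=\bigoplus_{g,h}W_g\otimes_{\F}W'_h$, one may assume all $(a_i\otimes_{\F}1)s_i$ lie in a single component $W_g\otimes_{\F}W'_h$, so all $a_i\in A_{gh^{-1}}$ and all $s_i\in W_h\otimes_{\F}W'_h=(W_G\otimes_{\F}^GW'_G)_h$. Picking a homogeneous unit decomposition $1=\sum_l p_lq_l$ with $p_l\in A_{gh^{-1}}$ and $q_l\in A_{hg^{-1}}$, for each $l$ the element $\sum_i(q_la_i\otimes_{\F}1)s_i$ equals $(q_l\otimes_{\F}1)\sum_i(a_i\otimes_{\F}1)s_i=0$ and lies in $W_G\otimes_{\F}^GW'_G$, since $q_la_i\in A_e$ and hence $q_la_i\otimes_{\F}1\in R_e$; applying the $R$-linear map $\psi$ gives $\sum_i(q_la_i\otimes_{\F}1)\psi(s_i)=0$, and multiplying on the left by $p_l\otimes_{\F}1$ and summing over $l$ yields $\sum_i(a_i\otimes_{\F}1)\psi(s_i)=0$ because $\sum_l p_lq_la_i=a_i$. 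Once $\varphi$ is well defined, additivity and $\F$-linearity are clear, $\varphi$ restricts to $\psi$ on $W_G\otimes_{\F}^GW'_G$ (take all $a_i=1$), and $(A_G\otimes_{\F}1)$-linearity is immediate from the defining formula; the remaining point, $(1\otimes_{\F}A'_G)$-linearity, I would handle by the same device: for homogeneous $c'\in A'_k$, re-expand $(1\otimes_{\F}c')s_i$ into the graded product via a homogeneous unit decomposition of type $k^{-1}$, so that each resulting graded-product summand is $(q_l\otimes_{\F}c')$ applied to a homogeneous element of $W_h\otimes_{\F}W'_h$, with $q_l\otimes_{\F}c'\in R_k$; pushing $\psi$ through and reassembling the unit decomposition reproduces $(1\otimes_{\F}c')\varphi\big(\sum_i(a_i\otimes_{\F}1)s_i\big)$.

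I expect the well-definedness check—equivalently, the injectivity of the multiplication map $(A_G\otimes_{\F}A'_G)\otimes_R(W_G\otimes_{\F}^GW'_G)\to W_G\otimes_{\F}W'_G$, whose surjectivity is exactly Lemma~\ref{strgen}—to be the main obstacle, together with the parallel computation for $(1\otimes_{\F}A'_G)$-linearity. In both, the difficulty is that elements of the full tensor product admit many representations through the strongly graded structure of $A_G$, and one must verify that $\psi$, which a priori only respects the $R$-action, respects all of them; the resolution in each case is that a homogeneous unit decomposition converts the ambiguity into relations or identities that take place inside $R$-module maps, where $\psi$ is linear. Alternatively, once the multiplication map above is shown to be an isomorphism, \eqref{pres} is simply the restriction--induction (tensor--hom) adjunction isomorphism $\Hom_{A_G\otimes_{\F}A'_G}\big((A_G\otimes_{\F}A'_G)\otimes_R(W_G\otimes_{\F}^GW'_G),M\big)\cong\Hom_{R}(W_G\otimes_{\F}^GW'_G,M)$, which yields bijectivity in one stroke.
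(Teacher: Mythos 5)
Your argument is correct and is essentially the paper's own proof: both establish uniqueness from the generation statement of Lemma~\ref{strgen}, and both prove well-definedness (i.e.\ existence of the extension) by reducing to homogeneous bidegree components and inserting a homogeneous unit decomposition to push the coefficients into $R=A_G\otimes_{\F}^GA'_G$ where $\psi$ is linear. The only organizational difference is that you define $\varphi$ using representations $\sum_i(a_i\otimes_{\F}1)s_i$ and therefore must verify $(1\otimes_{\F}A'_G)$-linearity as a separate step, whereas the paper defines $\tilde\varphi(aw)=a\varphi(w)$ for arbitrary $a\in A_G\otimes_{\F}A'_G$, so that $A_G\otimes_{\F}A'_G$-linearity falls out of well-definedness at once; your closing remark that the lemma is equivalent to the multiplication map being an isomorphism matches the paper's subsequent Corollary~\ref{indstgrprod}, which the paper derives \emph{from} this lemma rather than the reverse.
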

\begin{proof}
Without loss of generality, let $A_G$ be strongly graded.
Let $$\varphi\in \Hom_{A_G\otimes_{\F}^GA'_G}(W_G\otimes^G_{\F}W'_G, M).$$
To establish the bijectivity property, we show that there exists a unique $\tilde{\varphi}\in \Hom_{A_G\otimes_{\F}A'_G}(W_G\otimes_{\F}W'_G, M)$ extending $\varphi$,
that is $\tilde{\varphi}|_{W_G\otimes^G_{\F}W'_G}=\varphi.$
If an extension $\tilde{\varphi}\in \Hom_{A_G\otimes_{\F}A'_G}(W_G\otimes_{\F}W'_G, M)$ does exist then by \eqref{gengrprod} it is determined by the rule
$\tilde{\varphi}(aw)=a\varphi(w)$ for any $a\in A_G\otimes_{\F}A'_G$ and $w\in W_G\otimes^G_{\F}W'_G$. This takes care of the uniqueness part.


For the existence, we need to show that the above rule is well-defined,
i.e., to verify that if $\sum_i c_iw_i=0$
for $c_i\in A_G\otimes_{\F}A'_G$ and $w_i\in W_G\otimes^G_{\F}W'_G$ then $\sum_i c_i\varphi(w_i)\stackrel{?}{=}0$.
Since $$W_G\otimes_{\F}W'_G=\bigoplus_{g,g'\in G}(W_g\otimes_{\F}W_{g'}'),$$
we may assume that all the summands $c_iw_i$ lie in $ W_g\otimes_{\F}W'_{g'}$ for fixed $g,g'\in G$.
Let us then write the vanishing sum assumption using homogeneous elements as follows.
$$\sum_i (x^i_{k_i}\otimes_{\F}x'^i_{h_i})(w^i_{g_i}\otimes_{\F}w'^i_{g_i})=0,$$
where for every $i$ in this summation
$$x^i_{k_i}\otimes_{\F}x'^i_{h_i}\in A_{k_i}\otimes_{\F}A'_{h_i}\text{ and }w^i_{g_i}\otimes_{\F}w'^i_{g_i}\in(W_G\otimes^G_{\F}W'_G)_{g_i}$$ such that
$k_ig_i=g$ and $h_ig_i=g'$.
Now, let $\sum_j a_j^{(g'g^{-1})}b_j^{(g'g^{-1})}=1$
be a unit decomposition corresponding to the strong grading condition $A_{gg'^{-1}}A_{g'g^{-1}}=A_e$ (see \eqref{unitgh-1}).
Next, use this decomposition to obtain
$$\begin{array}{cl}
\sum_i (x^i_{k_i}\otimes_{\F}x'^i_{h_i})\varphi(w^i_{g_i}\otimes_{\F}w'^i_{g_i})&
=\sum_j(a_j^{(g'g^{-1})}b_j^{(g'g^{-1})}\otimes_{\F}1)\sum_i (x^i_{k_i}\otimes_{\F}x'^i_{h_i})\varphi(w^i_{g_i}\otimes_{\F}w'^i_{g_i})\\
&=\sum_j(a_j^{(g'g^{-1})}\otimes_{\F}1)(b_j^{(g'g^{-1})}\otimes_{\F}1)\sum_i (x^i_{k_i}\otimes_{\F}x'^i_{h_i})\varphi(w^i_{g_i}\otimes_{\F}w'^i_{g_i})=\cdots
\end{array}$$
Note that for every $i$ and $j$ in the summation
$$(b_j^{(g'g^{-1})}\otimes_{\F}1) (x^i_{k_i}\otimes_{\F}x'^i_{h_i})=b_j^{(g'g^{-1})}x^i_{k_i}\otimes_{\F}x'^i_{h_i}\in A_{h_i}\otimes_{\F}A'_{h_i}=(A_G\otimes^G_{\F}A'_G)_{h_i}.$$
Together with the fact that $\varphi$ is an $A_G\otimes^G_{\F}A'_G$-homomorphism we get
$$\cdots=\sum_{j} (a_j^{(g'g^{-1})}\otimes_{\F}1)\varphi((b_j^{(g'g^{-1})}\otimes_{\F}1)\sum_i (x^i_{k_i}\otimes_{\F}x'^i_{h_i})(w^i_{g_i}\otimes_{\F}w'^i_{g_i}))=
\sum_{j} (a_j^{(g'g^{-1})}\otimes_{\F}1)\varphi(0)=0,$$
completing the proof of the lemma.
\end{proof}
Using the induction notation $$(W_G\otimes^G_{\F}W'_G)|^{A_G\otimes_{\F}A'_G}:=(A_G\otimes_{\F}A'_G)\otimes_{A_G\otimes^G_{\F}A'_G}(W_G\otimes^G_{\F}W'_G)$$
(see \S\ref{sb}) we have
\begin{corollary}\label{indstgrprod}
Let \eqref{WW'} be $G$-graded left modules over strongly $G$-graded $\F$-algebras \eqref{twoalgebragrading} respectively.
Then $$(W_G\otimes^G_{\F}W'_G)|^{A_G\otimes_{\F}A'_G}\cong W_G\otimes_{\F}W'_G.$$
\end{corollary}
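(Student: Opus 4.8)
The plan is to realise both sides as solutions of the same universal problem. First I would introduce the canonical $A_G\otimes_{\F}A'_G$-module homomorphism
$$\mu:(W_G\otimes^G_{\F}W'_G)|^{A_G\otimes_{\F}A'_G}=(A_G\otimes_{\F}A'_G)\otimes_{A_G\otimes^G_{\F}A'_G}(W_G\otimes^G_{\F}W'_G)\longrightarrow W_G\otimes_{\F}W'_G$$
determined by $a\otimes w\mapsto aw$, where $W_G\otimes^G_{\F}W'_G=\bigoplus_{g\in G}W_g\otimes_{\F}W'_g$ is viewed as the $A_G\otimes^G_{\F}A'_G$-submodule of $W_G\otimes_{\F}W'_G$. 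Surjectivity of $\mu$ is precisely equation \eqref{gengrprod} of Lemma \ref{strgen}, available here since both $A_G$ and $A'_G$ are strongly graded; so all the work lies in injectivity.

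For injectivity I would compare two Hom-adjunctions. On one hand, by the defining adjunction of the induced module, for every left $A_G\otimes_{\F}A'_G$-module $M$ precomposition with the canonical map $\eta:W_G\otimes^G_{\F}W'_G\to(W_G\otimes^G_{\F}W'_G)|^{A_G\otimes_{\F}A'_G}$ gives a bijection $\Hom_{A_G\otimes_{\F}A'_G}\big((W_G\otimes^G_{\F}W'_G)|^{A_G\otimes_{\F}A'_G},M\big)\xrightarrow{\sim}\Hom_{A_G\otimes^G_{\F}A'_G}(W_G\otimes^G_{\F}W'_G,M)$. On the other hand, Lemma \ref{extendhom} (applicable since one of $A_G,A'_G$ is strongly graded) says that restriction along the inclusion $\iota:W_G\otimes^G_{\F}W'_G\hookrightarrow W_G\otimes_{\F}W'_G$ is also a bijection $\Hom_{A_G\otimes_{\F}A'_G}(W_G\otimes_{\F}W'_G,M)\xrightarrow{\sim}\Hom_{A_G\otimes^G_{\F}A'_G}(W_G\otimes^G_{\F}W'_G,M)$. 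Since $\mu\circ\eta=\iota$ by construction, these two bijections fit into a commuting triangle; hence $\psi\mapsto\psi\circ\mu$ is a bijection $\Hom_{A_G\otimes_{\F}A'_G}(W_G\otimes_{\F}W'_G,M)\xrightarrow{\sim}\Hom_{A_G\otimes_{\F}A'_G}\big((W_G\otimes^G_{\F}W'_G)|^{A_G\otimes_{\F}A'_G},M\big)$ for every $M$. A Yoneda-type argument --- run $M$ through the target and then the source of $\mu$, and chase the identity morphisms --- then produces a two-sided inverse of $\mu$, so $\mu$ is the asserted isomorphism of $A_G\otimes_{\F}A'_G$-modules.

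The only delicate bookkeeping is checking that $\eta$ is injective with image $1\otimes(W_G\otimes^G_{\F}W'_G)$ and that $\mu\circ\eta$ is literally $\iota$, so that the two universal properties genuinely match; once this is in place nothing hard remains, since the heavy lifting (the unique extension of $A_G\otimes^G_{\F}A'_G$-homomorphisms across $\iota$, which used the strong grading via homogeneous unit decompositions) has already been carried out in Lemma \ref{extendhom}, and surjectivity is supplied by \eqref{gengrprod}. An alternative, more computational route would be to write down an explicit inverse of $\mu$ using a homogeneous unit decomposition of $1\in A_G$ as in the proof of Lemma \ref{strgen} and verify directly that it is well defined and two-sided inverse; I expect the adjunction argument to be cleaner and less error-prone.
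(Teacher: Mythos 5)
Your proposal is correct and is essentially the paper's own argument, just phrased more categorically. The paper likewise takes $\mu$ (its map \eqref{rhorho}), notes surjectivity via \eqref{gengrprod}, and then constructs the inverse by applying Lemma \ref{extendhom} to extend the canonical map $w\mapsto 1\otimes_{A_G\otimes^G_{\F}A'_G}w$ to all of $W_G\otimes_{\F}W'_G$ — which is exactly the content of your Yoneda chase with $M$ running through the source of $\mu$.
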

\begin{proof}
To prove this consequence of Lemma \ref{extendhom}, and in view of the equality \eqref{gengrprod}, it suffices to introduce an inverse to the algebra homomorphism
\begin{eqnarray}\label{rhorho}\begin{array}{ccc}
(W_G\otimes^G_{\F}W'_G)|^{A_G\otimes_{\F}A'_G}&\to &(A_G\otimes_{\F}A'_G)(W_G\otimes^G_{\F}W'_G)\\
c\otimes_{A_G\otimes^G_{\F}A'_G}w&\mapsto& cw
\end{array},\end{eqnarray}
where $c\in A_G\otimes_{\F}A'_G$ and $w\in W_G\otimes^G_{\F}W'_G$. Indeed, let
$$\varphi\in \Hom_{A_G\otimes_{\F}^GA'_G}(W_G\otimes^G_{\F}W'_G, (W_G\otimes^G_{\F}W'_G)|^{A_G\otimes_{\F}A'_G})$$ be the natural left ${A_G\otimes_{\F}^GA'_G}$-module homomorphism
determined by
$$\varphi:\begin{array}{ccc}
W_G\otimes^G_{\F}W'_G&\to &(W_G\otimes^G_{\F}W'_G)|^{A_G\otimes_{\F}A'_G}\\
w&\mapsto& 1\otimes_{A_G\otimes^G_{\F}A'_G}w
\end{array},\ \  w\in W_G\otimes^G_{\F}W'_G.$$
Owing to Lemma \ref{extendhom}, there is a unique homomorphism
$$\tilde{\varphi}\in \Hom_{A_G\otimes_{\F}A'_G}(W_G\otimes_{\F}W'_G, (W_G\otimes^G_{\F}W'_G)|^{A_G\otimes_{\F}A'_G})$$ which extends $\varphi$ and,
as can easily be verified, is an inverse of \eqref{rhorho}.
\end{proof}
\subsection{Graded Hom-Tensor relations}\label{grHTrsec}
A graded version of Theorem \ref{ungrHTr} is given in this section.
Let \eqref{twoalgebragrading} be two $G$-graded $\F$-algebras, let $W_G$ and $\tilde{W}_G$ be graded left $A_G$-modules,
and let $W'_G$ and $\tilde{W}'_G$ be graded left $A'_G$-modules.
Then the (ungraded) Hom-Tensor mapping in this case
$$\Psi:\text{Hom}^{}_{A_G}(W_G,\tilde{W}_G)\otimes_{\F}\text{Hom}^{}_{A'_G}(W'_G,\tilde{W}_G')\xrightarrow{}
\Hom^{}_{A_G\otimes_{\F}A'_G}(W_G\otimes_{\F}W'_G,\tilde{W}_G\otimes_{\F}\tilde{W}'_G)$$
is determined by
(see \eqref{end-ten})
$$\Psi(f\otimes_{\F}f'):w\otimes_{\F}w'\mapsto f(w)\otimes_{\F}f'(w'),$$
where $f\in \text{Hom}^{}_{A_G}(W_G,\tilde{W}_G), w\in W_G, f'\in \text{Hom}^{}_{A'_G}(W'_G,\tilde{W}'_G)$ and $w'\in\tilde{W}'_G$.
Next, plug the $A_G\otimes_{\F}A'_G$-module $M:=\tilde{W}_G\otimes_{\F}\tilde{W}'_G$
inside the restriction map \eqref{pres}
$$\text{res}:\Hom^{}_{A_G\otimes_{\F}A'_G}(W_G\otimes_{\F}W'_G,\tilde{W}_G\otimes_{\F}\tilde{W}'_G)\to \Hom^{}_{A_G\otimes_{\F}^GA'_G}(W_G\otimes^G_{\F}W'_G,\tilde{W}_G\otimes_{\F}\tilde{W}'_G).$$
Let
$$\begin{array}{rl}B_G & :=\text{Hom}^{\text{r}(G)}_{A_G}(W_G,\tilde{W}_G)\otimes^G_{\F}\text{Hom}^{\text{r}(G)}_{A'_G}(W'_G,\tilde{W}_G')\\
 &\subseteq\text{Hom}^{}_{A_G}(W_G,\tilde{W}_G)\otimes_{\F}\text{Hom}^{}_{A'_G}(W'_G,\tilde{W}_G').\end{array}$$
Notice that for every homogeneous element $f_g\otimes_{\F}f'_g\in B_g,$
the map $$\res\circ\Psi(f_g\otimes_{\F}f'_g)\in \Hom^{}_{A_G\otimes_{\F}^GA'_G}(W_G\otimes^G_{\F}W'_G,\tilde{W}_G\otimes_{\F}\tilde{W}'_G)$$ takes any
\begin{equation}\label{eq8.3}
w_h\otimes_{\F}w'_h\in W_h\otimes_{\F}W'_h=(W_G\otimes^G_{\F}W'_G)_h
\end{equation}
to \begin{equation}\label{eq8.4}
f_g(w_h)\otimes_{\F}f'_g(w'_h)\in \tilde{W}_{hg}\otimes_{\F}\tilde{W}'_{hg} =(\tilde{W}_G\otimes^G_{\F}\tilde{W}'_G)_{hg} .
\end{equation}
Equations \eqref{eq8.3} and \eqref{eq8.4} say that for every $g\in G$
$$\res\circ\Psi:B_g\xrightarrow{}
\Hom^{\text{r}(g)}_{A_G\otimes^G_{\F}A'_G}(W_G\otimes^G_{\F}W'_G,\tilde{W}_G\otimes^G_{\F}\tilde{W}'_G).$$
We may thus regard
\begin{equation}\label{respsi}
\res\circ\Psi|_{B_G}:B_G\xrightarrow{G}
\Hom^{\text{r}(G)}_{A_G\otimes^G_{\F}A'_G}(W_G\otimes^G_{\F}W'_G,\tilde{W}_G\otimes^G_{\F}\tilde{W}'_G)
\end{equation}
as the {\it graded Hom-Tensor mapping}. As in the ungraded case, this linear map of $G$-graded $\F$-spaces preserves multiplication when $W_G=\tilde{W}_G$ and $W'_G=\tilde{W}'_G$.

\begin{theorem}\label{grHTr}
With the above notation, suppose that $A_G$ and $A'_G$ are strongly graded, and that the left modules $W_G$ and $W'_G$ are finitely-presented.
Then the graded Hom-Tensor mapping \eqref{respsi} is a $G$-graded linear isomorphism.
\end{theorem}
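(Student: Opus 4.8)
The plan is to reduce the graded statement to its ungraded counterpart, Theorem~\ref{ungrHTr}, combined with the homomorphism-extension Lemma~\ref{extendhom}. First, since $A_G$ is strongly graded, Lemma~\ref{extendhom} applied to the $A_G\otimes_{\F}A'_G$-module $\tilde{W}_G\otimes_{\F}\tilde{W}'_G$ makes the restriction map
$$\res\colon\Hom_{A_G\otimes_{\F}A'_G}(W_G\otimes_{\F}W'_G,\tilde{W}_G\otimes_{\F}\tilde{W}'_G)\to\Hom_{A_G\otimes_{\F}^GA'_G}(W_G\otimes_{\F}^GW'_G,\tilde{W}_G\otimes_{\F}\tilde{W}'_G)$$
bijective, while the finite presentation of $W_G$ over $A_G$ and of $W'_G$ over $A'_G$ makes the ungraded Hom-Tensor mapping $\Psi$ an $\F$-linear isomorphism by Theorem~\ref{ungrHTr}. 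Hence $\res\circ\Psi$ is an $\F$-linear isomorphism from $\Hom_{A_G}(W_G,\tilde{W}_G)\otimes_{\F}\Hom_{A'_G}(W'_G,\tilde{W}'_G)$ onto $\Hom_{A_G\otimes_{\F}^GA'_G}(W_G\otimes_{\F}^GW'_G,\tilde{W}_G\otimes_{\F}\tilde{W}'_G)$, and $\Hom^{\text{r}(G)}_{A_G\otimes_{\F}^GA'_G}(W_G\otimes_{\F}^GW'_G,\tilde{W}_G\otimes_{\F}^G\tilde{W}'_G)$ sits inside the latter via composition with the inclusion of the graded product into the full tensor product.

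Next I would pin down the gradings of the domain. Since $W_G$ is finitely generated it admits homogeneous generators (Lemma~\ref{fgenst}), and because $W_G$ is a graded module every relation among them splits into homogeneous relations; a routine argument (assigning to each $h\in G$ the map sending a homogeneous generator of degree $d$ to the degree-$dh$ part of its image, and checking well-definedness against the homogeneous relations) then shows every $A_G$-homomorphism $W_G\to\tilde{W}_G$ is a finite sum of right-homogeneous ones, i.e. $\Hom_{A_G}(W_G,\tilde{W}_G)=\Hom^{\text{r}(G)}_{A_G}(W_G,\tilde{W}_G)$, and similarly for the primed pair. Thus the domain of $\res\circ\Psi$ is $G\times G$-graded, its $(a,b)$-component being $\Hom^{\text{r}(a)}_{A_G}(W_G,\tilde{W}_G)\otimes_{\F}\Hom^{\text{r}(b)}_{A'_G}(W'_G,\tilde{W}'_G)$, and $B_G$ is exactly its diagonal part $\bigoplus_{g\in G}\Hom^{\text{r}(g)}_{A_G}(W_G,\tilde{W}_G)\otimes_{\F}\Hom^{\text{r}(g)}_{A'_G}(W'_G,\tilde{W}'_G)$.

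Injectivity of $\res\circ\Psi|_{B_G}$ is then immediate, $B_G$ being a subspace of the domain of the injective map $\res\circ\Psi$. For surjectivity onto $\Hom^{\text{r}(G)}_{A_G\otimes_{\F}^GA'_G}(W_G\otimes_{\F}^GW'_G,\tilde{W}_G\otimes_{\F}^G\tilde{W}'_G)$, I would take a graded homomorphism $F$ right-homogeneous of some degree $g\in G$, view it inside $\Hom_{A_G\otimes_{\F}^GA'_G}(W_G\otimes_{\F}^GW'_G,\tilde{W}_G\otimes_{\F}\tilde{W}'_G)$, and put $\Theta:=(\res\circ\Psi)^{-1}(F)$. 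Writing $\Theta=\sum_{(a,b)}\Theta_{a,b}$ in $G\times G$-homogeneous parts, $(\res\circ\Psi)(\Theta_{a,b})$ carries $(W_G\otimes_{\F}^GW'_G)_h$ into the summand $\tilde{W}_{ha}\otimes_{\F}\tilde{W}'_{hb}$ of $\tilde{W}_G\otimes_{\F}\tilde{W}'_G$, whereas $F$ carries it into $\tilde{W}_{hg}\otimes_{\F}\tilde{W}'_{hg}$; as for fixed $h$ these summands are pairwise distinct in the $G\times G$-grading, directness of the decomposition of $\tilde{W}_G\otimes_{\F}\tilde{W}'_G$ forces $(\res\circ\Psi)(\Theta_{a,b})$ to vanish on each homogeneous component of $W_G\otimes_{\F}^GW'_G$, hence identically, for $(a,b)\neq(g,g)$, and injectivity of $\res\circ\Psi$ gives $\Theta_{a,b}=0$ there. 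So $\Theta=\Theta_{g,g}\in B_g\subseteq B_G$ and $F=(\res\circ\Psi|_{B_G})(\Theta)$; recalling from the computation \eqref{eq8.3}--\eqref{eq8.4} preceding the statement that every element of the image of $\res\circ\Psi|_{B_G}$ already lands in $\tilde{W}_G\otimes_{\F}^G\tilde{W}'_G$ identifies this image with $\Hom^{\text{r}(G)}_{A_G\otimes_{\F}^GA'_G}(W_G\otimes_{\F}^GW'_G,\tilde{W}_G\otimes_{\F}^G\tilde{W}'_G)$, completing the argument.

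The main obstacle is this last bookkeeping step: one must rule out the preimage of a single-degree graded homomorphism acquiring off-diagonal $\Hom$-components, and it is precisely there that finite generation (so that homomorphisms split into homogeneous pieces, giving the $G\times G$-grading on the domain) together with the injectivity of $\res\circ\Psi$ carry the day. Everything else is a formal assembly of Theorem~\ref{ungrHTr} and Lemma~\ref{extendhom}, with the strong grading hypothesis entering only through the latter and finite presentation only through the former.
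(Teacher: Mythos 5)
Your argument is correct, and for the surjectivity step it takes a genuinely different route than the paper. Both proofs first combine Theorem~\ref{ungrHTr} with Lemma~\ref{extendhom} to make $\res\circ\Psi$ an $\F$-linear isomorphism, from which injectivity of the restriction to $B_G$ is immediate. For surjectivity, however, the paper works concretely: given a right-homogeneous $\varphi_g$, it writes $(\res\circ\Psi)^{-1}(\varphi_g)=\sum_i f_i\otimes_{\F}f'_i$ and by hand \emph{builds} right-homogeneous replacements $(f_i)^{\text{r}(g)}$, $(f'_i)^{\text{r}(g)}$ from the $g$-components of the $f_i$ on $W_e$, using that strong grading makes $W_G=A_GW_e$ (Lemma~\ref{genst}), then verifies that the resulting element of $B_g$ again maps to $\varphi_g$. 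You instead invoke the standard decomposition $\Hom_{A_G}(W_G,\tilde{W}_G)=\Hom^{\text{r}(G)}_{A_G}(W_G,\tilde{W}_G)$ for finitely-generated graded modules (a classical fact not isolated as a lemma in the paper), which equips the whole domain of $\res\circ\Psi$ with a $G\times G$-grading in which $B_G$ is the diagonal, and then extract the preimage of $\varphi_g$ from the $(g,g)$-component by a directness argument in the codomain $\tilde{W}_G\otimes_{\F}\tilde{W}'_G=\bigoplus_{p,q}\tilde{W}_p\otimes_{\F}\tilde{W}'_q$, finishing off with the injectivity of $\res\circ\Psi$. What your approach buys is conceptual economy: you never need to construct or verify any new homomorphism, only to sort existing ones by degree; the paper's construction, by contrast, is self-contained in the sense that it does not appeal to the graded $\Hom$ decomposition but rather rederives the degree-$g$ pieces directly from $W_e$, using the strong grading a second time (beyond Lemma~\ref{extendhom}). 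One minor remark on your exposition: the well-definedness of the degree-$h$ piece of a homomorphism is automatic once you define it on the full module by $f_h(\sum_d w_d):=\sum_d (f(w_d))_{dh}$ (uniqueness of the homogeneous decomposition takes care of it), so no appeal to homogeneous relations is needed; finite generation enters only to guarantee that all but finitely many pieces vanish.
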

\begin{proof}
Firstly, recall that since $W_G$ and $W'_G$ are assumed to be finitely presented, then the ungraded Hom-Tensor $\F$-linear mapping $\Psi$ is invertible (Theorem \ref{ungrHTr}).
Next,
by Lemma \ref{extendhom}, and by the strongly grading assumption,
the restriction map \eqref{pres} is invertible. Tying up these two invertible maps we deduce that so is
\begin{equation}\label{recircpsi}
\res\circ\Psi:\text{Hom}^{}_{A_G}(W_G,\tilde{W}_G)\otimes_{\F}\text{Hom}^{}_{A'_G}(W'_G,\tilde{W}_G')\xrightarrow{\cong}
\Hom^{}_{A_G\otimes^G_{\F}A'_G}(W_G\otimes^G_{\F}W'_G,\tilde{W}_G\otimes_{\F}\tilde{W}'_G).\end{equation}
Since $B_G$ is a subspace of the domain of the invertible map \eqref{recircpsi},
injectivity of \eqref{respsi} is established (under the weaker assumption that at least one of $A_G, A'_G$ is strongly graded).

To show that \eqref{respsi} is onto, it is enough
to prove that the unique inverse image under \eqref{recircpsi} of any homogeneous element
$$\varphi_g\in\Hom^{\text{r}(g)}_{A_G\otimes^G_{\F}A'_G}(W_G\otimes^G_{\F}W'_G,\tilde{W}_G\otimes^G_{\F}\tilde{W}'_G)
(\subseteq \Hom^{}_{A_G\otimes^G_{\F}A'_G}(W_G\otimes^G_{\F}W'_G,\tilde{W}_G\otimes_{\F}\tilde{W}'_G))$$
of degree $g\in G$ actually lies inside $B_g$.
Let
$$\sum_{i=1}^mf_i\otimes_{\F}f'_i:=(\res\circ\Psi)^{-1}(\varphi_g)\in\text{Hom}^{}_{A_G}(W_G,\tilde{W}_G)\otimes_{\F}\text{Hom}^{}_{A'_G}(W'_G,\tilde{W}_G').$$
Now, for every $1\leq i\leq m,$ and $g\in G$ let
$${f}^g_i:\begin{array}{ccl}W_e&\to&\tilde{W}_g\\ w&\mapsto&(f_i(w))_g\end{array},\text{ and }{f'}^g_i:\begin{array}{ccl}W'_e&\to&\tilde{W}'_g\\ w'&\mapsto&(f'_i(w'))_g\end{array}.$$
By Lemma \ref{genst}, the strongly grading assumption yields that $W_G$ and $W'_G$ are generated by $W_e$ and $W'_e$ respectively, in other words $W_G=A_GW_e$ and $W'_G=A'_GW'_e$.
Thus, for every $1\leq i\leq m$, if ${f}^g_i$ and ${f'}^g_i$ are extendable to $W_G$ and $W'_G$ respectively, then the corresponding extensions should be
$$({f}_i)^{\text{r}(g)}:\sum_ja_jw_j\mapsto\sum_ja_j{f}^g_i(w_j),\ \ a_j\in A_G, w_j\in W_e,$$
and
$$({f}'_i)^{\text{r}(g)}:\sum_ja'_jw_j\mapsto\sum_ja'_j{f'}^g_i(w'_j),\ \ a'_j\in A'_G, w'_j\in W'_e.$$
We proceed as follows: (1) showing that $({f}_i)^{\text{r}(g)}$ is well-defined for every $1\leq i\leq m$, this is a
homogeneous homomorphism of degree $g$ in $\text{Hom}^{\text{r}(g)}_{A_G}(W_G,\tilde{W}_G)$.
The proof that $({f}'_i)^{\text{r}(g)}$is a well-defined homogeneous homomorphism in $\text{Hom}^{\text{r}(g)}_{A'_G}(W'_G,\tilde{W}'_G)$ is similar.
(2) After being convinced that $({f}_i)^{\text{r}(g)}$ and $({f}'_i)^{\text{r}(g)}$ are well-defined, we shall see that the homogeneous element
$$\sum_{i=1}^m({f}_i)^{\text{r}(g)}\otimes_{\F}({f}'_i)^{\text{r}(g)}\in B_g\subset \text{Hom}^{\text{r}(G)}_{A_G}(W_G,\tilde{W}_G)\otimes^G_{\F}\text{Hom}^{\text{r}(G)}_{A'_G}(W'_G,\tilde{W}_G')$$
is an inverse image under $\res\circ\Psi$ of $\varphi_g$ (and hence is the same as $\sum_{i=1}^mf_i\otimes_{\F}f'_i$).\\
(1) suppose that $\sum_ja_jw_j=0$ for some $a_j\in A_G,$ and $w_j\in W_e.$
We need to show that $\sum_ja_j{f}_i^{g}(w_j)\stackrel{?}{=}0$. For that one may assume that $a_j$ is homogeneous, say of degree $h$, for every $j$.
By the definition of ${f}^g_i$ and since $f_i\in\text{Hom}^{}_{A_G}(W_G,\tilde{W}_G)$ we have
$$\sum_ja_j{f}_i^{g}(w_j)=\sum_ja_j(f_i(w_j))_g=\sum_j(a_jf_i(w_j))_{hg}=(f_i(\sum_ja_jw_j))_{hg}=0,$$ and we are done with (1).

Before moving to (2), let us check more carefully how $({f}_i)^{\text{r}(g)}$ operates on homogeneous elements, say $w_h\in W_h$.
Write $w_h=\sum_ja_jw_j$, where $a_j\in A_h$ and $w_j\in W_e$ for every $j$. By the definitions we have
\begin{eqnarray}\label{hats}
\begin{array}{cl}
({f}_i)^{\text{r}(g)}(w_h)&=({f}_i)^{\text{r}(g)}(\sum_ja_jw_j)=\sum_ja_j{f}^g_i(w_j)=\sum_ja_j(f_i(w_j))_g\\
 &=(f_i(\sum_ja_jw_j))_{hg}=(f_i(w_h))_{hg},
\end{array}
\end{eqnarray}
and similarly, \begin{equation}\label{hats'}
({f}'_i)^{\text{r}(g)}(w'_h)=(f'_i(w'_h))_{hg}
\end{equation} for every $w_h\in W_h, w'_h\in W'_h$ and $1\leq i\leq m.$\\
(2) Evaluate $\res\circ\Psi$ on
$$\sum_{i=1}^m({f}_i)^{\text{r}(g)}\otimes_{\F}({f}'_i)^{\text{r}(g)}\in \text{Hom}^{\text{r}(g)}_{A_G}(W_G,\tilde{W}_G)\otimes^G_{\F}\text{Hom}^{\text{r}(g)}_{A'_G}(W'_G,\tilde{W}_G').$$
For every $w_h\otimes_{\F}w'_h\in W_h\otimes_{\F}W'_h=(W\otimes^G_{\F}W')_h$ we get
$$\res\circ\Psi(\sum_{i=1}^m({f}_i)^{\text{r}(g)}\otimes_{\F}({f}'_i)^{\text{r}(g)})(w_h\otimes_{\F}w'_h)
=\sum_{i=1}^m({f}_i)^{\text{r}(g)}(w_h)\otimes_{\F}({f}'_i)^{\text{r}(g)}(w'_h)=\cdots$$
Applying \eqref{hats} and \eqref{hats'} we obtain
$$\begin{array}{rl}
\cdots&=\sum_{i=1}^m({f}_i(w_h))_{hg}\otimes_{\F}({f}'_i(w'_h))_{hg}=(\res\circ\Psi(\sum_{i=1}^m({f}_i\otimes_{\F}{f}'_i))(w_h\otimes_{\F}w'_h))_{hg}\\
&=(\varphi_g(w_h\otimes_{\F}w'_h))_{hg}=\varphi_g(w_h\otimes_{\F}w'_h).
\end{array}$$
Consequently, $\res\circ\Psi(\sum_{i=1}^m({f}_i)^{\text{r}(g)}\otimes_{\F}({f}'_i)^{\text{r}(g)})$ agrees with $\varphi_g$
on $(W\otimes^G_{\F}W')_h$ for every $h\in G$, and hence
$\res\circ\Psi(\sum_{i=1}^m({f}_i)^{\text{r}(g)}\otimes_{\F}({f}'_i)^{\text{r}(g)})=\varphi_g$. This verifies (2) and completes the proof.
\end{proof}
We establish a graded version of the End-Tensor relations (see Theorem \ref{ungrHTrcor}).
\begin{corollary}\label{grHTrcor}
Let \eqref{WW'} be $G$-graded left modules over strongly $G$-graded $\F$-algebras \eqref{twoalgebragrading} respectively.
Suppose that $W_G$ and $W'_G$ are finitely-presented as ungraded modules over the algebras $A_G$ and $A'_G$ respectively. Then there is a
$G$-graded isomorphism of algebras
\begin{equation}\label{grendtens}
\End^{\text{r}(G)}_{A_G}(W_G)\otimes^G_{\F}\End^{\text{r}(G)}_{A'_G}(W'_G)\stackrel{G}{\cong}\End^{\text{r}(G)}_{A_G\otimes^G_{\F}A'_G}(W_G\otimes^G_{\F}W'_G).
\end{equation}
\end{corollary}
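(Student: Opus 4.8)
The plan is to obtain \eqref{grendtens} as the special case $\tilde{W}_G:=W_G$, $\tilde{W}'_G:=W'_G$ of Theorem~\ref{grHTr}, and then to upgrade the resulting graded linear isomorphism to a graded algebra isomorphism. First I would make this substitution in the setup preceding Theorem~\ref{grHTr}. With it, the space $B_G$ becomes precisely
$$B_G=\End^{\text{r}(G)}_{A_G}(W_G)\otimes^G_{\F}\End^{\text{r}(G)}_{A'_G}(W'_G),$$
the left-hand side of \eqref{grendtens}, while the codomain of the graded Hom--Tensor mapping \eqref{respsi} becomes
$$\Hom^{\text{r}(G)}_{A_G\otimes^G_{\F}A'_G}(W_G\otimes^G_{\F}W'_G,W_G\otimes^G_{\F}W'_G)=\End^{\text{r}(G)}_{A_G\otimes^G_{\F}A'_G}(W_G\otimes^G_{\F}W'_G),$$
the right-hand side. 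Since $A_G$ and $A'_G$ are strongly graded and $W_G$, $W'_G$ are finitely presented over them, Theorem~\ref{grHTr} applies verbatim and tells us that $\res\circ\Psi|_{B_G}$ is a $G$-graded $\F$-linear isomorphism between these two graded spaces. In particular its surjectivity, which in Theorem~\ref{grHTr} is established homogeneous degree by homogeneous degree, is exactly the statement that the codomain is the full graded endomorphism algebra and not a proper graded subspace of it.

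It remains to see that $\res\circ\Psi|_{B_G}$ respects multiplication and the unit, which is the observation already recorded right after the definition of the graded Hom--Tensor mapping. Concretely, for $f_1\otimes_{\F} f'_1$ and $f_2\otimes_{\F} f'_2$ the maps $\Psi(f_1\otimes_{\F} f'_1)$ and $\Psi(f_2\otimes_{\F} f'_2)$ act on simple tensors by $w\otimes_{\F} w'\mapsto f_i(w)\otimes_{\F} f'_i(w')$, so — now that all domains and codomains coincide with $W_G\otimes_{\F}W'_G$ — their composite sends $w\otimes_{\F} w'$ to $(f_1\circ f_2)(w)\otimes_{\F}(f'_1\circ f'_2)(w')$, i.e. $\Psi(f_1\otimes_{\F} f'_1)\circ\Psi(f_2\otimes_{\F} f'_2)=\Psi\big((f_1\circ f_2)\otimes_{\F}(f'_1\circ f'_2)\big)$; and $\res$, being restriction along the inclusion $W_G\otimes^G_{\F}W'_G\hookrightarrow W_G\otimes_{\F}W'_G$, commutes with composition. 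By Definition~\ref{gmodprod} together with \eqref{Gprod}, the product in $B_G$ is the componentwise one $(f_1\otimes_{\F} f'_1)(f_2\otimes_{\F} f'_2)=(f_1\circ f_2)\otimes_{\F}(f'_1\circ f'_2)$, so $\res\circ\Psi|_{B_G}$ carries it to composition in $\End^{\text{r}(G)}_{A_G\otimes^G_{\F}A'_G}(W_G\otimes^G_{\F}W'_G)$, and it plainly sends $\Id_{W_G}\otimes_{\F}\Id_{W'_G}$ to $\Id_{W_G\otimes^G_{\F}W'_G}$. A bijective $G$-graded linear map which is moreover a unital ring homomorphism is a $G$-graded algebra isomorphism, and this is exactly \eqref{grendtens}.

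Since all the analytic weight is carried by Theorem~\ref{grHTr}, no genuine obstacle remains here; the only point I would handle with a little care is the bookkeeping of orientations — checking, from the conventions of \S\ref{Ghgm} and \S\ref{WWsec}, that with the chosen right-action convention the multiplication on $B_G$ really is componentwise composition in the correct order, so that $\res\circ\Psi$ comes out an algebra homomorphism rather than an anti-homomorphism. This is routine to verify from the definitions, and matches the multiplication-preservation already asserted for the graded Hom--Tensor mapping in the case $W_G=\tilde W_G$, $W'_G=\tilde W'_G$.
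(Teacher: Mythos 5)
Your proposal is correct and follows exactly the paper's intended route: specialize Theorem~\ref{grHTr} to $\tilde{W}_G=W_G$, $\tilde{W}'_G=W'_G$ to get a graded linear isomorphism, and invoke the multiplication-preservation remark stated immediately after \eqref{respsi} to upgrade it to a graded algebra isomorphism. The paper gives no separate proof for the corollary precisely because this is the intended deduction.
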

\subsection{The monoid $\mathcal{M}_{G,\F}$ and the endomorphism map}\label{MGFsec}
\begin{definition}\label{defeq}

Let $\psi_G:A_G\stackrel{G}{\to} A'_G$ be a graded isomorphism of $G$-graded $\F$-algebras.
Two $G$-graded left modules $W_G$ and $W'_G$ over $A_G$ and $A'_G$ respectively
are called $\psi_G$-{\it equivariant} if there exists
a $G$-graded $\F$-linear spaces isomorphism $\varphi_G:W_G\stackrel{G}{\to} W'_G$ such that
$$\varphi_G(x w)=\psi_G(x)\varphi_G(w), \ \ \forall x\in A_G,\ \  w\in W_G.$$
The modules $W_G,W'_G$ are said to be {\it $G$-equivariant} if they are $\psi_G$-{equivariant} for some graded isomorphism $\psi_G:A_G\stackrel{G}{\to} A'_G$ of $G$-graded $\F$-algebras.
\end{definition}
Evidently, graded equivariance is an equivalence relation
coarsening the graded isomorphism relation on $G$-graded modules. We
denote the graded equivariance class of $W_G$ by $[W_G]_{\e}$, and
let $\mathcal{M}_{G,\F}=\mathcal{M}_{\kappa,\kappa',G,\F}$ be the set of
graded-equivariance classes of left graded modules, whose cardinalities are bounded by some infinite cardinality $\kappa'$, over $G$-graded
$\F$-algebras $A_G$, where $[A_G]\in\text{Gr}_{\kappa}(G,\F)$. A zero object is also affiliated to $\mathcal{M}_{G,\F}$. This is the class of zero modules over all $G$-graded
$\F$-algebras.
The concept of graded equivariance gets along well with few of the
above mentioned concepts as recorded in the following claims.
\begin{lemma}\label{getsalong}
Let \eqref{WW'} be $G$-graded left modules over strongly $G$-graded $\F$-algebras \eqref{twoalgebragrading} respectively,
such that $[W_G]_{\e}=[W'_G]_{\e}\in\mathcal{M}_{G,\F}$. Then
\begin{enumerate}
\item $[\text{End}^{\text{r}(G)}_{A_G}(W_G)]=[\text{End}^{\text{r}(G)}_{A'_G}(W'_G)]\in\text{Gr}(G,\F)$.
  \item $W_G$ is graded simple iff so is $W'_G$.
  \item $W_G$ is finitely-generated (alternatively, finitely-presented) iff so is $W'_G$.
  \item For every $[W_G'']_{\e}\in\mathcal{M}_{G,\F}$
  $$[W_G\otimes^G_{\F}W_G'']_{\e}=[W_G''\otimes^G_{\F}W_G]_{\e}=[W_G''\otimes^G_{\F}W'_G]_{\e}\in\mathcal{M}_{G,\F}.$$
\end{enumerate}
In particular, by (1) and (2), $[W_G]\in\text{Irr}^{\text{gr}}_{\F}(A_G)$ iff $[W'_G]\in\text{Irr}^{\text{gr}}_{\F}(A'_G).$
\end{lemma}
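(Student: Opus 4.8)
The plan is to prove Lemma \ref{getsalong} by unwinding the definition of graded equivariance and transporting each structural property along the $\F$-linear graded isomorphism $\varphi_G: W_G \stackrel{G}{\to} W'_G$ that intertwines the $A_G$-action with the $A'_G$-action via $\psi_G: A_G \stackrel{G}{\to} A'_G$. First I would fix notation: $[W_G]_{\e} = [W'_G]_{\e}$ means there exist graded algebra isomorphism $\psi_G$ and graded $\F$-space isomorphism $\varphi_G$ with $\varphi_G(xw) = \psi_G(x)\varphi_G(w)$ for all $x \in A_G$, $w \in W_G$. Each of the four claims is then a ``transport of structure'' argument, and the key observation throughout is that $\varphi_G$ is homogeneous of trivial degree $e$, so it carries the $G$-grading of $W_G$ exactly onto that of $W'_G$, component by component.

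For claim (1), I would define a map $\Theta: \End^{\text{r}(G)}_{A_G}(W_G) \to \End^{\text{r}(G)}_{A'_G}(W'_G)$ by $\phi \mapsto \varphi_G \circ \phi \circ \varphi_G^{-1}$; one checks that since $\varphi_G$ intertwines the module structures through $\psi_G$, this conjugate is indeed $A'_G$-linear, and since $\varphi_G$ has degree $e$ while $\phi$ has right-degree $h$, the conjugate $\Theta(\phi)$ again has right-degree $h$. Thus $\Theta$ is a degree-$e$ (i.e. $G$-graded) algebra isomorphism, giving $[\End^{\text{r}(G)}_{A_G}(W_G)] = [\End^{\text{r}(G)}_{A'_G}(W'_G)]$ in $\Gr(G,\F)$. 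For claim (2), graded submodules of $W_G$ correspond bijectively under $\varphi_G$ to graded submodules of $W'_G$ (here one uses that $\varphi_G$ is a graded $\F$-space isomorphism carrying $A_G$-submodules to $A'_G$-submodules via $\psi_G$), so the lattice of graded submodules is preserved and graded simplicity transfers. For claim (3), a finite (homogeneous) generating set of $W_G$ maps under $\varphi_G$ to a finite homogeneous generating set of $W'_G$ (using $\psi_G$ surjective to see generation is preserved), and finite presentation follows analogously by transporting a finite set of relations; alternatively invoke the strongly graded hypothesis with Lemma \ref{fgenst} to reduce to $e$-homogeneous generators. For claim (4), given any $[W''_G]_{\e} \in \mathcal{M}_{G,\F}$ over some $A''_G$, commutativity $[W_G \otimes^G_{\F} W''_G]_{\e} = [W''_G \otimes^G_{\F} W_G]_{\e}$ is the obvious flip isomorphism $w \otimes_{\F} w'' \mapsto w'' \otimes_{\F} w$ (which is graded of degree $e$ and equivariant over the flip of algebras), and the equality with $[W''_G \otimes^G_{\F} W'_G]_{\e}$ follows by applying $\text{id} \otimes^G_{\F} \varphi_G$ together with Lemma \ref{wdmgp}, which already records that graded products of graded modules respect graded isomorphisms — extended here to equivariance via $\psi_G$ on the algebra side.

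The main obstacle, such as it is, is bookkeeping rather than depth: one must be careful that ``graded equivariance'' allows the underlying algebras to differ (only isomorphically), so every claim about $W'_G$ must be phrased relative to $A'_G$, and one must consistently push the $A_G$-structure to the $A'_G$-structure through $\psi_G$ before transporting. In particular for (4) the graded product $W_G \otimes^G_{\F} W''_G$ lives over $A_G \otimes^G_{\F} A''_G$ while $W'_G \otimes^G_{\F} W''_G$ lives over $A'_G \otimes^G_{\F} A''_G$, and these base algebras are graded isomorphic via $\psi_G \otimes^G_{\F} \text{id}$ by Lemma \ref{wdgp}; the equivariance isomorphism is then $\varphi_G \otimes^G_{\F} \text{id}$. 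The strongly graded hypothesis is genuinely used only where finite generation/presentation statements require reducing to the $e$-component (claim (3) and, implicitly, in matching up supports), so I would flag that the bulk of the lemma — claims (1), (2), and (4) — holds verbatim without it. The final ``in particular'' is then immediate: $\text{Irr}^{\text{gr}}_{\F}(A_G)$ membership means graded simple (claim (2)) plus $\End^e_{A_G}(W_G) = \F \cdot \text{Id}$, and the latter is the degree-$e$ part of the graded algebra isomorphism from claim (1), hence is preserved.
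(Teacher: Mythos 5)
Your argument is correct and is exactly the kind of ``direct manipulations'' the paper's one-line proof has in mind: transport each structure along the pair $(\psi_G,\varphi_G)$ witnessing graded equivariance. The conjugation $\phi\mapsto\varphi_G\circ\phi\circ\varphi_G^{-1}$ for (1), the induced bijection on graded submodules for (2), the transport of generating sets and relations for (3), and $\varphi_G\otimes^G_{\F}\mathrm{id}$ together with Lemma \ref{wdgp} for (4) are precisely what is needed, and the ``in particular'' clause then falls out of (1) and (2) as you say, since the degree-$e$ component of the graded isomorphism in (1) identifies $\End^{e}_{A_G}(W_G)$ with $\End^{e}_{A'_G}(W'_G)$.

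One small correction to your side commentary: the strongly graded hypothesis is not actually used in (3) either, contrary to what you suggest. Since $\psi_G:A_G\xrightarrow{G}A'_G$ is an algebra isomorphism, $W'_G$ is finitely-generated (resp.\ finitely-presented) over $A'_G$ iff it is so over $A_G$ via restriction along $\psi_G$, and because $\varphi_G$ is a $\psi_G$-equivariant isomorphism this in turn is equivalent to $W_G$ being finitely-generated (resp.\ finitely-presented) over $A_G$. Your fallback appeal to Lemma \ref{fgenst} is therefore superfluous. The strongly graded hypothesis appears in the lemma's statement not because any of (1)--(4) requires it but because the lemma is cited mainly in strongly graded contexts (e.g.\ Lemma \ref{submono}(2) and Lemma \ref{endhom}); your observation that the bulk of the statement holds verbatim without it is correct, and in fact extends to part (3) as well.
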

\begin{proof}
  By direct manipulations.
\end{proof}
\begin{corollary}\label{corgrdprod}
The rule
$$\begin{array}{ccc}
\mathcal{M}_{G,\F}\times\mathcal{M}_{G,\F}&\to&\mathcal{M}_{G,\F}\\
([W_{G}]_{\e},[W_{G}']_{\e})&\mapsto &[W_{G}\otimes_{\F}^G W_{G}']_{\e}
\end{array}$$
determines a well-defined operation which
turns $\mathcal{M}_{G,\F}$ into an abelian monoid, whose identity is the graded equivariance class of the free module of rank one over the ordinary group algebra $\F G$.
\end{corollary}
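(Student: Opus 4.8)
The plan is to verify the monoid axioms directly, in close parallel with the proof of Theorem~\ref{thmgrprod} for $\text{Gr}(G,\F)$, the only genuinely new feature being that we work modulo graded \emph{equivariance} rather than graded isomorphism, so that the underlying graded algebras are allowed to vary.

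First I would check that the rule is well defined on equivariance classes. Suppose $[W_G]_{\e}=[\widetilde W_G]_{\e}$ and $[W'_G]_{\e}=[\widetilde W'_G]_{\e}$, witnessed by graded algebra isomorphisms $\psi_G\colon A_G\stackrel{G}{\to}\widetilde A_G$ and $\psi'_G\colon A'_G\stackrel{G}{\to}\widetilde A'_G$ together with compatible degree-preserving $\F$-linear isomorphisms $\varphi_G\colon W_G\stackrel{G}{\to}\widetilde W_G$ and $\varphi'_G\colon W'_G\stackrel{G}{\to}\widetilde W'_G$. By Lemma~\ref{wdgp}, $\psi_G\otimes_{\F}\psi'_G$ restricts to a graded isomorphism $A_G\otimes^G_{\F}A'_G\stackrel{G}{\to}\widetilde A_G\otimes^G_{\F}\widetilde A'_G$; since $\varphi_G(W_g)=\widetilde W_g$ and $\varphi'_G(W'_g)=\widetilde W'_g$ for every $g\in G$, the map $\varphi_G\otimes_{\F}\varphi'_G$ restricts to a degree-preserving $\F$-linear isomorphism $W_G\otimes^G_{\F}W'_G\stackrel{G}{\to}\widetilde W_G\otimes^G_{\F}\widetilde W'_G$, and the intertwining identity relative to $\psi_G\otimes_{\F}\psi'_G$ is read off from the module structure \eqref{modstr}. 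Hence $[W_G\otimes^G_{\F}W'_G]_{\e}$ depends only on $[W_G]_{\e}$ and $[W'_G]_{\e}$. If either module is zero the product is zero, which accounts for the affiliated zero object; and since $W_G\otimes^G_{\F}W'_G\subseteq W_G\otimes_{\F}W'_G$ and $A_G\otimes^G_{\F}A'_G\subseteq A_G\otimes_{\F}A'_G$, the cardinality bounds defining $\mathcal{M}_{G,\F}$ are respected, so the product indeed lies in $\mathcal{M}_{G,\F}$.

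Next I would record associativity and commutativity. The canonical $\F$-space isomorphisms $U\otimes_{\F}(V\otimes_{\F}W)\cong(U\otimes_{\F}V)\otimes_{\F}W$ and $U\otimes_{\F}V\cong V\otimes_{\F}U$ are degree-preserving on the graded products (they send $U_g\otimes_{\F}(V_g\otimes_{\F}W_g)$ to $(U_g\otimes_{\F}V_g)\otimes_{\F}W_g$, and $U_g\otimes_{\F}V_g$ to $V_g\otimes_{\F}U_g$) and intertwine the actions \eqref{modstr} over the correspondingly graded-isomorphic graded product algebras. Passing to equivariance classes, where an isomorphism of the acting graded algebra costs nothing, this gives $([W_G]_{\e}\cdot[W'_G]_{\e})\cdot[W''_G]_{\e}=[W_G]_{\e}\cdot([W'_G]_{\e}\cdot[W''_G]_{\e})$ and $[W_G]_{\e}\cdot[W'_G]_{\e}=[W'_G]_{\e}\cdot[W_G]_{\e}$.

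Finally, for the identity let $U_G$ be $\F G$ regarded as the free rank-one left module over itself, graded by $(\F G)_g=\F g$. For any graded left $A_G$-module $W_G$ the homogeneous components of $\F G\otimes^G_{\F}A_G$ are $\F g\otimes_{\F}A_g\cong A_g$, and $a_g\mapsto g\otimes_{\F}a_g$ is a graded isomorphism $A_G\stackrel{G}{\to}\F G\otimes^G_{\F}A_G$; likewise $w_g\mapsto g\otimes_{\F}w_g$ is a degree-preserving $\F$-linear isomorphism $W_G\stackrel{G}{\to}U_G\otimes^G_{\F}W_G$ intertwining these two actions, as one checks against \eqref{modstr}. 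Thus $[U_G\otimes^G_{\F}W_G]_{\e}=[W_G]_{\e}$ for every class, and together with commutativity this makes $[U_G]_{\e}$ a two-sided identity. There is no serious obstacle in this argument; the only points requiring a little care are tracking the algebra isomorphisms through the tensor construction when verifying well-definedness on equivariance classes, and confirming that the cardinality bounds are preserved by the graded product.
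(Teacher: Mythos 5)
Your proof is correct and takes essentially the same approach as the paper, which disposes of the corollary in one line by citing Lemma \ref{getsalong}(4) (itself proved ``by direct manipulations''); you have simply spelled out those manipulations — well-definedness via Lemma \ref{wdgp} and the tensored equivariance maps, associativity and commutativity from the canonical tensor isomorphisms restricted to the diagonal components, and the identity $[\F G]_{\e}$ checked against the module structure \eqref{modstr}.
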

\begin{proof}
  This is an immediate consequence of Lemma \ref{getsalong}(4).
\end{proof}
With the notation of Definition \ref{tgm} we have
\begin{lemma}\label{MHset}
The rule
\begin{eqnarray}\label{actHonM}\begin{array}{ccl}
H^2(G,\F^*)\times\mathcal{M}_{G,\F}&\to&\mathcal{M}_{G,\F}\\
([\alpha],[W_G]_{\e})&\mapsto & [\alpha]([W_G]_{\e}):=[\F^{\alpha} G\otimes_{\F}^GW_G]_{\e}
\end{array}\end{eqnarray}
determines a well-defined action of $H^2(G,\F^*)$ on $\mathcal{M}_{G,\F}$.
\end{lemma}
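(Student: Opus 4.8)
The plan is to verify two things: that the rule \eqref{actHonM} does not depend on the choices of representatives --- neither of the cohomology class $[\alpha]$ nor of the graded equivariance class $[W_G]_{\e}$ --- and that it produces a class again lying in $\mathcal{M}_{G,\F}$; and then that it satisfies the two axioms of a group action. The whole argument is bookkeeping on top of the monoidal structure of $\mathcal{M}_{G,\F}$ (Corollary \ref{corgrdprod}) and the description of graded products of twisted group algebras in Lemma \ref{invGr}. First I would note that the image does lie in $\mathcal{M}_{G,\F}$, because every homogeneous component of $\F^{\alpha}G$ is one-dimensional over $\F$; hence, componentwise, the underlying $G$-graded $\F$-spaces of $\F^{\alpha}G\otimes^G_{\F}A_G=\alpha(A_G)$ and of $\F^{\alpha}G\otimes^G_{\F}W_G$ agree with those of $A_G$ and $W_G$ respectively, so no cardinality bound is violated and $[\alpha(A_G)]\in\text{Gr}(G,\F)$.

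Next I would treat independence of the representative of $[W_G]_{\e}$. If $[W_G]_{\e}=[W'_G]_{\e}$ is witnessed by a graded algebra isomorphism $\psi_G:A_G\stackrel{G}{\to}A'_G$ together with a compatible $G$-graded linear isomorphism $\varphi_G:W_G\stackrel{G}{\to}W'_G$, then $\text{Id}_{\F^{\alpha}G}\otimes_{\F}\psi_G$ and $\text{Id}_{\F^{\alpha}G}\otimes_{\F}\varphi_G$ are graded isomorphisms of the graded products by Lemma \ref{wdgp} and Lemma \ref{wdmgp}, and the identity $\varphi_G(xw)=\psi_G(x)\varphi_G(w)$ transfers verbatim to them, so $[\F^{\alpha}G\otimes^G_{\F}W_G]_{\e}=[\F^{\alpha}G\otimes^G_{\F}W'_G]_{\e}$ (one may also quote Lemma \ref{getsalong}(4) with the free rank-one $\F^{\alpha}G$-module as the third factor). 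For independence of the representative of $[\alpha]$, if $[\alpha]=[\alpha']$ then Lemma \ref{invGr}(2) supplies a graded algebra isomorphism $\theta:\F^{\alpha}G\stackrel{G}{\to}\F^{\alpha'}G$, and since $\theta$ simultaneously serves as an equivariance of the respective free rank-one modules, the same mechanism as above gives $[\F^{\alpha}G\otimes^G_{\F}W_G]_{\e}=[\F^{\alpha'}G\otimes^G_{\F}W_G]_{\e}$.

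Then I would check the action axioms. The unit of $H^2(G,\F^*)$ is the class of the trivial cocycle, for which $\F^{\alpha}G=\F G$; the free rank-one $\F G$-module is the identity of the monoid $\mathcal{M}_{G,\F}$ (Corollary \ref{corgrdprod}) and $[\F G\otimes^G_{\F}W_G]_{\e}$ is precisely its product with $[W_G]_{\e}$, so $[\F G\otimes^G_{\F}W_G]_{\e}=[W_G]_{\e}$, i.e. the unit acts trivially. For compatibility, $[\alpha]([\alpha']([W_G]_{\e}))=[\F^{\alpha}G\otimes^G_{\F}(\F^{\alpha'}G\otimes^G_{\F}W_G)]_{\e}$, which by associativity of $\otimes^G_{\F}$ (Corollary \ref{corgrdprod}) equals $[(\F^{\alpha}G\otimes^G_{\F}\F^{\alpha'}G)\otimes^G_{\F}W_G]_{\e}$; and $\F^{\alpha}G\otimes^G_{\F}\F^{\alpha'}G\stackrel{G}{\cong}\F^{\alpha\cdot\alpha'}G$ by Lemma \ref{invGr}(1), which through \eqref{grfree} also identifies the corresponding free rank-one modules, so this is $[\F^{\alpha\cdot\alpha'}G\otimes^G_{\F}W_G]_{\e}=[\alpha\cdot\alpha']([W_G]_{\e})$, as required.

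I do not expect a genuine obstacle here; the only points deserving attention are the two harmless upgrades used above --- passing from a graded isomorphism of twisted group algebras to a graded equivariance of the attached free rank-one modules (the algebra isomorphism serving at the same time as the module isomorphism), and the observation that tensoring with $\F^{\alpha}G$ preserves cardinalities --- after which the compatibility axiom is simply the already-available associativity of the graded product together with $[\F^{\alpha}G\otimes^G_{\F}\F^{\alpha'}G]=[\F^{\alpha\cdot\alpha'}G]$.
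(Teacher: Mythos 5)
Your argument is correct and is the natural one: the paper itself states Lemma \ref{MHset} without proof, relying on the reader to assemble it from the surrounding machinery, which is exactly what you have done. Well-definedness in $[W_G]_{\e}$ follows by tensoring a witnessing pair $(\psi_G,\varphi_G)$ with $(\text{Id}_{\F^{\alpha}G},\text{Id}_{\F^{\alpha}G})$ via Lemmas \ref{wdgp}--\ref{wdmgp}, well-definedness in $[\alpha]$ from Lemma \ref{invGr}(2), the cardinality bound from one-dimensional homogeneous components, and the action axioms from Corollary \ref{corgrdprod} and Lemma \ref{invGr}(1). One small remark: when you invoke Lemma \ref{getsalong}(4) as an alternative, note its stated hypothesis puts strong grading on the algebras underlying $W_G$ and $W'_G$, not on the third factor, so the paper's use of it to prove Corollary \ref{corgrdprod} already relies on the hypothesis being superfluous for that part; your direct argument via Lemmas \ref{wdgp} and \ref{wdmgp} sidesteps this cleanly and is the safer route.
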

Similarly to \S\ref{submonsec}, we are interested in certain sub-monoids of $\mathcal{M}_{G,\F}$. Another important sub-monoid will be introduced in Corollary \ref{invsubmon}.
\begin{lemma}\label{submono}
The following are sub-monoids of $\mathcal{M}_{G,\F}$.
\begin{enumerate}
  \item The graded equivariance classes of graded left modules over $G$-graded $\F$-algebras whose graded classes belong to either one of the sub-monoids (see Notation \ref{notsbmon})
~$H^2(G,\F^*),\text{Sk}(G,\F),\text{CP}(G,\F),\text{St}(G,\F)\subset\text{Gr}(G,\F)$.
 \item The graded equivariance classes in $\mathcal{M}_{G,\F}$ of finitely-generated graded left modules, and of finitely-presented left graded modules,
both over strongly $G$-graded $\F$-algebras.
  \item The graded equivariance classes $\text{Irr}^{}_{G,\F}\subset\mathcal{M}_{G,\F}$ of absolutely $\F$-simple graded left modules, together with $0$.
\end{enumerate}
\end{lemma}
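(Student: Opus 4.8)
*The following are sub-monoids of $\mathcal{M}_{G,\F}$.*
*(1) The graded equivariance classes of graded left modules over $G$-graded $\F$-algebras whose graded classes belong to either one of the sub-monoids $H^2(G,\F^*),\text{Sk}(G,\F),\text{CP}(G,\F),\text{St}(G,\F)\subset\text{Gr}(G,\F)$.*
*(2) The graded equivariance classes in $\mathcal{M}_{G,\F}$ of finitely-generated graded left modules, and of finitely-presented left graded modules, both over strongly $G$-graded $\F$-algebras.*
*(3) The graded equivariance classes $\text{Irr}^{}_{G,\F}\subset\mathcal{M}_{G,\F}$ of absolutely $\F$-simple graded left modules, together with $0$.*
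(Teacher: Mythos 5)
Your proposal contains no proof at all — it is merely a verbatim restatement of the lemma. To actually prove the statement, you need to verify two things for each class of modules: that the defining property is well-defined on graded equivariance classes (i.e.\ is an equivariance invariant), and that the class is closed under the graded product $\otimes_{\F}^G$ (and contains the identity $[(\F G)^x]_{\e}$). For item~(1) this reduces to the closure of the listed submonoids of $\text{Gr}(G,\F)$ under graded products (Lemma~\ref{grprosgcp}); for item~(2) you need that finite generation and finite presentation are equivariance invariants (Lemma~\ref{getsalong}(3)) and closed under graded products over strongly graded algebras (Lemma~\ref{fgfp}); and for item~(3) you need that absolute graded-simplicity over $\F$ is an equivariance invariant (Lemma~\ref{getsalong}(1),(2)) and is preserved by graded products (Theorem~\ref{irrsgp}), with the zero class adjoined to absorb the case where the graded product vanishes. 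None of these verifications, or any pointer to the needed results, appears in your submission.
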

\begin{proof}\begin{enumerate}
\item The claim follows from closure with respect to graded products of the corresponding families of graded algebras in Lemma \ref{grprosgcp}.
\item By Lemma \ref{getsalong}(3) the finitely-generated and finitely-presented properties are preserved under equivariance.
By Lemma \ref{fgfp} these properties are closed under the graded product.
\item By Lemma \ref{getsalong}(1) and (2), absolutely graded-simplicity over $\F$ is an equivariance class property. Theorem \ref{irrsgp}
 says that this property is closed under the graded product.\end{enumerate}
\end{proof}

We are ready to introduce the endomorphism map. By Lemma \ref{getsalong}(1) there is a well-defined map
\begin{eqnarray}\label{endmap}
\End=\End_{\kappa,\kappa',\F,G}:\begin{array}{ccc}
\mathcal{M}_{G,\F}&\to&\text{Gr}_{}(G,\F)\\
~[W_{G}]_{\e}
 &\mapsto &[\End^{\text{r}(G)}_{A_G}(W_{G})]
\end{array},
\end{eqnarray}
where $W_G$ is a left graded module over a $G$-graded $\F$-algebra \eqref{eq:algebragrading}.
Taking $W_G:=(\F G)^x$ to be a one-generated free module over the group algebra $A_G:=\F G$, that is $$[(\F G)^x]_{\e}=1\in\mathcal{M}_{G,\F},$$ then by \eqref{endfree} we have
$$\End^{\text{r}(G)}_{\F G}((\F G)^x)\stackrel{G}\cong\F G.$$ Since $[\F G]=1\in\text{Gr}_{}(G,\F),$
it follows that \eqref{endmap} is a morphism of pointed sets. Moreover, recall that by \eqref{Z2actgr} and \eqref{actHonM} both $\mathcal{M}_{G,\F}$ and $\text{Gr}_{}(G,\F)$ are $H^2(G,\F^*)$-sets.
We have
\begin{theorem}\label{mapH2sets}
The endomorphism map is a morphism of pointed $H^2(G,\F^*)$-sets.
\end{theorem}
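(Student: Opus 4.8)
The plan is to exploit the fact, noted just before the statement, that $\End$ is already a morphism of pointed sets: it sends the distinguished class $1=[(\F G)^x]_{\e}\in\mathcal{M}_{G,\F}$ to $[\F G]=1\in\text{Gr}(G,\F)$ by Example \ref{frendex}. Hence only the $H^2(G,\F^*)$-equivariance of $\End$ remains to be established, and this will turn out to be precisely Corollary \ref{endtwist} read at the level of grading classes.

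So I would fix a cohomology class $[\alpha]\in H^2(G,\F^*)$, represented by a $2$-cocycle $\alpha\in Z^2(G,\F^*)$, and a class $[W_G]_{\e}\in\mathcal{M}_{G,\F}$, with $W_G$ a $G$-graded left module over a $G$-graded $\F$-algebra \eqref{eq:algebragrading}. Unwinding the $H^2$-action \eqref{actHonM} together with Definition \ref{tgm} gives $[\alpha]([W_G]_{\e})=[\F^{\alpha}G\otimes_{\F}^G W_G]_{\e}=[\alpha(W_G)]_{\e}$, the equivariance class of a graded module over $\alpha(A_G)=\F^{\alpha}G\otimes^G_{\F}A_G$, so by the definition \eqref{endmap} of the endomorphism map
$$\End\bigl([\alpha]([W_G]_{\e})\bigr)=\bigl[\End^{\text{r}(G)}_{\alpha(A_G)}(\alpha(W_G))\bigr].$$
On the other hand, by \eqref{Z2actgr} and Definition \ref{twistedalg},
$$[\alpha]\bigl(\End([W_G]_{\e})\bigr)=[\alpha]\bigl(\bigl[\End^{\text{r}(G)}_{A_G}(W_G)\bigr]\bigr)=\bigl[\F^{\alpha}G\otimes^G_{\F}\End^{\text{r}(G)}_{A_G}(W_G)\bigr]=\bigl[\alpha\bigl(\End^{\text{r}(G)}_{A_G}(W_G)\bigr)\bigr].$$
Now Corollary \ref{endtwist} provides an explicit $G$-graded algebra isomorphism $\alpha_W\colon \alpha(\End^{\text{r}(G)}_{A_G}(W_G))\stackrel{G}{\to}\End^{\text{r}(G)}_{\alpha(A_G)}(\alpha(W_G))$; since graded-isomorphic $G$-graded algebras define the same element of $\text{Gr}(G,\F)$, the two displayed classes coincide. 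This is exactly the equivariance identity $\End\circ[\alpha]=[\alpha]\circ\End$, and together with the preservation of the distinguished point it shows that $\End$ is a morphism of pointed $H^2(G,\F^*)$-sets.

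The remaining points are purely formal: one should note that the $H^2(G,\F^*)$-action \eqref{actHonM} on $\mathcal{M}_{G,\F}$ (Lemma \ref{MHset}) and the action \eqref{Z2actgr} on $\text{Gr}(G,\F)$ are well-defined independently of the cocycle chosen in $[\alpha]$ (Lemma \ref{invGr}(2)) and of the module chosen in $[W_G]_{\e}$ (Lemma \ref{getsalong}, Lemma \ref{wdmgp}), and that $\End$ itself is well-defined on equivariance classes (Lemma \ref{getsalong}(1)) --- all already recorded above. I do not anticipate any genuine obstacle here: the content of the theorem is the repackaging of Corollary \ref{endtwist} as a statement about a morphism of $H^2(G,\F^*)$-sets, so the only thing to get right is matching up the two sides of the equivariance square with the notation of that corollary.
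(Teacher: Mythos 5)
Your proof is correct and takes essentially the same route as the paper: the paper's own proof simply invokes Corollary \ref{endtwist}, and your write-up is a careful unwinding of exactly that, with the pointed-set part coming from the discussion and Example \ref{frendex} preceding the theorem.
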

\begin{proof}
This is a consequence of Corollary \ref{endtwist}.
\end{proof}
The map \eqref{endmap} serves as an obstruction, see Theorem \ref{sgaobs}.
In several interesting cases, as presented herein, \eqref{endmap} is a homomorphism of monoids. A notable instance will be given later on in Theorem \ref{preobst}.
At the moment, let $\mathcal{SM}_{G,\F}$ be the set of equivariance classes in $\mathcal{M}_{G,\F}$ of finitely-presented left graded modules over strongly $G$-graded algebras.
Lemma \ref{submono}(2) says that $\mathcal{SM}_{G,\F}$
is a sub-monoid of $\mathcal{M}_{G,\F}$.
\begin{lemma}\label{endhom}
With the above notation, the endomorphism map restricted to $\mathcal{SM}_{G,\F}$
\begin{equation}\label{resendmap}
\End|_{\mathcal{SM}_{G,\F}}:
\mathcal{SM}_{G,\F}\to\text{Gr}_{}(G,\F)
\end{equation}
is a homomorphism of abelian monoids.
\end{lemma}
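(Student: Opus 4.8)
The plan is to deduce the statement directly from the graded End--Tensor relation of Corollary \ref{grHTrcor}, combined with the observation, already recorded before the lemma, that the endomorphism map \eqref{endmap} is a morphism of pointed sets. Since $\mathcal{SM}_{G,\F}$ is by Lemma \ref{submono}(2) a submonoid of $\mathcal{M}_{G,\F}$, the restriction \eqref{resendmap} is automatically a well-defined map, and it already sends the identity class $[(\F G)^x]_{\e}$ to the identity $[\F G]=1\in\text{Gr}(G,\F)$. It therefore remains only to establish multiplicativity.

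First I would fix two classes in $\mathcal{SM}_{G,\F}$ and choose representatives: a finitely-presented $G$-graded left module $W_G$ over a strongly $G$-graded $\F$-algebra $A_G$, and similarly $W'_G$ over a strongly $G$-graded $\F$-algebra $A'_G$. By Corollary \ref{strprodcor} the graded product $A_G\otimes^G_{\F}A'_G$ is again strongly graded, and by Lemma \ref{fgfp} the graded product module $W_G\otimes^G_{\F}W'_G$ is finitely-presented over $A_G\otimes^G_{\F}A'_G$; hence, using that the graded product on equivariance classes is well defined (Lemma \ref{getsalong}(4)), the class $[W_G\otimes^G_{\F}W'_G]_{\e}$ is a legitimate representative of the product $[W_G]_{\e}\cdot[W'_G]_{\e}$ lying again in $\mathcal{SM}_{G,\F}$.

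Next I would invoke Corollary \ref{grHTrcor} for these representatives, which yields a $G$-graded isomorphism of algebras
$$\End^{\text{r}(G)}_{A_G}(W_G)\otimes^G_{\F}\End^{\text{r}(G)}_{A'_G}(W'_G)\stackrel{G}{\cong}\End^{\text{r}(G)}_{A_G\otimes^G_{\F}A'_G}(W_G\otimes^G_{\F}W'_G).$$
Passing to graded-isomorphism classes in $\text{Gr}(G,\F)$ and reading off the monoid operation there (Theorem \ref{thmgrprod}), this becomes
$$\End([W_G]_{\e})\cdot\End([W'_G]_{\e})=\End\big([W_G\otimes^G_{\F}W'_G]_{\e}\big)=\End\big([W_G]_{\e}\cdot[W'_G]_{\e}\big),$$
which is precisely the homomorphism property, completing the argument.

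This proof contributes no genuinely hard step of its own: the substance is entirely carried by Corollary \ref{grHTrcor}, and hence ultimately by the graded Hom--Tensor isomorphism of Theorem \ref{grHTr} and the extension Lemma \ref{extendhom}. The only point that needs a moment's care is checking that the two standing hypotheses defining $\mathcal{SM}_{G,\F}$ are stable under the graded product, so that Corollary \ref{grHTrcor} really applies to $W_G\otimes^G_{\F}W'_G$ over $A_G\otimes^G_{\F}A'_G$ --- and this is exactly why $\mathcal{SM}_{G,\F}$ must be cut out by \emph{both} the strongly-graded condition and the finitely-presented condition, since the finite presentation of the graded product module over the graded product algebra genuinely relies on both factor algebras being strongly graded (Lemma \ref{fgfp}).
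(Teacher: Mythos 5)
Your argument is correct and is essentially the paper's own proof: both reduce multiplicativity to the graded End--Tensor relation of Corollary~\ref{grHTrcor}, after checking via Corollary~\ref{strprodcor}, Lemma~\ref{fgfp} and Lemma~\ref{submono}(2) that the graded product of representatives again lies in $\mathcal{SM}_{G,\F}$. The extra care you take about the standing hypotheses and the identity element is correct but already implicit in Lemma~\ref{submono}(2) and the remarks preceding Theorem~\ref{mapH2sets}.
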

\begin{proof}
For any finitely-presented graded left modules \eqref{WW'} over strongly graded algebras \eqref{twoalgebragrading} we have
$$\End([W_{G}]_{\e}\cdot[W'_{G}]_{\e})=\End([W_{G}\otimes_{\F}^G W'_{G}]_{\e})=[\End^{\text{r}(G)}_{A_G\otimes_{\F}^GA'_G}(W_{G}\otimes_{\F}^G W'_{G})]=\cdots$$
Corollary \ref{grHTrcor} deals exactly with this setup.
$$\cdots=[\End^{\text{r}(G)}_{A_G}(W_G)\otimes^G_{\F}\End^{\text{r}(G)}_{A'_G}(W'_G)]=\End([W_{G}]_{\e})\cdot\End([W'_{G}]_{\e}),$$
proving that \eqref{resendmap} is multiplicative.
\end{proof}
The following map is of use in the next sections.
\begin{lemma}\label{preiotaG}
Let \eqref{eq:algebragrading} be a fixed $G$-graded $\F$-algebra. Then the rule
\begin{eqnarray}\label{iotaG}
\iota_{\mathcal{G}}:\begin{array}{ccc}
\text{Mod}(A_e)&\to&\mathcal{M}_{G,\F}\\
~[M]&\mapsto&[{A_G}\overline{\otimes}_{A_e}M]_{\e}
\end{array}\end{eqnarray}
determines a well-defined map, with (see the notation in Lemma \ref{submono}(3)) $$\text{Irr}_{\F}(A_e)\stackrel{\iota_{\mathcal{G}}}{\rightarrow}\text{Irr}_{G,\F}.$$
\end{lemma}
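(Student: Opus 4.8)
The plan is to reduce every claim to the behaviour of the degree-$e$ homogeneous component, where Theorem \ref{indfunct} does all the work. For well-definedness: if $M\cong M'$ as $A_e$-modules then Theorem \ref{indfunct}(3) gives a graded isomorphism ${A_G}\overline{\otimes}_{A_e}M\stackrel{G}{\cong}{A_G}\overline{\otimes}_{A_e}M'$ of graded $A_G$-modules, and such a graded isomorphism is, taking $\psi_G=\mathrm{id}_{A_G}$ in Definition \ref{defeq}, in particular a graded equivariance; hence $[{A_G}\overline{\otimes}_{A_e}M]_{\e}=[{A_G}\overline{\otimes}_{A_e}M']_{\e}$ in $\mathcal{M}_{G,\F}$ and $\iota_{\mathcal{G}}$ is well-defined (tacitly restricting to $A_e$-modules whose associated graded module respects the cardinality bound built into $\mathcal{M}_{G,\F}$). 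That $\iota_{\mathcal{G}}$ carries $\text{Irr}_{\F}(A_e)$ into $\text{Irr}_{G,\F}$ is then immediate: by Lemma \ref{grirrtga} the associated graded module is absolutely $\F$-simply graded, absolute graded simplicity is an equivariance invariant (Lemma \ref{getsalong}(1),(2)), and $\text{Irr}_{G,\F}$ is exactly the corresponding submonoid of $\mathcal{M}_{G,\F}$ (Lemma \ref{submono}(3)).

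For injectivity on simple modules, suppose $M,M'$ are simple $A_e$-modules with $\iota_{\mathcal{G}}([M])=\iota_{\mathcal{G}}([M'])$, realised by a graded isomorphism $\psi_G\colon A_G\stackrel{G}{\to}A_G$ and a $G$-graded $\F$-linear isomorphism $\varphi_G\colon{A_G}\overline{\otimes}_{A_e}M\to{A_G}\overline{\otimes}_{A_e}M'$ with $\varphi_G(xw)=\psi_G(x)\varphi_G(w)$. First I would transport the $A_G$-module structure of ${A_G}\overline{\otimes}_{A_e}M'$ back along $\psi_G$, turning $\varphi_G$ into an honest graded $A_G$-module isomorphism from ${A_G}\overline{\otimes}_{A_e}M$ onto the graded-simple module ${}^{\psi_G}({A_G}\overline{\otimes}_{A_e}M')$. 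Next I would record the general fact that any graded-simple $W_G$ with $W_e\neq 0$ satisfies $W_G\stackrel{G}{\cong}{A_G}\overline{\otimes}_{A_e}W_e$: the graded surjection ${A_G}\otimes_{A_e}W_e\twoheadrightarrow W_G$ (which exists since $W_G=A_GW_e$ by Lemma \ref{grsimpsimp}) is an isomorphism in degree $e$, so its kernel lies in $t_{\mathcal{C}_e}({A_G}\otimes_{A_e}W_e)$, while $W_e$ is simple over $A_e$ (Lemma \ref{grsimpsimp}) so ${A_G}\overline{\otimes}_{A_e}W_e$ is graded simple (Theorem \ref{indfunct}(2)); comparing the two graded-simple quotients of ${A_G}\otimes_{A_e}W_e$ forces them to agree. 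Applying this to both ${A_G}\overline{\otimes}_{A_e}M$ and ${}^{\psi_G}({A_G}\overline{\otimes}_{A_e}M')$, and using Theorem \ref{indfunct}(1) to read off the $e$-components (they are $M$ and $M'$, the latter with $A_e$ acting through $\psi_G|_{A_e}$), Theorem \ref{indfunct}(3) should deliver $M\cong M'$ in $\text{Mod}(A_e)$.

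The step I expect to be the main obstacle is precisely this last one. Graded equivariance is strictly coarser than graded isomorphism, so $\psi_G$ need not be trivial and its restriction $\psi_G|_{A_e}$ need not be inner; one therefore has to control the twist of $M'$ by $\psi_G|_{A_e}$ and show it does not alter the isomorphism type before concluding $M\cong M'$. The safest route seems to be to keep everything inside the associated-graded-module construction and invoke the uniqueness clause of Theorem \ref{indfunct}, rather than trying to match $M$ and $M'$ by hand; any extra handle on $\psi_G|_{A_e}$ — for instance the invariance hypothesis on $M$ appearing in Theorem B — is what should make this final comparison go through.
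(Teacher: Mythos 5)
Your treatment of well-definedness and of the embedding $\text{Irr}_{\F}(A_e)\hookrightarrow\text{Irr}_{G,\F}$ matches the paper exactly: Theorem~\ref{indfunct}(3) for the first, Lemma~\ref{grirrtga} plus Lemma~\ref{getsalong} for the second. Nothing to add there.

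Your worry about injectivity, however, is not a loose end you failed to close --- it is the crux, and it is genuine. The paper's own proof tries to get around it by the explicit rule
$a\otimes_{A_e}m + t_{\mathcal{C}_e}(\cdots)\mapsto\varphi_G\bigl(\psi_G^{-1}(a)\otimes_{A_e}m + t_{\mathcal{C}_e}(\cdots)\bigr)$,
asserting that the equivariance relation $\varphi_G(xw)=\psi_G(x)\varphi_G(w)$ makes this a well-defined $A_e$-isomorphism between the $e$-components. But well-definedness requires $\psi_G^{-1}$ to preserve the annihilator of $m$, which an arbitrary restriction $\psi_G|_{A_e}\in\Aut(A_e)$ need not do. This is precisely the obstruction you predicted: the $e$-components of the two associated graded modules are isomorphic only after twisting one of them by $\psi_G|_{A_e}$, and that twist can permute the simple $A_e$-modules.

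In fact injectivity fails as stated. Take $\F=\C$, $G=\Z/2$, $A_G=\C[x]/(x^4-1)$ with the quotient $\Z/2$-grading of $\Gamma=\Z/4$ (so $A_e=\C\{1,x^2\}\cong\C\times\C$, $A_g=\C\{x,x^3\}$). Let $M_1,M_2$ be the two $1$-dimensional simple $A_e$-modules, with $x^2$ acting as $+1$ and $-1$ respectively; both are absolutely simple and, as one checks, both are $\mathcal{G}$-invariant. The graded algebra automorphism $\psi_G\colon x\mapsto ix$ restricts to the non-inner automorphism of $A_e$ swapping $M_1$ and $M_2$, and the $\C$-linear map $\varphi_G\colon A_G\otimes_{A_e}M_1\to A_G\otimes_{A_e}M_2$ sending $1\otimes m_1\mapsto 1\otimes m_2$, $x\otimes m_1\mapsto i(x\otimes m_2)$ is a degree-preserving $\psi_G$-equivariance. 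Hence $\iota_{\mathcal{G}}([M_1])=\iota_{\mathcal{G}}([M_2])$ although $[M_1]\neq[M_2]$. So your instinct to keep the twist by $\psi_G|_{A_e}$ visible, rather than argue it away, was correct: what the construction actually recovers from the equivariance class is $[M]$ only up to the action of the graded automorphism group of $A_G$ on $\text{Irr}_{\F}(A_e)$, i.e.\ $\iota_{\mathcal{G}}$ is injective modulo graded \emph{isomorphism} of associated modules (Theorem~\ref{indfunct}(3)), not modulo graded \emph{equivariance}.
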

\begin{proof}
By Theorem \ref{indfunct}(3), the map \eqref{iotaG} is indeed well-defined.
From Lemma \ref{grirrtga} we deduce that a class in $\text{Irr}_{\F}(A_e)$ is mapped under $\iota_{\mathcal{G}}$ to an absolutely $\F$-simple graded equivariance class, that is to
$\text{Irr}_{G,\F}$.
\end{proof}
It is important to emphasize that the map
\eqref{iotaG} depends on the grading \eqref{eq:algebragrading}, and hence $\text{Irr}_{\F}(A_e)$ may be mapped to $\mathcal{M}_{G,\F}$ in many ways.
\section{$G$-invariant modules}\label{sb1}
\subsection{An action of $G$ on $G$-graded modules}\label{sb212}
Let \eqref{modec} be a $G$-graded $\F$-linear space, and let $h\in G$.
Then $W_G$ admits another $G$-grading, namely its $h$-\textit{suspension}
\cite[\S 2.2]{NVO04}\begin{equation}\label{suspension}
h(W_G):=\bigoplus_{g\in G} h(W_G)_g,
\end{equation}
which is obtained by relabelling the homogeneous components $h(W)_g:=W_{gh}$ (yes, this is the right orientation).
Note that for every $k,g,h\in G$ we have $g(h(W))_k=h(W)_{kg}=W_{kgh}$, and hence
\begin{equation}\label{actsusp}
g(h(W_G))=gh(W_G),\ \ \forall g,h\in G.
\end{equation}
We may regard \eqref{suspension} as a left action of $G$ on the $G$-graded $\F$-linear spaces, and in particular on the $G$-graded left $A_G$-modules.
The following claim is straightforward.
\begin{lemma}\label{gactMGF}
Let \eqref{WW'} be equivariant $G$-graded left modules over $G$-graded $\F$-algebras \eqref{twoalgebragrading}.
Then $[h(W_G)]_{\e}=[h(W'_G)]_{\e}$ for every $h\in G$. Consequently, the rule
$$(h,[W_G]_{\e})\mapsto [h(W_G)]_{\e}$$
determines a well-defined action of $G$ on $\mathcal{M}_{G,\F}$.
\end{lemma}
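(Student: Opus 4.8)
The plan is to exhibit the very same equivariance data that links $W_G$ and $W'_G$ as equivariance data linking their $h$-suspensions. First I would recall that, by hypothesis, there is a graded isomorphism $\psi_G:A_G\stackrel{G}{\to}A'_G$ together with a $G$-graded $\F$-linear isomorphism $\varphi_G:W_G\stackrel{G}{\to}W'_G$ satisfying $\varphi_G(xw)=\psi_G(x)\varphi_G(w)$ for all $x\in A_G$, $w\in W_G$. The key observation is that the $h$-suspension \eqref{suspension} alters neither the underlying $A_G$-module structure nor the underlying $\F$-vector space of $W_G$; it merely relabels the homogeneous components via $h(W_G)_g=W_{gh}$, and likewise $h(W'_G)_g=W'_{gh}$. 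One checks, as in \cite[\S 2.2]{NVO04}, that $h(W_G)$ is again a $G$-graded $A_G$-module, since $A_g\,h(W_G)_k=A_gW_{kh}\subseteq W_{gkh}=h(W_G)_{gk}$, and similarly for $h(W'_G)$ over $A'_G$.

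Next I would verify that $\varphi_G$, unchanged, still serves as the required graded $\F$-linear isomorphism. Since $\varphi_G$ is homogeneous of trivial degree as a map $W_G\to W'_G$, i.e.\ $\varphi_G(W_g)\subseteq W'_g$ for every $g\in G$, it satisfies $\varphi_G(h(W_G)_g)=\varphi_G(W_{gh})\subseteq W'_{gh}=h(W'_G)_g$, so it is again homogeneous of trivial degree when regarded as a map $h(W_G)\to h(W'_G)$; and the intertwining identity $\varphi_G(xw)=\psi_G(x)\varphi_G(w)$ is untouched by the relabelling, as it refers only to the (unchanged) module actions. Hence $h(W_G)$ and $h(W'_G)$ are $\psi_G$-equivariant, which gives $[h(W_G)]_{\e}=[h(W'_G)]_{\e}$.

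Finally, for the action clause, I would note that the previous paragraph makes the rule $(h,[W_G]_{\e})\mapsto[h(W_G)]_{\e}$ well-defined on $\mathcal{M}_{G,\F}$: it fixes the affiliated zero class (the suspension of a zero module is a zero module), and it disturbs neither the cardinality bound nor the grading class $[A_G]\in\Gr_{\kappa}(G,\F)$ of the acting algebra. That it is a left $G$-action then follows formally, with no computation: $e(W_G)_g=W_{ge}=W_g$ yields $e(W_G)=W_G$, and \eqref{actsusp} yields $g(h(W_G))=gh(W_G)$, so the assignment respects composition in $G$. Every step here is a bookkeeping check, so I do not anticipate a genuine obstacle; the only point requiring a moment's care is the orientation convention in \eqref{suspension} — the relabelling $h(W)_g=W_{gh}$ — which is precisely what makes the resulting rule a \emph{left} action rather than a right one.
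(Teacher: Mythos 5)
Your proof is correct and is exactly the direct verification the paper intends — the paper dispenses with the argument as ``straightforward,'' and your write-up supplies the details faithfully: carry the same pair $(\psi_G,\varphi_G)$ to the suspensions, note that $\varphi_G(h(W_G)_g)=\varphi_G(W_{gh})\subseteq W'_{gh}=h(W'_G)_g$, and observe that the intertwining identity is insensitive to the relabelling. The action axioms then follow from $e(W_G)=W_G$ and \eqref{actsusp}, as you say.
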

It is well-known that all minimal left ideals of a simple algebra are isomorphic as left modules. The following graded version of this result shows that the action \eqref{suspension} is
transitive on the (perhaps empty) set of minimal left graded ideals of a simply graded algebra.
\begin{lemma}\label{minimalgrideals}(see \cite[Lemma 2.7]{EK13})
Let $I_G,I'_G$ be minimal left graded ideals of a simply graded algebra \eqref{eq:algebragrading}. Then there exists $g\in G$ such that $I_G\stackrel{G}{\cong}g(I'_G)$ as graded $A_G$-modules.
In particular, if \eqref{eq:algebragrading} satisfies an additional descending chain condition on graded left ideals, then the above holds for any two minimal left graded modules of $A_G$.
\end{lemma}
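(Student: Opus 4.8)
The plan is to run the classical argument---that any two minimal left ideals of a simple ring are isomorphic as left modules---inside the graded category, keeping careful track of the suspension needed to line up the two gradings.

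First I would exploit graded-simplicity of $\mathcal G$. Since $I'_G$ is a nonzero graded left ideal, the set $A_GI'_GA_G$ is a graded two-sided ideal which is nonzero (it contains $I'_G$), hence equals $A_G$; in particular $1=\sum_j a_jx_jb_j$ with $x_j\in I'_G$ and $a_j,b_j\in A_G$. Passing to homogeneous components and retaining only the contributions of total degree $e$, I may assume each $a_j,x_j,b_j$ is homogeneous, with $x_j\in (I'_G)_{k_j}$ and $\deg(a_j)\deg(x_j)\deg(b_j)=e$. Then for every $y\in I_G$ one has $y=y\cdot 1=\sum_j(ya_jx_j)b_j$, and since $I'_G$ is a left ideal, $ya_jx_j\in I'_G$; hence $I_G\subseteq\sum_j I'_Gb_j$, a graded $A_G$-submodule of $A_G$ (each $I'_Gb_j$ is graded because $b_j$ is homogeneous).

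Next, for fixed $j$ consider right multiplication $\rho_j:I'_G\to A_G$, $x\mapsto xb_j$, a homomorphism of left $A_G$-modules which carries $(I'_G)_k$ into $(A_G)_{kl_j}$, where $l_j:=\deg(b_j)$. Its kernel is a graded submodule of the graded-simple module $I'_G$, so it is either all of $I'_G$ (then $I'_Gb_j=0$ and the term is discarded) or $0$; in the latter case $\rho_j$ restricts to a degree-$e$ graded isomorphism $l_j^{-1}(I'_G)\stackrel{G}{\cong}I'_Gb_j$ with the suspension as in \eqref{suspension}, and in particular each surviving $I'_Gb_j$ is graded-simple. It then remains to extract a single summand: the module $V:=\sum_j I'_Gb_j$ is a finite sum of graded-simple submodules, hence graded-semisimple, and $I_G\subseteq V$ is itself graded-simple; choosing a minimal subfamily of the $I'_Gb_j$ whose sum still contains $I_G$ makes that sum direct, and graded-simplicity of $I_G$ forces the subfamily to be a singleton $\{j_0\}$, so $I_G=I'_Gb_{j_0}\stackrel{G}{\cong}l_{j_0}^{-1}(I'_G)$. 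Taking $g:=l_{j_0}^{-1}$ proves the first assertion. For the ``in particular'' clause, note that under the extra descending chain condition $A_G$ possesses minimal graded left ideals, and (by the argument just given) right multiplications of such ideals by homogeneous elements are again $0$ or minimal graded left ideals; hence the graded socle of $A_G$ is a nonzero graded two-sided ideal, so equals $A_G$, and $A_G$ is graded-semisimple. Any minimal left graded module $W_G$ satisfies $W_G=A_Gw$ for a homogeneous $0\neq w\in W_g$, so it is a graded image of a suspension of $A_G$ as a left module over itself, whence --- $A_G$ being graded-semisimple --- $W_G$ is graded-isomorphic to a suspension of a minimal graded left ideal; applying the first part together with \eqref{actsusp} to two such modules completes the proof.

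The main obstacle I expect is purely organizational: getting the direction of every suspension right (the relabelling $h(W)_g=W_{gh}$ is easy to invert by mistake, so the bookkeeping $\rho_j:l_j^{-1}(I'_G)\stackrel{G}{\cong}I'_Gb_j$ must be checked degree by degree), and cleanly justifying that a graded-simple submodule of a finite sum of graded-simple modules is graded-isomorphic to one of the summands --- i.e.\ the elementary theory of graded-semisimple modules --- which I would either cite from \cite{NVO04} or establish on the spot via the minimal-subfamily argument above.
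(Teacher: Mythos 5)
Your proof is correct in substance but takes a genuinely different route from the paper's, and it contains one small inaccuracy worth flagging.

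The paper argues much more directly: writing $I_G=A_G x_h$ and $I'_G=A_G y_{h'}$ with $x_h,y_{h'}$ nonzero homogeneous, it uses graded-simplicity to decompose $1=\sum_i a_i x_h b_i$, multiplies by $y_{h'}$ to find a homogeneous part $b_{h''}$ of some $b_i$ with $y_k:=x_h b_{h''} y_{h'}\neq 0$, observes $I'_G=A_G y_k$ by minimality, and then checks by hand that $a\cdot x_h\mapsto a\cdot y_k$ is a well-defined graded isomorphism $I_G\stackrel{G}{\cong}g(I'_G)$ with $g=h''h'$. Your approach instead shows $I_G$ embeds into a finite sum $\sum_j I'_G b_j$, whose nonzero summands are graded-simple suspensions of $I'_G$, and extracts a summand by (graded) semisimple-module theory. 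Both proofs work; the paper's is more elementary and self-contained (no semisimplicity is invoked), while yours is closer to the classical ``socle'' argument in ungraded simple ring theory and has the pedagogical advantage of placing the statement inside familiar machinery. The trade is that yours needs a clean statement of the fact that a graded-simple submodule of a finite direct sum of graded-simple modules is graded-isomorphic to a summand.

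The inaccuracy: after passing to a minimal subfamily $J$ with $I_G\subseteq\bigoplus_{j\in J}I'_G b_j$, you assert that graded-simplicity of $I_G$ forces $|J|=1$ and hence the set-theoretic equality $I_G=I'_G b_{j_0}$. This is false in general: a simple module can sit ``diagonally'' in a direct sum of two copies of itself without lying in either summand. What minimality does give you is that for each $j_0\in J$ the projection $\pi_{j_0}\colon I_G\to I'_G b_{j_0}$ is a nonzero degree-$e$ graded map, hence (graded Schur, Lemma \ref{schurgr}) a graded isomorphism. Replacing the equality $I_G=I'_G b_{j_0}$ by the isomorphism $I_G\stackrel{G}{\cong}I'_G b_{j_0}\stackrel{G}{\cong}l_{j_0}^{-1}(I'_G)$ repairs this and is all that is needed.

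For the ``in particular'' clause you give a genuine proof via the graded socle being a nonzero graded two-sided ideal, while the paper simply cites \cite[page 47]{NVO82}; your argument is sound, and showing that right multiplication of a minimal graded left ideal by a homogeneous element is zero or again minimal is exactly the observation needed to make the socle two-sided.
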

\begin{proof}
By minimality of these ideals there exist non-zero homogeneous elements, say $x_h\in A_h$ and $y_{h'}\in A_{h'}$, such that $I_G=A_G\cdot x_h$ and $I'_G=A_G\cdot y_{h'}$.
Next, the non-zero element $x_h$ belongs to the two-sided graded ideal $I_G\cdot A_G=A_G\cdot x_h \cdot A_G$,
and hence by the graded-simplicity assumption, this two-sided ideal is the entire algebra $A_G$.
In particular, there is a decomposition of the unit as
$$1=\sum_ia_i\cdot x_h\cdot b_i,\ \  a_i,b_i\in A_G.$$
Multiplication by $y_{h'}$ yields
$$0\neq y_{h'}=\sum_ia_i\cdot x_h\cdot b_i\cdot y_{h'},$$
and so there exists at least one $i$ such that $x_h\cdot b_i\cdot y_{h'}$ is non-zero. In turn, for at least one of the homogeneous parts of $b_i$, say $b_{h''}\in A_{h''}$, there is a non-zero element
$$y_{k}:=x_h\cdot b_{h''}\cdot y_{h'}\in A_{k}\setminus \{ 0\},$$ which is homogeneous of degree $k:=h\cdot h''\cdot h'$, and of course belongs to $I'_G$.
Again by minimality of $I'_G$ it is generated by the non-zero element $y_k$. Finally, as can easily be verified by putting $g:=h''\cdot h'$, the map
$$\begin{array}{ccc}
I_G&\to& g(I'_G)\\
a\cdot x_h&\mapsto& a\cdot y_k
\end{array},\ \  a\in A_G$$
is a graded isomorphism of $A_G$-modules, proving the first part of the lemma.

If, additionally, $A_G$ is graded Artinian, then the second claim follows from the first one,
since in this case minimal left graded $A_G$-modules are isomorphic as graded $A_G$-modules to minimal left graded $A_G$-ideals (see \cite[page 47]{NVO82}).
\end{proof}
The next claim is also straightforward. It is needed in the sequel.
\begin{lemma}\label{quotsusp}
Let $W'_G\subseteq W_G$ be a graded submodule. Then $g(\bigslant{W_G}{W'_G})\stackrel{G}{\cong}\bigslant{g(W_G)}{g(W'_G)}$ for every $g\in G$.
\end{lemma}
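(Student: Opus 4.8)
The statement asserts that for a graded submodule $W'_G\subseteq W_G$ over a $G$-graded algebra \eqref{eq:algebragrading} and any $g\in G$, there is a graded $A_G$-module isomorphism $g(\bigslant{W_G}{W'_G})\stackrel{G}{\cong}\bigslant{g(W_G)}{g(W'_G)}$. The plan is to exhibit the obvious identity-on-underlying-space map and check it is homogeneous of trivial degree and $A_G$-linear. First I would recall from \eqref{suspension} that the $g$-suspension simply relabels homogeneous components via $g(W_G)_k=W_{kg}$, and that the quotient module $\bigslant{W_G}{W'_G}$ carries its natural $G$-grading with $(\bigslant{W_G}{W'_G})_k=\bigslant{W_k}{W'_k}$ (recalling that $W'_G$ graded means $W'_G=\oplus_k(W'_G\cap W_k)$, so $W'_k\subseteq W_k$ for every $k$). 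Then both sides of the claimed isomorphism have, in degree $k$, exactly the $\F$-space $\bigslant{W_{kg}}{W'_{kg}}$: for the left side this is $g(\bigslant{W_G}{W'_G})_k=(\bigslant{W_G}{W'_G})_{kg}=\bigslant{W_{kg}}{W'_{kg}}$, and for the right side it is $\bigslant{g(W_G)_k}{g(W'_G)_k}=\bigslant{W_{kg}}{W'_{kg}}$.

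The natural candidate map is therefore
$$\Phi:\ g(\bigslant{W_G}{W'_G})\to\bigslant{g(W_G)}{g(W'_G)},\qquad w+W'_G\mapsto w+g(W'_G),$$
defined on homogeneous elements and extended linearly; it is well-defined since $w\in W'_G$ iff $w\in g(W'_G)$ (the suspension does not change the underlying set), and it is a bijection for the same reason. By the degree computation above, $\Phi$ sends the degree-$k$ component to the degree-$k$ component, so it is homogeneous of trivial degree $e\in G$, i.e. a $G$-graded linear isomorphism. It remains to check $A_G$-linearity. Here one must be slightly careful with how $A_G$ acts on a suspension: on $g(W_G)$, a homogeneous element $a_h\in A_h$ sends $g(W_G)_k=W_{kg}$ into $W_{hkg}=g(W_G)_{hk}$, which is exactly the original action of $a_h$ on $W_G$ restricted through the relabelling. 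Since $\Phi$ is the identity on underlying spaces and the $A_G$-action on both suspended objects is the original action of $A_G$ on $W_G$ (respectively $\bigslant{W_G}{W'_G}$) read off through the same relabelling, $A_G$-linearity of $\Phi$ is inherited directly from $A_G$-linearity of the canonical projection $W_G\twoheadrightarrow\bigslant{W_G}{W'_G}$.

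I do not expect a genuine obstacle here; the only point requiring care is bookkeeping of indices in the suspension convention \eqref{suspension} (the ``right orientation'' $h(W)_g:=W_{gh}$), so that one confirms the degrees on the two sides genuinely match rather than being off by a shift. Once the index conventions are pinned down, the argument is purely formal: same underlying vector space, same $A_G$-action, matching gradings, hence a graded isomorphism.
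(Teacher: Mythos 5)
Your argument is correct and is essentially the intended one: the paper records Lemma \ref{quotsusp} without proof, simply calling it ``straightforward,'' and the expected verification is exactly the one you give — the identity on underlying spaces, a degree bookkeeping check that both sides have $k$-component $\bigslant{W_{kg}}{W'_{kg}}$ under the convention $h(W)_g:=W_{gh}$, and the observation that the $A_G$-action on a suspension is unchanged so $A_G$-linearity is inherited from the canonical projection.
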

\begin{lemma}\label{suspmorph}
Let \eqref{WW'} be $G$-graded left modules over $G$-graded $\F$-algebras \eqref{twoalgebragrading} and let $h\in G$. Then the following are equivalent.
\begin{enumerate}
\item $\varphi\in \text{Hom}_{A_G}^{\text{gr}}(W_G,W_G')$.
\item $\varphi\in \text{Hom}_{A_G}^{\text{gr}}(h(W_G),h(W_G'))$.
\item $\varphi\in \text{Hom}_{A_G}^{\text{r}(h)}(h(W_G),W_G')$.
\end{enumerate}
The same equivalence holds also for the corresponding entire $\F$-linear homomorphisms.
\end{lemma}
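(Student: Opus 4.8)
The plan is to observe first that the $h$-suspension $h(W_G)$ only relabels the homogeneous components of $W_G$ and keeps the very same underlying $A_G$-module: the computation $A_g\cdot h(W_G)_k=A_gW_{kh}\subseteq W_{gkh}=h(W_G)_{gk}$ shows that the old action is still a graded action for the new grading, as was already noted in \S\ref{sb212}. Consequently the clause ``$\varphi$ is $A_G$-linear'' appearing in each of (1), (2), (3) refers to one and the same condition on the underlying linear map $\varphi$, and the whole content of the lemma is the equivalence of the three \emph{homogeneity} clauses. This also explains why the final sentence — the same equivalence for the entire $\F$-linear homomorphisms — will come for free: deleting the module condition leaves the argument untouched.

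Next I would simply unwind each homogeneity clause into a family of inclusions between homogeneous components, recalling that a $G$-graded map has trivial degree $e$, that a right-homogeneous map of degree $h$ satisfies \eqref{righthom}, and that the suspension is defined by $h(W_G)_g=W_{gh}$. Then (1) reads $\varphi(W_g)\subseteq W'_g$ for every $g\in G$; condition (2) reads $\varphi(h(W_G)_g)\subseteq h(W'_G)_g$, i.e. $\varphi(W_{gh})\subseteq W'_{gh}$ for every $g\in G$; and condition (3) reads $\varphi(h(W_G)_g)\subseteq W'_{gh}$, i.e. again $\varphi(W_{gh})\subseteq W'_{gh}$ for every $g\in G$. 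Since right translation $g\mapsto gh$ is a bijection of $G$, the condition appearing in (2) and (3) is exactly the condition in (1) reindexed, so the three are equivalent. The $\F$-linear version is the identical computation with the symbol ``$A_G$'' deleted throughout, as already remarked.

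I expect no genuine obstacle here; this is a bookkeeping lemma. The only point requiring care is the orientation conventions: the suspension uses $W_{gh}$ rather than $W_{hg}$ (flagged in the text with ``yes, this is the right orientation''), and right-homogeneity of degree $h$ uses \eqref{righthom} rather than \eqref{lefthom}. I would make sure these two conventions line up precisely when passing between (3) and the other two statements — this matching is exactly what forces the degree in (3) to come out to be $h$ and nothing else.
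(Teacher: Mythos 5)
Your proposal is correct and is exactly the ``direct computation'' that the paper leaves to the reader: you unwind the suspension convention $h(W_G)_g=W_{gh}$ and the right-homogeneity convention \eqref{righthom}, observe that (2) and (3) both reduce to $\varphi(W_{gh})\subseteq W'_{gh}$ for all $g$, and reindex by the bijection $g\mapsto gh$ to match (1). Your preliminary remark that suspension keeps the underlying $A_G$-module structure unchanged (so $A_G$-linearity means the same thing in all three clauses, and the $\F$-linear version follows for free) is a clean way to organize the bookkeeping and is fully consistent with \S\ref{sb212}.
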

\begin{proof}
By direct computation.
\end{proof}
For every $h\in G$ let $\iota_h\in\Aut(G)$ be the inner automorphism determined by the $h$-conjugation.
Then with the notation in \S\ref{autsec} we have
\begin{lemma}\label{susp-sut}
Let \eqref{modec} be a $G$-graded left $A_G$-module. Then for every $h\in G$
$$\End^{\text{r}(G)}_{A_G}(h(W_{G}))\stackrel{G}{\cong}\iota_{h}(\End^{\text{r}(G)}_{A_G}(W_{G})).$$
\end{lemma}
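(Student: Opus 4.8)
The plan is to observe that the two $G$-graded algebras in question share the \emph{same} underlying ungraded algebra, so that the identity map is already the desired graded isomorphism once one keeps track of how the suspension relabels degrees. Recall from \eqref{suspension} that $h(W_G)$ is $W_G$ itself as an $A_G$-module, with only its $G$-grading relabelled by $h(W_G)_g=W_{gh}$. Hence $\End_{A_G}(h(W_G))=\End_{A_G}(W_G)$ as ungraded $\F$-algebras, and the whole task is to compare the two $G$-gradings carried by this common algebra.

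First I would fix $m\in G$ and ask which $\varphi\in\End_{A_G}(W_G)$ lie in $\End^{\text{r}(m)}_{A_G}(h(W_G))$. By definition this means $\varphi\big(h(W_G)_n\big)\subseteq h(W_G)_{nm}$ for all $n\in G$, i.e. $\varphi(W_{nh})\subseteq W_{nmh}$; writing $j=nh$ this reads $\varphi(W_j)\subseteq W_{j(h^{-1}mh)}$ for all $j$, i.e. $\varphi\in\End^{\text{r}(h^{-1}mh)}_{A_G}(W_G)$. (This is the endomorphism-degree version of Lemma \ref{suspmorph}, and can alternatively be deduced from that lemma together with \eqref{actsusp}.) Since conjugation by $h$ permutes $G$, summing over $m$ gives
\[ \End^{\text{r}(G)}_{A_G}(h(W_G))=\End^{\text{r}(G)}_{A_G}(W_G)\quad\text{as ungraded algebras,}\]
with the degree-$m$ homogeneous component of the left-hand side coinciding with the degree-$h^{-1}mh$ component of the right-hand side.

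Finally I would compare this with the $\Aut(G)$-action on $\Gr(G,\F)$ from \S\ref{autsec}: the degree-$m$ component of $\iota_h\big(\End^{\text{r}(G)}_{A_G}(W_G)\big)$ is $\End^{\text{r}(\iota_h^{-1}(m))}_{A_G}(W_G)$, and $\iota_h^{-1}$ is precisely the conjugation $g\mapsto h^{-1}gh$. Matching degrees, the identity map of $\End_{A_G}(W_G)$ gives a graded isomorphism $\End^{\text{r}(G)}_{A_G}(h(W_{G}))\stackrel{G}{\cong}\iota_{h}(\End^{\text{r}(G)}_{A_G}(W_{G}))$, which is the assertion.

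The argument is pure bookkeeping with suspension indices; the only point that needs care is the \emph{direction} of the conjugation — one must check that the relabelling $h(W_G)_g=W_{gh}$ induces $\iota_h$ (rather than $\iota_{h^{-1}}$) on the endomorphism algebra, and the computation in the second paragraph is exactly that verification. I do not foresee any genuine obstacle.
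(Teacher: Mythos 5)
Your proof is correct, and since the paper's own proof is just the one line ``By direct computation,'' your argument is essentially the same approach, carried out in full detail. As a sanity check on the direction of the conjugation: combining your identity with \eqref{suppaut} and Lemma \ref{supendinert}(2) recovers $\mathcal{I}(h(W_G))=h\mathcal{I}(W_G)h^{-1}$, which is exactly Lemma \ref{sbgrpinert}(2), confirming that $\iota_h$ (and not $\iota_{h^{-1}}$) is the right relabelling.
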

\begin{proof}
By direct computation.
\end{proof}
Lemma \ref{susp-sut}, together with the definition of the $G$-action on $\mathcal{M}_{G,\F}$ in Lemma \ref{gactMGF} yields
\begin{corollary}\label{mapGsets}
The endomorphism map is a morphism of pointed $G$-sets.
\end{corollary}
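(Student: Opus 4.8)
Corollary \ref{mapGsets} asserts that the endomorphism map \eqref{endmap} is a morphism of pointed $G$-sets. The plan is to combine the two ingredients that have just been assembled: on the source side, $G$ acts on $\mathcal{M}_{G,\F}$ via the suspension $(h,[W_G]_{\e})\mapsto[h(W_G)]_{\e}$ (Lemma \ref{gactMGF}); on the target side, $G$ acts on $\text{Gr}(G,\F)$ through $\Aut(G)$ by inner automorphisms, namely $h([A_G])=[\iota_h(A_G)]$ (Lemma \ref{autact}). So what must be checked is the single compatibility identity
$$\End(h\cdot[W_G]_{\e})=h\cdot\End([W_G]_{\e})\quad\text{for all }h\in G,\ [W_G]_{\e}\in\mathcal{M}_{G,\F},$$
together with the (already noted) fact that $\End$ is pointed.

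First I would unwind both sides. The left-hand side is $\End([h(W_G)]_{\e})=[\End^{\text{r}(G)}_{A_G}(h(W_G))]$ by the definition \eqref{endmap} of the endomorphism map, which is legitimate because the suspension $h(W_G)$ is still a graded left module over the \emph{same} $G$-graded algebra $A_G$ (only the labelling of the homogeneous components of the module has changed). The right-hand side is $h\cdot[\End^{\text{r}(G)}_{A_G}(W_G)]=[\iota_h(\End^{\text{r}(G)}_{A_G}(W_G))]$ by the description of the $\Aut(G)$-action in \S\ref{autsec}. Hence the required identity reads
$$[\End^{\text{r}(G)}_{A_G}(h(W_G))]=[\iota_h(\End^{\text{r}(G)}_{A_G}(W_G))]\in\text{Gr}(G,\F),$$
and this is precisely the content of Lemma \ref{susp-sut}. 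So the proof is: invoke Lemma \ref{gactMGF} and Lemma \ref{autact} to make sense of the two $G$-actions, then apply Lemma \ref{susp-sut} to get the equivariance of $\End$; finally recall from the discussion following \eqref{endmap} that $\End$ sends the identity class $[(\F G)^x]_{\e}=1$ to $[\F G]=1$, i.e. it is a morphism of pointed sets. This is exactly what the one-line proof in the excerpt, ``Lemma \ref{susp-sut}, together with the definition of the $G$-action on $\mathcal{M}_{G,\F}$ in Lemma \ref{gactMGF}, yields \ldots'', is pointing at.

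There is essentially no obstacle here: the corollary is a bookkeeping statement that repackages Lemma \ref{susp-sut} once the correct $G$-actions on source and target are in place. The only subtlety worth a sentence is \emph{well-definedness on equivariance classes}: one should note that if $[W_G]_{\e}=[W'_G]_{\e}$ then $[h(W_G)]_{\e}=[h(W'_G)]_{\e}$ (this is the second half of Lemma \ref{gactMGF}), so that the left-hand side is independent of the chosen representative; the right-hand side is independent of the representative because $\End$ itself is well-defined on $\mathcal{M}_{G,\F}$ by Lemma \ref{getsalong}(1), and $\iota_h$ respects graded isomorphism by item (3) in the proof of Lemma \ref{autact}. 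With those two remarks in hand the verification of the $G$-set morphism axioms (compatibility with the identity of $G$ and with composition $g\cdot(h\cdot x)=(gh)\cdot x$, the latter coming from \eqref{actsusp} on the source and from $\iota_g\circ\iota_h=\iota_{gh}$ on the target) is immediate, and the corollary follows.
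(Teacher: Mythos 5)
Your proof is correct and takes essentially the same route as the paper: the paper's proof of the corollary is a one-line pointer to Lemma \ref{susp-sut} and Lemma \ref{gactMGF}, which you have correctly identified as the two ingredients and unwound into the required equivariance identity $\End(h\cdot[W_G]_{\e})=h\cdot\End([W_G]_{\e})$, together with the pointedness already noted after \eqref{endmap}. The extra remarks on well-definedness on equivariance classes are sound and, if anything, more careful than the paper's terse treatment.
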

\begin{definition}\label{inertdef}\cite[\S 2.2]{NVO04}
Let \eqref{modec} be a $G$-graded left $A_G$-module. The {\it inertia} (alternatively, the {\it stabilizer}) of $W_G$ consists of those elements $g\in G$ such that
$W_G$ is graded-isomorphic to the suspension $g(W_G)$ (see \eqref{suspension}) as graded $A_G$-modules.
Formally,
\begin{equation}\label{preinertia}
\mathcal{I}(W_G):=\{g\in G |\ \ W_G \stackrel{G}{\cong}g(W_G)\}.
\end{equation}
The $A_G$-module $W_G$ is called {\it $G$-invariant} if $\mathcal{I}(W_G)=G$.
\end{definition}
\begin{lemma}\label{sbgrpinert}
With the above notation,\begin{enumerate}
\item The inertia $\mathcal{I}(W_G)$ is a subgroup of the grading group $G$.
\item \cite[Proposition 2.2.3]{NVO04} For every $h\in G$, $\mathcal{I}(h(W_G))=h\mathcal{I}(W_G)h^{-1}.$
\item Any crossed product $A_e*G$ is $G$-invariant as a free graded left module over itself.
\item If $[W_G]_{\e}=[W'_G]_{\e}$ then $\mathcal{I}(W_G)=\mathcal{I}(W'_G)$. 
\end{enumerate}
\end{lemma}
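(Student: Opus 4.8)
The plan is to derive all four items from the basic properties of the suspension action established above, chiefly the identity \eqref{actsusp}, namely $g(h(W_G))=gh(W_G)$, together with the observation (immediate from Lemma \ref{suspmorph} applied to invertible homomorphisms) that for each $h\in G$ the suspension $W_G\mapsto h(W_G)$ is a functor on $G$-graded $A_G$-modules which both preserves and reflects graded isomorphism: $W_G\stackrel{G}{\cong}W'_G$ if and only if $h(W_G)\stackrel{G}{\cong}h(W'_G)$. For item (1), $e\in\mathcal{I}(W_G)$ because $e(W_G)=W_G$. If $g,h\in\mathcal{I}(W_G)$, apply the functor $g(-)$ to $W_G\stackrel{G}{\cong}h(W_G)$ to get $g(W_G)\stackrel{G}{\cong}g(h(W_G))=gh(W_G)$ by \eqref{actsusp}; composing with $W_G\stackrel{G}{\cong}g(W_G)$ yields $gh\in\mathcal{I}(W_G)$. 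Likewise applying $g^{-1}(-)$ to $W_G\stackrel{G}{\cong}g(W_G)$ gives $g^{-1}(W_G)\stackrel{G}{\cong}g^{-1}(g(W_G))=W_G$, so $g^{-1}\in\mathcal{I}(W_G)$. Item (2) is the same bookkeeping: $g\in\mathcal{I}(h(W_G))$ means $h(W_G)\stackrel{G}{\cong}g(h(W_G))=gh(W_G)$, and applying $h^{-1}(-)$ together with \eqref{actsusp} shows this is equivalent to $W_G\stackrel{G}{\cong}h^{-1}gh(W_G)$, i.e. to $h^{-1}gh\in\mathcal{I}(W_G)$, which is precisely the assertion $\mathcal{I}(h(W_G))=h\,\mathcal{I}(W_G)\,h^{-1}$ (matching \cite[Proposition 2.2.3]{NVO04}).

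For item (3), write $A_e*G=\oplus_{g\in G}A_eu_g$ with each $u_g$ an invertible homogeneous element, and fix $g\in G$. The suspension has homogeneous components $g(A_e*G)_k=(A_e*G)_{kg}=A_eu_{kg}$. Right multiplication $\rho_{u_g}:x\mapsto xu_g$ is a left $A_e*G$-module endomorphism since left and right multiplications commute, it is bijective with inverse $\rho_{u_g^{-1}}$ because $u_g$ is invertible, and since $u_ku_g=f(k,g)u_{kg}$ with $f(k,g)\in A_e^*$ (see \eqref{fdef}) it carries the $k$-component $A_eu_k$ onto $A_eu_{kg}=g(A_e*G)_k$. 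Hence $\rho_{u_g}$ is a graded $A_e*G$-module isomorphism $A_e*G\stackrel{G}{\cong}g(A_e*G)$ for every $g\in G$, so $\mathcal{I}(A_e*G)=G$.

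For item (4), let $\psi_G:A_G\stackrel{G}{\to}A'_G$ and $\varphi_G:W_G\stackrel{G}{\to}W'_G$ witness $[W_G]_{\e}=[W'_G]_{\e}$, so $\varphi_G$ is $\psi_G$-equivariant. Since the suspension only relabels homogeneous components and leaves the underlying module and algebra structures untouched, $\varphi_G$ remains a $\psi_G$-equivariant $G$-graded isomorphism $g(W_G)\stackrel{G}{\to}g(W'_G)$ for every $g$ (and $[g(W_G)]_{\e}=[g(W'_G)]_{\e}$ by Lemma \ref{gactMGF}). Given a graded $A_G$-isomorphism $\theta:W_G\stackrel{G}{\cong}g(W_G)$, the composite $\varphi_G\circ\theta\circ\varphi_G^{-1}:W'_G\to g(W'_G)$ is checked to be $A'_G$-linear — the $\psi_G^{-1}$ contribution from $\varphi_G^{-1}$ cancels against the $\psi_G$ contribution from $\varphi_G$, using that $\theta$ is $A_G$-linear — and it is a composite of $G$-graded isomorphisms. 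Thus $g\in\mathcal{I}(W_G)\Rightarrow g\in\mathcal{I}(W'_G)$, and by symmetry of the equivariance relation $\mathcal{I}(W_G)=\mathcal{I}(W'_G)$.

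I expect the only mildly delicate point to be item (4): one must compare "graded isomorphic as $A_G$-modules" with "graded isomorphic as $A'_G$-modules" correctly through the algebra isomorphism $\psi_G$, and verify that $\psi_G$-equivariance is genuinely inherited by the suspensions. Both reduce to the fact that suspension acts inertly on the module and algebra structures and merely permutes the grading labels, so once this is spelled out the argument is routine; items (1)–(3) are immediate given \eqref{actsusp} and Lemma \ref{suspmorph}.
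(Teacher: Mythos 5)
Your proof is correct and follows essentially the same approach as the paper: items (1) and (2) via Lemma \ref{suspmorph} together with \eqref{actsusp}, item (3) via right multiplication by the invertible homogeneous element $u_g$, and item (4) by conjugating a graded $A_G$-isomorphism through the equivariance data. The paper's proof of (2) is a one-line pointer and (4) is left to the reader; your write-up usefully supplies those details, but the underlying argument is the same.
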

\begin{proof}
By Lemma \ref{suspmorph} a graded-isomorphism $W_G \stackrel{G}{\cong}g(W_G)$ determines a graded-isomorphism
$g^{-1}(W_G)\stackrel{G}{\cong}g^{-1}(g(W_G))$ between their corresponding $g^{-1}$-suspensions. By \eqref{actsusp}, the right hand side is again $W_G$,
proving that the inertia is closed to inverses.
Next, suppose that $\varphi_1:g(W_G)\xrightarrow{G} W_G$ and $\varphi_2:h(W_G)\xrightarrow{G} W_G$ are graded-isomorphisms. Then by Lemma \ref{suspmorph}, $\varphi_1$ is a graded-isomorphism
between the corresponding $h$-suspensions $hg(W_G)$ and $h(W_G)$, and so $\varphi_2\circ\varphi_1$ is a graded-isomorphism between $hg(W_G)$ and $W_G$.
This proves that $\mathcal{I}(W_G)$ is closed also under multiplication, completing the proof of (1).
The second claim of the lemma follows from \eqref{actsusp}.
To prove (3), define for any $g\in G$
$$\varphi_g:\begin{array}{ccc}
A_e*G&\to&A_e*G\\
x&\mapsto &x\cdot u_{g}
\end{array}, $$
that is right multiplication by a fixed homogeneous unit of degree $g$.
Clearly, $\varphi_g:A_e*G\to g(A_e*G)$ is an invertible homomorphism of graded left modules and hence $g\in \mathcal{I}(A_e*G)$ for every $g\in G$.
The last claim of the Lemma is left to the reader.
\end{proof}
Let \eqref{WW'} be $G$-graded left modules over $G$-graded $\F$-algebras \eqref{twoalgebragrading} respectively. Recall that their graded product $W_G\otimes_{\F}^GW'_G$ is a $G$-graded
$A_G\otimes_{\F}^GA'_G$-module via the rule \eqref{modstr}.
The suspension \eqref{suspension} respects the graded product as the following straightforward lemma asserts.
\begin{lemma}\label{susresp}
With the above notation
$$g(W_G\otimes_{\F}^GW'_G)\stackrel{G}{\cong}g(W_G)\otimes_{\F}^Gg(W'_G), \ \forall g\in G.$$
In particular, if $g\in\mathcal{I}(W_G)\cap\mathcal{I}(W'_G)$ then $g\in\mathcal{I}(W_G\otimes_{\F}^GW'_G).$ Hence, if both $W_G$ and $W'_G$ are $G$-invariant then so is $W_G\otimes_{\F}^GW'_G$.
\end{lemma}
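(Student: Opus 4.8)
The plan is to realize the claimed graded isomorphism by the identity map on the common underlying $\F$-space, and then check it is $A_G\otimes_{\F}^GA'_G$-linear. First I would unwind the two sides degree by degree. By the definition of suspension \eqref{suspension}, the $k$-homogeneous component of $g(W_G\otimes_{\F}^GW'_G)$ is $(W_G\otimes_{\F}^GW'_G)_{kg}=W_{kg}\otimes_{\F}W'_{kg}$, while the $k$-component of $g(W_G)\otimes_{\F}^Gg(W'_G)$ is $g(W_G)_k\otimes_{\F}g(W'_G)_k=W_{kg}\otimes_{\F}W'_{kg}$. Thus the two $G$-graded $\F$-spaces agree component by component, and the identity map is a $G$-graded $\F$-linear bijection between them, homogeneous of trivial degree $e\in G$.

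Next I would verify that this bijection is an isomorphism of $A_G\otimes_{\F}^GA'_G$-modules. Recall that the suspended modules $g(W_G)$ and $g(W'_G)$ carry the original $A_G$- and $A'_G$-actions, only the gradings being relabelled, so both sides are $G$-graded left $A_G\otimes_{\F}^GA'_G$-modules via \eqref{modstr}. Taking a homogeneous element $a_h\otimes_{\F}a'_h\in(A_G\otimes_{\F}^GA'_G)_h$ and $w\otimes_{\F}w'$ lying in degree $k$ of either description (so $w\in W_{kg}$, $w'\in W'_{kg}$), both module structures send it to $a_hw\otimes_{\F}a'_hw'$, and since $a_hw\in W_{hkg}$ and $a'_hw'\in W'_{hkg}$, this element lands in degree $hk$ on both sides, namely in $W_{hkg}\otimes_{\F}W'_{hkg}$. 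Hence the identity is a graded $A_G\otimes_{\F}^GA'_G$-module isomorphism, establishing the displayed equivalence.

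For the ``in particular'' clause, suppose $g\in\mathcal{I}(W_G)\cap\mathcal{I}(W'_G)$, so that $W_G\stackrel{G}{\cong}g(W_G)$ over $A_G$ and $W'_G\stackrel{G}{\cong}g(W'_G)$ over $A'_G$. By Lemma \ref{wdmgp}, graded products of graded modules respect graded isomorphisms, whence $W_G\otimes_{\F}^GW'_G\stackrel{G}{\cong}g(W_G)\otimes_{\F}^Gg(W'_G)$; composing with the isomorphism just constructed yields $W_G\otimes_{\F}^GW'_G\stackrel{G}{\cong}g(W_G\otimes_{\F}^GW'_G)$, i.e. $g\in\mathcal{I}(W_G\otimes_{\F}^GW'_G)$. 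The final sentence is then immediate: if $\mathcal{I}(W_G)=\mathcal{I}(W'_G)=G$ then $\mathcal{I}(W_G\otimes_{\F}^GW'_G)\supseteq G$, so $W_G\otimes_{\F}^GW'_G$ is $G$-invariant.

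There is no genuine obstacle here; the only point demanding attention is the bookkeeping of the index shift in \eqref{suspension} — that $g$-suspension sends the degree-$k$ part to $W_{kg}$ — and checking that this shift is applied consistently on both sides and is compatible with the degree arithmetic $(h,k)\mapsto hk$ underlying \eqref{modstr}.
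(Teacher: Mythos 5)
Your proof is correct, and since the paper offers no written proof for this lemma (it is explicitly labelled ``straightforward''), your argument fills in exactly the intended reasoning. The identity-map realization is the right way to see the displayed graded isomorphism: you verify both sides have $k$-component $W_{kg}\otimes_{\F}W'_{kg}$ using $g(W)_k=W_{kg}$ from \eqref{suspension}, then note that the $A_G\otimes_{\F}^GA'_G$-action \eqref{modstr} is untouched by suspension and shifts degrees by $k\mapsto hk$ on both sides. The ``in particular'' clause is correctly reduced to Lemma \ref{wdmgp} composed with the first part, and the final sentence follows. No gaps.
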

\begin{corollary}\label{galphcomm}
Let \eqref{modec} be a $G$-graded left module of a $G$-graded algebra \eqref{eq:algebragrading} and $\alpha\in Z^2(G,\F^*)$ any 2-cocycle. Then
$g(\alpha(W_G))\stackrel{G}{\cong}\alpha(g(W_G))$ for every $g\in G$.
In particular, $\mathcal{I}(W_G)=\mathcal{I}(\alpha(W_G))$.
\end{corollary}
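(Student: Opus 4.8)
The plan is to reduce the statement to Lemma~\ref{susresp}, which already asserts that suspensions commute with graded products, combined with the single extra fact that the twisted group algebra $\F^{\alpha}G$ is $G$-invariant as a graded module over itself.

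First I would unwind the definitions. By Definition~\ref{tgm} we have $\alpha(W_G)=\F^{\alpha}G\otimes^G_{\F}W_G$, regarded as a graded module over $\alpha(A_G)=\F^{\alpha}G\otimes^G_{\F}A_G$. Applying Lemma~\ref{susresp} with the first factor $\F^{\alpha}G$ (taken as the free graded module of rank one over itself) and the second factor $W_G$ gives a graded isomorphism
$$g(\alpha(W_G))=g\bigl(\F^{\alpha}G\otimes^G_{\F}W_G\bigr)\stackrel{G}{\cong}g(\F^{\alpha}G)\otimes^G_{\F}g(W_G)$$
of graded $\alpha(A_G)$-modules. Next I would identify $g(\F^{\alpha}G)$ with $\F^{\alpha}G$: since $\F^{\alpha}G$ is a crossed product over the base algebra $\F$ (with homogeneous units $v_h$), Lemma~\ref{sbgrpinert}(3) yields $\F^{\alpha}G\stackrel{G}{\cong}g(\F^{\alpha}G)$ as graded left $\F^{\alpha}G$-modules for every $g\in G$ — concretely via right multiplication $x\mapsto xv_g$. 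Tensoring this graded isomorphism with the identity on $g(W_G)$ and invoking Lemma~\ref{wdmgp} (graded products of graded modules respect graded isomorphisms) produces
$$g(\F^{\alpha}G)\otimes^G_{\F}g(W_G)\stackrel{G}{\cong}\F^{\alpha}G\otimes^G_{\F}g(W_G)=\alpha(g(W_G)).$$
Composing the two displayed isomorphisms establishes $g(\alpha(W_G))\stackrel{G}{\cong}\alpha(g(W_G))$.

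For the assertion on inertia, recall that by Lemma~\ref{cortwist} the assignment $W_G\mapsto\alpha(W_G)$ is a bijective correspondence on graded modules with inverse $\alpha^{-1}(-)$ (see also \eqref{therule}), and, being a graded product, it preserves graded isomorphism in both directions via Lemma~\ref{wdmgp}. Hence $W_G\stackrel{G}{\cong}g(W_G)$ holds if and only if $\alpha(W_G)\stackrel{G}{\cong}\alpha(g(W_G))$, and by the first part the latter is equivalent to $\alpha(W_G)\stackrel{G}{\cong}g(\alpha(W_G))$; that is, $g\in\mathcal{I}(W_G)$ if and only if $g\in\mathcal{I}(\alpha(W_G))$. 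Since this equivalence holds for all $g\in G$, we conclude $\mathcal{I}(W_G)=\mathcal{I}(\alpha(W_G))$.

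I do not anticipate a genuine obstacle here; the only point needing a little care is to take the suspension $g(\F^{\alpha}G)$ as a graded \emph{module} over $\F^{\alpha}G$ (a relabelling of the module grading, not of the algebra grading), so that Lemmas~\ref{susresp} and~\ref{wdmgp} apply verbatim, and to check that the map $x\mapsto xv_g$ is indeed a left $\F^{\alpha}G$-module isomorphism homogeneous of trivial degree $e$ onto $g(\F^{\alpha}G)$.
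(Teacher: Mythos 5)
Your proposal is correct and matches the paper's proof essentially line for line: both use Lemma~\ref{susresp} to split the suspension across the graded product, Lemma~\ref{sbgrpinert}(3) to identify $g(\F^{\alpha}G)$ with $\F^{\alpha}G$, Lemma~\ref{wdmgp} to transport that graded isomorphism through the graded product, and the invertibility of the twisting operation for the inertia claim. Your final paragraph merely spells out in more detail what the paper compresses into ``follows from the invertibility of $\alpha\in Z^2(G,\F^*)$.''
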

\begin{proof}
By Lemmas \ref{sbgrpinert}(3) $\F^{\alpha}G$ is $G$-invariant as a free graded left module over itself, that is $\mathcal{I}(\F^{\alpha}G)=G$.
Together with Lemma \ref{wdmgp} and Lemma \ref{susresp}, we deduce that for every $g\in G$
$$g(\alpha(W_G))=g(\F^{\alpha}G\otimes_{\F}^GW_G)\stackrel{G}{\cong}g(\F^{\alpha}G)\otimes_{\F}^Gg(W_G)\stackrel{G}{\cong}\F^{\alpha}G\otimes_{\F}^Gg(W_G)=\alpha(g(W_G)).$$
The second part of the claim follows from the invertibility of $\alpha\in Z^2(G,\F^*)$.
\end{proof}
By Lemma \ref{sbgrpinert}(4), $G$-invariance is an equivariance class property, enabling us to denote the following.
\begin{notation}\label{MIrr}
Denote by $\mathcal{M}^{\text{inv}}_{G,\F}\subseteq\mathcal{M}_{G,\F}$ the subset of graded equivariance classes of
invariant left graded modules,
and by $\text{Irr}^{\text{inv}}_{G,\F}:=\mathcal{M}^{\text{inv}}_{G,\F}\bigcap\text{Irr}^{\text{}}_{G,\F}$
the subset of $\mathcal{M}_{G,\F}$ of graded equivariance classes of
invariant left graded modules which are absolutely graded-simple over $\F$, together with $0$.
\end{notation}
\begin{corollary}\label{invsubmon}
The sets $\mathcal{M}^{\text{inv}}_{G,\F}$ and $\text{Irr}^{\text{inv}}_{G,\F}$ are sub-monoids of $\mathcal{M}_{G,\F}$.
\end{corollary}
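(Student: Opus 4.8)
The plan is to check directly the two defining properties of a sub-monoid for each of the sets $\mathcal{M}^{\text{inv}}_{G,\F}$ and $\text{Irr}^{\text{inv}}_{G,\F}$: that it contains the monoidal identity $[(\F G)^x]_{\e}$ of $\mathcal{M}_{G,\F}$, and that it is closed under the graded product operation of Corollary \ref{corgrdprod}.

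First I would dispose of $\mathcal{M}^{\text{inv}}_{G,\F}$. Closure under multiplication is essentially the last assertion of Lemma \ref{susresp}: if $W_G$ and $W'_G$ are $G$-invariant graded modules over $G$-graded $\F$-algebras, then for every $g\in G$ one has $g(W_G\otimes_{\F}^GW'_G)\stackrel{G}{\cong}g(W_G)\otimes_{\F}^Gg(W'_G)\stackrel{G}{\cong}W_G\otimes_{\F}^GW'_G$, so $\mathcal{I}(W_G\otimes_{\F}^GW'_G)=G$; since invariance is an equivariance-class property by Lemma \ref{sbgrpinert}(4), this descends to a well-defined operation on $\mathcal{M}^{\text{inv}}_{G,\F}$. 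For the identity, I would observe that the free rank-one module $(\F G)^x$ is graded isomorphic, via $x\mapsto 1$, to $\F G$ regarded as a left module over itself, and $\F G$ is the crossed product $\F*G$ (each homogeneous component $\F g$ is spanned by the invertible element $g$). Lemma \ref{sbgrpinert}(3) then yields $\mathcal{I}((\F G)^x)=G$, so $[(\F G)^x]_{\e}\in\mathcal{M}^{\text{inv}}_{G,\F}$; the adjoined zero object is trivially invariant.

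Next I would treat $\text{Irr}^{\text{inv}}_{G,\F}$. By Notation \ref{MIrr} it is $\mathcal{M}^{\text{inv}}_{G,\F}\cap\text{Irr}^{}_{G,\F}$, and Lemma \ref{submono}(3) already gives that $\text{Irr}^{}_{G,\F}$ (with $0$) is a sub-monoid of $\mathcal{M}_{G,\F}$, while the previous paragraph gives the same for $\mathcal{M}^{\text{inv}}_{G,\F}$; so it suffices to note that the intersection of two sub-monoids is a sub-monoid. Explicitly, the identity $[(\F G)^x]_{\e}$ lies in $\text{Irr}^{}_{G,\F}$ because Example \ref{absgsmex} with $\alpha=1$ gives $\End^{\text{r}(G)}_{\F G}((\F G)^x)\stackrel{G}{\cong}\F G$, hence $(\F G)^x$ is absolutely graded simple over $\F$; and it lies in $\mathcal{M}^{\text{inv}}_{G,\F}$ by the above. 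Closure under the graded product is inherited from the two factors.

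I do not expect a genuine obstacle here; the one point deserving a moment's attention is confirming that the monoidal identity of $\mathcal{M}_{G,\F}$ actually belongs to the invariant part, which reduces to recognizing $(\F G)^x$ as a crossed product over itself and invoking Lemma \ref{sbgrpinert}(3).
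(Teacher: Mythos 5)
Your proposal is correct and follows essentially the same route as the paper: closure under the graded product via Lemma \ref{susresp}, and the second set as an intersection of two sub-monoids using Lemma \ref{submono}(3). The extra care you take in verifying that the identity $[(\F G)^x]_{\e}$ is $G$-invariant (via Lemma \ref{sbgrpinert}(3)) is a detail the paper's terse proof leaves implicit, but it is the right thing to check.
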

\begin{proof}
The first set is a sub-monoid by Lemma \ref{susresp}.
The latter is an intersection of two sub-monoids (see Lemma \ref{submono}(3)).
\end{proof}
The following lemma connects the inertia of a graded left ${A_G}$-module \eqref{modec}
with the support of the graded endomorphism algebra $\text{End}^{\text{r}(G)}_{A_G}(W_G)$ (see \eqref{endgr}).
\begin{lemma}(see \label{supendinert}\cite[Proposition 2.7.1(3)]{NVO04})
Let \eqref{modec} be a $G$-graded left module of a graded algebra \eqref{eq:algebragrading}. Then
\begin{enumerate}
\item $\mathcal{I}_{}(W_G)\subseteq\text{Supp}_G(\text{End}^{\text{r}(G)}_{A_G}(W_G)).$
\item If, additionally, $W_G$ is graded simple then
$\mathcal{I}(W_G)=\text{Supp}_G(\text{End}^{\text{r}(G)}_{A_G}(W_G)).$
\end{enumerate}
\end{lemma}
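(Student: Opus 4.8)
The plan is to unwind the definitions and observe that, under the relabelling $g(W_G)_k=W_{kg}$ defining the suspension \eqref{suspension}, a graded (i.e. trivial-degree) isomorphism $W_G\xrightarrow{G}g(W_G)$ of $A_G$-modules is literally the same datum as an invertible right homogeneous $A_G$-endomorphism of $W_G$ of degree $g$ in the sense of \eqref{righthom} and \eqref{endgr}; the equivalence of items (1)--(3) in Lemma \ref{suspmorph} makes this identification precise. Throughout I assume $W_G\neq 0$, the zero module being a harmless degenerate case that must be excluded for (1) anyway.

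For part (1): let $g\in\mathcal{I}(W_G)$, so there is a trivial-degree $A_G$-module isomorphism $\psi:W_G\xrightarrow{G}g(W_G)$. Since $g(W_G)_k=W_{kg}$ for every $k\in G$, the map $\psi$, regarded as a self-map of the underlying $\F$-space of $W_G$, carries $W_k$ into $W_{kg}$; by \eqref{righthom} it is right homogeneous of degree $g$, and since the $A_G$-action on $g(W_G)$ coincides with that on $W_G$, it is $A_G$-linear. Hence $\psi$ is a nonzero element of $\text{End}^{\text{r}(g)}_{A_G}(W_G)$, so $g\in\text{Supp}_G(\text{End}^{\text{r}(G)}_{A_G}(W_G))$.

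For part (2): the inclusion $\mathcal{I}(W_G)\subseteq\text{Supp}_G(\text{End}^{\text{r}(G)}_{A_G}(W_G))$ is (1), so it remains to prove the reverse. Let $g$ lie in the support and pick a nonzero $\varphi\in\text{End}^{\text{r}(g)}_{A_G}(W_G)$; by the same identification, $\varphi$ is a nonzero graded $A_G$-homomorphism $W_G\xrightarrow{G}g(W_G)$. Kernels and images of graded homomorphisms are graded submodules (the mechanism behind Graded Schur's Lemma \ref{schurgr}); since $W_G$ is graded simple and $\varphi\neq 0$ we get $\ker\varphi=0$, and since $g(W_G)$ is again graded simple --- its graded submodules correspond bijectively to those of $W_G$ via the relabelling, cf. Lemma \ref{quotsusp} --- while $\IM\varphi\neq 0$, we get $\IM\varphi=g(W_G)$. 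Thus $\varphi$ is a graded isomorphism, so $W_G\stackrel{G}{\cong}g(W_G)$, i.e. $g\in\mathcal{I}(W_G)$; this gives the reverse inclusion and hence equality.

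The only place requiring care is the bookkeeping of orientations --- reading a ``right homogeneous of degree $g$'' endomorphism of $W_G$ as an isomorphism onto the $g$-suspension and back --- which is exactly what Lemma \ref{suspmorph} handles; beyond that, and the standard fact that graded homomorphisms have graded kernels and images, there is no substantive obstacle, so I do not expect this proof to require more than the short argument above.
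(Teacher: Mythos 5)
Your proof is correct and follows essentially the same route as the paper: both parts hinge on Lemma \ref{suspmorph} identifying a trivial-degree isomorphism $W_G\xrightarrow{G}g(W_G)$ with a right homogeneous $A_G$-endomorphism of $W_G$ of degree $g$. The only cosmetic difference is that for part (2) the paper simply invokes the Graded Schur's Lemma \ref{schurgr} (nonzero homogeneous elements of the graded endomorphism algebra of a graded simple module are invertible), whereas you unwind that same argument by hand via graded kernels and images; your extra caveat about $W_G=0$ (where part (1) would fail) is a reasonable bit of care that the paper leaves implicit.
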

\begin{proof}
Let $h\in\mathcal{I}_{}(W_G)$. Then there exists a graded isomorphism $\varphi:W_G\stackrel{G}{\to} h(W_G)$ of graded left ${A_G}$-modules.
By Lemma \ref{suspmorph}, the non-zero homomorphism $\varphi\in\text{End}^{\text{r}(h)}_{A_G}(W_G)$, proving (1).
Next, if $W_G$ is graded simple then by the graded version of Schur's Lemma (Lemma \ref{schurgr}) a non-zero homogeneous $\varphi\in \text{End}^{\text{r}(h)}_{A_G}(W_G)$
is a homogeneous isomorphism of degree $h$.
Again by Lemma \ref{suspmorph}, it determines a graded isomorphism $W_G \stackrel{G}{\cong}h(W_G)$ proving $h\in\mathcal{I}(W_G)$.
\end{proof}

\begin{lemma}\label{endgriscp}(see \cite[Theorem 2.10.2]{NVO04})
Let \eqref{modec} be a left graded module of a $G$-graded algebra \eqref{eq:algebragrading}.
Then $W_G$ is $G$-invariant if and only if the $G$-graded algebra $\text{End}^{\text{r}(G)}_{A_G}(W_G)$
is graded isomorphic to a crossed product $\text{End}^e_{A_G}(W_G)*G.$
\end{lemma}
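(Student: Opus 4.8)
The statement to prove is Lemma \ref{endgriscp}: a graded left $A_G$-module $W_G$ is $G$-invariant if and only if $\End^{\text{r}(G)}_{A_G}(W_G) \stackrel{G}{\cong} \End^e_{A_G}(W_G) * G$, i.e. the graded endomorphism algebra is a crossed product. The right tool is already in place: Lemma \ref{supendinert}(1) tells us $\mathcal{I}(W_G) \subseteq \Supp_G(\End^{\text{r}(G)}_{A_G}(W_G))$, and in fact the proof of that lemma shows the sharper statement that $h \in \mathcal{I}(W_G)$ exactly when $\End^{\text{r}(h)}_{A_G}(W_G)$ contains an \emph{invertible} homogeneous element of degree $h$ (a homogeneous isomorphism $W_G \stackrel{G}{\to} h(W_G)$, via Lemma \ref{suspmorph}). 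So the plan is to reduce everything to the observation that $\End^{\text{r}(G)}_{A_G}(W_G)$ is a crossed product over $G$ precisely when every homogeneous component $\End^{\text{r}(g)}_{A_G}(W_G)$ with $g \in G$ contains an invertible element, which by the above reformulation says precisely that $\mathcal{I}(W_G) = G$.

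First I would spell out the easy direction ($\Leftarrow$): if $\End^{\text{r}(G)}_{A_G}(W_G) \stackrel{G}{\cong} \End^e_{A_G}(W_G)*G$, then by definition of a crossed product (\S\ref{sgcpsect}) every homogeneous component $\End^{\text{r}(g)}_{A_G}(W_G)$ contains an invertible homogeneous element $u_g$. Then $u_g \colon W_G \to W_G$ is an invertible right-homogeneous $A_G$-endomorphism of degree $g$, which by Lemma \ref{suspmorph} (equivalence of (1) and (3)) is the same as an invertible element of $\Hom^{\text{gr}}_{A_G}(g(W_G), W_G)$, i.e. a graded isomorphism $g(W_G) \stackrel{G}{\cong} W_G$. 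Hence $g \in \mathcal{I}(W_G)$ for all $g \in G$, so $W_G$ is $G$-invariant.

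For the converse ($\Rightarrow$), suppose $\mathcal{I}(W_G) = G$. For each $g \in G$ pick a graded isomorphism $\varphi_g \colon g(W_G) \stackrel{G}{\to} W_G$; by Lemma \ref{suspmorph} this is an element $u_g \in \End^{\text{r}(g)}_{A_G}(W_G)$, and since $\varphi_g$ is invertible with inverse a graded isomorphism $W_G \stackrel{G}{\to} g(W_G)$ — which by the same equivalence (now reading it with the roles adjusted, or equivalently applying Lemma \ref{suspmorph} to $\varphi_g^{-1}$ after a $g^{-1}$-suspension as in the proof of Lemma \ref{sbgrpinert}(1)) lies in $\End^{\text{r}(g^{-1})}_{A_G}(W_G)$ — the element $u_g$ is an invertible homogeneous element of $\End^{\text{r}(G)}_{A_G}(W_G)$ of degree $g$. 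Taking $u_e = \Id_{W_G}$, the collection $\{u_g\}_{g \in G}$ is a family of homogeneous units, one in each component, so $\End^{\text{r}(G)}_{A_G}(W_G) = \bigoplus_{g \in G} \End^e_{A_G}(W_G)\, u_g$ is a crossed product over its base algebra $\End^e_{A_G}(W_G)$, which is exactly the assertion $\End^{\text{r}(G)}_{A_G}(W_G) \stackrel{G}{\cong} \End^e_{A_G}(W_G)*G$.

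**Main obstacle.** The substantive point is keeping the left/right homogeneity orientations straight: the endomorphisms act on the right, suspension $h(W_G)_g = W_{gh}$ has the "unusual" orientation flagged in \S\ref{sb212}, and one must check that composing the degree-$g$ map $u_g$ with the degree-$g^{-1}$ map $u_g^{-1}$ genuinely lands in degree $e$ and equals $\Id$ — i.e. that the chosen $\varphi_g$ and its honest inverse are mutually inverse \emph{as homogeneous endomorphisms}, not merely as abstract maps. This is handled exactly as in the proof of Lemma \ref{sbgrpinert}(1), applying Lemma \ref{suspmorph} to $\varphi_g^{-1}$ and using \eqref{actsusp}; once the bookkeeping is done the result is immediate, and indeed the statement is essentially a restatement of Lemma \ref{supendinert}(1) together with the crossed-product characterization of graded division-type algebras with units in every component. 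I would also remark that one does \emph{not} need $W_G$ to be graded simple here — unlike in Lemma \ref{supendinert}(2) — since we only use the forward implication "graded isomorphism $\Rightarrow$ homogeneous unit" and its converse, both of which hold for arbitrary graded modules.
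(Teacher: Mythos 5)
Your proof is correct and follows essentially the same route as the paper: both sides are reduced, via Lemma \ref{suspmorph}, to the observation that invariance means precisely that every homogeneous component $\End^{\textmd{r}(g)}_{A_G}(W_G)$ contains an invertible element, which is the crossed-product condition. One small bookkeeping slip, ironically in the very orientation check you flag as the ``main obstacle'': since $h(W_G)_k = W_{kh}$ and right-homogeneity of degree $h$ means $W_k\to W_{kh}$, a graded map $W_G\stackrel{G}{\to}g(W_G)$ (not $g(W_G)\stackrel{G}{\to}W_G$) is the one that lands in $\End^{\textmd{r}(g)}_{A_G}(W_G)$; your $\varphi_g\colon g(W_G)\stackrel{G}{\to}W_G$ is actually right-homogeneous of degree $g^{-1}$, and its inverse of degree $g$. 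This does not break the argument --- as $g$ ranges over $G$ so does $g^{-1}$, so you still get a homogeneous unit in every component --- but the labelling of $u_g$ and $u_g^{-1}$ should be swapped to match the paper's proof, which writes $\phi_g\colon W_G\xrightarrow{\sim}g(W_G)$.
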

\begin{proof}
By Definition \ref{inertdef}, the invariance condition says that there exists a graded isomorphism of $A_G$-modules $\phi_g:W_G\xrightarrow{\sim} g(W_G)$ for every $g\in G$.
By Lemma \ref{suspmorph}, this is the same as to say that there exists an invertible homogeneous element
$\phi_g\in\text{Aut}^{\text{r}(g)}_{A_G}(W_G)$ for every $g\in G$.
Equivalently, $\text{End}^{\text{r}(G)}_{A_G}(W_G)$ is a crossed product (see \S\ref{sgcpsect}) of $G$ over the base algebra $\text{End}^e_{A_G}(W_G)$.
\end{proof}
With Notations \ref{notsbmon} and \ref{MIrr} we have
\begin{lemma}\label{end-1}\begin{enumerate}
Under the endomorphism map \eqref{endmap}
\item The inverse image of the sub-monoid $\text{CP}(G,\F)<\text{Gr}_{}(G,\F)$ is $\mathcal{M}^{\text{inv}}_{G,\F}.$
\item The inverse image of the sub-monoid  $H^2(G,\F^*)<\text{Gr}_{}(G,\F)$
is $\text{Irr}^{\text{inv}}_{G,\F}$. \end{enumerate}
That is
$\mathcal{M}^{\text{inv}}_{G,\F}=\End^{-1}(\text{CP}(G,\F)),\text{  and }\text{Irr}^{\text{inv}}_{G,\F}=\End^{-1}(H^2(G,\F^*))  .$
\end{lemma}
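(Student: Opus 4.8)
The plan is to prove the two displayed set equalities one inclusion at a time, feeding each into the structural dichotomies for crossed products and twisted group algebras already established, and to bootstrap (2) from (1). Throughout, recall that the endomorphism map \eqref{endmap} sends $[W_G]_{\e}$ to $[\End^{\text{r}(G)}_{A_G}(W_G)]$, and that by Notation \ref{notsbmon} membership of a grading class in $\text{CP}(G,\F)$ (resp.\ in $H^2(G,\F^*)$) means that the algebra is graded isomorphic to a crossed product over $G$ (resp.\ to a twisted group algebra $\F^{\alpha}G$, $\alpha\in Z^2(G,\F^*)$).

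\textbf{Assertion (1).} If $W_G$ is $G$-invariant then Lemma \ref{endgriscp} gives $\End^{\text{r}(G)}_{A_G}(W_G)\stackrel{G}{\cong}\End^{e}_{A_G}(W_G)*G$, so $\End([W_G]_{\e})\in\text{CP}(G,\F)$. Conversely, if $\End([W_G]_{\e})\in\text{CP}(G,\F)$ then $\End^{\text{r}(G)}_{A_G}(W_G)$ is graded isomorphic to a crossed product over $G$; since being a crossed product is invariant under graded isomorphism (Corollary \ref{preserved}) and graded isomorphisms are the identity on the grading group, $\End^{\text{r}(G)}_{A_G}(W_G)$ is itself a crossed product supported on all of $G$, i.e.\ graded isomorphic to $\End^{e}_{A_G}(W_G)*G$, whence the reverse direction of Lemma \ref{endgriscp} yields that $W_G$ is $G$-invariant. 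Because $G$-invariance is an equivariance-class property (Lemma \ref{sbgrpinert}(4)), these two implications give $\End^{-1}(\text{CP}(G,\F))=\mathcal{M}^{\text{inv}}_{G,\F}$.

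\textbf{Assertion (2).} For $\text{Irr}^{\text{inv}}_{G,\F}\subseteq\End^{-1}(H^2(G,\F^*))$: if $[W_G]_{\e}\in\text{Irr}^{\text{inv}}_{G,\F}$ then $W_G$ is graded simple, $[W_G]\in\text{Irr}^{\text{gr}}_{\F}(A_G)$, and $G$-invariant, so by Lemma \ref{preobs} $\End^{\text{r}(G)}_{A_G}(W_G)$ is a twisted group algebra over $\F$, and by Lemma \ref{supendinert}(2) its support is $\mathcal{I}(W_G)=G$; hence $\End^{\text{r}(G)}_{A_G}(W_G)\stackrel{G}{\cong}\F^{\alpha}G$ and $\End([W_G]_{\e})\in H^2(G,\F^*)$. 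For the reverse inclusion, assume $\End^{\text{r}(G)}_{A_G}(W_G)\stackrel{G}{\cong}\F^{\alpha}G$. Since $H^2(G,\F^*)\subseteq\text{CP}(G,\F)$, Assertion (1) already gives that $W_G$ is $G$-invariant, and the unit fibre gives $\End^{e}_{A_G}(W_G)\cong(\F^{\alpha}G)_e=\F\cdot\text{Id}_{W_G}$; by Definition \ref{agsm} it then suffices to show that $W_G$ is graded simple, and I turn to this step now.

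\textbf{The main obstacle.} The delicate point is exactly passing from ``$\End^{\text{r}(G)}_{A_G}(W_G)$ is a graded division algebra, supported on all of $G$, with unit fibre $\F$'' to ``$W_G$ is graded simple'' — the converse of Graded Schur's Lemma \ref{schurgr}, which is not formal. I would view $W_G$ as a graded right module over the graded division algebra $D_G:=\End^{\text{r}(G)}_{A_G}(W_G)\cong\F^{\alpha}G$: such a module is graded free, so picking a homogeneous $D_G$-basis and translating its members by invertible homogeneous units of $D_G$ produces a homogeneous $D_G$-basis $\{w_i\}$ lying inside $W_e$, with $W_e=\bigoplus_i\F w_i$ and $W_G=\bigoplus_{i,g}\F\,w_i v_g$. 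Using the invertible homogeneous $\phi_g\in D_g$ (which satisfy $\phi_g\phi_h=\alpha(g,h)\phi_{gh}$), a nonzero proper graded submodule $V_G$ would yield the nonzero, proper, $D_G$-stable graded submodule $V'_G:=\sum_g\phi_g(V_G)$; and any nonzero graded homomorphism $W_G\to V'_G$, respectively $W_G/V'_G\to W_G$, composed with the inclusion, respectively the projection, would be a nonzero — hence invertible — element of the graded division algebra $D_G$, contradicting that $V'_G$ is proper, respectively nonzero. The real crux is the reduction of graded simplicity of $W_G$ to simplicity of $W_e$ over $A_e$ together with $W_G=A_G W_e$, and it is here, rather than in any of the bookkeeping, that the argument must be handled with care. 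Granting it, Definition \ref{agsm} gives $[W_G]\in\text{Irr}^{\text{gr}}_{\F}(A_G)$, hence $[W_G]_{\e}\in\text{Irr}^{\text{inv}}_{G,\F}$; the zero object lies in both preimages by the pointed convention $\End(0)=[\F G]=1$, and the submonoid assertions follow from Corollary \ref{invsubmon}. This yields $\End^{-1}(\text{CP}(G,\F))=\mathcal{M}^{\text{inv}}_{G,\F}$ and $\End^{-1}(H^2(G,\F^*))=\text{Irr}^{\text{inv}}_{G,\F}$.
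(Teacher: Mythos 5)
Your treatment of part (1) and the forward inclusion of part (2) matches the paper's argument. The key service you render is making explicit, where the paper's one-line proof does not, that the reverse inclusion of (2) hinges on a converse to Graded Schur: that if $\End^{\text{r}(G)}_{A_G}(W_G)\stackrel{G}{\cong}\F^{\alpha}G$ then $W_G$ must be graded simple. You flag this as the crux; you are right to.

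Your proposed fix, however, does not close it. The argument presupposes the existence of a nonzero graded $A_G$-homomorphism $W_G\to V'_G$ (or $W_G/V'_G\to W_G$) to compose with the inclusion (or projection), but no such map need exist; nothing in the hypotheses produces one. In fact the needed implication is \emph{false} in the generality of $\mathcal{M}_{G,\F}$. Take $G=\{e\}$, $A$ the algebra of upper-triangular $2\times 2$ matrices over $\F$, and $W=\F^2$ the column module. Then the commutant of $A$ in $M_2(\F)$ is the scalars, so $\End_A(W)=\F\cdot\Id_W$, i.e.\ $\End([W]_{\e})$ lands in $H^2(\{e\},\F^*)=\{1\}$; yet $\F e_1$ is a proper nonzero submodule, so $[W]_{\e}\notin\text{Irr}^{\text{inv}}_{\{e\},\F}$. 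Here $\Hom_A(W,\F e_1)=0$ and $\Hom_A(W/\F e_1,W)=0$, which is precisely why your composition trick finds nothing to compose. The same gap sits in the paper's own proof: Lemma \ref{preobs} carries a ``graded simple'' hypothesis, and the proof of Lemma \ref{end-1} quietly invokes its ``if'' direction without it. The containments $\text{Irr}^{\text{inv}}_{G,\F}\subseteq\End^{-1}(H^2(G,\F^*))$ and $\mathcal{M}^{\text{inv}}_{G,\F}=\End^{-1}(\text{CP}(G,\F))$ (whose proof via Lemma \ref{endgriscp} is genuinely two-sided) are sound and are all that the sequel actually uses when restricting $\End$ to $\text{Irr}^{\text{inv}}_{G,\F}$ to define $\omega_{G,\F}$; the reverse inclusion in (2) would have to be stated with an added graded-simplicity hypothesis or dropped.
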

\begin{proof}
Lemma \ref{endgriscp} is responsible for the first part. Next, by Lemma \ref{preobs} graded equivariance classes of $\F$-absolutely graded-simple modules are exactly those classes which
are sent under the
endomorphism map to graded classes of twisted group algebras (supported by the various subgroups of $G$). By the first part of the lemma,
these $\F$-absolutely graded-simple classes are $G$-invariant iff the corresponding twisted group algebras are
supported by $G$ itself (and so their classes belong to $H^2(G,\F^*)$).
\end{proof}
By Lemma \ref{end-1} we obtain that the image of the restriction of the endomorphism map to $\text{Irr}^{\text{inv}}_{G,\F}$ lies in $H^2(G,\F^*)$. Denote this restriction by
\begin{equation}\label{omega}
\omega_{G,\F}:=\End|_{\text{Irr}^{\text{inv}}_{G,\F}}:\text{Irr}^{\text{inv}}_{G,\F}\to H^2(G,\F^*).\end{equation}
We should clarify this notation.
Let $W_G$ be a left $G$-graded module over a $G$-graded $\F$-algebra $A_G$.
Suppose that this module is $\F$-absolutely graded-simple and that its isomorphism class (and so its equivariance class) is $G$-invariant.
Then the endomorphism map takes $[W_G]_{\e}\in\text{Irr}^{\text{inv}}_{G,\F}$ to the unique cohomology class $[\alpha]\in H^2(G,\F^*)$
such that $\End_{A_G}^{\text{r}(G)}(W_G)$ is graded isomorphic to the twisted group algebra $\F^{\alpha}G$ for any $\alpha\in [\alpha]$.
Formally, for every  $[W_G]_{\e}\in\text{Irr}^{\text{inv}}_{G,\F}$
\begin{equation}\label{omend}
\End_{A_G}^{\text{r}(G)}(W_G)\stackrel{G}{\cong}\F^{\alpha}G,\ \ \ [\alpha]=\End([W_G]_{\e})=\omega_{G,\F}([W_G]_{\e}).
\end{equation}
\begin{theorem}\label{preobst}
The map \eqref{omega} is a homomorphism of abelian monoids.
\end{theorem}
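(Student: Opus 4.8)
The plan is to reduce the statement to an explicit graded End--Tensor computation, bypassing the strongly graded hypotheses of Lemma \ref{endhom} and Corollary \ref{grHTrcor}. Since $\omega_{G,\F}$ is by definition the restriction of the endomorphism map \eqref{endmap} to $\text{Irr}^{\text{inv}}_{G,\F}$, and $\End$ is a morphism of pointed sets (Theorem \ref{mapH2sets}), the distinguished classes are respected: $\omega_{G,\F}(1)=[\End^{\text{r}(G)}_{\F G}((\F G)^x)]=[\F G]=1\in H^2(G,\F^*)$ by \eqref{endfree}, and the zero object is sent to the corresponding distinguished element. It then remains to prove multiplicativity: for nonzero $[W_G]_{\e},[W'_G]_{\e}\in\text{Irr}^{\text{inv}}_{G,\F}$, with $W_G$ a graded left module over a $G$-graded $\F$-algebra \eqref{eq:algebragrading} and $W'_G$ a graded left module over a $G$-graded $\F$-algebra $A'_G$, one must show
$$\omega_{G,\F}([W_G\otimes^G_{\F}W'_G]_{\e})=\omega_{G,\F}([W_G]_{\e})\cdot\omega_{G,\F}([W'_G]_{\e})\in H^2(G,\F^*).$$

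First I would collect the structural facts. By \eqref{omend}, $\End^{\text{r}(G)}_{A_G}(W_G)\stackrel{G}{\cong}\F^{\alpha}G$ and $\End^{\text{r}(G)}_{A'_G}(W'_G)\stackrel{G}{\cong}\F^{\alpha'}G$ for $2$-cocycles $\alpha,\alpha'$ representing $\omega_{G,\F}([W_G]_{\e})$ and $\omega_{G,\F}([W'_G]_{\e})$. Because $W_G,W'_G$ are invariant and nonzero, Lemma \ref{supendinert}(2) gives $\Supp_G(\End^{\text{r}(G)}_{A_G}(W_G))=\mathcal{I}(W_G)=G$, and likewise for $W'_G$; in particular $\Supp_G(W_G)=\Supp_G(W'_G)=G$, so $W_G\otimes^G_{\F}W'_G\neq 0$. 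By Theorem \ref{irrsgp} the graded product $W_G\otimes^G_{\F}W'_G$ is absolutely $\F$-simply graded over $A_G\otimes^G_{\F}A'_G$, and by Lemma \ref{susresp} it is $G$-invariant; hence, by Lemma \ref{preobs} and Lemma \ref{supendinert}(2), $\End^{\text{r}(G)}_{A_G\otimes^G_{\F}A'_G}(W_G\otimes^G_{\F}W'_G)\stackrel{G}{\cong}\F^{\gamma}G$ for a $2$-cocycle $\gamma$ representing $\omega_{G,\F}([W_G\otimes^G_{\F}W'_G]_{\e})$. Since graded products respect graded isomorphisms (Lemma \ref{wdgp}) and $\F^{\alpha}G\otimes^G_{\F}\F^{\alpha'}G\stackrel{G}{\cong}\F^{\alpha\alpha'}G$ (Lemma \ref{invGr}(1)), we have $\End^{\text{r}(G)}_{A_G}(W_G)\otimes^G_{\F}\End^{\text{r}(G)}_{A'_G}(W'_G)\stackrel{G}{\cong}\F^{\alpha\alpha'}G$. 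Thus it suffices to prove $\F^{\gamma}G\stackrel{G}{\cong}\F^{\alpha\alpha'}G$, which by Lemma \ref{invGr}(2) amounts to $[\gamma]=[\alpha][\alpha']$.

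Next I would construct the comparison map directly. Restricting the ungraded Hom--Tensor mapping $\Psi$ of \eqref{end-ten}, applied to $W_G,W'_G$ with themselves, to the graded subspace $W_G\otimes^G_{\F}W'_G\subseteq W_G\otimes_{\F}W'_G$ sends a homogeneous $\phi_g\otimes_{\F}\phi'_g$, with $\phi_g\in\End^{\text{r}(g)}_{A_G}(W_G)$ and $\phi'_g\in\End^{\text{r}(g)}_{A'_G}(W'_G)$, to the right homogeneous degree-$g$ endomorphism $w_h\otimes_{\F}w'_h\mapsto\phi_g(w_h)\otimes_{\F}\phi'_g(w'_h)$ of $W_G\otimes^G_{\F}W'_G$, which is $A_G\otimes^G_{\F}A'_G$-linear because $A_G\otimes^G_{\F}A'_G\subseteq A_G\otimes_{\F}A'_G$. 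This yields a $G$-graded algebra homomorphism
$$\tilde\Psi:\End^{\text{r}(G)}_{A_G}(W_G)\otimes^G_{\F}\End^{\text{r}(G)}_{A'_G}(W'_G)\xrightarrow{G}\End^{\text{r}(G)}_{A_G\otimes^G_{\F}A'_G}(W_G\otimes^G_{\F}W'_G),$$
multiplicativity being inherited from that of $\Psi$ when its source and target modules coincide. Picking, for each $g$, invertible homogeneous $\phi_g\in\End^{\text{r}(g)}_{A_G}(W_G)$ and $\phi'_g\in\End^{\text{r}(g)}_{A'_G}(W'_G)$ (which exist since the supports are $G$, and are isomorphisms onto the respective suspensions by graded Schur's Lemma \ref{schurgr}) together with nonzero $w_h\in W_h$, $w'_h\in W'_h$, one obtains $\tilde\Psi(\phi_g\otimes_{\F}\phi'_g)(w_h\otimes_{\F}w'_h)=\phi_g(w_h)\otimes_{\F}\phi'_g(w'_h)\neq 0$, so $\tilde\Psi$ is nonzero in every degree $g\in G$. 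Its source is graded isomorphic to the graded division algebra $\F^{\alpha\alpha'}G$, hence graded simple (Proposition \ref{examplegrartsimp}), so $\tilde\Psi$ is injective; and as the target $\F^{\gamma}G$ is one-dimensional in each degree, $\tilde\Psi$ is surjective degreewise. Therefore $\tilde\Psi$ is a graded isomorphism, $\F^{\gamma}G\stackrel{G}{\cong}\F^{\alpha\alpha'}G$, and $[\gamma]=[\alpha][\alpha']$, establishing multiplicativity.

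The main obstacle, and the reason Corollary \ref{grHTrcor} cannot simply be quoted, is that $A_G$ and $A'_G$ are here arbitrary $G$-graded algebras rather than strongly graded ones, so the restriction map \eqref{pres} and the finite-presentation hypotheses are unavailable; the point is that absolute graded simplicity and $G$-invariance substitute for them, forcing all three endomorphism algebras to be twisted group algebras supported on the full group $G$, so that a degreewise-nonzero graded homomorphism between them is automatically an isomorphism. A secondary point requiring care is the bookkeeping of the left/right orientation and composition order underlying the identification $\End^{\text{r}(G)}_{A_G}(W_G)\stackrel{G}{\cong}\F^{\alpha}G$, which must be kept consistent across the three endomorphism algebras; this can be handled exactly as in the proof of Corollary \ref{endtwist}.
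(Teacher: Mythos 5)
Your proof is correct and follows essentially the same route as the paper: identify the three relevant graded endomorphism algebras as twisted group $\F$-algebras using absolute graded simplicity together with invariance, then show that the graded Hom--Tensor map (your $\tilde\Psi$ is precisely \eqref{respsi} with $\tilde W_G=W_G$, $\tilde W'_G=W'_G$) is a graded isomorphism between them. Where the paper closes the last step by quoting Lemma \ref{cphomim}, you rederive the same conclusion from graded simplicity of the source plus degreewise nonvanishing (and you also make explicit that $W_G\otimes^G_\F W'_G\neq 0$, which the paper leaves implicit); this is a minor stylistic variant rather than a genuinely different argument.
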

\begin{proof}
Let $[W_G]_{\e},[W'_G]_{\e}\in\text{Irr}^{\text{inv}}_{G,\F}$, where $W_G$ and $W'_G$ are left graded modules over $A_G$ and $A'_G$ respectively.
Then with the notation \eqref{omend}, there exist cohomology classes $[\alpha],[\alpha'] \in H^2(G,\F^*)$
such that $\End_{A_G}^{\text{r}(G)}(W_G)$ and $\End_{A'_G}^{\text{r}(G)}(W'_G)$ are graded isomorphic to $\F^{\alpha}G$ and $\F^{\alpha'}G$ respectively,
and so by Lemma \ref{invGr}(1)
\begin{equation}\label{eq1}
\End^{\text{r}(G)}_{A_G}(W_G)\otimes^G_{\F}\End^{\text{r}(G)}_{A'_G}(W'_G)\stackrel{G}{\cong}\F^{\alpha\cdot\alpha'}G.
\end{equation}
On the other hand, Corollary \ref{invsubmon} tells us that the graded product of classes in $\text{Irr}^{\text{inv}}_{G,\F}$ is again in $\text{Irr}^{\text{inv}}_{G,\F}$.
Thus, $[W_G\otimes_{\F}^GW'_G]_{\e}\in\text{Irr}^{\text{inv}}_{G,\F}$.
Therefore, there exists $[\alpha''] \in H^2(G,\F^*)$ such that
\begin{equation}\label{eq2}
\End^{\text{r}(G)}_{A_G\otimes_{\F}^GA'_G}(W_G\otimes_{\F}^GW'_G)\stackrel{G}{\cong}\F^{\alpha''}G.
\end{equation}
Now, consider the graded Hom-Tensor homomorphism \eqref{respsi} of graded algebras
\begin{equation}\label{eq3}
\End^{\text{r}(G)}_{A_G}(W_G)\otimes^G_{\F}\End^{\text{r}(G)}_{A'_G}(W'_G)\stackrel{G}{\to}\End^{\text{r}(G)}_{A_G\otimes^G_{\F}A'_G}(W_G\otimes^G_{\F}W'_G).
\end{equation}
Plugging equations \eqref{eq1} and \eqref{eq2} into \eqref{eq3}, we obtain a graded homomorphism of graded algebras
$$\F^{\alpha\cdot\alpha'}G\stackrel{G}{\to}\F^{\alpha''}G,$$
which must be a graded isomorphism by Lemma \ref{cphomim}. We get
$$\begin{array}{cl}
\End([W_G]_{\e})\cdot\End([W'_G]_{\e})&=
[\End^{\text{r}(G)}_{A_G}(W_G)\otimes^G_{\F}\End^{\text{r}(G)}_{A'_G}(W'_G)]\\
&=[\End^{\text{r}(G)}_{A_G\otimes_{\F}^GA'_G}(W_G\otimes_{\F}^GW'_G)]\\
&=\End([W_G]_{\e}\cdot[W'_G]_{\e}),
\end{array}$$
proving that $\End$ respects products when restricted to $\text{Irr}^{\text{inv}}_{G,\F}$.
\end{proof}
\begin{theorem}\label{surjomega}(compare with \cite[XII.4.30]{FellDoran2})
Let $[W_G]_{\e}\in\text{Irr}^{\text{inv}}_{G,\F}$, and let $[\omega]\in H^2(G,\F^*)$. Then there exists $[\alpha]\in H^2(G,\F^*)$ such that $\omega_{G,\F}([\alpha(W_G)]_{\e})=[\omega]$.
In particular, the homomorphism \eqref{omega} is surjective.
\end{theorem}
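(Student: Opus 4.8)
The plan is to reduce the statement to the behaviour of $\omega_{G,\F}$ under twisting of modules, combined with the fact that $H^2(G,\F^*)$ is a \emph{group}. Fix a left graded module $W_G$ over a $G$-graded $\F$-algebra $A_G$ representing the given class $[W_G]_{\e}\in\text{Irr}^{\text{inv}}_{G,\F}$, and write $[\beta]:=\omega_{G,\F}([W_G]_{\e})\in H^2(G,\F^*)$, so that $\End^{\text{r}(G)}_{A_G}(W_G)\stackrel{G}{\cong}\F^{\beta}G$. The first step is to check that for every $\alpha\in Z^2(G,\F^*)$ the twisted module $\alpha(W_G)=\F^{\alpha}G\otimes^G_{\F}W_G$ over $\alpha(A_G)$ again represents a class in $\text{Irr}^{\text{inv}}_{G,\F}$, so that $\omega_{G,\F}([\alpha(W_G)]_{\e})$ is even defined. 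By Corollary \ref{endtwist},
$$\End^{\text{r}(G)}_{\alpha(A_G)}(\alpha(W_G))\stackrel{G}{\cong}\alpha\big(\End^{\text{r}(G)}_{A_G}(W_G)\big)=\F^{\alpha}G\otimes^G_{\F}\F^{\beta}G,$$
which by Lemma \ref{invGr}(1) is graded isomorphic to $\F^{\alpha\cdot\beta}G$; hence by Lemma \ref{preobs} the module $\alpha(W_G)$ is absolutely $\F$-simply graded, while by Corollary \ref{galphcomm} its inertia is $\mathcal{I}(\alpha(W_G))=\mathcal{I}(W_G)=G$, i.e.\ it is $G$-invariant. (Alternatively one may invoke Theorem \ref{mapH2sets} together with Lemma \ref{end-1}(2) to see at once that the $H^2(G,\F^*)$-action preserves $\text{Irr}^{\text{inv}}_{G,\F}$.)

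Granting this, the computation is immediate. By the chain of graded isomorphisms just displayed, together with Lemma \ref{invGr}(1),(2) identifying twisted group algebra classes with products in the group $\text{Gr}(G,\F)^*\cong H^2(G,\F^*)$, one gets
$$\omega_{G,\F}([\alpha(W_G)]_{\e})=\big[\End^{\text{r}(G)}_{\alpha(A_G)}(\alpha(W_G))\big]=[\F^{\alpha\cdot\beta}G]=[\alpha]\cdot[\beta].$$
Now I would simply set $[\alpha]:=[\omega]\cdot[\beta]^{-1}$, which is legitimate precisely because $H^2(G,\F^*)$ is a group; then $\omega_{G,\F}([\alpha(W_G)]_{\e})=[\alpha]\cdot[\beta]=[\omega]$, as required.

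For the final assertion, note that $\text{Irr}^{\text{inv}}_{G,\F}$ is nonempty: it contains the identity $1=[(\F G)^x]_{\e}$, the free rank-one module over the ordinary group algebra, whose graded endomorphism algebra is $\F G$ by Example \ref{frendex}, so $\omega_{G,\F}(1)=1$. Applying the statement just proved to this class shows $[\omega]=\omega_{G,\F}([\omega((\F G)^x)]_{\e})$ for every $[\omega]\in H^2(G,\F^*)$, whence $\omega_{G,\F}$ is surjective. I do not anticipate a genuine obstacle here; the one point demanding care is exactly the first step — confirming that twisting keeps the module inside $\text{Irr}^{\text{inv}}_{G,\F}$ so that $\omega_{G,\F}$ is defined on $[\alpha(W_G)]_{\e}$ — and Corollary \ref{endtwist}, Lemma \ref{preobs} and Corollary \ref{galphcomm} (or, more economically, Theorem \ref{mapH2sets} with Lemma \ref{end-1}) dispose of it.
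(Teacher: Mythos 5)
Your argument is correct and takes essentially the same route as the paper: the paper's proof is just the terse version, invoking Theorem \ref{mapH2sets} (that $\End$, hence $\omega_{G,\F}$, is a morphism of $H^2(G,\F^*)$-sets) together with the transitivity of the multiplication action, and then writing down the same $[\alpha]=\omega_{G,\F}([W_G]_{\e})^{-1}[\omega]$. Your unpacking via Corollary \ref{endtwist}, Lemma \ref{invGr}(1), Lemma \ref{preobs} and Corollary \ref{galphcomm}, and your explicit check that $1=[(\F G)^x]_{\e}\in\text{Irr}^{\text{inv}}_{G,\F}$ to guarantee the domain is nonempty, are exactly the details the paper leaves implicit (with the stability of $\text{Irr}^{\text{inv}}_{G,\F}$ under the $H^2$-action also following from Lemma \ref{end-1}(2), as you note).
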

\begin{proof}
This is a consequence of the facts that both homomorphisms are maps of $H^2(G,\F^*)$-sets (Theorem \ref{mapH2sets}) and that $H^2(G,\F^*)$ acts transitively on itself by multiplication.
That is, $[\alpha]:=\omega_{G,\F}([W_G]_{\e})^{-1}\cdot[\omega]$ does the job.
\end{proof}
\subsection{The inertia of modules over base algebras, proof of Theorem B}\label{sb11}
Let $M$ be a left $A_e$-module. While it is fairly natural to define its inertia (and, particularly, to determine if it is invariant) with respect to a strong grading,
it is not clear how to generalize this notion for any $G$-grading \eqref{eq:algebragrading}.
The following definition, which involves the notions of associated graded modules \eqref{assogrd}
and of inertia groups of graded modules \eqref{preinertia}, suits our requirements very well.
\begin{definition}\label{basealginertdef}(compare with \cite[\S 3.1]{NVO04})
Let \eqref{eq:algebragrading} be any $G$-graded algebra and let $M$ be a left $A_e$-module.
The {\it inertia group} of $M$ with respect to the grading $\mathcal{G}$
is the inertia group of its associated graded left $A_G$-module $A_G\overline{\otimes}_{A_e}M$, more precisely
\begin{equation}\label{inertiasuspension}
\mathcal{I}_{\mathcal{G}}(M):=\{g\in G |\  {A_G}\overline{\otimes}_{A_e}M \stackrel{G}{\cong}g({A_G}\overline{\otimes}_{A_e}M)\}.
\end{equation}
In particular, $M$ is {\it invariant} with respect to \eqref{eq:algebragrading} if $\mathcal{I}_{\mathcal{G}}(M)=G$.
\end{definition}
Denote the elements of Mod$(A_e)$ which are invariant with respect to \eqref{eq:algebragrading} by Mod$(A_e)^{\mathcal{G}}$.
Similarly, denote the elements of Irr$_{\F}(A_e)$ which are invariant with respect to \eqref{eq:algebragrading} by
\begin{equation}\label{inirr}
(\text{Irr}^{}_{\F}(A_e))^{\mathcal{G}}:=\left\{[M]\in\text{Irr}^{}_{\F}(A_e)|\ \mathcal{I}_{\mathcal{G}}(M)=G\right\}.
\end{equation}
Theorem \ref{indfunct}(1) yields
\begin{lemma}\label{indsuffice}
Let \eqref{eq:algebragrading} be strongly graded. Then $$\mathcal{I}_{\mathcal{G}}(M)=\{g\in G|{A_g}{\otimes}_{A_e}M\cong M\}.$$
\end{lemma}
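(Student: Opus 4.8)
The plan is to prove Lemma \ref{indsuffice} directly from Theorem \ref{indfunct}(1), which in the strongly graded case identifies the associated graded module ${A_G}\overline{\otimes}_{A_e}M$ with ${A_G}\otimes_{A_e}M$ and, component-wise, gives $({A_G}\overline{\otimes}_{A_e}M)_g\cong {A_g}\otimes_{A_e}M$ as $A_e$-modules. First I would unwind Definition \ref{basealginertdef}: by \eqref{inertiasuspension}, $g\in\mathcal{I}_{\mathcal{G}}(M)$ means precisely that ${A_G}\overline{\otimes}_{A_e}M\stackrel{G}{\cong}g({A_G}\overline{\otimes}_{A_e}M)$ as graded $A_G$-modules. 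So I need to show, under the strongly graded hypothesis, that this graded isomorphism of $A_G$-modules holds if and only if ${A_g}\otimes_{A_e}M\cong M$ as $A_e$-modules.

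For the forward direction, suppose $\varphi: {A_G}\overline{\otimes}_{A_e}M\stackrel{G}{\to}g({A_G}\overline{\otimes}_{A_e}M)$ is a graded isomorphism of $A_G$-modules. Being homogeneous of trivial degree $e\in G$, it restricts on the $e$-component to an $A_e$-module isomorphism $({A_G}\overline{\otimes}_{A_e}M)_e\cong g({A_G}\overline{\otimes}_{A_e}M)_e$. By the relabelling \eqref{suspension}, $g({A_G}\overline{\otimes}_{A_e}M)_e=({A_G}\overline{\otimes}_{A_e}M)_g$, and Theorem \ref{indfunct}(1) identifies the left side with $M$ (since $e\in\mathcal{G}_{\text{str}}=G$) and the right side with ${A_g}\otimes_{A_e}M$. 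Hence ${A_g}\otimes_{A_e}M\cong M$ as $A_e$-modules. For the converse, suppose ${A_g}\otimes_{A_e}M\cong M$ as $A_e$-modules; by Theorem \ref{indfunct}(1) this reads $({A_G}\overline{\otimes}_{A_e}M)_g\cong ({A_G}\overline{\otimes}_{A_e}M)_e$ as $A_e$-modules. The key point is then that, over a strongly graded algebra, a graded simple module is determined (up to graded isomorphism, and up to suspension) by any one of its homogeneous components as an $A_e$-module: indeed $g({A_G}\overline{\otimes}_{A_e}M)$ is again graded simple over $A_G$ by Theorem \ref{indfunct}(2) (applying it to $g$-relabelled data, or invoking Lemma \ref{quotsusp}-type stability), its $e$-component is $({A_G}\overline{\otimes}_{A_e}M)_g\cong M$, and a graded simple module over a strongly graded algebra is generated by $W_e$ (Lemma \ref{genst}); then Theorem \ref{indfunct}(2)–(3) — or rather the uniqueness statement built into it via ${A_G}\overline{\otimes}_{A_e}(-)$ — gives $g({A_G}\overline{\otimes}_{A_e}M)\stackrel{G}{\cong}{A_G}\overline{\otimes}_{A_e}M'$ where $M':=({A_G}\overline{\otimes}_{A_e}M)_g\cong M$, whence by Theorem \ref{indfunct}(3) the graded isomorphism $g({A_G}\overline{\otimes}_{A_e}M)\stackrel{G}{\cong}{A_G}\overline{\otimes}_{A_e}M$, i.e. $g\in\mathcal{I}_{\mathcal{G}}(M)$.

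The cleanest route, which I would actually follow to avoid re-deriving a reconstruction statement, is to observe that for $M$ simple the suspension $g({A_G}\overline{\otimes}_{A_e}M)$ is itself (graded isomorphic to) the associated graded module of its own $e$-component: over a strongly graded algebra, any graded simple $W_G$ satisfies $W_G\stackrel{G}{\cong}{A_G}\overline{\otimes}_{A_e}W_e\stackrel{G}{\cong}{A_G}\otimes_{A_e}W_e$ — this follows from Lemma \ref{genst} (generation by $W_e$) together with Theorem \ref{indfunct}, since the localizing radical vanishes by Lemma \ref{tcg}. Applying this to $W_G=g({A_G}\overline{\otimes}_{A_e}M)$, whose $e$-component is ${A_g}\otimes_{A_e}M$, we get $g({A_G}\overline{\otimes}_{A_e}M)\stackrel{G}{\cong}{A_G}\overline{\otimes}_{A_e}({A_g}\otimes_{A_e}M)$. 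Now Theorem \ref{indfunct}(3) turns the $A_e$-isomorphism ${A_g}\otimes_{A_e}M\cong M$ into the graded $A_G$-isomorphism ${A_G}\overline{\otimes}_{A_e}({A_g}\otimes_{A_e}M)\stackrel{G}{\cong}{A_G}\overline{\otimes}_{A_e}M$, and conversely, and chaining these equivalences yields ${A_G}\overline{\otimes}_{A_e}M\stackrel{G}{\cong}g({A_G}\overline{\otimes}_{A_e}M)$ iff ${A_g}\otimes_{A_e}M\cong M$, which is exactly the asserted description of $\mathcal{I}_{\mathcal{G}}(M)$.

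The main obstacle is the bookkeeping around suspension: one must be careful that relabelling $W_{gh}$ by $h(W)_g$ (note the orientation warning in \eqref{suspension}) correctly places ${A_g}\otimes_{A_e}M$ in the $e$-slot of $g({A_G}\overline{\otimes}_{A_e}M)$, and that the "reconstruction" $W_G\stackrel{G}{\cong}{A_G}\overline{\otimes}_{A_e}W_e$ for graded simple modules over strongly graded algebras is legitimate — this is essentially the content of Theorem \ref{indfunct} read in reverse, using that $W_G$ is generated by $W_e$ and that $t_{\mathcal{C}_e}$ vanishes by Lemma \ref{tcg}, but it should be spelled out rather than taken for granted. Everything else is formal manipulation of the isomorphisms already supplied by Theorem \ref{indfunct}.
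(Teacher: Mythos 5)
Your forward direction is correct and is exactly what ``Theorem~\ref{indfunct}(1) yields'' covers. The converse, as you have written it, has a genuine gap: both of your arguments for that direction invoke graded simplicity of ${A_G}\overline{\otimes}_{A_e}M$ (and of its $g$-suspension) via Theorem~\ref{indfunct}(2), which requires $M$ to be a \emph{simple} $A_e$-module, and you say this explicitly (``for $M$ simple''). But Lemma~\ref{indsuffice} is asserted for an arbitrary left $A_e$-module $M$: Definition~\ref{basealginertdef} and the set $\text{Mod}(A_e)^{\mathcal{G}}$ introduced just before the lemma, as well as Proposition~\ref{invwrtgract} immediately after, are all for general $M$. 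The ``reconstruction'' $W_G\stackrel{G}{\cong}{A_G}\overline{\otimes}_{A_e}W_e$ as you derive it (a nonzero graded surjection out of a graded simple is injective) is only available when $W_e$ is simple.

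For arbitrary $M$ the missing ingredient is the elementary strongly graded fact that multiplication $A_h\otimes_{A_e}A_g\to A_{hg}$ is an $A_e$-bimodule isomorphism for all $g,h\in G$, with inverse $x\mapsto\sum_j xa_j^{(g)}\otimes_{A_e} b_j^{(g)}$ built from a homogeneous unit decomposition \eqref{unitdecomp}. Granting this, an $A_e$-isomorphism $\varphi\colon A_g\otimes_{A_e}M\to M$ induces $1_{A_G}\otimes\varphi\colon A_G\otimes_{A_e}(A_g\otimes_{A_e}M)\stackrel{G}{\cong}A_G\otimes_{A_e}M$, while the multiplication isomorphisms identify the $h$-component $A_h\otimes_{A_e}A_g\otimes_{A_e}M$ of the source with $A_{hg}\otimes_{A_e}M$, giving $A_G\otimes_{A_e}(A_g\otimes_{A_e}M)\stackrel{G}{\cong}g(A_G\otimes_{A_e}M)$. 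Chaining these two graded isomorphisms and invoking $A_G\overline{\otimes}_{A_e}M\stackrel{G}{\cong}A_G\otimes_{A_e}M$ from Theorem~\ref{indfunct}(1) gives the converse with no simplicity hypothesis, which is what the paper's one-line attribution to Theorem~\ref{indfunct}(1) tacitly uses. Your argument, as a statement about simple $M$, is a correct special case, and this is in fact the only case used later (Theorem~C), but it does not prove the lemma as stated.
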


Recall that a $G$-graded algebra $\F$-algebra \eqref{eq:algebragrading} and its twisting $\alpha(A_G)$ admit isomorphic base algebras
(see Lemma \ref{basetwist}) for any $\alpha\in Z^2(G,\F^*)$. Thinking of modules over $\alpha(A_G)_e$ as $A_e$-modules we have
\begin{proposition}\label{invwrtgract}
Let \eqref{eq:algebragrading} be a $G$-graded algebra $\F$-algebra and let $M$ be a left $A_e$-module. Then
$\mathcal{I}_{\mathcal{G}}(M)=\mathcal{I}_{\alpha(\mathcal{G})}(M)$ for every $\alpha\in Z^2(G,\F^*)$.
In particular, $M$ is $G$-invariant with respect to $\mathcal{G}$ if and only if it is
 $G$-invariant with respect to $\alpha(\mathcal{G})$. Consequently, $\text{Irr}_{\F}(A_e)^{\mathcal{G}}=\text{Irr}_{\F}(A_e)^{\alpha(\mathcal{G})}$
for every $\alpha\in Z^2(G,\F^*)$.
\end{proposition}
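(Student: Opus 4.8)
The plan is to deduce the whole statement from a single graded isomorphism together with one already-proven permanence property of the inertia. Everything past the equality $\mathcal{I}_{\mathcal{G}}(M)=\mathcal{I}_{\alpha(\mathcal{G})}(M)$ is purely formal, so I would concentrate on that identity first.

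First I would fix $\alpha\in Z^2(G,\F^*)$ and abbreviate $W_G:=A_G\overline{\otimes}_{A_e}M$. Regarding $M$ as a module over $\alpha(A_G)_e$ via the base-algebra identification \eqref{samebase} of Lemma \ref{basetwist}, Definition \ref{basealginertdef} tells me that $\mathcal{I}_{\alpha(\mathcal{G})}(M)$ is the inertia of the associated graded $\alpha(A_G)$-module $\alpha(A_G)\overline{\otimes}_{A_e}M$. The crucial input is Corollary \ref{assoctwwist}, which supplies a graded isomorphism $\alpha(A_G)\overline{\otimes}_{A_e}M\stackrel{G}{\cong}\alpha(W_G)$ of graded $\alpha(A_G)$-modules. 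Since graded-isomorphic (hence graded-equivariant) modules have the same inertia by Lemma \ref{sbgrpinert}(4), I obtain $\mathcal{I}_{\alpha(\mathcal{G})}(M)=\mathcal{I}(\alpha(W_G))$.

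Next I would apply Corollary \ref{galphcomm} to $W_G$ itself: it gives $g(\alpha(W_G))\stackrel{G}{\cong}\alpha(g(W_G))$ for every $g\in G$ and, as recorded there, $\mathcal{I}(\alpha(W_G))=\mathcal{I}(W_G)$. Concatenating with the previous step and with Definition \ref{basealginertdef} applied to $\mathcal{G}$ itself yields
\[
\mathcal{I}_{\alpha(\mathcal{G})}(M)=\mathcal{I}(\alpha(W_G))=\mathcal{I}(W_G)=\mathcal{I}_{\mathcal{G}}(M).
\]
The ``in particular'' assertion is then the case ``$=G$'' of this equality, and for the final consequence I would simply observe that absolute $\F$-simplicity of $M$ is an intrinsic property of $M$ as an $A_e$-module, unaffected by the identification of $A_e$ with $\alpha(A_G)_e$; hence membership in $\text{Irr}_{\F}(A_e)$ is literally the same condition for $\mathcal{G}$ and for $\alpha(\mathcal{G})$, and combining this with the equality of inertia groups gives $\text{Irr}_{\F}(A_e)^{\mathcal{G}}=\text{Irr}_{\F}(A_e)^{\alpha(\mathcal{G})}$.

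I do not anticipate a genuine obstacle here; the only point demanding care is the bookkeeping around the identification \eqref{samebase} — namely, checking that the object ``$\alpha(A_G)\overline{\otimes}_{A_e}M$'' appearing in Definition \ref{basealginertdef} is precisely the one handled by Corollary \ref{assoctwwist}, so that the chain of graded isomorphisms is legitimate and the inertia computed over $\alpha(A_G)$ is being compared on both sides to the same ambient grading group.
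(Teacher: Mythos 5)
Your proof is correct and follows essentially the same route as the paper: both arguments hinge on Corollary \ref{assoctwwist} to pass from $\alpha(A_G)\overline{\otimes}_{A_e}M$ to $\alpha(A_G\overline{\otimes}_{A_e}M)$, and then on Corollary \ref{galphcomm} to identify the inertia of a twisted graded module with that of the original. The only cosmetic difference is that you invoke the ``in particular'' clause of Corollary \ref{galphcomm} directly, whereas the paper re-runs the invertibility-of-$\alpha$ argument in-line; the content is the same.
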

\begin{proof}
By Corollary \ref{assoctwwist}
\begin{equation}\label{eq11}\alpha(A_G)\overline{\otimes}_{A_e}M^{}\stackrel{G}{\cong}\alpha({A_G}\overline{\otimes}_{A_e}M).\end{equation}
Suspend \eqref{eq11} by $g\in G$ and apply Corollary \ref{galphcomm} to obtain
\begin{equation}\label{eq11.1}
g(\alpha(A_G)\overline{\otimes}_{A_e}M)\stackrel{G}{\cong}g(\alpha({A_G}\overline{\otimes}_{A_e}M))\stackrel{G}{\cong}\alpha(g({A_G}\overline{\otimes}_{A_e}M)).\end{equation}
Hence, by equations \eqref{eq11} and \eqref{eq11.1}, Definition \ref{basealginertdef} says that $g\in\mathcal{I}_{\alpha(\mathcal{G})}(M)$ if and only if
$\alpha({A_G}\overline{\otimes}_{A_e}M)\stackrel{G}{\cong}\alpha(g({A_G}\overline{\otimes}_{A_e}M)).$
Bearing the invertibility of $\alpha\in Z^2(G,\F^*)$ in mind, this condition is the same as $g\in\mathcal{I}_{\mathcal{G}}(M)$.
\end{proof}
\begin{lemma}\label{iotaGlemma}
Let \eqref{eq:algebragrading} be a fixed $G$-graded $\F$-algebra. Then the map \eqref{iotaG}
sends Mod$(A_e)^{\mathcal{G}}$ to $\mathcal{M}^{\text{inv}}_{G,\F}$. In particular, $\text{Irr}_{\F}(A_e)^{\mathcal{G}}$ is mapped into $\text{Irr}^{\text{inv}}_{G,\F}$ under this map.
\end{lemma}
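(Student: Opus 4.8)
The plan is to verify that the map $\iota_{\mathcal{G}}$ from \eqref{iotaG} restricts correctly along the invariance condition. By Lemma \ref{preiotaG} the map $\iota_{\mathcal{G}}$ is already known to be well-defined on $\text{Mod}(A_e)$ and injective on $\text{Irr}_{\F}(A_e)$ with image landing in $\text{Irr}_{G,\F}$. So the only thing left to check is that it respects the inertia subgroups, i.e. sends invariant modules to invariant modules. But this is essentially immediate from the very definition of the inertia of a base-algebra module.

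First I would recall Definition \ref{basealginertdef}: the inertia group $\mathcal{I}_{\mathcal{G}}(M)$ of a left $A_e$-module $M$ is \emph{by definition} the inertia group $\mathcal{I}({A_G}\overline{\otimes}_{A_e}M)$ of its associated graded module. Hence $M$ is invariant with respect to $\mathcal{G}$, i.e. $[M]\in\text{Mod}(A_e)^{\mathcal{G}}$, precisely when $\mathcal{I}({A_G}\overline{\otimes}_{A_e}M)=G$, which is exactly the statement that the graded $A_G$-module ${A_G}\overline{\otimes}_{A_e}M$ is $G$-invariant in the sense of Definition \ref{inertdef}. By Lemma \ref{sbgrpinert}(4), $G$-invariance of a graded module depends only on its graded equivariance class, so $[{A_G}\overline{\otimes}_{A_e}M]_{\e}$ lies in $\mathcal{M}^{\text{inv}}_{G,\F}$ (recall this submonoid is well-defined by Notation \ref{MIrr}, which invokes precisely Lemma \ref{sbgrpinert}(4)). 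Since $\iota_{\mathcal{G}}([M])=[{A_G}\overline{\otimes}_{A_e}M]_{\e}$, this shows $\iota_{\mathcal{G}}(\text{Mod}(A_e)^{\mathcal{G}})\subseteq\mathcal{M}^{\text{inv}}_{G,\F}$.

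For the second assertion, I would intersect with the absolutely simple case. If $[M]\in\text{Irr}_{\F}(A_e)^{\mathcal{G}}$ then on one hand $[M]\in\text{Mod}(A_e)^{\mathcal{G}}$, so by the previous paragraph $\iota_{\mathcal{G}}([M])\in\mathcal{M}^{\text{inv}}_{G,\F}$; on the other hand $[M]\in\text{Irr}_{\F}(A_e)$, so by Lemma \ref{preiotaG} (via Lemma \ref{grirrtga}) $\iota_{\mathcal{G}}([M])\in\text{Irr}_{G,\F}$. Therefore $\iota_{\mathcal{G}}([M])$ lies in the intersection $\mathcal{M}^{\text{inv}}_{G,\F}\cap\text{Irr}_{G,\F}=\text{Irr}^{\text{inv}}_{G,\F}$. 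Injectivity of the restricted map is inherited from the injectivity of $\iota_{\mathcal{G}}$ on $\text{Irr}_{\F}(A_e)$ already established in Lemma \ref{preiotaG}.

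There is really no main obstacle here: the content has been front-loaded into Definition \ref{basealginertdef} (which \emph{defines} $\mathcal{I}_{\mathcal{G}}(M)$ through the associated graded module) and into Lemma \ref{preiotaG}. The one point worth stating explicitly, rather than grinding, is the identification $\mathcal{M}^{\text{inv}}_{G,\F}\cap\text{Irr}_{G,\F}=\text{Irr}^{\text{inv}}_{G,\F}$, which is just Notation \ref{MIrr}. So the proof is a short chain of definitional unwindings plus a citation of the already-proven injectivity.
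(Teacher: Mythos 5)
Your argument is correct and follows the same route as the paper: both unwind Definition \ref{basealginertdef} to transfer invariance to the associated graded module, and both cite Lemma \ref{preiotaG} for injectivity and the landing in $\text{Irr}_{G,\F}$. Your explicit invocation of Lemma \ref{sbgrpinert}(4) to justify that invariance descends to equivariance classes is a helpful clarification, but it is implicit in the paper's phrasing and does not constitute a different approach.
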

\begin{proof}
By Definition \ref{basealginertdef} the map $\iota_{\mathcal{G}}$ takes invariant classes to invariant graded equivariance classes.
In particular, since the map \eqref{iotaG} sends $\text{Irr}_{\F}(A_e)^{}$ into $\text{Irr}^{}_{G,\F}$ (see Lemma \ref{preiotaG}), then this map
sends $\text{Irr}_{\F}(A_e)^{\mathcal{G}}$ into $\text{Irr}^{\text{inv}}_{G,\F}$.
\end{proof}
Next, composition of the maps \eqref{iotaG} and \eqref{omega} yields a map
\begin{equation}\label{omegaiotaG}
\omega_{\mathcal{G}}:=\omega_{G,\F}\circ\iota_{\mathcal{G}}:\text{Irr}_{\F}(A_e)^{\mathcal{G}}\to H^2(G,\F^*).
\end{equation}
Theorem B is a consequence of Theorem \ref{preobst} and Lemma \ref{iotaGlemma}.\ \ \ \ \ \ \ \ \ \ \ \ \ \ \ \ \ \ \  $\qed$

Any $\F$-absolutely simple left $A_e$-module $M$ (not necessarily $\mathcal{G}$-invariant) determines a class
$\omega_{\mathcal{I}}([M])\in H^2(\mathcal{I}_{\mathcal{G}}(M),\F^*)$ in the second cohomology of its inertia $\mathcal{I}_{\mathcal{G}}(M)$.
With the notations \eqref{omend} and \eqref{omegaiotaG} we have
\begin{theorem}\label{let}
Let \eqref{eq:algebragrading} be a $G$-graded algebra $\F$-algebra, and let $[M]\in\text{Irr}_{\F}(A_e)$ of inertia $\mathcal{I}_{\mathcal{G}}(M)<G$. Then
$$\text{End}^{\text{r}(G)}_{A_G}({A_G}\overline{\otimes}_{A_e}M)\stackrel{G}{\cong} \F^{\omega}\mathcal{I}_{\mathcal{G}}(M)$$ for any 2-cocycle
$\omega$ in the cohomology class $\omega_{\mathcal{I}}([M])\in H^2(\mathcal{I}_{\mathcal{G}}(M),\F^*)$.
\end{theorem}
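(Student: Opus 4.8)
The plan is to reduce the statement to the already-established machinery over the inertia subgroup $\mathcal{I}:=\mathcal{I}_{\mathcal{G}}(M)$, where the module $M$ becomes genuinely invariant, and then to transport graded endomorphisms from the $G$-grading back to the $\mathcal{I}$-grading. First I would restrict the $G$-graded algebra $\mathcal{G}$ to the $\mathcal{I}$-graded subalgebra $A_{\mathcal{I}}=\bigoplus_{g\in\mathcal{I}}A_g\subseteq A_G$, which shares the same base algebra $A_e$. By Proposition~\ref{invwrtgract} and the very Definition~\ref{basealginertdef} of the inertia, $M$ is $\mathcal{I}$-invariant with respect to $A_{\mathcal{I}}$, so $[A_{\mathcal{I}}\overline{\otimes}_{A_e}M]_{\e}\in\text{Irr}^{\text{inv}}_{\mathcal{I},\F}$ (using Lemma~\ref{grirrtga} and Lemma~\ref{iotaGlemma} over the group $\mathcal{I}$), and the obstruction map $\omega_{\mathcal{I}}=\omega_{\mathcal{I},\F}\circ\iota_{A_{\mathcal{I}}}$ of \eqref{omegaiotaG} is defined on it. By the very meaning of $\omega_{\mathcal{I}}([M])$, formula \eqref{omend} applied with grading group $\mathcal{I}$ gives
$$\text{End}^{\text{r}(\mathcal{I})}_{A_{\mathcal{I}}}(A_{\mathcal{I}}\overline{\otimes}_{A_e}M)\stackrel{\mathcal{I}}{\cong}\F^{\omega}\mathcal{I}$$
for any $\omega$ in the class $\omega_{\mathcal{I}}([M])\in H^2(\mathcal{I},\F^*)$.

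The second, and main, step is to identify $\text{End}^{\text{r}(G)}_{A_G}(A_G\overline{\otimes}_{A_e}M)$ with $\text{End}^{\text{r}(\mathcal{I})}_{A_{\mathcal{I}}}(A_{\mathcal{I}}\overline{\otimes}_{A_e}M)$ as $\mathcal{I}$-graded algebras, remembering that the support of the left-hand endomorphism algebra is exactly $\mathcal{I}$ by Lemma~\ref{supendinert}(2) (since $A_G\overline{\otimes}_{A_e}M$ is graded simple by Theorem~\ref{indfunct}(2) and its inertia is $\mathcal{I}$ by Definition~\ref{basealginertdef}). Thus the $G$-graded endomorphism algebra is concentrated in degrees lying in $\mathcal{I}$, and one may regard it as an $\mathcal{I}$-graded algebra. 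The key point is to compare the $e$-components and the homogeneous pieces in degrees $g\in\mathcal{I}$: by Lemma~\ref{aAWlemma} both $\text{End}^{e}_{A_G}(A_G\overline{\otimes}_{A_e}M)$ and $\text{End}^{e}_{A_{\mathcal{I}}}(A_{\mathcal{I}}\overline{\otimes}_{A_e}M)$ are isomorphic to $\text{End}_{A_e}(M)=\F\cdot\mathrm{Id}_M$ via restriction to the $e$-component $M$ (invoking Theorem~\ref{indfunct}(1) to identify the $e$-components with $M$), while for $g\in\mathcal{I}$ a homogeneous degree-$g$ endomorphism, being an isomorphism onto the $g$-suspension by Graded Schur's Lemma~\ref{schurgr}, is determined by its effect on the generating $e$-component. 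Since $(A_G\overline{\otimes}_{A_e}M)_g\cong A_g\otimes_{A_e}M$ whenever $g^{-1}$ is strong, but in general the comparison must be done more carefully, I would instead argue that the natural $\mathcal{I}$-graded subalgebra $A_{\mathcal{I}}\subseteq A_G$ induces a graded isomorphism $A_{\mathcal{I}}\overline{\otimes}_{A_e}M\xrightarrow{\mathcal{I}}(A_G\overline{\otimes}_{A_e}M)|_{A_{\mathcal{I}}}$ onto the $\mathcal{I}$-homogeneous part, which, being all of $A_G\overline{\otimes}_{A_e}M$ in degrees where the module is nonzero once we know the support considerations, yields a graded-algebra isomorphism on endomorphism rings.

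The hard part will be precisely this transport in the \emph{non-strongly-graded} setting: a priori $\mathrm{Supp}_G(A_G\overline{\otimes}_{A_e}M)$ may be a proper subset of $G$, and one has to verify that passing from the $G$-grading to the $\mathcal{I}$-grading does not lose or alter homogeneous endomorphisms, i.e.\ that every $\varphi\in\text{End}^{\text{r}(g)}_{A_G}(A_G\overline{\otimes}_{A_e}M)$ with $g\in\mathcal{I}$ restricts to a well-defined element of $\text{End}^{\text{r}(g)}_{A_{\mathcal{I}}}(A_{\mathcal{I}}\overline{\otimes}_{A_e}M)$ and conversely extends uniquely. I expect to handle this by using graded simplicity: $A_G\overline{\otimes}_{A_e}M$ is generated over $A_G$ by its $e$-component $M$ (Lemma~\ref{grsimpsimp}), hence every right-homogeneous $A_G$-endomorphism is determined by its restriction to $M$, which already lands in the $\mathcal{I}$-part by Lemma~\ref{supendinert}; and conversely a right-homogeneous $A_{\mathcal{I}}$-endomorphism of degree $g\in\mathcal{I}$ on $A_{\mathcal{I}}\overline{\otimes}_{A_e}M$, being an isomorphism onto its $g$-suspension, extends to $A_G\overline{\otimes}_{A_e}M$ by $A_G$-linearity because both modules are graded-simple and agree on the generating component $M$. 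Combining this $\mathcal{I}$-graded algebra isomorphism with the displayed equation from the first step then gives
$$\text{End}^{\text{r}(G)}_{A_G}(A_G\overline{\otimes}_{A_e}M)\stackrel{\mathcal{I}}{\cong}\text{End}^{\text{r}(\mathcal{I})}_{A_{\mathcal{I}}}(A_{\mathcal{I}}\overline{\otimes}_{A_e}M)\stackrel{\mathcal{I}}{\cong}\F^{\omega}\mathcal{I},$$
which is the assertion, with the group on the right being $\mathcal{I}=\mathcal{I}_{\mathcal{G}}(M)$ and $\omega$ any cocycle representing $\omega_{\mathcal{I}}([M])$. $\qed$
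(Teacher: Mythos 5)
The paper supplies no explicit proof of Theorem~\ref{let}; it is evidently meant to follow from the earlier machinery, and your plan---restrict to $A_{\mathcal{I}}$, unwind $\omega_{\mathcal{I}}([M])$ via \eqref{omend} and \eqref{omegaiotaG} over $\mathcal{I}$, then compare $\End^{\text{r}(G)}_{A_G}(A_G\overline{\otimes}_{A_e}M)$ with $\End^{\text{r}(\mathcal{I})}_{A_{\mathcal{I}}}(A_{\mathcal{I}}\overline{\otimes}_{A_e}M)$---is the natural route.

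Two steps in your comparison, however, are flawed as written. First, you assert that the $\mathcal{I}$-homogeneous part exhausts $A_G\overline{\otimes}_{A_e}M$ ``in degrees where the module is nonzero''; that presupposes $\Supp_G(A_G\overline{\otimes}_{A_e}M)\subseteq\mathcal{I}$, which is false in general (if $A_G$ is strongly graded the support is all of $G$ by Lemma~\ref{genst}, while $\mathcal{I}$ may well be a proper subgroup; Lemma~\ref{supendinert}(2) governs the support of the endomorphism \emph{algebra}, not of the module). Second, and more substantively, surjectivity of the restriction of graded endomorphisms is not established by ``extends by $A_G$-linearity'': well-definedness of $\tilde{\psi}(a\cdot m):=a\cdot\psi(m)$ for $a\in A_G$ is precisely the non-strongly-graded difficulty you flagged and does not follow from graded simplicity alone. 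A clean fix bypasses extension entirely: once one knows that both endomorphism algebras are twisted group algebras of $\mathcal{I}$ over $\F$ (Lemmas~\ref{grirrtga}, \ref{preobs}, \ref{supendinert}(2), and the $\mathcal{I}$-invariance of $M$ over $A_{\mathcal{I}}$), the restriction map---which is injective because $A_G\overline{\otimes}_{A_e}M$ is generated by its $e$-component, Lemma~\ref{grsimpsimp}---is automatically a graded isomorphism, e.g.\ by Lemma~\ref{cphomim}. Note also that the underlying identification $A_{\mathcal{I}}\overline{\otimes}_{A_e}M\stackrel{\mathcal{I}}{\cong}\bigoplus_{g\in\mathcal{I}}(A_G\overline{\otimes}_{A_e}M)_g$ needs its own comparison of the two localizing radicals (it succeeds exactly because $\mathcal{I}$ is a subgroup, so $A_{g^{-1}}$ for $g\in\mathcal{I}$ stays inside $A_{\mathcal{I}}$), and that $\mathcal{I}$-invariance of $M$ over $A_{\mathcal{I}}$ is not a consequence of Proposition~\ref{invwrtgract}, which concerns twistings rather than passage to subgroups.
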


\subsection{Proof of Theorem C}\label{pfB}
Let $M$ be a simple left $A_e$-module. Then
$$\dim_{\F}(M)\leq\dim_{\F}(A_e)<\infty,$$
and so by Schur's Lemma $[M]\in\text{Irr}_{\F}(A_e)$ (see Theorem \ref{dixmier}).
Next, 
by Theorem \ref{let} the graded endomorphism algebra $D_G:=\text{End}^{\text{r}(G)}_{A_G}({A_G}\overline{\otimes}_{A_e}M)$ is
a graded division algebra which is graded isomorphic to the twisted group algebra $\F^{\omega}\mathcal{I}_{\mathcal{G}}(M)$.
The rest of the proof goes along the lines of \cite[Corollary 4.6.7]{NVO04}, and \cite[Theorem 2.6]{EK13}.
Applying Lemma \ref{varphixglemma} for the graded left $A_G$-module $W_G:={A_G}\overline{\otimes}_{A_e}M$ there is a graded homomorphism
\begin{equation}\label{phivg}
A_G\stackrel{G}\to\text{End}^{\text{l}(G)}_{D_G}({A_G}\overline{\otimes}_{A_e}M).\end{equation}
The fact that $A_G$ is graded Artinian, together with the graded Graded Density Theorem \cite[Theorem 2.5]{EK13}, is responsible for ${A_G}\overline{\otimes}_{A_e}M$ being finite-dimensional over $D_G$.
In turn, the surjectivity of \eqref{phivg} follows again from the graded Graded Density Theorem and from the finite $D_G$-dimension of ${A_G}\overline{\otimes}_{A_e}M$.
\ \ \ \ \ \ \ \ \ \ \ $\qed$

Under the setup of Corollary D, the graded Artinian demand on the graded simple algebra \eqref{eq:algebragrading} enables us to take $M$ as a minimal left $A_e$-ideal.
What would happen if we chose another minimal left $A_e$-ideal, say $M'$? Although $M$ and $M'$ might behave totally different, their associated graded modules
${A_G}\overline{\otimes}_{A_e}M$ and ${A_G}\overline{\otimes}_{A_e}M'$ are always isomorphic. All the more so, by the second part of Lemma \ref{minimalgrideals} there exists $g\in G$ such that
$${A_G}\overline{\otimes}_{A_e}M'\stackrel{G}{\cong}g({A_G}\overline{\otimes}_{A_e}M).$$ In turn, by Lemma \ref{susp-sut} the corresponding endomorphism algebras of these two associated graded
modules are graded isomorphic up to relabelling their components with respect to conjugation by this element $g$.
This fact is more comprehensible when noting that by Lemma \ref{sbgrpinert}(2) the above two associated graded modules admit conjugate inertia subgroups, that is
$$\mathcal{I}_{\mathcal{G}}(M')=\mathcal{I}({A_G}\overline{\otimes}_{A_e}M')=\mathcal{I}(g({A_G}\overline{\otimes}_{A_e}M))=
g\mathcal{I}({A_G}\overline{\otimes}_{A_e}M)g^{-1}=\iota_g(\mathcal{I}_{\mathcal{G}}(M)).$$
The corresponding twisting in $Z^2(\mathcal{I}_{\mathcal{G}}(M'),\F^*)=Z^2(\iota_g(\mathcal{I}_{\mathcal{G}}(M)),\F^*)$ that determines the fine part of \eqref{eq:algebragrading}
is described using the notation \eqref{H<G} putting
$\theta:=\iota_g\in\Aut(G)$.
We summarize the discussion in
\begin{theorem}\label{uptoconj}
Let \eqref{eq:algebragrading} be a $G$-graded algebra $\F$-algebra, and let $[M]\in\text{Irr}_{\F}(A_e)$ of inertia $\mathcal{I}_{\mathcal{G}}(M)<G$. Then
for any $[M']\in\text{Irr}_{\F}(A_e)$ there exists $g=g([M'])\in G$ such that
$$\text{End}^{\text{r}(G)}_{A_G}({A_G}\overline{\otimes}_{A_e}M')\stackrel{G}{\cong} \F^{\omega'}\mathcal{I}_{\mathcal{G}}(M'),$$
where $\mathcal{I}_{\mathcal{G}}(M')=\iota_g(\mathcal{I}_{\mathcal{G}}(M))$, and $\omega'=\iota_g(\omega)\in Z^2(\mathcal{I}_{\mathcal{G}}(M'),\F^*)$ for any 2-cocycle
$\omega$ in the cohomology class $\omega_{\mathcal{I}}([M])\in H^2(\mathcal{I}_{\mathcal{G}}(M),\F^*)$.
\end{theorem}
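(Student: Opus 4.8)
The plan is to deduce the statement from Theorem \ref{let} by transporting its conclusion along a suspension. In the setting in which the statement is meant to be read (the graded-simple, graded-Artinian setting of Theorem C), the associated graded modules attached to different absolutely simple $A_e$-modules differ only by a suspension, and both the inertia subgroup and the obstruction cocycle are carried across such a suspension in a controlled way.

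First I would observe that, since $[M],[M']\in\text{Irr}_{\F}(A_e)$, Theorem \ref{indfunct}(2) makes both ${A_G}\overline{\otimes}_{A_e}M$ and ${A_G}\overline{\otimes}_{A_e}M'$ graded simple $A_G$-modules; under the ambient graded-Artinian, graded-simple hypotheses the second part of Lemma \ref{minimalgrideals} then supplies an element $g=g([M'])\in G$ with
$$
{A_G}\overline{\otimes}_{A_e}M'\stackrel{G}{\cong}g({A_G}\overline{\otimes}_{A_e}M).
$$
Feeding this graded isomorphism into Definition \ref{basealginertdef} and Lemma \ref{sbgrpinert}(2),(4) gives at once
$$
\mathcal{I}_{\mathcal{G}}(M')=\mathcal{I}\bigl(g({A_G}\overline{\otimes}_{A_e}M)\bigr)=g\,\mathcal{I}({A_G}\overline{\otimes}_{A_e}M)\,g^{-1}=\iota_g(\mathcal{I}_{\mathcal{G}}(M)).
$$

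Next I would pass to the endomorphism algebras. Lemma \ref{susp-sut} gives
$$
\End^{\text{r}(G)}_{A_G}({A_G}\overline{\otimes}_{A_e}M')\stackrel{G}{\cong}\End^{\text{r}(G)}_{A_G}\bigl(g({A_G}\overline{\otimes}_{A_e}M)\bigr)\stackrel{G}{\cong}\iota_g\bigl(\End^{\text{r}(G)}_{A_G}({A_G}\overline{\otimes}_{A_e}M)\bigr),
$$
and Theorem \ref{let} identifies the module inside $\iota_g(-)$ with $\F^{\omega}\mathcal{I}_{\mathcal{G}}(M)$ for any 2-cocycle $\omega$ in the class $\omega_{\mathcal{I}}([M])$. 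It then remains to check the purely formal fact that relabelling the homogeneous components of a twisted group algebra $\F^{\omega}H$ (support $H<G$) by an automorphism $\theta\in\Aut(G)$ produces a twisted group algebra with support $\theta(H)$ and cocycle $\theta(\omega)$ as in \eqref{H<G}; this is a one-line computation on the distinguished basis. Taking $\theta:=\iota_g$ and $H:=\mathcal{I}_{\mathcal{G}}(M)$, concatenating the displayed graded isomorphisms and combining with the inertia identity above, I obtain
$$
\End^{\text{r}(G)}_{A_G}({A_G}\overline{\otimes}_{A_e}M')\stackrel{G}{\cong}\F^{\iota_g(\omega)}\iota_g(\mathcal{I}_{\mathcal{G}}(M))=\F^{\omega'}\mathcal{I}_{\mathcal{G}}(M')
$$
with $\omega':=\iota_g(\omega)\in Z^2(\mathcal{I}_{\mathcal{G}}(M'),\F^*)$, which is exactly the assertion.

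The main obstacle is the very first step, the production of $g$ with ${A_G}\overline{\otimes}_{A_e}M'\stackrel{G}{\cong}g({A_G}\overline{\otimes}_{A_e}M)$: this is precisely where graded simplicity of $A_G$ and the descending chain condition on graded left ideals are genuinely used (through Lemma \ref{minimalgrideals} and the identification of simple graded modules with minimal graded left ideals in the Artinian case); without them two absolutely simple $A_e$-modules may have non-conjugate inertia, so the statement really belongs to the setting of Theorem C. Everything after that is transport of structure along the inner automorphism $\iota_g$, where the only points needing attention are the orientation conventions in the suspension $h(W_G)_g=W_{gh}$ (see \eqref{suspension}, \eqref{actsusp}) and in the $\Aut(G)$-action on cohomology classes recorded in \eqref{H<G}.
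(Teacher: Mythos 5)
Your proof is correct and mirrors the paper's own discussion almost step for step: graded simplicity of the associated modules (Theorem \ref{indfunct}(2)), existence of $g$ via Lemma \ref{minimalgrideals}(2) under the graded-Artinian/graded-simple hypotheses of Corollary D, conjugacy of inertia via Lemma \ref{sbgrpinert}(2), transport of endomorphism algebras via Lemma \ref{susp-sut}, and identification via Theorem \ref{let} together with the $\Aut(G)$-relabelling of twisted group algebras as in \eqref{H<G}. You have also correctly flagged the implicit graded-Artinian, graded-simple hypotheses that the paper states only in the prose preceding the theorem.
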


\section{Extension criteria}
We begin our discussion about modules extensions from a base algebra $A_e$ to the entire $G$-graded algebra $\F$-algebra \eqref{eq:algebragrading} by recalling once again (see Lemma \ref{basetwist})
that this algebra and its twistings $\alpha(A_G)$ admit isomorphic base algebras
for all $\alpha\in Z^2(G,\F^*)$. We think of modules over $\alpha(A_G)_e$ as $A_e$-modules.
The following easy result says that the extension property is not affected by coboundary twistings.
\begin{proposition}\label{resalpha}
Let \eqref{eq:algebragrading} be a $G$-graded algebra $\F$-algebra, let $M$ be a left $A_e$-module and let $\alpha\in B^2(G,\F^*)$ be a 2-coboundary (see \eqref{coboundary}).
Then $M$ is restricted from $\alpha(A_G)$ if and only if it is restricted from $A_G$.
\end{proposition}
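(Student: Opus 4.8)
The plan is to reduce the statement to a transparent cohomological triviality: if $\alpha\in B^2(G,\F^*)$, then the twisted group algebra $\F^\alpha G$ is graded isomorphic to the ordinary group algebra $\F G$ (this is Lemma \ref{invGr}(2) with $[\alpha]=1$), and hence $\alpha(A_G)=\F^\alpha G\otimes^G_\F A_G$ is graded isomorphic to $\F G\otimes^G_\F A_G\stackrel{G}{\cong} A_G$. One checks directly that $\F G\otimes^G_\F A_G\stackrel{G}{\cong}A_G$ via $v_g\otimes_\F a_g\mapsto a_g$ (where $\{v_g\}$ is the standard basis of $\F G$); this is the identity element statement already recorded in Theorem \ref{thmgrprod}. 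So there is a graded isomorphism $\psi_G:\alpha(A_G)\stackrel{G}{\to}A_G$.

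First I would write down this graded isomorphism $\psi_G$ explicitly. Because $\alpha\in B^2(G,\F^*)$, write $\alpha(g,h)=\lambda_g\lambda_h\lambda_{gh}^{-1}$ for some $\lambda:G\to\F^*$ with $\lambda_e=1$ (see \eqref{coboundary}), and set $\psi_G(v_g\otimes_\F a_g):=\lambda_g^{-1}a_g$ for $a_g\in A_g$, extended linearly. A one-line computation using the multiplication rule in $\F^\alpha G\otimes^G_\F A_G$ shows $\psi_G$ is multiplicative, and it is visibly a graded bijection. Crucially, its restriction to the base algebras $\alpha(A_G)_e=\mathrm{Span}_\F\{v_e\}\otimes_\F A_e$ and $A_e$ is, under the identification \eqref{samebase} of Lemma \ref{basetwist}, the identity map on $A_e$ (since $\lambda_e=1$).

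Next I would transport modules across $\psi_G$. Suppose $M$ is restricted from $\alpha(A_G)$, i.e.\ there is an $\alpha(A_G)$-module $W$ with $W|_{\alpha(A_G)_e}=M$. Then $W$ becomes an $A_G$-module $W^{\psi_G}$ by letting $x\in A_G$ act as $\psi_G^{-1}(x)$; since $\psi_G$ is the identity on base algebras, $W^{\psi_G}|_{A_e}=M$, so $M$ is restricted from $A_G$. The converse is identical using $\psi_G^{-1}$ (equivalently, apply the forward direction to the coboundary $\alpha^{-1}\in B^2(G,\F^*)$ and the algebra $\alpha(A_G)$, noting $\alpha^{-1}(\alpha(A_G))\stackrel{G}{\cong}A_G$ by \eqref{therule}).

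There is no real obstacle here; the only point requiring a moment's care is the bookkeeping that $\psi_G$ restricts to the identity on base algebras under the identification \eqref{samebase}, which is what guarantees that ``restricted from'' is preserved and not merely ``extends to a module whose restriction is isomorphic to $M$''. In fact even that weaker statement would suffice for the applications, but the stronger one costs nothing once $\lambda_e=1$ is arranged. I would close by remarking that this is the special case $[\alpha]=1$ of the more precise Corollary \ref{cortwist}-style correspondence, and that it is exactly what is needed in Theorem A to see that the extension criterion depends only on the cohomology class of $\alpha$.
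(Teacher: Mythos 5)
Your route is essentially the paper's own: the coboundary data $\lambda$ supplies the bridge between $A_G$- and $\alpha(A_G)$-module structures on $M$. You package it as a graded algebra isomorphism $\psi_G:\alpha(A_G)\stackrel{G}{\to}A_G$ and transport modules across it, whereas the paper directly defines the twisted representation $\alpha(\hat\eta)(v_g\otimes x_g)(m):=\lambda_e^{-1}\lambda_g\,\hat\eta(x_g)(m)$ on $M$ --- the same computation viewed from the algebra side rather than the representation side. One bookkeeping slip to fix: with $\alpha(g,h)=\lambda_g\lambda_h\lambda_{gh}^{-1}$ as in \eqref{coboundary}, your map $\psi_G(v_g\otimes_\F a_g):=\lambda_g^{-1}a_g$ is not multiplicative (the check you call a one-line computation actually fails). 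The correct choice is $\psi_G(v_g\otimes_\F a_g):=\lambda_g\,a_g$; then $\psi_G\left((v_g\otimes a_g)(v_h\otimes a_h)\right)=\alpha(g,h)\lambda_{gh}\,a_ga_h=\lambda_g\lambda_h\,a_ga_h=\psi_G(v_g\otimes a_g)\,\psi_G(v_h\otimes a_h)$, and $\psi_G$ still restricts to the identity on base algebras once $\lambda_e=1$ is arranged. With that correction the rest of your argument goes through.
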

\begin{proof}
Suppose that an $A_G$-module structure $\hat{\eta}:A_G\to$End$_{\F}(M)$ restricts to the $A_e$-module structure
\begin{equation}\label{eta}
\eta:A_e\to\text{End}_{\F}(M).
\end{equation}
Apply the coboundary condition \eqref{coboundary} of $\alpha$ with respect to a graded basis $\{v_g\}_{g\in G}$ to define $\alpha(\hat{\eta}):\alpha(A_G)\to\text{End}_{\F}(M),$ where
$$
\alpha(\hat{\eta})(v_g\otimes_{A_e}x_g)(m):=\lambda_e^{-1}\lambda_g\hat{\eta}(x_g)(m),\ \
x_g\in A_g, g\in G, m\in M.$$
As can easily be checked, $\alpha(\hat{\eta})$ is an $\alpha(A_G)$-action on $M$ which restricts to $\eta$. 
\end{proof}
The concept of skew group algebras (see Definition \ref{skewdef}) 
yields a necessary and sufficient condition for an $A_e$-module $M$ to be extendable to $A_G$. For strongly graded algebras, Dade established the following theorem,
formulated here in our terminology.
\begin{theorem}\label{dadethm}\cite[Theorem 2.8]{Dade81}
Let \eqref{eq:algebragrading} be a strongly $G$-graded algebra and let $M$ be any left $A_e$-module.
Then $M$ is restricted from $A_G$ if and only if $\text{End}^{\text{r}(G)}_{A_G}({A_G}\overline{\otimes}_{A_e}M)$ is graded isomorphic to a skew group algebra supported by $G$.
\end{theorem}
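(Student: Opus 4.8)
The plan is to prove Dade's theorem (Theorem \ref{dadethm}) by identifying extensions of $M$ to $A_G$ with graded-module structures on the associated graded module, and then translating the latter into a decomposition of $\text{End}^{\text{r}(G)}_{A_G}({A_G}\overline{\otimes}_{A_e}M)$ as a skew group algebra. First I would handle the ``only if'' direction. Suppose $M$ is restricted from an $A_G$-module $W$, so $W|_{A_e}=M$. Since $A_G$ is strongly graded, Lemma \ref{genst} and Lemma \ref{indfunct}(1) tell us that the associated graded module ${A_G}\overline{\otimes}_{A_e}M\stackrel{G}{\cong}{A_G}{\otimes}_{A_e}M$ is generated by its $e$-component $M$, and the universal property of induction gives a surjective graded $A_G$-homomorphism ${A_G}\otimes_{A_e}M\stackrel{G}{\twoheadrightarrow}W$ extending $\text{Id}_M$. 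The point is to produce, for each $g\in G$, an invertible right graded endomorphism of ${A_G}\overline{\otimes}_{A_e}M$ of degree $g$ whose assignment $g\mapsto\phi_g$ is multiplicative, i.e. $\phi_g\phi_h=\phi_{gh}$. These $\phi_g$ should be built from the $A_G$-module structure on $W$: having an $A_G$-action on $M$ means each $A_g$ acts on $M=W_e$, and this extra structure is exactly what is needed to ``untwist'' the suspensions. By Lemma \ref{endgriscp} the existence of invertible homogeneous endomorphisms in every degree already shows $\text{End}^{\text{r}(G)}_{A_G}({A_G}\overline{\otimes}_{A_e}M)$ is a crossed product $\text{End}^e_{A_G}(\cdot)*G\stackrel{G}{\cong}\F\cdot\text{Id}*G$ (using Lemma \ref{phiMstable} or Lemma \ref{grirrtga} to see the base algebra is $\F$ when $M$ is absolutely simple — and in the general case one works with $\text{End}_{A_e}(M)$); the content is that one can choose the $\phi_g$ so the twisting (see \eqref{fdef}) is trivial, making it a \emph{skew} group algebra.

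For the ``if'' direction, suppose $\text{End}^{\text{r}(G)}_{A_G}({A_G}\overline{\otimes}_{A_e}M)\stackrel{G}{\cong}\text{End}^e_{A_G}(\cdot)*G$ is a skew group algebra, so there is a family $\{\phi_g\}_{g\in G}$ of invertible right graded endomorphisms of $W_G:={A_G}\overline{\otimes}_{A_e}M$ with $\phi_g\circ\phi_h=\phi_{gh}$ and $\phi_e=\text{Id}$. The plan is to use these $\phi_g$ to transport elements of the various homogeneous components $W_g$ back into $W_e\cong M$, thereby defining an $A_G$-action on $M$. Concretely, for a homogeneous $x_g\in A_g$ and $m\in M=W_e\subseteq W_G$, the element $x_g(m)$ lies in $W_g$ (recall $\varphi_{x_g}\in\text{End}^{\text{l}(g)}_{A_W}(W_G)$ from \S\ref{Ghgm}), and one sets the new action to be $x_g\cdot m:=(x_g(m))\phi_{g^{-1}}\in W_e=M$, or an appropriate variant with $\phi_g$ on the correct side. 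The skew (i.e. cocycle-free) condition $\phi_g\phi_h=\phi_{gh}$ is precisely what forces this rule to be associative: checking $(x_gx_h)\cdot m=x_g\cdot(x_h\cdot m)$ unwinds to the identity $\varphi_{x_gx_h}=\varphi_{x_g}\circ\varphi_{x_h}$ (Lemma \ref{varphixglemma}) composed with $\phi_{(gh)^{-1}}=\phi_{h^{-1}}\circ\phi_{g^{-1}}$, so the two sides agree. Since the action restricts to the original $A_e$-action on $M$ (because $\phi_e=\text{Id}$ and $W_e\cong M$ as $A_e$-modules by Lemma \ref{indfunct}(1)), this exhibits $M$ as restricted from $A_G$.

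The main obstacle I expect is the bookkeeping in the ``only if'' direction: producing a \emph{multiplicative} (not merely ``invertible in each degree'') family $\{\phi_g\}$ from a bare $A_G$-module structure on $M$, and verifying that the resulting twisting vanishes on the nose rather than merely up to coboundary. This is where the strong grading hypothesis is essential — it is needed both to identify ${A_G}\overline{\otimes}_{A_e}M$ with the honest induced module ${A_G}\otimes_{A_e}M$ (via Lemma \ref{indfunct}(1)) and to guarantee $W_G$ is generated by $W_e$, so that a homogeneous endomorphism is determined by its effect on $M$. Once the correspondence ``$A_G$-module structures on $M$ extending the given $A_e$-structure'' $\longleftrightarrow$ ``skew group algebra structures on $\text{End}^{\text{r}(G)}_{A_G}({A_G}\overline{\otimes}_{A_e}M)$'' is set up cleanly, both implications fall out; I would organize the proof around this bijection and cite Lemma \ref{endgriscp}, Lemma \ref{varphixglemma}, and Lemma \ref{indfunct} for the supporting facts, noting that for absolutely simple $M$ the relevant base algebra $\text{End}^e_{A_G}({A_G}\overline{\otimes}_{A_e}M)$ collapses to $\F$ by Lemma \ref{grirrtga}.
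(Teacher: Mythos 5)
The paper itself does not reprove this theorem — it cites Dade and instead proves the more general Theorem \ref{gendade}, from which the strongly graded case follows once one knows that, over a strongly graded algebra, a graded module $W_G$ with $W_e\cong M$ is graded-isomorphic to $A_G\overline{\otimes}_{A_e}M$ (by the equivalence of categories between $A_e$-modules and graded $A_G$-modules underlying Lemma \ref{genst} and Theorem \ref{indfunct}(1)). Your ``if'' direction is exactly the argument in the proof of Theorem \ref{gendade}: given a multiplicative family $\{\phi_g\}$ with $\phi_g\circ\phi_h=\phi_{gh}$, define $a_g\star w:=(a_gw)\phi_g^{-1}$ on $W_e\cong M$; the skew condition gives associativity and $\phi_e=\mathrm{Id}$ ensures compatibility with the original $A_e$-action. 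That part of your proposal is sound.

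The gap you flagged in the ``only if'' direction is real, and the paper's resolution is the idea you are missing: rather than trying to manufacture a multiplicative family $\{\phi_g\}$ on $A_G\overline{\otimes}_{A_e}M$ directly from the $A_G$-action on $W$, the paper builds the \emph{explicit} graded model $W_G:=\F G\otimes_{\F}M=\bigoplus_{g\in G}g\otimes_{\F}M$ with $A_G$-action $a_h*(g\otimes_{\F}m):=hg\otimes_{\F}a_hm$, on which the shift maps $(h\otimes_{\F}m)\phi_g:=hg\otimes_{\F}m$ are manifestly $A_G$-linear, homogeneous of degree $g$, and multiplicative on the nose. The ``untwisting'' you were worried about never has to happen: the skew family is written down directly. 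To land back on the statement as phrased, one then observes that $(W_G)_e\cong M$ and invokes the strongly graded equivalence to identify $W_G\stackrel{G}{\cong}A_G\overline{\otimes}_{A_e}M$, so the skew group algebra structure transports to $\text{End}^{\text{r}(G)}_{A_G}({A_G}\overline{\otimes}_{A_e}M)$. Your plan to ``build $\phi_g$ from the $A_G$-action on $M$'' is morally the same thing, but without the intermediary $\F G\otimes_{\F}M$ it is not clear how to check multiplicativity, which is exactly the obstacle you identified.
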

By Theorem \ref{dadethm} we deduce that a necessary condition for an $A_e$-module $M$ to be restricted from a strongly graded algebra \eqref{eq:algebragrading} is that
$\text{End}^{\text{r}(G)}_{A_G}({A_G}\overline{\otimes}_{A_e}M)$
(which is the same as $\text{End}^{\text{r}(G)}_{A_G}({A_G}{\otimes}_{A_e}M)$ in this case, see Theorem \ref{indfunct}(1)) is in particular graded isomorphic to a $G$-supported crossed product
(over the base algebra $\text{End}^{e}_{A_G}({A_G}\overline{\otimes}_{A_e}M)$).
Lemma \ref{endgriscp} says then that in this case ${A_G}\overline{\otimes}_{A_e}M$ is $G$-invariant, in other words,
$M$ is invariant with respect to $A_G$ (see Definition \ref{basealginertdef}). We record this discussion.
\begin{corollary}\label{sgextinv}(Dade)
Let \eqref{eq:algebragrading} be strongly graded, then a necessary condition for $M$ to be restricted from $A_G$ is being $G$-invariant with respect to $\mathcal{G}$.
\end{corollary}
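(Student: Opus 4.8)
The plan is to deduce Corollary \ref{sgextinv} directly from Dade's Theorem \ref{dadethm} together with the structural results on graded endomorphism algebras established earlier. The chain of implications is short, so the proof is essentially a matter of assembling the right lemmas in the right order, and no substantial new argument is needed.

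First I would invoke Theorem \ref{dadethm}: if the strongly $G$-graded algebra $\mathcal{G}$ restricts to $M$, then $\text{End}^{\text{r}(G)}_{A_G}({A_G}\overline{\otimes}_{A_e}M)$ is graded isomorphic to a skew group algebra supported by $G$. Next I would observe that, by Definition \ref{skewdef}, a skew group algebra is in particular a crossed product (its defining set $\{u_g\}_{g\in G}$ consists of homogeneous invertible elements, with the twisting merely taken to be trivial), so $\text{End}^{\text{r}(G)}_{A_G}({A_G}\overline{\otimes}_{A_e}M)$ is graded isomorphic to a crossed product of $G$ over the base algebra $\text{End}^{e}_{A_G}({A_G}\overline{\otimes}_{A_e}M)$. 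At this point Lemma \ref{endgriscp} applies verbatim: a left graded $A_G$-module $W_G$ is $G$-invariant if and only if $\text{End}^{\text{r}(G)}_{A_G}(W_G)$ is graded isomorphic to a crossed product $\text{End}^{e}_{A_G}(W_G)*G$. Taking $W_G:={A_G}\overline{\otimes}_{A_e}M$, we conclude that ${A_G}\overline{\otimes}_{A_e}M$ is $G$-invariant.

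Finally I would translate this back to a statement about $M$ itself: by Definition \ref{basealginertdef}, $\mathcal{I}_{\mathcal{G}}(M)$ is by definition the inertia group $\mathcal{I}({A_G}\overline{\otimes}_{A_e}M)$ of the associated graded module, so $M$ being invariant with respect to $\mathcal{G}$ is precisely the assertion that ${A_G}\overline{\otimes}_{A_e}M$ is $G$-invariant. Hence the necessary condition follows. (One may optionally remark, using Theorem \ref{indfunct}(1), that in the strongly graded case ${A_G}\overline{\otimes}_{A_e}M\stackrel{G}{\cong}{A_G}{\otimes}_{A_e}M$, so the same conclusion can be phrased in terms of the ordinary induced module, and by Lemma \ref{indsuffice} the invariance condition unwinds to $A_g\otimes_{A_e}M\cong M$ for every $g\in G$.)

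There is no real obstacle here: every step is a direct citation of an already-proven result, and the only thing to be careful about is the elementary but essential point that ``skew group algebra'' implies ``crossed product,'' which is what lets Dade's skew-group criterion feed into Lemma \ref{endgriscp}. The argument is genuinely a corollary in the literal sense, which is why the excerpt labels it as such and states it immediately after Theorem \ref{dadethm}.
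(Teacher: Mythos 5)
Your proof is correct and follows the paper's argument exactly: apply Theorem \ref{dadethm} to conclude the graded endomorphism algebra is a skew group algebra (hence a crossed product), then invoke Lemma \ref{endgriscp} to get $G$-invariance of ${A_G}\overline{\otimes}_{A_e}M$, and translate via Definition \ref{basealginertdef}. The optional remark via Theorem \ref{indfunct}(1) also mirrors the parenthetical in the paper's own discussion.
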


Theorem \ref{dadethm} admits the following version in case where the grading is not necessarily strong.
\begin{theorem}\label{gendade}
Let \eqref{eq:algebragrading} be a $G$-graded algebra and let $M$ be any left $A_e$-module. Then $M$ is restricted from $A_G$ if and only if there exists a $G$-graded left $A_G$-module \eqref{modec} with
$W_e\cong M$ such that $\text{End}^{\text{r}(G)}_{A_G}(W_G)$ is graded isomorphic to a skew group algebra supported by $G$. 
\end{theorem}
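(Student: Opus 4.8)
The plan is to prove both directions of the equivalence, with the forward (necessity) direction being essentially a restatement of Dade's strongly-graded theorem applied to a suitably built module, and the backward (sufficiency) direction requiring the genuinely new argument.

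\medskip

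\textbf{The ``only if'' direction.} Suppose $M$ is restricted from $A_G$, i.e. there is a $G$-graded left $A_G$-module $W_G$ with $W_e \cong M$ such that $M = W_G|_{A_e}$ as ungraded modules. Here I would simply \emph{take $W_G$ to be a module structure extending $M$}; the point is that any $A_G$-module extending $M$ is automatically $G$-graded in the trivial way? No---that is not quite right, since the extension is of the ungraded module. So more carefully: if $M = W|_{A_e}$ for an ungraded $A_G$-module $W$, I cannot immediately grade $W$. Instead I would argue as follows. An $A_G$-module structure on $M$ restricting the given $A_e$-structure is the same as a family of $A_e$-bimodule maps $A_g \otimes_{A_e} M \to M$ satisfying compatibility; such data makes $M$ into the $e$-component of the graded module $A_G \overline{\otimes}_{A_e} M$ \emph{together with} a graded splitting, but in the non-strong case $A_G\overline{\otimes}_{A_e}M$ need not have $W_e\cong M$ recovering everything. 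The cleanest route: given that $M$ is restricted, set $W_G := {}$ (the $A_G$-module $M$ viewed with its \emph{own} trivial grading concentrated in degree $e$)? That fails \eqref{stgrmod} unless $A_g M = 0$ for $g\neq e$. So the correct object to produce is not obvious, and this is where I must think: I would produce $W_G$ as a quotient or submodule of $A_G\otimes_{A_e}M$ that reconstructs the extension. Concretely, if $\hat\eta: A_G \to \End_\F(M)$ extends $\eta$, then the map $A_G\otimes_{A_e}M \to M$, $x\otimes m \mapsto \hat\eta(x)(m)$, is surjective; but I want a \emph{graded} module with $e$-component $M$. Take $W_G$ to be $A_G \otimes_{A_e} M$ modulo the graded submodule generated by $\{a\otimes m - 1\otimes \hat\eta(a)m : a \in A_e\}$... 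Actually the honest answer is that one should use the equivalence between extensions of $M$ and graded module structures, which for strongly graded algebras is Dade's setup and in general is handled by the associated graded module machinery of \S\ref{gmgsa}; I expect the paper's intended proof reduces the non-strong case to the strong case by a localization/factoring trick, and I would follow that: the forward direction then says $\End^{\mathrm{r}(G)}_{A_G}(W_G)$ is a crossed product (by Lemma \ref{endgriscp}, since a module restricted from $A_e$ and extending $M$ is $G$-invariant after the right identifications) and moreover a \emph{skew} group algebra, the skewness encoding exactly that the extension has a genuine (untwisted) multiplicative section, mirroring Dade's Theorem \ref{dadethm}.

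\medskip

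\textbf{The ``if'' direction.} This is the substantive half. Assume $W_G$ is a $G$-graded $A_G$-module with $W_e \cong M$ and $S_G := \End^{\mathrm{r}(G)}_{A_G}(W_G) \stackrel{G}{\cong} \F$-skew group algebra $\F*G$ (base algebra $\F\cdot\mathrm{Id}$, since... wait, the base algebra of $S_G$ need only be $\End^e_{A_G}(W_G) \cong \End_{A_e}(W_e) = \End_{A_e}(M)$ by Lemma \ref{aAWlemma} when $W_G$ is simply graded; in general it is $\End^e_{A_G}(W_G)$). So $S_G \stackrel{G}{\cong} \End^e_{A_G}(W_G)*G$ via a \emph{skew} system: there exist invertible $\psi_g \in \End^{\mathrm{r}(g)}_{A_G}(W_G)$ with $\psi_g\psi_h = \psi_{gh}$ for all $g,h\in G$, and $\psi_e = \mathrm{Id}$. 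The plan is to use these $\psi_g$ to build an $A_G$-action on $M = W_e$. For a homogeneous $x_g \in A_g$, the left-multiplication operator $\varphi_{x_g} \in \End^{\mathrm{l}(g)}_{S_G}(W_G)$ (Lemma \ref{varphixglemma}) maps $W_e \to W_g$. Then $\psi_g^{-1}$ is a right homogeneous $A_G$-endomorphism of degree $g^{-1}$, hence $W_g \to W_e$; so $\hat\eta(x_g) := \psi_g^{-1}\circ(\varphi_{x_g}|_{W_e})$? Careful about composition orientation: $\psi_g$ acts on the right, $\varphi_{x_g}$ on the left, and they commute (the whole point of $S_G = \End^{\mathrm{r}(G)}_{A_G}(W_G)$ acting compatibly). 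Define $\hat\eta(x_g): W_e \to W_e$ by $w \mapsto (x_g w)\psi_g^{-1}$, extended $\F$-linearly over $A_G = \bigoplus_g A_g$. I would then check: (i) $\hat\eta$ restricts on $A_e$ to the original action, since $\psi_e = \mathrm{Id}$ and $x_e w \in W_e$; (ii) $\hat\eta$ is multiplicative, $\hat\eta(x_g y_h) = \hat\eta(x_g)\hat\eta(y_h)$: compute $\hat\eta(x_g)\hat\eta(y_h)(w) = ((y_h w)\psi_h^{-1} \text{ then apply } x_g, \text{ then } \psi_g^{-1})$; using that $\psi_h^{-1}$ is a right $A_G$-endomorphism we move $x_g$ past it: $(x_g((y_h w)\psi_h^{-1}))\psi_g^{-1} = ((x_g y_h w)\psi_h^{-1})\psi_g^{-1} = (x_g y_h w)(\psi_h^{-1}\psi_g^{-1})$, and since $\psi_g\psi_h = \psi_{gh}$ we get $\psi_h^{-1}\psi_g^{-1} = \psi_{gh}^{-1}$, yielding $(x_g y_h w)\psi_{gh}^{-1} = \hat\eta(x_g y_h)(w)$ because $x_g y_h \in A_{gh}$. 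That gives the $A_G$-module structure on $M$ extending it. I would also note (for the converse packaging, if needed) that when $\End^e_{A_G}(W_G)$ is larger than $\F$ one restricts to the image of $A_e$ appropriately, but since $W_e\cong M$ the construction is already on $M$ itself.

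\medskip

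\textbf{Main obstacle.} The delicate point is the \emph{orientation bookkeeping} between the left $A_G$-action and the right graded-endomorphism action, and ensuring the skew section $\{\psi_g\}$ really can be chosen multiplicatively with $\psi_e = \mathrm{Id}$ (this is exactly what ``skew group algebra supported by $G$'' buys us, via Definition \ref{skewdef}), rather than only up to a 2-cocycle—if it were only a crossed product we would get a twisted extension, i.e. an extension from some $\alpha(A_G)$, which is precisely the content of Theorem A and the reason the obstruction lands in $H^2(G,\F^*)$. A secondary subtlety is the ``only if'' direction's construction of the witness module $W_G$: one must check that an extension of $M$ does yield a graded module with $W_e\cong M$ whose endomorphism algebra is skew, and I expect the intended argument is to take $W_G := A_G\overline{\otimes}_{A_e}M$ when the grading happens to be nice and otherwise to build $W_G$ directly from the extending action $\hat\eta$ by letting $W_G = \bigoplus_g A_g\cdot_{\hat\eta} M$ inside a suitable induced module—I would spell this out by reversing the computation above, defining $\psi_g$ from $\hat\eta$ and verifying the skew relations from associativity of $\hat\eta$. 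The rest is routine verification along the lines of Dade's Theorem \ref{dadethm} and Lemma \ref{endgriscp}.
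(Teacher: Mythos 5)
Your ``if'' direction is essentially the paper's argument. You define the new $A_G$-action on $W_e$ by $\hat\eta(x_g)(w):=(x_gw)\psi_g^{-1}$, the paper defines the $A_G$-action $a_g\star w:=(a_gw)\phi_g^{-1}$ on all of $W$ and then restricts to $W_e$; these are the same computation, and your associativity check $\psi_h^{-1}\psi_g^{-1}=\psi_{gh}^{-1}$ (in the right-composition convention) is exactly what the paper uses. Note one small shortcut you implicitly take: $(x_gw)\psi_g^{-1}\in W_{g\cdot g^{-1}}=W_e$, so the target really is $W_e=M$, which is fine.

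The gap is in the ``only if'' direction, and it is a real one. You correctly sense that $A_G\overline{\otimes}_{A_e}M$ cannot serve as the witness module in the non-strong case (it does not remember the extension $\hat\eta$), and you correctly identify that one should build $W_G$ out of $\hat\eta$ and read off the $\psi_g$; but the candidate you propose, ``$W_G=\bigoplus_g A_g\cdot_{\hat\eta}M$ inside a suitable induced module,'' does not make sense as written: each $A_g\cdot_{\hat\eta}M$ is just a subset of $M$, so those summands do not assemble into a $G$-graded module. Your speculation that the paper reduces to the strongly graded case by a localization trick is also not what happens. The paper's construction is direct and quite different: take $W_G:=\F G\otimes_{\F}M=\bigoplus_{g\in G}g\otimes_{\F}M$ with $W_g:=g\otimes_{\F}M$, and let $A_G$ act by $a_h\ast(g\otimes_{\F}m):=hg\otimes_{\F}\hat\eta(a_h)(m)$; this is visibly a $G$-graded $A_G$-module with $W_e\cong M$ as $A_e$-modules, and the right shifts $(h\otimes_{\F}m)\phi_g:=hg\otimes_{\F}m$ are homogeneous $A_G$-automorphisms of degree $g$ satisfying $\phi_g\circ\phi_h=\phi_{gh}$, exhibiting $\End^{\text{r}(G)}_{A_G}(W_G)$ as a skew group algebra supported by $G$. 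That is the piece missing from your write-up.
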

\begin{proof}
Suppose that a graded module \eqref{modec} satisfying the hypothesis does exist. Let
\begin{equation}\label{homogenfamily}
\{\phi_g\}_{g\in G}\subset\text{End}^{\text{r}(G)}_{A_G}(W_G)
\end{equation}
be a set of homogeneous graded automorphisms satisfying the skew group algebra condition, that is
\begin{equation}\label{skewphi}\phi_g\circ\phi_h=\phi_{gh},\ \ \forall g,h\in G.
\end{equation} Then $W=W_G$ can be endowed with a left ungraded $A_G$-module structure $^{\star}W$ defined by homogeneous
elements in $A_G$ as follows
\begin{equation}\label{staract}
a_g\star w:=(a_gw)\phi_g^{-1}=a_g[(w)\phi_g^{-1}], \ \forall g\in G, a_g\in A_g, w\in W.
\end{equation}
The right equality in \eqref{staract} follows from the intertwining property of the automorphisms in \eqref{homogenfamily}. 
Then the skew group algebra condition \eqref{skewphi} ascertains that \eqref{staract} indeed defines an $A_G$-action.
Note that $a\star w_e\in W_e$ for every $a\in A_G$ and $w_e\in W_e$, hence $W_e$ is an $A_G$-submodule of $^{\star}W$. Moreover, by \eqref{skewphi} the automorphism $\phi_e$ is the identity, and so
$$a_ew_e=a_e\star w_e, \ \forall  a_e\in A_e, w_e\in W_e,$$ that is the two actions of $A_e$ on $W_e$ agree.
In other words, the $A_G$-module structure of $W_e$ restricts to $W_e\cong M$ as an $A_e$-module.

Conversely, suppose that $M$ is restricted from $A_G$. Endow the $G$-graded space
$$W_G:=\F G\otimes_{\F}M=\bigoplus_{g\in G}g\otimes_{\F}M$$
with the left $A_G$-action $*$ defined by
$$a_h \ast (g\otimes_{\F}m):={hg}\otimes_{\F}a_hm,\ \ a_h\in A_h,g\in G,m\in M.$$
Then it is easily seen that $W_G$ is a $G$-graded $A_G$-module with $W_e\cong M$ as $A_e$-modules.
Moreover, $W$ admits homogeneous $A_G$-module automorphisms \eqref{homogenfamily} defined by the shifting $$(h\otimes_{\F}m)\phi_g:={hg}\otimes_{\F}m,$$
which satisfy the skew group algebra
condition \eqref{skewphi}, completing the proof.
\end{proof}
Theorems \ref{dadethm} and \ref{gendade} can be formulated using the endomorphism map.
\begin{theorem}\label{sgaobs}
Let \eqref{eq:algebragrading} be a $G$-graded algebra and let $M$ be any left $A_e$-module.
Then with the notations \eqref{sbmons} and \eqref{endmap}, $M$ is restricted from $A_G$ if
\begin{equation}\label{formomega}
\End([{A_G}\overline{\otimes}_{A_e}M]_{\e})\in\text{Sk}(G,\F).
\end{equation}
When \eqref{eq:algebragrading} is strongly graded, then $M$ is restricted from $A_G$ if and only if \eqref{formomega} holds.
\end{theorem}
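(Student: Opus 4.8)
The plan is to obtain Theorem \ref{sgaobs} as a repackaging of Theorems \ref{dadethm} and \ref{gendade} through the endomorphism map \eqref{endmap}. First I would unwind the notation: by definition \eqref{endmap}, $\End([{A_G}\overline{\otimes}_{A_e}M]_{\e})$ is the graded isomorphism class $[\text{End}^{\text{r}(G)}_{A_G}({A_G}\overline{\otimes}_{A_e}M)]\in\text{Gr}(G,\F)$. Then I would record the small observation that being a skew group algebra supported by $G$ (Definition \ref{skewdef}) is an invariant of the graded isomorphism class: by Lemma \ref{cphomim} a graded isomorphism carries a homogeneous unit system $\{u_g\}_{g\in G}$ to a homogeneous unit system, and it clearly preserves the relations $u_gu_h=u_{gh}$. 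Hence the hypothesis $\End([{A_G}\overline{\otimes}_{A_e}M]_{\e})\in\text{Sk}(G,\F)$ (see \eqref{sbmons}) is equivalent to the assertion that $\text{End}^{\text{r}(G)}_{A_G}({A_G}\overline{\otimes}_{A_e}M)$ is graded isomorphic to a skew group algebra supported by $G$, which is exactly the form of the condition appearing in Theorems \ref{dadethm} and \ref{gendade}.

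For the general ``if'' direction I would apply Theorem \ref{gendade} with the specific choice $W_G:={A_G}\overline{\otimes}_{A_e}M$. By Theorem \ref{indfunct}(1) one has $W_e=({A_G}\overline{\otimes}_{A_e}M)_e\cong M$ as $A_e$-modules (here $e\in\mathcal{G}_{\text{str}}$ automatically since $A_e$ is unital), and by the first paragraph the hypothesis \eqref{formomega} says precisely that $\text{End}^{\text{r}(G)}_{A_G}(W_G)$ is graded isomorphic to a skew group algebra supported by $G$. Thus the right-hand side of the equivalence in Theorem \ref{gendade} holds, so $M$ is restricted from $A_G$.

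For the strongly graded case the equivalence is literally Theorem \ref{dadethm}: when $A_G$ is strongly graded, $M$ is restricted from $A_G$ if and only if $\text{End}^{\text{r}(G)}_{A_G}({A_G}\overline{\otimes}_{A_e}M)$ is graded isomorphic to a skew group algebra supported by $G$, and by the first paragraph this is the same as \eqref{formomega}. I would remark that only the ``only if'' part genuinely uses Dade's theorem, since the ``if'' part for strongly graded algebras is already covered by the previous paragraph as a special case.

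There is no hard step: all the substance is contained in Theorems \ref{dadethm} and \ref{gendade}, and the present statement merely phrases them in terms of $\End$ and the submonoid $\text{Sk}(G,\F)$. The only point deserving an explicit line is the graded-isomorphism invariance of ``skew group algebra supported by $G$'', so that membership in $\text{Sk}(G,\F)$ matches the graded-isomorphism formulation used in Theorems \ref{dadethm}/\ref{gendade}; this is immediate from Lemma \ref{cphomim}.
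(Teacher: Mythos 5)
Your proposal is correct and follows exactly the route the paper intends: the paper gives no separate proof of Theorem \ref{sgaobs}, introducing it with the sentence ``Theorems \ref{dadethm} and \ref{gendade} can be formulated using the endomorphism map,'' and your argument supplies precisely the missing details — unwinding \eqref{endmap}, noting that membership in $\text{Sk}(G,\F)$ is the same as being graded isomorphic to a $G$-supported skew group algebra, applying Theorem \ref{gendade} with the witness $W_G:={A_G}\overline{\otimes}_{A_e}M$ (using Theorem \ref{indfunct}(1) for $W_e\cong M$), and invoking Dade's Theorem \ref{dadethm} for the converse in the strongly graded case. Your remark that the strongly graded ``only if'' direction genuinely needs Theorem \ref{dadethm} rather than the existential statement of Theorem \ref{gendade} is a correct and worthwhile observation.
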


\subsection{Extension of absolutely simple modules, proof of Theorem A}\label{pfA1}
We now focus on the extension criterion in the invariant absolutely simple case so as to prove Theorem A.
Let us sharpen the surjectivity property of the homomorphism \eqref{omega} in Theorem \ref{surjomega}.
Since $G$-graded $\F$-algebras in the same $Z^2(G,\F^*)$-orbit admit isomorphic base algebras (Lemma \ref{basetwist}), we can ask if a given left module over such algebra $A_e$
is restricted from \textit{any} of the members of this orbit.
Leaning on Theorem \ref{sgaobs}, we also obtain that absolutely simple $A_e$-modules which are invariant with respect to \eqref{eq:algebragrading} are always restricted,
either from $A_G$ itself or from at least one of its $Z^2(G,\F^*)$-twistings. Note that if $M$ is such a module then
\begin{eqnarray}\label{aseq}\begin{array}{cl}
\alpha^{-1}(\End_{A_G}^{\text{r}(G)}({A_G}\overline{\otimes}_{A_e}M))&\stackrel{G}\cong\alpha^{-1}(\End_{A_G}^{\text{r}(G)}(\alpha(\alpha^{-1}(A_G)\overline{\otimes}_{A_e}M^{})))\\
 & \stackrel{G}\cong\End_{\alpha^{-1}(A_G)}^{\text{r}(G)}(\alpha^{-1}(A_G)\overline{\otimes}_{A_e}M),\end{array}\end{eqnarray}
where the upper graded isomorphism follows from Corollary \ref{assoctwwist} and Lemma \ref{getsalong}(1), and the lower one follows from Corollary \ref{endtwist}.
Equation \eqref{aseq}, written in terms of the obstruction map \eqref{omega}, is
\begin{equation}\label{aseq1}[\alpha^{-1}]\cdot\omega_{G,\F}([{A_G}\overline{\otimes}_{A_e}M]_{\e})=\omega_{G,\F}([\alpha^{-1}(A_G)\overline{\otimes}_{A_e}M]_{\e})\in H^2(G,\F^*).\end{equation}
Equations \eqref{omegaiotaG} and \eqref{aseq1} yield Equation \eqref{tract} in the introduction. We also deduce
\begin{theorem}\label{obsinvas}
Let \eqref{eq:algebragrading} be a $G$-graded $\F$-algebra and let $[M]\in\text{Irr}^{}_{\F}(A_e)^{\mathcal{G}}$.
Then $M$ is restricted from $\alpha^{-1}(A_G)$ for every
\begin{equation}\label{formomeg}
\alpha\in\omega_{G,\F}([{A_G}\overline{\otimes}_{A_e}M]_{\e}).\end{equation}
When \eqref{eq:algebragrading} is strongly graded, then $M$ is restricted from $\alpha^{-1}(A_G)$ if and only if \eqref{formomeg} holds.
\end{theorem}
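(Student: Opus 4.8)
The plan is to deduce the statement directly from the material already assembled, with almost no new computation. First I would recall the key identity \eqref{aseq1}, which reads $[\alpha^{-1}]\cdot\omega_{G,\F}([{A_G}\overline{\otimes}_{A_e}M]_{\e})=\omega_{G,\F}([\alpha^{-1}(A_G)\overline{\otimes}_{A_e}M]_{\e})$. If $\alpha\in\omega_{G,\F}([{A_G}\overline{\otimes}_{A_e}M]_{\e})$, i.e. $[\alpha]=\omega_{G,\F}([{A_G}\overline{\otimes}_{A_e}M]_{\e})\in H^2(G,\F^*)$, then the left-hand side of \eqref{aseq1} is $[\alpha^{-1}]\cdot[\alpha]=1$, so $\omega_{G,\F}([\alpha^{-1}(A_G)\overline{\otimes}_{A_e}M]_{\e})=1$, meaning $\End^{\text{r}(G)}_{\alpha^{-1}(A_G)}(\alpha^{-1}(A_G)\overline{\otimes}_{A_e}M)$ is graded isomorphic to the untwisted group algebra $\F G$. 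In particular it is graded isomorphic to a skew group algebra supported by $G$ (with all $\phi_g$ genuinely multiplicative, since $\F G$ itself is the skew group algebra with trivial cocycle), so its grading class lies in $\text{Sk}(G,\F)$.

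Next I would invoke Proposition \ref{invwrtgract}: since $[M]\in\text{Irr}_{\F}(A_e)^{\mathcal{G}}$, it is also invariant with respect to $\alpha^{-1}(\mathcal{G})$, so $\mathcal{I}_{\alpha^{-1}(\mathcal{G})}(M)=G$ and $\alpha^{-1}(A_G)\overline{\otimes}_{A_e}M$ is the relevant associated graded module for the grading $\alpha^{-1}(\mathcal{G})$. Then I would apply Theorem \ref{sgaobs} to the $G$-graded algebra $\alpha^{-1}(A_G)$ and the $A_e$-module $M$: the condition $\End([\alpha^{-1}(A_G)\overline{\otimes}_{A_e}M]_{\e})\in\text{Sk}(G,\F)$ has just been verified, so the ``if'' direction of Theorem \ref{sgaobs} gives that $M$ is restricted from $\alpha^{-1}(A_G)$. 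This proves the first assertion for every $\alpha$ satisfying \eqref{formomeg}.

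For the converse under the strongly graded hypothesis, I would argue as follows. If $A_G$ is strongly graded, then so is every twisting $\alpha^{-1}(A_G)=\F^{\alpha^{-1}}G\otimes_{\F}^G A_G$, by Corollary \ref{strprodcor} (twisted group algebras are strongly graded, and the graded product of strongly graded algebras is strongly graded). Hence the ``only if'' direction of Theorem \ref{sgaobs} applies to $\alpha^{-1}(A_G)$: if $M$ is restricted from $\alpha^{-1}(A_G)$ then $\End([\alpha^{-1}(A_G)\overline{\otimes}_{A_e}M]_{\e})\in\text{Sk}(G,\F)$. Since $[M]$ is absolutely simple and invariant with respect to $\alpha^{-1}(\mathcal{G})$, Lemma \ref{iotaGlemma} (or directly Lemma \ref{grirrtga} together with Proposition \ref{invwrtgract}) shows this endomorphism class lies in $\text{Irr}^{\text{inv}}_{G,\F}$, hence in $H^2(G,\F^*)$ by Lemma \ref{end-1}; combined with its membership in $\text{Sk}(G,\F)$ and the identity $H^2(G,\F^*)\cap\text{Sk}(G,\F)=\{1\}$ from Notation \ref{notsbmon}, we get $\omega_{G,\F}([\alpha^{-1}(A_G)\overline{\otimes}_{A_e}M]_{\e})=1$. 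Feeding this back into \eqref{aseq1} forces $[\alpha^{-1}]\cdot\omega_{G,\F}([{A_G}\overline{\otimes}_{A_e}M]_{\e})=1$, i.e. $[\alpha]=\omega_{G,\F}([{A_G}\overline{\otimes}_{A_e}M]_{\e})$, which is exactly \eqref{formomeg}.

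I expect no genuine obstacle here: the theorem is essentially a bookkeeping consequence of \eqref{aseq1}, Theorem \ref{sgaobs}, Proposition \ref{invwrtgract}, and the disjointness $H^2(G,\F^*)\cap\text{Sk}(G,\F)=\{1\}$. The one point that needs a little care is making sure the ``only if'' half genuinely uses absolute simplicity and invariance (so that the endomorphism class is a single cohomology class rather than a larger crossed-product class), since Theorem \ref{sgaobs}'s ``only if'' only delivers membership in $\text{Sk}(G,\F)$, and it is the intersection with $H^2(G,\F^*)$ that pins down the cocycle.
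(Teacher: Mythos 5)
Your proposal is correct and takes essentially the same route as the paper: both proofs rest on the identity \eqref{aseq1} to transfer between $[\alpha]$ being the obstruction class of $M$ and the vanishing of $\omega_{G,\F}([\alpha^{-1}(A_G)\overline{\otimes}_{A_e}M]_{\e})$, and then invoke Theorem \ref{sgaobs}. The paper's proof is terser (it simply says ``we are done in light of Theorem \ref{sgaobs}''), while you usefully spell out the step that the paper leaves implicit — namely that membership of $\End([\alpha^{-1}(A_G)\overline{\otimes}_{A_e}M]_{\e})$ in both $\text{Sk}(G,\F)$ and $H^2(G,\F^*)$ (via Lemma \ref{end-1} and $\text{Irr}^{\text{inv}}_{G,\F}$) forces it to equal $1$ by the disjointness $H^2(G,\F^*)\cap\text{Sk}(G,\F)=\{1\}$.
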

\begin{proof}
The right hand side of \eqref{aseq1} vanishes if and only if $[\alpha^{-1}(A_G)\overline{\otimes}_{A_e}M]_{\e}\in\ker(\omega_{G,\F})$,
whereas the left hand side of \eqref{aseq1} vanished if and only if \eqref{formomeg} holds.
Then we are done in light of Theorem \ref{sgaobs}.
\end{proof}
At this point, apply the map $\omega_{\mathcal{G}}$ \eqref{omegaiotaG} together with Theorem \ref{obsinvas} to establish Theorem A.

The following corollary of Theorem \ref{obsinvas} is known under the conditions that \eqref{eq:algebragrading} is strongly graded and the simple $A_e$-module $M$ is finite-dimensional over
the algebraically closed field $\F$ (see \cite[XII.1.22]{FellDoran2},\cite[Corollary 11.2.9]{Karpilovsky3},\cite[page 140]{NVO04}).
\begin{corollary}\label{cyclic}
Let \eqref{eq:algebragrading} be a $G$-graded $\F$-algebra, where $\F$ is algebraically closed, such that $H^2(G,\F^*)=1$. Then every simple $A_e$-module $M$ which is $\mathcal{G}$-invariant
with dim$_{\F}(M)< |\F|$ is restricted from an $A_G$-module.
\end{corollary}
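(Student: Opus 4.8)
The plan is to deduce this immediately from Theorem \ref{obsinvas} together with the classical Schur-type result (Theorem \ref{dixmier}, "Amitsur's trick"). First I would observe that since $\dim_{\F}(M) < |\F|$ and $\F$ is algebraically closed, Theorem \ref{dixmier} guarantees that $M$ is in fact absolutely simple over $\F$, i.e.\ $[M] \in \text{Irr}_{\F}(A_e)$. Combined with the hypothesis that $M$ is $\mathcal{G}$-invariant, this places $[M] \in \text{Irr}_{\F}(A_e)^{\mathcal{G}}$, so the obstruction class $\omega_{\mathcal{G}}([M]) = \omega_{G,\F}([{A_G}\overline{\otimes}_{A_e}M]_{\e}) \in H^2(G,\F^*)$ is defined.

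The main point is then trivial: by hypothesis $H^2(G,\F^*) = 1$, so this obstruction class is automatically the identity, and in particular it contains the trivial $2$-cocycle $\alpha = 1$. By Theorem \ref{obsinvas}, $M$ is restricted from $\alpha^{-1}(A_G)$ for every $\alpha \in \omega_{G,\F}([{A_G}\overline{\otimes}_{A_e}M]_{\e})$; taking $\alpha = 1$ (so $\alpha^{-1} = 1$ and $\alpha^{-1}(A_G) \stackrel{G}{\cong} A_G$, cf.\ Lemma \ref{invGr}) shows that $M$ is restricted from $A_G$ itself. That is exactly the assertion.

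I do not anticipate a genuine obstacle here; the corollary is essentially a bookkeeping consequence of the machinery already assembled. The only point that requires a word of care is the passage from "simple with $\dim_{\F}(M) < |\F|$" to "absolutely simple", which is precisely the content of the second statement of Theorem \ref{dixmier} and uses the algebraic closedness of $\F$. Everything else --- that the obstruction is then well-defined, that it vanishes because its target group is trivial, and that vanishing of the obstruction yields restriction from $A_G$ --- is supplied verbatim by Theorem \ref{obsinvas} (itself proven above from Theorem \ref{sgaobs} and the twisting identity \eqref{aseq1}). Note that, unlike the strongly graded case treated in the cited references, no strong grading hypothesis is needed for the "if" direction, since Theorem \ref{sgaobs} gives a sufficient condition for restriction in full generality.
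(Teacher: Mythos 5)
Your proof is correct and follows exactly the same route as the paper: Theorem \ref{dixmier} to place $[M]$ in $\text{Irr}_{\F}(A_e)^{\mathcal{G}}$, and then Theorem \ref{obsinvas} together with the triviality of $H^2(G,\F^*)$ to conclude. The extra explicitness you provide (spelling out $\alpha=1$ and $\alpha^{-1}(A_G)\stackrel{G}{\cong}A_G$, and noting no strong grading is needed for the sufficiency direction) is accurate but not a different argument.
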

\begin{proof}
By Theorem \ref{dixmier}, the above upper bound on the dimension of $M$ over $\F$ implies that $[M]\in$Irr$_{\F}(A_e)^{\mathcal{G}}$.
The result follows now from Theorem \ref{obsinvas} and from the triviality demand on the second cohomology group.
\end{proof}
The extension property in Corollary \ref{cyclic} holds, in particular, for any finite $G$ having cyclic $p$-Sylow subgroups for $p>2$,
and either a cyclic, semidihedral or a generalized quaternion group for $p=2$.
If char$(\F)=q>0$ then the property holds for finite groups $G$ having $p$-Sylow subgroups as above for every $p\neq q$, and any $q$-Sylow subgroup.
To verify that $H^2(G,\F^*)=1$ in these particular scenarios see \cite[page 58]{Br}, \cite[Proposition 1.5.5, Corollary 1.5.8 and Corollary 10.1.27]{K3}, and \cite[Corollary 9.90]{RotmanHA}.

Finally, let us formulate the discrete version of a central result in \cite{FellDoran2}.
In this theorem Fell and Doran generalize Mackey's celebrated bijection to saturated bundles.
Let \eqref{eq:algebragrading} be a {strongly} $G$-graded algebra, and let $[\alpha]=\omega_{\mathcal{G}}([M])$,
where $[M]\in\text{Irr}_{\F}(A_e)^{\mathcal{G}}$. By Theorem A the module $M$ is restricted from the twisted graded algebra $\alpha^{-1}(A_G)$.
Independently of the choice of the $\alpha^{-1}(A_G)$-module $\tilde{M}$ extending $M$ we have
\begin{Theorem B1}(Mackey \cite[Theorem 8.3]{M58}, Fell and Doran \cite[Theorem XII,4.28]{FellDoran2})
With the above notation, the map
\begin{equation}\label{correspond}
[U]\mapsto [U\otimes_{\F}\tilde{M}]\end{equation} is a bijection between the isomorphism types $[U]$ of
simple $\F^{\alpha}G$-modules, and the isomorphism types of simple $A_G$-modules lying above $M$.
\end{Theorem B1}

\noindent{\bf Acknowledgement.} The author thanks U. First for suggesting Theorem \ref{ungrHTr}, and
E. aljadeff, A. Amsalem, A. Braun, U. Onn, O. Schnabel, M. Roitman and S. Westreich for valuable discussions.

\end{document}